\crefname{equation}{}{}
\newtheorem{theorem}{Theorem}[section]
\newtheorem{proposition}[theorem]{Proposition}
\newtheorem{lemma}[theorem]{Lemma}
\newtheorem{corollary}[theorem]{Corollary}
\newtheorem*{question*}{Question}
\Crefname{question}{Question}{Questions}
\newtheorem{knowntheorem}{Theorem}
\newtheorem{knownlemma}{Lemma}
\theoremstyle{definition}
\newtheorem{definition}[theorem]{Definition}
\newtheorem{notation}[theorem]{Notation}
\newtheorem*{notation*}{Notation}
\newtheorem{problem}[theorem]{Problem}
\newtheorem{open}[theorem]{Open Problem}
\newtheorem*{example*}{Example}
\theoremstyle{remark}
\newtheorem*{remark}{Remark}
\numberwithin{equation}{section}
\newcommand{\Z}{\mathbf{Z}}
\newcommand{\R}{\mathbf{R}}
\newcommand{\Q}{\mathbf{Q}}
\newcommand{\C}{\mathbf{C}}
\newcommand{\N}{\mathbf{N}}
\renewcommand{\P}{\mathbf{P}}
\renewcommand{\H}{\mathbf{H}}
\newcommand{\I}{\mathbf{I}}
\newcommand{\E}{\mathbf{E}}
\newcommand{\mE}{\mathcal{E}}
\newcommand{\mF}{\mathcal{F}}
\newcommand{\mD}{\mathcal{D}}
\newcommand{\mR}{\mathcal{R}}
\newcommand{\mP}{\mathcal{P}}
\newcommand{\mQ}{\mathcal{Q}}
\newcommand{\mC}{\mathcal{C}}
\newcommand{\mT}{\mathcal{T}}
\newcommand{\one}{\mathbbm{1}}
\renewcommand{\Re}{\textnormal{Re}}
\renewcommand{\Im}{\textnormal{Im}}
\newcommand{\eps}{\varepsilon}
\newcommand{\uminus}{\mathbin{\ooalign{$\cup$\cr%
   \hfil\raise0.42ex\hbox{$\scriptscriptstyle\minus$}\hfil\cr}}}
\renewcommand{\emptyset}{\varnothing}
\renewcommand{\bar}{\overline}
\renewcommand{\hat}{\widehat}
\renewcommand{\tilde}{\widetilde}
\newcommand{\supp}{\textnormal{supp}}
\newcommand{\sq}{_{\substack{ \\ \square}}}
\newcommand{\gow}{_{\substack{ \\ \square^2}}}
\renewcommand{\pmod}[1]{\ (\textnormal{mod } #1)}
\newcommand{\cremph}[1]{\emph{\cref{#1}}}
\newcommand{\Tr}{\textnormal{Tr}}
\newcommand{\Mat}{\textnormal{Mat}}
\newcommand{\Part}{\textnormal{Part}}
\newcommand{\ex}{\textnormal{ex}}
\newcommand{\Hom}{\textnormal{Hom}}
\newcommand{\Inj}{\textnormal{Inj}}
\newcommand{\Energy}{\mE}
\newcommand{\Complexity}{\mC}
\newcommand{\rank}{\textnormal{rank}}
\newcommand{\diag}{\textnormal{diag}}
\newenvironment{psmall}
  {\left(\begin{smallmatrix}}
  {\end{smallmatrix}\right)}
\newenvironment{bsmall}
  {\left[\begin{smallmatrix}}
  {\end{smallmatrix}\right)}
\newcommand{\twoline}[2]{$\substack{\text{\normalsize \vphantom{y}#1} \\ \text{\normalsize \vphantom{y}#2}}$}
\newcommand{\twolinemath}[2]{\let\scriptstyle\textstyle \substack{{#1} \\ {#2}}}
\definecolor{firebrick}{rgb}{0.7, 0.13, 0.13}
\definecolor{myBlue}{rgb}{0, 0, 0.6}
\title{The sparse regularity method with Schatten norms and entropy}
\author[Alexandru Pascadi]{Alexandru Pascadi}
\address{Mathematical Institute, Radcliffe Observatory quarter, Woodstock Road, Oxford OX2 6GG, England}
\email{pascadi@maths.ox.ac.uk}
\begin{document}
\maketitle

\begin{abstract} 
We introduce a regularity method for sparse graphs, with new regularity and counting lemmas which use the Schatten--von-Neumann norms to measure uniformity. This leads to $k$-cycle removal lemmas in subgraphs of mildly-pseudorandom graphs, and also in graphs lacking a quasi-smooth family of bipartite subgraphs, extending results of Conlon, Fox, Sudakov and Zhao. We give some applications in additive combinatorics: one about translation-invariant linear equations in subsets of mildly-pseudorandom sets, one about such equations in generalized Sidon sets, and one about polygonal patterns in subsets of $\Z^2$ with few parallelograms (giving a two-dimensional analogue for a result of Prendiville). Separately, our regularity lemma implies a dense graph removal lemma with mild constant dependencies, in graphs whose spectral $L^{2-\eps}$ norms are small.
\end{abstract}

%\tableofcontents

\section{Introduction} \label{sec:introduction}

%Cite:
%https://arxiv.org/pdf/math/0402239.pdf
%https://arxiv.org/pdf/0903.3237.pdf
%https://mathscinet.ams.org/mathscinet-getitem?mr=3667583
%https://citeseerx.ist.psu.edu/viewdoc/download?doi=10.1.1.477.2568&rep=rep1&type=pdf
%https://www.renyi.hu/~miki/BondySimEven.pdf

%Also add some citations about the trace method: expander mixing lemma, Bourgain and Gambourd, also Matomaki and Radziwill?

A famous result in graph theory, initially motivated by the Rusza--Szemer\'edi problem \cite{ruzsa1978triple} and then by its application to Roth's theorem for $3$-term arithmetic progressions \cite{roth1953certain}, is the \emph{triangle removal lemma} reproduced below.

\begin{knowntheorem}[Triangle removal lemma] \label{thm:dense-triangle-removal} 
For any $\epsilon > 0$, there exists $\delta > 0$ such that any $n$-vertex graph $G$ with fewer than $\delta n^3$ triangles can be made triangle-free by removing at most $\epsilon n^2$ edges.
\end{knowntheorem}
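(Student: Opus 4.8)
The plan is to derive \cref{thm:dense-triangle-removal} from Szemer\'edi's regularity lemma via the classical ``regularize--clean--count'' argument. Fix $\epsilon > 0$ and choose a small parameter $\eps' = \eps'(\epsilon) > 0$, to be specified later. Given an $n$-vertex graph $G$ with fewer than $\delta n^3$ triangles, we may assume $n$ is large: if $n$ lies below a threshold $n_0(\epsilon)$ to be fixed, then taking $\delta \le n_0^{-3}$ forces $\delta n^3 < 1$, so $G$ is already triangle-free and there is nothing to do. For large $n$, apply the regularity lemma to obtain an $\eps'$-regular equitable partition $V(G) = V_1 \cup \dots \cup V_M$ with $2/\epsilon \le M \le M_0(\eps')$ and each $|V_i| \ge n/(2M)$.

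Now clean up: form $G' \subseteq G$ by deleting every edge that (i) lies within some part $V_i$, (ii) joins a pair $(V_i, V_j)$ that is not $\eps'$-regular, or (iii) joins a pair $(V_i, V_j)$ of edge density less than $\epsilon/2$. The total number of deleted edges is at most
\[
  \binom{n/M}{2} M + \eps' \binom{M}{2}\Big(\tfrac{n}{M}\Big)^2 + \tfrac{\epsilon}{2}\binom{M}{2}\Big(\tfrac{n}{M}\Big)^2 \;\le\; \Big(\tfrac{1}{M} + \eps' + \tfrac{\epsilon}{2}\Big)\tfrac{n^2}{2} \;\le\; \epsilon n^2,
\]
provided $\eps' < \epsilon/2$ and $M \ge 2/\epsilon$ (both of which we may demand from the regularity lemma).

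It remains to show that $G'$ is triangle-free for $\delta$ small enough in terms of $\epsilon$. Suppose $G'$ contained a triangle $xyz$. Since all within-part edges were deleted, $x,y,z$ lie in three \emph{distinct} parts $V_i, V_j, V_k$, and by construction each of the three pairs among them is $\eps'$-regular with edge density at least $\epsilon/2$. At this point I invoke the triangle counting lemma (which, if not cited, I would establish as an auxiliary step): if each pair among $V_i, V_j, V_k$ is $\eps'$-regular of density $\ge d$ with $d \ge 2\eps'$, then the number of triangles with one vertex in each part is at least $(1 - 2\eps')(d - \eps')^3 |V_i||V_j||V_k|$. With $d = \epsilon/2$ and $|V_i|, |V_j|, |V_k| \ge n/(2M) \ge n/(2M_0)$, this produces at least $c(\epsilon)\,n^3$ triangles in $G$, where $c(\epsilon) = (1 - 2\eps')(\epsilon/2 - \eps')^3 /(2M_0)^3$ is a positive quantity depending on $\epsilon$ alone (as $\eps'$, and hence $M_0$, are now functions of $\epsilon$). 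Taking $\delta < c(\epsilon)$ contradicts the assumption that $G$ has fewer than $\delta n^3$ triangles.

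The sole nontrivial ingredient, and the step I expect to require the most care, is the counting lemma. I would prove it by the standard neighborhood argument: by $\eps'$-regularity of $(V_i, V_j)$ and of $(V_i, V_k)$, all but at most $2\eps'|V_i|$ vertices $v \in V_i$ satisfy $|N(v) \cap V_j| \ge (d - \eps')|V_j|$ and $|N(v) \cap V_k| \ge (d - \eps')|V_k|$ simultaneously. For each such ``good'' $v$, since $d - \eps' \ge \eps'$, the pair $(N(v) \cap V_j,\, N(v) \cap V_k)$ meets the size threshold for $\eps'$-regularity of $(V_j, V_k)$, so it spans at least $(d(V_j, V_k) - \eps')\,|N(v) \cap V_j|\,|N(v) \cap V_k| \ge (d - \eps')^3 |V_j||V_k|$ edges, hence that many triangles through $v$; summing over the good $v$ gives the bound. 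Finally one checks that the quantifier chain is well-founded: $\eps'$ and the prescribed part count $2/\epsilon$ are chosen from $\epsilon$; the regularity lemma then produces $M_0 = M_0(\eps')$; and $\delta$ together with $n_0$ are chosen from $\epsilon$ and $M_0$. This closes the argument.
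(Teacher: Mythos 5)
Your proof is correct: it is the classical regularity-method argument (Szemer\'edi regularity lemma, cleaning step, triangle counting lemma), with the quantifier dependencies and edge-deletion estimates handled properly. The paper does not prove this statement -- it records it as a known theorem with citations -- but your argument is exactly the three-step template (regularity lemma, counting lemma, removal argument) that the paper's introduction describes as the standard proof, so there is nothing to compare beyond noting that your write-up fills in the details the paper omits.
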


This fact can be generalized to an $H$-removal lemma for any subgraph $H$ of a (weighted) graph $G$ \cite{erdos1986asymptotic,fox2011new,conlon2013graph}, and also to the setting of hypergraphs \cite{gowers2007hypergraph,tao2006variant}, the latter leading to a proof of Szemer\'edi's theorem for $k$-term arithmetic progressions \cite{szemeredi1975sets,tao2007ergodic}. %Besides graph theory and additive combinatorics, graph removal lemmas also have applications to property testing in computer science \cite{alon2007efficient}. 
We note that the known dependency of $\delta^{-1}$ on $\eps^{-1}$ (or on $\log \eps^{-1}$ \cite{fox2011new}) is at best tower-exponential, unless $H$ belongs to a special family. The \emph{regularity method} is the classical template used to prove such results, typically consisting of:
\begin{itemize}
    \item[1.] A \emph{regularity lemma}, which approximates $G$ by a simpler (weighted) graph $\tilde{G}$;
    \item[2.] A \emph{counting lemma}, which shows that $G$ and $\tilde{G}$ have about the same number of (homomorphic) copies of $H$;
    \item[3.] A \emph{removal argument}, which removes a few blocks of edges from $\tilde{G}$ to eliminate all homomorphic copies of $H$. After removing a few more edges, the same will hold for $G$. 
\end{itemize}

A shortcoming of results like \cref{thm:dense-triangle-removal} is that they are only nontrivial for \emph{dense} graphs, which have $\gg n^2$ edges; indeed, a graph with $\le \epsilon n^2$ edges can always be made triangle-free by removing all edges. By contrast, a triangle removal lemma for \emph{sparse} graphs $G$ would assume a total of $\delta p^3 n^3$ triangles, and would remove at most $\epsilon pn^2$ edges, where $p \in (0, 1)$ can decay to $0$ as $n \to \infty$ (these values can be justified by considering the Erd\"os--R\'enyi random graph $G(n, p)$, where each edge is included independently with probability $p$). But such results require additional assumptions on the graphs in question, and finding appropriate assumptions has been the subject of extensive literature \cite{conlon2014green,tao2006gaussian,tao2007ergodic,kohayakawa1997onk,kohayakawa1997szemeredi,scott2011szemeredi,conlon2021regularity,conlon2021graphs}. Despite their variety, most \emph{sparse regularity methods} contain:
\begin{itemize}
    \item[1.] A \emph{structure theorem} / \emph{dense model theorem}, often based on a (weak) \emph{sparse regularity lemma}. This is used to construct a \emph{dense} graph $\tilde{G}$ approximating the \emph{sparse} graph $G$; %The latter is similar to the (strong) regularity lemma from the dense setting, but it applies to a sparse graph $G$, and it produces a poorer approximation with better structure.
    \item[2.] A \emph{sparse counting lemma}, which plays the same role as the dense counting lemma (but requires additional assumptions about the sparse graphs involved);
    \item[3.] A \emph{transference argument}, which applies the dense $H$-removal lemma to $\tilde{G}$, and removes a few more edges to conclude an $H$-removal lemma for $G$.
\end{itemize}

A broad category of sparse removal lemmas assume that $G$ is a relatively-dense subgraph of a random or pseudorandom (hyper)graph, for an appropriate notion of (pseudo)randomness. Green and Tao used similar ideas to prove their celebrated theorem that the primes contain arbitrarily long arithmetic progressions \cite{green2008primes}, and Conlon, Fox and Zhao gave a simplified exposition of the proof using a more general pseudorandomness condition \cite{conlon2014green}; multidimensional analogues of this result require more restrictive (and complex) notions of pseudorandomness \cite{tao2006gaussian,tao2015multi}. Separately, much of the recent progress on sparse removal lemmas has focused on obtaining counting lemmas for subgraphs of random graphs, related to the resolution of the K\L R conjecture \cite{conlon2014klr,balogh2015independent,saxton2015hypergraph}.

A more recent direction of research concerns removal lemmas in sparse graphs which contain few copies of certain bipartite graphs, particularly in graphs with few $4$-cycles. This was partly motivated by the work of Chung, Graham and Wilson on quasi-random graphs \cite{chung1989quasi}, and pursued by Conlon, Fox, Sudakov and Zhao in a series of two papers \cite{conlon2021regularity,conlon2021graphs}; their work uses a regularity lemma similar to that of Scott \cite{scott2011szemeredi}, and makes its main contribution through the counting lemmas.

The goal of this paper is to introduce a new variant of the sparse regularity method, leading to improved $H$-removal lemmas in both types of graphs described above, when $H$ is a cycle or a path; these cases also have additive-combinatorial applications about sparse subsets of abelian groups. %Some of our results will avoid the transference step, which will lead to reasonable (not tower-exponential) implied bounds. %We note that our counting lemma is two-sided and has a short proof, lala regularity lemma (\cref{thm:regularity}).

To highlight the difficulties faced by sparse regularity methods and to introduce our approach, we first state a version of Frieze and Kannan's weak regularity lemma \cite{frieze1999quick}, following the analytic form due to Lov\'asz \cite{lovasz2007szemeredi}. Here, graphs are replaced with functions $f : X \times Y \to \C$ (where $(X, \P_X)$ and $(Y, \P_Y)$ are finite probability spaces with full $\sigma$-algebras), and we will use the \emph{cut norm} \cite{lovasz2012large}
\begin{equation} \label{eq:cut}
    ||f||\sq := \sup_{\substack{A \subset X \\ B \subset Y}} \left\vert \E_{x,y} \left[f(x,y) \one_{A}(x) \one_{B}(y) \right] \right\vert,
\end{equation}
which has functioned in more recent literature  as a substitute for the \emph{Gowers box norm}  \cite{gowers2001new,gowers2007hypergraph}
\begin{equation} \label{eq:gowers}
    ||f||\gow := \E_{x_1,x_2,y_1,y_2}\left[
    f(x_1,y_1)\bar{f(x_2,y_1)}f(x_2,y_2)\bar{f(x_1,y_2)} \right]^{1/4}.
\end{equation}
(In the expected values from \cref{eq:cut,eq:gowers}, it is understood that $x_1, x_2, y_1, y_2$ are sampled independently from the appropriate probability spaces.)

\begin{knowntheorem}[Weak regularity lemma for dense graphs \cite{frieze1999quick, lovasz2007szemeredi}] \label{thm:weak-dense-regularity} 
Let $(X, \P_X), (Y, \P_Y)$ be finite probability spaces (with full $\sigma$-algebras), $f : X \times Y \to \C$, and $\eps \in (0, 1)$. Then there exist partitions $\mP$ of $X$ and $\mQ$ of $Y$, each with at most $2^{O(1/\eps^{O(1)})}$ parts, such that
\[
    \left\vert \left\vert f - \E(f \mid \mP \otimes \mQ)
    \right\vert\right\vert\sq \le \eps ||f||_{L^\infty}.
\]
%If $X = Y$, one can also require that $\mP = \mQ$ (using a slightly worse implied constant). 
(Here we denoted $\mP \otimes \mQ := \{A \times B : A \in \mP, B \in \mQ\}$, and the conditional expectation $\E(f \mid \mP \otimes \mQ)$ is obtained by averaging $f$ over parts of $\mP \otimes \mQ$; see \cref{subsec:partitions}.)
\end{knowntheorem}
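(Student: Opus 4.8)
The plan is to run the classical \emph{energy increment} argument adapted to the Frieze--Kannan setting. We begin with the trivial partitions $\mP_0 = \{X\}$, $\mQ_0 = \{Y\}$ and define the \emph{mean-square density} (energy) of a pair of partitions $(\mP, \mQ)$ by $q(\mP, \mQ) := ||\E(f \mid \mP \otimes \mQ)||_{L^2}^2 = \E_{x,y}|\E(f\mid\mP\otimes\mQ)(x,y)|^2$. Since conditional expectation onto a refinement is an $L^2$-contraction orthogonal projection, $q$ is nonnegative, monotone under refinement, and bounded above by $||f||_{L^2}^2 \le ||f||_{L^\infty}^2$. The iteration then proceeds: if the current partitions $(\mP_i, \mQ_i)$ already satisfy $||f - \E(f\mid\mP_i\otimes\mQ_i)||\sq \le \eps ||f||_{L^\infty}$ we stop; otherwise there exist witness sets $A \subset X$, $B \subset Y$ with $|\E_{x,y}[(f - \E(f\mid\mP_i\otimes\mQ_i))(x,y)\one_A(x)\one_B(y)]| > \eps||f||_{L^\infty}$.

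The key step is to show each such refinement increases the energy by a fixed amount. I would refine $\mP_i$ by intersecting every part with $A$ and its complement, and likewise refine $\mQ_i$ using $B$, obtaining $(\mP_{i+1}, \mQ_{i+1})$ with $|\mP_{i+1}| \le 2|\mP_i|$, $|\mQ_{i+1}| \le 2|\mQ_i|$. Writing $g := f - \E(f\mid\mP_i\otimes\mQ_i)$, the point is that $g \one_A \one_B$ is measurable with respect to $\mP_{i+1}\otimes\mQ_{i+1}$ up to the part of $g$ that averages out, so by Cauchy--Schwarz (or Pythagoras applied to the nested projections) one gets $q(\mP_{i+1},\mQ_{i+1}) - q(\mP_i,\mQ_i) \ge |\la g, \one_A\otimes\one_B\ra_{L^2}|^2 = |\E_{x,y}[g(x,y)\one_A(x)\one_B(y)]|^2 > \eps^2 ||f||_{L^\infty}^2$. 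Hence the energy, which lives in $[0, ||f||_{L^\infty}^2]$, can increase by $> \eps^2||f||_{L^\infty}^2$ at most $\eps^{-2}$ times, so the process halts after $O(\eps^{-2})$ steps, at which point the partition sizes are at most $2^{O(\eps^{-2})} = 2^{O(1/\eps^{O(1)})}$, as claimed. (One harmless normalization: if $||f||_{L^\infty} = 0$ then $f \equiv 0$ and the statement is trivial, so we may assume $||f||_{L^\infty} = 1$ by scaling.)

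The main obstacle — really the only subtle point — is verifying the energy-increment inequality cleanly in the measure-theoretic language with general (not necessarily uniform) probability spaces $(X, \P_X)$, $(Y,\P_Y)$. The cut norm uses indicator functions of arbitrary measurable sets, and one must check that refining by $A$ and $B$ genuinely makes the rank-one function $\one_A \otimes \one_B$ (or rather the relevant component of $g$ against it) measurable with respect to $\mP_{i+1}\otimes\mQ_{i+1}$; the identity $\E(g\mid\mP_{i+1}\otimes\mQ_{i+1})$ has nonzero inner product with $\one_A\otimes\one_B$ equal to that of $g$ itself, since $\one_A\otimes\one_B$ is $(\mP_{i+1}\otimes\mQ_{i+1})$-measurable. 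This, combined with $||\E(g\mid\mP_{i+1}\otimes\mQ_{i+1})||_{L^2} \ge |\la \E(g\mid\mP_{i+1}\otimes\mQ_{i+1}), \one_A\otimes\one_B\ra|/||\one_A\otimes\one_B||_{L^2} \ge |\la g, \one_A\otimes\one_B\ra|$ (using $||\one_A\otimes\one_B||_{L^2}\le 1$), plus the orthogonal decomposition $q(\mP_{i+1},\mQ_{i+1}) = q(\mP_i,\mQ_i) + ||\E(g\mid\mP_{i+1}\otimes\mQ_{i+1})||_{L^2}^2$ (valid because $\mP_{i+1}\otimes\mQ_{i+1}$ refines $\mP_i\otimes\mQ_i$), gives the desired increment. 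Everything else is bookkeeping on partition sizes.
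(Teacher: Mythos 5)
Your argument is correct: it is the classical Frieze--Kannan iterative energy increment, and every step checks out (the refinement $\mP_{i+1}=\mP_i\vee\{A,A^c\}$, $\mQ_{i+1}=\mQ_i\vee\{B,B^c\}$ makes $\one_A\otimes\one_B$ measurable with respect to $\mP_{i+1}\otimes\mQ_{i+1}$, so the inner product against $g$ survives the projection, Cauchy--Schwarz with $\|\one_A\otimes\one_B\|_{L^2}\le 1$ gives the $\eps^2\|f\|_{L^\infty}^2$ energy jump, and Pythagoras for nested conditional expectations gives the orthogonal decomposition; at most $\eps^{-2}$ doublings yields $2^{O(\eps^{-2})}$ parts).

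Note, however, that the paper does not prove this statement itself --- it is quoted as a known result of Frieze--Kannan and Lov\'asz --- so the relevant comparison is with the proof of the paper's generalization, \cref{thm:regularity}. There the iteration is replaced by a one-shot \emph{energy optimization} (\cref{lem:energy-optimization}, following Tao): one picks the partition maximizing $\Energy(\mP)-\eps\,\Complexity(\mP)\cdot(\Energy(\mD)-\Energy(\mT))$, which immediately gives both the complexity bound and the statement that any bounded-complexity refinement increases the energy by at most $O(\eps)$; the witnesses $A,B$ then play the role of your refining partition $\mQ$, and the passage from a small energy increment to a small uniformity norm is done through \cref{lem:energy-increment} rather than a bare Cauchy--Schwarz. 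In the $L^2$/cut-norm setting of \cref{thm:weak-dense-regularity} the two packagings are essentially interchangeable (the optimizer exists because the energy is bounded by $\|f\|_{L^2}^2$, exactly the boundedness you use to terminate the iteration), but the optimization form is what lets the paper swap $\log|\mP|$ for entropy and regularize several functions simultaneously without re-running a stopping-time argument. So: correct proof, standard route, logically parallel to but organized differently from the machinery the paper actually develops.
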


\begin{remark}
The reader may imagine that $f = \one_G/p : V \times V \to [0, \infty)$, where $\one_G$ is the indicator function of the edges of a graph $G$ with vertex set $V$, $V$ is equipped with the uniform probability measure, and $p = \E[\one_G]$ is roughly the edge density of $G$.
\end{remark}

There are three key quantities in this theorem and its generalizations. The first is a \emph{density norm} depending on $f$ (in this case, the $L^\infty$ norm), which dictates the theorem's regime of applicability, since this norm should be bounded for the conclusion to be helpful. Indeed, when $f = \one_{G}/p$ and $p = \E[\one_G]$, one has $||f||_{L^\infty} = 1/p$, so \cref{thm:weak-dense-regularity} is only useful for \emph{dense} graphs $G$ (with edge density $p \gg 1$). The second is a \emph{uniformity norm} (in this case, the cut norm), the smallness of which is relevant in the counting lemma. The third is the \emph{complexity} of the partition $\mP \otimes \mQ$ (in this case, its cardinality); bounding this quantity makes a transference argument possible, since low-complexity functions $\E(f \mid \mP \otimes \mQ)$ are easier to approximate by $L^\infty$-bounded functions. In this context, a \emph{sparse} regularity method may start by obtaining analogues of \cref{thm:weak-dense-regularity} which replace $||\cdot||_{L^\infty}$ with a weaker density norm, and $||\cdot||\sq$ with a stronger uniformity norm.

\iffalse
\begin{problem}[Weak regularity for sparse graphs]\label{prob:sparse-regularity}
Obtain analogues of \cref{thm:weak-dense-regularity} which replace $||\cdot||_{L^\infty}$ with a weaker density norm, and  $||\cdot||\sq$ with a stronger uniformity norm (possibly using a different complexity function).
\end{problem}
\fi 

\iffalse
For a start, one can replace the density norm with any $L^q$ norm with $q > 1$, but this fails to capture the case of sparse graphs since $||\one_{G}/p||_{L^q} = p^{(1/q) - 1}$ blows up as $p \searrow 0$ (when $p = \E[\one_G]$). By contrast, if one could use the density norm $||f||_{L^1}$, the resulting regularity lemma would cover \emph{all} sparse graphs, since $||\one_{G}/p||_{L^1} = 1$. This is known to be impossible though, unless one introduces a cutoff like $\one_{\E(f \mid \mP \otimes \mQ) \le K}$ in the uniformity bound to ignore the ``dense pairs'' in $\mP \otimes \mQ$ (such a cutoff appears in the work of Conlon, Fox, Sudakov and Zhao \cite[Theorem 2.2]{conlon2021regularity}, based on the approach of Scott \cite{scott2011szemeredi}). But such results are incompatible with our method -- partly because we seek more flexible cutoffs introduced at a later stage, and partly since we require a stronger uniformity norm.  %Allowing such a cutoff would introduce additional restrictions later in the transference process, which our approach will avoid.
\fi 

A key idea in our work is to use a family of norms $||f||_{S^q}$, for $q \in [1, \infty]$, as \emph{both} density norms and uniformity norms. Here, $||f||_{S^q}$ is the Schatten $q$-norm (i.e., the $L^q$ norm on the singular values) of a matrix associated to $f$, appropriately normalized with respect to the probability spaces; we will also refer to $||f||_{S^q}$ as the Schatten $q$-norm of $f$. Using singular values to study graph regularity has of course been the topic of multiple papers \cite{frieze1999simple,fox2019fast,szegedy2011limits,bodwin2022unified,nikiforov2012extremal}, most of these focusing on the maximal singular value $||f||_{S^\infty}$. We extend \cref{thm:weak-dense-regularity} to the setting of these norms, with no cutoffs on $f$; this requires a different notion of complexity for our partitions: Shannon entropy (see \cref{subsec:partitions}). %In particular, \cite[Theorem 6]{bodwin2022unified} essentially recovers the following partial solution to \cref{prob:sparse-regularity}, which we also prove (via a different approach).
%http://www.icm2014.org/download/Proceedings_Volume_IV.pdf

\iffalse
\begin{lemma}[Particular case of \cref{thm:regularity}] \label{lem:regularity-1}
For any fixed $q \in [1, \infty)$, one can replace the density norm $||f||_{L^\infty}$ in \cref{thm:weak-dense-regularity} with $||f||_{S^q}$.
\end{lemma}

(Bodwin and Vempala's work in \cite{bodwin2022unified} actually uses a weaker density norm than $||f||_{S^q}$, defined in terms of a generalization of singular value decomposition. Their results also relate to the work of Borgs, Chayes, Cohn and Zhao on $L^p$ upper regularity \cite{borgs2019p,borgs2018p}.)

In order to also strengthen the uniformity norm in our regularity lemma (which will greatly simplify our job of finding a compatible counting lemma), we will use a different complexity function: Shannon entropy (see the notation in \cref{subsec:partitions}). Tao was the first to use entropy as a measure of partition complexity in regularity lemmas \cite{tao2005szemer}, although it was not strictly necessary in the setting of dense graphs; the point, in our case, will be a relationship between the $S^\infty$ norm and partitions with bounded entropy, illustrated in \cref{lem:modify-spectral}. This leads to the following result.
\fi

\begin{theorem}[Sparse regularity lemma]\label{thm:regularity-intro} 
Let $(X, \P_X), (Y, \P_Y)$ be finite probability spaces (with full $\sigma$-algebras), $f : X \times Y \to \C$, $\eps \in (0, 1)$, and $1 \le q < r \le \infty$. Then there exist partitions $\mP$ of $X$ and $\mQ$ of $Y$, each with entropy at most $O_{q,r}(\eps^{-O_{q,r}(1)})$, such that
\[
    \left\vert \left\vert f - \E(f \mid \mP \otimes \mQ)
    \right\vert\right\vert_{S^r} \le \eps ||f||_{S^q}.
\]
\end{theorem}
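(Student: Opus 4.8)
The plan is to prove this directly, without an iterative energy increment: I would approximate $f$ by a bounded-rank truncation of its singular value decomposition, and build $\mP$ and $\mQ$ from the quantized level sets of the finitely many singular vectors that appear. Normalize $\|f\|_{S^q} = 1$ (the case $f \equiv 0$ being trivial), identify $f$ with the finite-rank operator $T_f : L^2(Y,\P_Y) \to L^2(X,\P_X)$ given by $(T_f g)(x) = \E_y[f(x,y)g(y)]$, and write its singular value decomposition as $f = \sum_i \sigma_i\, u_i \otimes \bar{v_i}$ with $\sigma_1 \ge \sigma_2 \ge \cdots \ge 0$, where $(u \otimes \bar{v})(x,y) := u(x)\bar{v(y)}$ and $(u_i)$, $(v_i)$ are orthonormal systems in $L^2(X)$, $L^2(Y)$. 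I use that $\|f\|_{S^s}^s = \sum_i \sigma_i^s$ for $s \in [1,\infty)$ (and $\|f\|_{S^\infty} = \sigma_1$), so that $\|f\|_{S^2} = \|f\|_{L^2}$ and $s \mapsto \|f\|_{S^s}$ is nonincreasing.

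\emph{Choice of truncation level.} From $N\sigma_{N+1}^q \le \sum_{i \le N}\sigma_i^q \le 1$ we get $\sigma_{N+1} \le N^{-1/q}$, so for $q < r < \infty$,
\[
    \Big\| \textstyle\sum_{i > N}\sigma_i\, u_i \otimes \bar{v_i} \Big\|_{S^r}^r = \sum_{i > N}\sigma_i^{r-q}\,\sigma_i^q \le \sigma_{N+1}^{r-q}\,\textstyle\sum_{i>N}\sigma_i^q \le N^{-(r-q)/q},
\]
and similarly $\big\| \sum_{i > N}\sigma_i\, u_i \otimes \bar{v_i}\big\|_{S^\infty} = \sigma_{N+1} \le N^{-1/q}$. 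Hence there is an $N = N(q, r, \eps) = O_{q,r}(\eps^{-O_{q,r}(1)})$ — a constant times $\eps^{-qr/(r-q)}$, or $\eps^{-q}$ when $r = \infty$ — for which the tail $f_{>N} := \sum_{i > N}\sigma_i\, u_i \otimes \bar{v_i}$ satisfies $\|f_{>N}\|_{S^r} \le \eps/8$. I will choose $\mP$, $\mQ$ so that $\E(f \mid \mP \otimes \mQ)$ reproduces the leading part $g := \sum_{i \le N}\sigma_i\, u_i \otimes \bar{v_i}$ up to a small error.

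\emph{Construction and the uniformity bound.} Let $\mP$ be the common refinement, over $i \le N$, of the partitions of $X$ into level sets of $u_i$ quantized at scale $\eta = \eta(q,r,\eps) := \eps/(8N)$, and define $\mQ$ symmetrically from $v_1, \dots, v_N$; "quantized at scale $\eta$" means binning $u_i(x)$ into a cell of $\C$ of size $\asymp \eta\,(1 + |u_i(x)|)$ (e.g.\ dyadic in modulus with $\asymp \eta^{-1}$ phase buckets per shell). Two facts conclude the argument. (i) Conditional expectation onto $\mP \otimes \mQ$ is block-averaging, which on the operator side is $T_f \mapsto \Pi_X T_f \Pi_Y$ with $\Pi_X = \E(\,\cdot \mid \mP)$, $\Pi_Y = \E(\,\cdot \mid \mQ)$ orthogonal projections; being a composition of two orthogonal projections it is a contraction in every Schatten norm $S^s$, $s \in [1,\infty]$, and it satisfies $\E(u \otimes \bar{v} \mid \mP \otimes \mQ) = \E(u \mid \mP) \otimes \overline{\E(v \mid \mQ)}$. (ii) With $\tilde u_i := \E(u_i \mid \mP)$, $\tilde v_i := \E(v_i \mid \mQ)$: since $\|u_i\|_{L^2} = 1$, the quantized version of $u_i$ is $\mP$-measurable and within $\eta$ of $u_i$ in $L^2$, so $\|u_i - \tilde u_i\|_{L^2} \le \eta$ (likewise $\|v_i - \tilde v_i\|_{L^2} \le \eta$), and $\|\tilde u_i\|_{L^2} \le 1$. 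Granting these, $\E(g \mid \mP \otimes \mQ) = \sum_{i \le N}\sigma_i\, \tilde u_i \otimes \bar{\tilde v_i}$ by (i), so splitting each difference as $u_i \otimes \bar{v_i} - \tilde u_i \otimes \bar{\tilde v_i} = (u_i - \tilde u_i) \otimes \bar{v_i} + \tilde u_i \otimes (\bar{v_i} - \bar{\tilde v_i})$ and using $\|a \otimes \bar{b}\|_{S^1} = \|a\|_{L^2}\|b\|_{L^2}$ and $\sigma_1 \le \|f\|_{S^q} = 1$,
\[
    \|g - \E(g \mid \mP \otimes \mQ)\|_{S^r} \le \sum_{i \le N}\sigma_i\, \big\| u_i \otimes \bar{v_i} - \tilde u_i \otimes \bar{\tilde v_i} \big\|_{S^1} \le \sum_{i \le N}\sigma_i \cdot 2\eta \le 2\eta N = \eps/4 ;
\]
meanwhile $\|f_{>N} - \E(f_{>N} \mid \mP \otimes \mQ)\|_{S^r} \le 2\|f_{>N}\|_{S^r} \le \eps/4$ by (i). Summing, $\|f - \E(f \mid \mP \otimes \mQ)\|_{S^r} \le \eps$.

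\emph{The entropy bound — the main obstacle.} It remains to bound $H(\mP)$ and $H(\mQ)$, and this is the step I expect to carry the real weight. By subadditivity, $H(\mP) \le \sum_{i \le N} H(\mP_i)$ where $\mP_i$ is the level-set partition of $u_i$; and each $H(\mP_i) = O_\eta(1) = O(\eta^{-O(1)})$, because $\|u_i\|_{L^2} = 1$ forces the cell at modulus $\asymp m$ to have $\P$-measure $\lesssim m^{-2}$ (Chebyshev), so the dyadic tail of $-\sum_{\text{cells } c} \P(c)\log\P(c)$ converges geometrically. This is exactly where Shannon entropy, rather than the number of parts, is indispensable: the partitions $\mP_i$ really do have infinitely many parts (there is no $u_i$-uniform way to truncate $|u_i|$ from above), yet their entropy stays polynomially bounded. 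Combined with $N = O_{q,r}(\eps^{-O_{q,r}(1)})$ and $\eta^{-1} = O_{q,r}(\eps^{-O_{q,r}(1)})$, this gives $H(\mP), H(\mQ) = O_{q,r}(\eps^{-O_{q,r}(1)})$, as required. (One should also note that the identity $\E(f \mid \mP \otimes \mQ) = \sum_i \sigma_i \E(u_i \otimes \bar{v_i} \mid \mP \otimes \mQ)$ used above is justified since the singular value decomposition converges in $L^2$ and conditional expectation is $L^2$-continuous; over finite $X, Y$ all the sums are in fact finite.)
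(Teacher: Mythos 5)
Your argument is correct, and it takes a genuinely different route from the paper. The paper deduces this statement from \cref{thm:regularity}, which is proved non-constructively: after reducing to $r = \infty$ by interpolation, it runs the abstract energy optimization of \cref{lem:energy-optimization} with entropy as the complexity function and the generalized $L^2$ energies $\mE_{f,\ell}$ (with $\ell = \lceil \log_2 q\rceil$) as the energy, using \cref{lem:modify-spectral} to test the $S^\infty$ norm of $f - \E(f\mid\mP\otimes\mQ)$ against dyadic-valued functions of bounded entropy, and \cref{lem:energy-increment} to convert the resulting energy increment into a Schatten-norm bound. You instead construct $\mP, \mQ$ explicitly from a rank-$N$ SVD truncation, with $N \asymp \eps^{-qr/(r-q)}$ coming from $\sigma_{N+1} \le N^{-1/q}$, and quantized level sets of the leading singular vectors. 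The paper explicitly anticipates an SVD-based proof in the remark following \cref{thm:regularity}, but asserts it would require exceptional-set cutoffs $\one_{R_i^c \times R_j^c}$; your proof shows these cutoffs are unnecessary precisely because you measure complexity by entropy rather than by the number of parts — your dyadic-shell quantization of $u_i$ has infinitely many cells but entropy $O(\log(1/\eta))$ via Chebyshev, which is the same computation the paper performs in \cref{sec:spectral-norms-entropy} for its test functions $g, h$. What each approach buys: the paper's variational argument extends painlessly to the simultaneous regularization of several functions on several spaces (the full \cref{thm:regularity}, needed for the $k$-partite applications) and to the cut-norm variant with $\log|\mP|$ as complexity, while yours is constructive, gives a marginally better exponent ($q/(1-q/r)$ versus $2^{\lceil\log_2 q\rceil}/(1-q/r)$), and makes the role of the singular value decay transparent. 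Two cosmetic points: your parenthetical description of the quantization (dyadic shells with only $\eta^{-1}$ phase buckets) does not literally produce cells of diameter $\asymp \eta(1+|u_i(x)|)$ — one also needs $\asymp \eta^{-1}$ radial subdivisions per shell — but the stated specification of the cell size is what you actually use; and the $L^2$ approximation error is $\le 2\eta$ rather than $\eta$, which only affects constants.
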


\begin{remark} The crucial feature of \cref{thm:regularity-intro} is the strength of the uniformity norm, which can get arbitrarily close to that of the density norm. This is possible because the Schatten $q$-norms essentially interpolate between a measure of density (when $q \le 2$) and a measure of uniformity (when $q = \infty$). Naturally, the bound for the entropies $\H(\mP)$, $\H(\mQ)$ blows up as $r$ approaches $q$.
\end{remark}

\begin{remark}
The case $q = 2$ of \cref{thm:regularity-intro} is a dense regularity lemma (generalizing \cref{thm:weak-dense-regularity}), applicable when $||f||_{L^2} \ll 1$. Surprisingly, \cref{thm:regularity-intro} is also interesting in the ``super-dense'' regime $q < r = 2$, where the norm $||f - \E(f \mid \mP \otimes \mQ)||_{L^2}$ is shown to be small; see \cref{thm:super-dense-removal}.
\end{remark}

We give a generalization of \cref{thm:regularity-intro}, which approximates more functions simultaneously, in \cref{thm:regularity}; we regard these sparse regularity lemmas and the sparse graph removal lemmas that they lead to (discussed in the next subsection) as our main results. 
Indeed, equipped with this regularity lemma, a compatible counting lemma (\cref{lem:counting}) will follow from standard properties of the Schatten norms. Our transference step will use a \emph{dense pairs condition} similar to, but more general than that of Conlon, Fox, Sudakov and Zhao from \cite{conlon2021regularity}. For paths and even-length cycles, we can do better by entirely avoiding the transference step, and using properties related to Sidorenko's conjecture \cite{sidorenko1991inequalities}; \cref{fig:results} outlines the logical dependencies between our main results.

\begin{figure}[ht]
\centering %\vspace{-0.5cm}
\begin{tikzpicture}[
square/.style={rectangle, draw=black!60, fill=white, very thick, minimum size=5mm},
important/.style={rectangle, double, draw=black!60, fill=white, very thick, minimum size=5mm},
]
%Nodes, Level 1
\node[square,fill=yellow!10] (counting) 
    {\twoline{Counting lemma}{(\cremph{lem:counting})}};
\node[important,fill=blue!8] (regularity) [right=1cm of counting]
    {\twoline{Regularity lemma}{(\cremph{thm:regularity})}};
\node[square,fill=blue!8] (super-dense-removal) [yshift=-0.5cm][right = 2.2cm of regularity]
    {\twoline{``Super-dense'' graph removal}{lemma (\cremph{thm:super-dense-removal})}};
%Nodes, Level 2
\node[square,fill=teal!9] (structure) [xshift=2cm] [below=of counting]
    {\twoline{Reduction to low complexity}{(\cremph{prop:reduction-to-structure})}};
\node[square,fill=red!12] (dense-removal) [yshift=0.5cm] [below=of super-dense-removal]
    {\twoline{Dense cycle removal lemma \vphantom{y}}{(\cremph{thm:dense-k-cycle-removal})}};
%Nodes, Level 3
\node[important,fill=orange!15] (sparse-removal) [xshift=4cm][below=of structure]
    {\twoline{Sparse cycle removal lemma}{ (\cremph{thm:sparse-k-cycle-removal})}};
\node[square,fill=orange!15] (sparse-removal-2) [right=2cm of sparse-removal]
    {\cremph{thm:sparse-removal-odd-cycles}};
\node[square,fill=teal!9] (transference-free) [xshift=-2.8cm] [below=of structure]
    {\twoline{Transference-free results \vphantom{p}}{({\cremph{thm:expected-cycles-paths}\vphantom{y}})}};

%Edges from Level 1
\draw[-triangle 45] (regularity.south) |- ([yshift=0.5cm]structure.north) -| (structure.north);
\draw[-triangle 45] (counting.south) |- ([yshift=0.5cm]structure.north) -| (structure.north);
\draw[-triangle 45] (regularity.east) -- (super-dense-removal.west);
%\draw[-] ([xshift=0.4cm]counting.south) --
%([xshift=0.4cm,yshift=-2.55cm]counting.south);
%Edges from Level 2
\draw[-triangle 45] (structure.west) -| (transference-free.north);
\draw[-triangle 45] (structure.south) |- ([yshift=0.5cm]sparse-removal.north) -| (sparse-removal.north);
\draw[-triangle 45] (dense-removal.south) |-
  ++(0,-0.45cm) 
  node[xshift=-4.1cm,yshift=0.2cm]{``transference''}
  |- ([yshift=0.5cm]sparse-removal.north) 
  -| (sparse-removal.north);
%Edges from level 3
\draw[-triangle 45] (sparse-removal) -- (sparse-removal-2);
\end{tikzpicture}
\caption{Relationships between main results (\emph{arrows show logical implications}).}
\label{fig:results}
\end{figure}
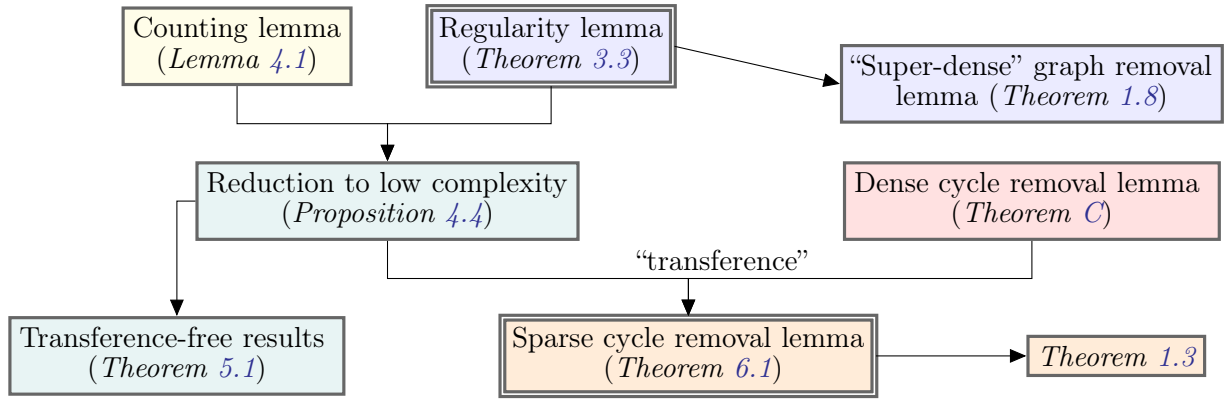

To better understand the regime of applicability of our work, let us mention a combinatorial characterization of $||f||_{S^q}$ when $q = 2k$ is an even positive integer: using indices modulo $k$, one has
\begin{equation} \label{eq:Schatten-2k}
    ||f||_{S^{2k}}^{2k} = \E_{\substack{x_1, \ldots, x_k \\ y_1, \ldots, y_k}} \left[\prod_{i=1}^k f(x_i, y_i) \bar{f(x_{i+1}, y_i)} \right],
\end{equation}
which generalizes \cref{eq:gowers}, and can be viewed as a normalized count of homomorphic copies of $2k$-cycles in the weighted bipartite graph induced by $f$ (this is closely related to the \emph{trace method} from spectral graph theory \cite{dsouza2013combinatorial,cioabua2006closed}). Furthermore, due to the characterization of spectral norms, one has
\begin{equation} \label{eq:Schatten-infty}
    ||f||_{S^\infty} = \sup_{||g||_{L^2} = ||h||_{L^2} = 1} \left\vert \E_{x,y} \left[f(x,y) g(x) h(y) \right] \right\vert,
\end{equation}
where $g$ and $h$ range over the complex functions on $X$, respectively $Y$, with $L^2$ norms equal to $1$; note the resemblance to the cut norm in \cref{eq:cut}. In particular,
\begin{itemize}
    \item[$(i)$.] $||f||_{S^2}$ is the $L^2$ norm of $f$;
    \item[$(ii)$.] $||f||_{S^4}$ is the $\square^2$ Gowers box norm of $f$;
    \item[$(iii)$.] $||f||_{S^6}$ is the cube root of the $L^2$ norm of the \emph{dual} function of $f$ (see, e.g., \cite{tao2007ergodic,tao2006gaussian});
    \item[$(iv)$.] $||f||_{S^\infty}$, which we may call the \emph{spectral norm} of $f$, is at least equal to the cut norm $||f||\sq$. %This was also observed, e.g., by Szegedy in \cite[Lemma 1.5]{szegedy2011limits}.
\end{itemize}
Moreover, as they are the $L^q$ norms of the same sequence, the Schatten $q$-norms of a given function are nonincreasing in $q$ and obey interpolation bounds. 
\iffalse 
Putting these observations together for $q \in [4, \infty]$ and $q' \in [2, 4]$, one obtains the sequence of inequalities
\[
    ||f||\sq \le ||f||_{S^q} \le ||f||\gow \le ||f||_{S^{q'}} \le ||f||_{L^2} \le
    ||f||_{L^\infty}.
\]
\fi
%thus Schatten norms provide a way to `interpolate' between the $L^2$, Gowers and cut norms of a function (and actual interpolation of $L^q$ norms will turn out to be useful). 
Crucially, for $q > 2$, the norm $||\one_{G}/p||_{S^q}$ often remains bounded as $p = \E[\one_G] \searrow 0$, which makes \cref{thm:regularity-intro} applicable to sparse graphs. %(indeed, one should expect $||\one_{G}/p||_{S^q} \asymp_q 1$ for a random graph with edge density $p$ \cite{nikiforov2012extremal})

%\begin{remark}
%The indicator functions $f = \one_G$ of the edges of an undirected graph are real and symmetric, so one might suspect that it suffices to consider the eigenvalues (rather than singular values) of the corresponding normalized matrix, as in much of spectral graph theory. However, even when $f$ is self-adjoint, our regularity and counting lemmas will implicitly involve computations with non-self-adjoint matrices, and will use several properties that are unique to singular values.
%\end{remark}
%Include the expander mixing lemma!

\subsection{Results about graphs} 
To demonstrate the applicability of our Schatten-norm-based regularity method (including the regularity lemma in \cref{thm:regularity} and the counting lemma in \cref{lem:counting}), we use it to derive various new graph removal lemmas, particularly in sparse settings.

\begin{remark}
\cref{lem:counting} also partially answers a question of Conlon, Fox, Sudakov and Zhao \cite{conlon2021graphs} about two-sided $H$-counting lemmas in $4$-cycle-free graphs, when $H$ is a cycle or a path; see \cref{cor:two-sided-counting} and the remark preceding it. In fact, \cref{cor:two-sided-counting} applies to any graph with $O(n^2)$ $4$-cycles, but it requires a slightly stronger uniformity assumption than in \cite{conlon2021graphs}. %(using the spectral norm instead of the cut norm). %To state our main results, we need to introduce some notation.
\end{remark}

\begin{notation}[Graphs] \label{not:graphs}
Unless specified otherwise, all graphs mentioned in this paper are finite, unweighted, undirected, and simple. We denote by $V(G)$ and $E(G)$ the sets of vertices and edges of a graph $G$; thus the elements of $E(G)$ are two-element subsets of $V(G)$. We define $\one_G : V(G) \times V(G) \to \{0, 1\}$ by setting $\one_G(x, y) = 1$ iff $\{x, y\} \in E(G)$, and we write $e_G(V, W) := \sum_{x \in V, y \in W} \one_G(x, y)$ for any $V, W \subset V(G)$. Here are some particular graphs:
\begin{itemize}
    \item For $k \ge 3$, we denote by $C_k$ the cycle of length $k$ (with $k$ vertices and $k$ edges).
    \item For $k \ge 1$, we denote by $P_k$ the path of length $k$ (with $k+1$ vertices and $k$ edges).
    \item For $s, t \ge 1$, we denote by $K_{s,t}$ the complete bipartite graph on $s + t$ vertices (e.g., $K_{2,2} \cong C_4$).
\end{itemize}
We also distinguish between several meanings for counting instances of a graph in another graph:
\begin{itemize}
    \item[(1).] By the number of \emph{copies} of a graph $H$ in a graph $G$ (or simply the number of $H$'s in $G$), we mean the number of subgraphs of $G$ which are isomorphic to $H$. 
    \item[(2).] We write $\Inj(H, G)$ for the number of \emph{injective copies} of $H$ in $G$, i.e.\ the number of monomorphisms from $H$ to $G$ (or embeddings of $H$ in $G$).
    \item[(3).] We write $\Hom(H, G)$ for the number of \emph{homomorphic copies} of $H$ in $G$, i.e.\ the number of homomorphisms from $H$ to $G$.
\end{itemize}
Formally, we write
\[
    \Hom(H, G) := \sum_{x_v \in V(G), \forall v \in V(H)}\  \prod_{\{u, v\} \in E(H)} \one_G(x_u, x_v),
\]
\[
    \Inj(H, G) := \sum_{x_v \in V(G), \forall v \in V(H)}\  \one_{x_u \neq x_v,\forall u \neq v} \prod_{\{u, v\} \in E(H)} \one_G(x_u, x_v).
\]
Hence when $V(G)$ is equipped with the uniform probability distribution, \cref{eq:Schatten-2k} implies that
    \begin{equation}\label{eq:2k-cycles}
        \forall k \in \Z_{\ge 2} : \qquad 
        \Hom(C_{2k}, G) = n^{2k} ||\one_{G}||_{S^{2k}}^{2k}.
    \end{equation}
\end{notation}

\begin{remark}
$\Inj(H, G)$ is the same as the number of copies of $H$ in $G$ up to a factor depending on $H$; however, $\Inj(H, G)$ can be much smaller than $\Hom(H, G)$ when $G$ is sparse. Also, in light of \cref{eq:2k-cycles}, a simple consequence of the monotonicity of Schatten $q$-norms in $q$ is that for a fixed $n$-vertex graph $G$, the quantity $\frac{1}{n}\Hom(C_{2k},G)^{1/2k}$ is nondecreasing in $k$.
\end{remark}

Throughout this subsection, $n$ is understood to be a positive integer, and we write $o(1) = o_{n \to \infty}(1)$ for a function of $n$ which vanishes as $n \to \infty$ (see \cref{subsec:asymptotic} for other asymptotic notation); thus one can understand our results either as $\epsilon$-$\delta$ statements, or as statements about an implicit sequence of graphs $\{G_n\}_{n \ge 1}$. We will also use a parameter $p > 0$ which may depend on $n$, and which essentially plays the role of the edge density of $G$ (although we do not explicitly require that $p = \E[\one_G]$); thus for our graphs to be sparse, we need $p = o(1)$. %Indeed, for removing $p n^2$ edges to be nontrivial we need $p \ll \E[\one_G]$, and to have a Schatten norm bound $||\one_G/p||_{S^q} \ll 1$ we need $\E[\one_G] \ll p$. 

Our main removal lemma, which uses a transference argument based on a dense pairs condition (and has tower-exponential implied dependencies), is given below; see \cref{thm:sparse-k-cycle-removal} for a more general version, applicable to weighted and $k$-partite graphs. 

\begin{theorem}[Sparse removal lemma for odd-length cycles] \label{thm:sparse-removal-odd-cycles}
For any $\epsilon, K, C > 0$ and integer $k \ge 2$, there exist $n_0, \delta, \tau > 0$ such that the following holds. Let $n \ge n_0$, $p > 0$, and $G$ be an $n$-vertex graph such that
\[
    \Hom(C_{2k}, G) \le C p^{2k} n^{2k} \qquad \text{and} 
    \qquad 
    \Inj(C_{2k+1}, G) \le \delta p^{2k+1} n^{2k+1}.
\]
Suppose that for any partition of $V(G)$ into parts $V_1, V_2, \ldots$ with at least $\tau n$ elements each, a total of at most $\epsilon p n^2/8$ edges of $G$ lie between `dense' pairs $(V_i, V_j)$ with $i \neq j$ and $e_G(V_i, V_j) \ge K p |V_i| |V_j|$. Then $G$ can be made $\{C_3, C_5, \ldots, C_{2k+1}\}$-free by removing at most $\epsilon pn^2$ edges.
\end{theorem}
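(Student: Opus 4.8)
The plan is to follow the standard three-step sparse regularity template, using Theorem 1.1 (the Schatten-norm regularity lemma, in its general form Theorem referenced as \cref{thm:regularity}) to produce a low-entropy model, the counting lemma (\cref{lem:counting}) to transfer cycle counts, and a transference step based on the dense-pairs condition to pass from the sparse graph $G$ to a dense graph to which the classical dense $k$-cycle removal lemma (\cref{thm:dense-k-cycle-removal}) applies. Concretely, set $f := \one_G/p : V(G) \times V(G) \to [0,\infty)$, equip $V(G)$ with the uniform measure, and pick an exponent $q \in (2, 2k)$ (say $q$ slightly above $2$, or $q = 2k/\ell$ for a convenient even integer) so that the hypothesis $\Hom(C_{2k}, G) \le Cp^{2k}n^{2k}$ gives via \cref{eq:2k-cycles} that $||f||_{S^{2k}} \le C^{1/2k}$, and hence by monotonicity of Schatten norms a bound $||f||_{S^{q'}} = O_{C,k}(1)$ for every $q' \ge 2k$; to get a density bound at an exponent below $2k$ I will instead interpolate using $||f||_{S^2} = ||f||_{L^2}$, which need not be bounded, so in fact the cleanest choice is to take the density exponent to be $q = 2k$ itself and the uniformity exponent $r$ slightly larger, or to run the argument with the generalized regularity lemma applied to the finitely many ``link'' functions that appear in the counting lemma for $C_{2k+1}$. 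Applying \cref{thm:regularity} (with $\eps$ a small function of the target $\epsilon, K, C, k$) yields partitions $\mP, \mQ$ of $V(G)$ with $\H(\mP), \H(\mQ) = O_{k}(\eps^{-O_k(1)})$ such that the error $g := f - \E(f \mid \mP \otimes \mQ)$ has small $S^r$-norm.

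Next I would run the \textbf{counting step}: by \cref{lem:counting}, the number of homomorphic copies of $C_{2k+1}$ in the weighted graph $f$ and in its approximation $\tilde f := \E(f \mid \mP \otimes \mQ)$ differ by an amount controlled by $||g||_{S^r}$ times powers of the relevant norms of $f$ and $\tilde f$ (which are bounded in terms of $C$, since conditional expectation does not increase Schatten norms in the relevant range). Choosing $\eps$ small enough relative to $\delta$, this shows $\Hom(C_{2k+1}, \tilde f)$ is at most a small multiple of $p^{2k+1}n^{2k+1}$; the injective-versus-homomorphic distinction in the hypothesis is handled by the usual observation that degenerate terms contribute $O(p^{2k+1}n^{2k})=o(p^{2k+1}n^{2k+1})$ as $n\to\infty$ (this is where $n \ge n_0$ is used). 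The \textbf{transference step} then uses the dense-pairs condition exactly as in Conlon--Fox--Sudakov--Zhao: the partition $\mP \otimes \mQ$ has boundedly many parts of size $\ge \tau n$ (after merging the small parts, whose total edge contribution is $O(\tau n^2) \le \tfrac{\epsilon}{8}pn^2$), and the hypothesis lets us delete at most $\tfrac{\epsilon}{8}pn^2$ edges of $G$ so that on the remaining ``sparse'' pairs we have $\tilde f \le K$ pointwise; thus $\min(\tilde f, K)/\!\max$-type truncation produces a genuinely \emph{dense} (bounded, order-$1$-valued) weighted graph $\tilde f' := \min(\tilde f, K)$ with essentially the same $C_{2k+1}$-count, to which \cref{thm:dense-k-cycle-removal} applies: we can delete $O_{K}(\epsilon' )$-density worth of blocks from $\tilde f'$ to kill all homomorphic copies of $C_3, \ldots, C_{2k+1}$, and translating each deleted block back to $G$ removes at most a further $\tfrac{\epsilon}{2}pn^2$ edges. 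Combining the three deletion budgets ($\tfrac{\epsilon}{8} + \tfrac{\epsilon}{8} + \tfrac{\epsilon}{2} < \epsilon$, with room to spare, or just rescale) gives the result; the quantities $\delta, \tau$ and the admissible $n_0$ are determined by unwinding these choices through \cref{thm:regularity} and \cref{thm:dense-k-cycle-removal}, which is the source of the tower-exponential dependence.

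The main obstacle I anticipate is the \textbf{interface between the Schatten density bound and the counting lemma for odd cycles}: a $(2k+1)$-cycle is not bipartite, so its homomorphism count is not directly a Schatten norm of $f$, and the counting lemma must express $\Hom(C_{2k+1}, \cdot)$ by splitting the odd cycle into a path of length $2k$ plus a closing edge, introducing auxiliary one-variable functions whose $L^2$ or $L^\infty$ norms need to be controlled. One must verify that these auxiliary norms are bounded purely in terms of $C$ (using $||f||_{S^{2k}} \le C^{1/2k}$ and Cauchy--Schwarz / the spectral characterization \cref{eq:Schatten-infty}) rather than in terms of the unbounded $||f||_{L^2}$ or $||f||_{L^\infty}$ — this is exactly the point of using Schatten norms as density norms, and the bound $\Hom(C_{2k},G)\le Cp^{2k}n^{2k}$ is designed to feed it. A secondary technical point is making sure the truncation $\tilde f \mapsto \min(\tilde f, K)$ (legitimate on sparse pairs after the dense-pairs deletion) changes the $C_{2k+1}$-count by $o(p^{2k+1}n^{2k+1})$, which again follows from the counting lemma applied to the difference; I would handle both of these by simply invoking \cref{lem:counting} in the general multi-function form already proved earlier in the paper.
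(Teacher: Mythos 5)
Your overall template (Schatten-norm regularity, counting lemma, truncation at the dense-pairs threshold, then the dense cycle removal lemma) is the same as the paper's, which proves this statement by specializing its general weighted result \cref{thm:sparse-k-cycle-removal} to $f_1=\cdots=f_{2k+1}=\one_G/p$ with $q_i=2k$. However, there are genuine gaps at two places where the sparse setting breaks the "usual" dense-graph shortcuts you invoke.

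First, the injective-versus-homomorphic issue. You claim degenerate homomorphic copies of $C_{2k+1}$ contribute $O(p^{2k+1}n^{2k})=o(p^{2k+1}n^{2k+1})$. That bound is only the trivial one when $p\asymp 1$; for sparse $G$ the degenerate terms (closed walks of length $2k+1$ revisiting a vertex) are not controlled by $p^{2k+1}n^{2k}$, and the hypotheses of the theorem give you no direct handle on them. This is precisely why the paper does \emph{not} compare $\Hom(C_{2k+1},\tilde f)$ to $\Hom(C_{2k+1},f)$: instead it proves \cref{lem:distinct-vertices} and part $(iii)$ of \cref{prop:reduction-to-structure}, splitting each part of the regularity partition into $2k+1$ roughly equal pieces $X^{(1)},\ldots,X^{(2k+1)}$ so that the structured homomorphic count (off the exceptional set) is bounded, up to an $O_k(1)$ factor, by the count over tuples of \emph{distinct} vertices, which is then at most $\Inj(C_{2k+1},G)/(p^{2k+1}n^{2k+1})\le\delta$. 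Without this (or an equivalent device) your counting step does not connect the hypothesis $\Inj(C_{2k+1},G)\le\delta p^{2k+1}n^{2k+1}$ to the model.

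Second, the exceptional set. You dismiss the small parts of the partition by saying their "total edge contribution is $O(\tau n^2)\le \tfrac{\epsilon}{8}pn^2$". In the sparse regime this is false as stated: $\tau$ is fixed before $p$, so $\tau n^2\not\le \epsilon p n^2/8$, and a part of size $\tau n$ can a priori carry $\gg \epsilon p n^2$ edges. The correct mechanism (which is why the regularity lemma is stated with \emph{entropy} control) is that the union $R$ of all parts of probability $<\tau$ satisfies $\P(R)\le \H(\mP)/\log(1/\tau)$, which can be made $\le\eps^2$, and then the edges meeting $R$ are bounded by $\|f\one_{R\times X}\|_{L^1}\le\|f\|_{S^\infty}\sqrt{\P(R)}\le C^{1/2k}\eps$ via \cref{eq:Schatten-infty}. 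Relatedly, the theorem's dense-pairs hypothesis only covers off-diagonal pairs $(V_i,V_j)$ with $i\ne j$, so your transference also needs the paper's self-improvement argument showing $\|f\one_{\diag(\mP)}\|_{L^1}$ is automatically small (this uses the small cycle count restricted to a single part $A\times\cdots\times A$ together with the $S^\infty$ bound). These are fixable, but as written the proposal does not close either step.
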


\begin{remark}
A similar result holds for paths and even-length cycles, except that in the conclusion, the graph $G$ becomes empty after removing $\epsilon p n^2$ edges (essentially because paths and even-length cycles are bipartite). In that case, it makes more sense to state the contrapositive statement (concluding that $G$ contains many injective copies of $P_j$ and $C_{2k}$), given in \cref{thm:expected-cycles-paths}. We note that \cref{thm:expected-cycles-paths} avoids the transference step (and thus also the tower-exponential dependencies).
\end{remark}

\begin{remark}
\cref{thm:sparse-removal-odd-cycles} may be compared to \cite[Proposition 4.2]{conlon2021regularity}, which can be recovered by taking $k = 2$ and $p \gg n^{-1/2}$. The dense pairs condition in \cite[Proposition 4.2]{conlon2021regularity} is more restrictive, since:
\begin{itemize} 
    \item[(1).] It applies to all partitions with at most $M$ parts, while our condition only concerns partitions with large parts (and one can always take $M = 1/\tau$);
    \item[(2).] It includes the contribution of the diagonal pairs $(V_i, V_i)$.
\end{itemize} 
Point $(1)$ will be relevant for \cref{cor:subgraphs-pseudorandom}, while $(2)$ will be relevant for \cref{cor:graphs-few-4-cycles,cor:smooth-free-graphs}.
\end{remark}

\begin{corollary}[Subgraphs of mildly-pseudorandom graphs] \label{cor:subgraphs-pseudorandom}
Fix $k \in \Z_{\ge 2}$. Let $p > 0$ and $G$ be an $n$-vertex graph with $\one_{G} \ll p\nu$, for some $\nu : V(G) \times V(G) \to [0, \infty)$ satisfying
\[
    ||\nu||_{S^{2k}} \ll 1 
    \qquad\qquad 
    \text{and} 
    \qquad\qquad 
    ||\nu - 1||\sq = o(1).
\]
If $G$ has $o(p^{2k+1} n^{2k+1})$ $C_{2k+1}$'s, it can be made $\{C_3, C_5, \ldots, C_{2k+1}\}$-free by removing $o(pn^2)$ edges.
\end{corollary}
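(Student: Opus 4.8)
The plan is to derive this corollary from \cref{thm:sparse-removal-odd-cycles} by checking its three hypotheses. I argue along the implicit sequence of graphs $\{G_n\}$, and it suffices to show that for every fixed $\epsilon > 0$, the graph $G_n$ can be made $\{C_3, C_5, \ldots, C_{2k+1}\}$-free by removing at most $\epsilon p n^2$ edges once $n$ is large (a routine diagonalization in $\epsilon$ then yields the stated $o(pn^2)$ bound). So fix $\epsilon > 0$, let $C' \ge 1$ be an absolute constant with $\one_G \le C' p \nu$ pointwise, let $C''$ be a bound on $||\nu||_{S^{2k}}$, and set $K := 2C'$ and $C := (C'C'')^{2k}$. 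Feed $\epsilon, K, C$ into \cref{thm:sparse-removal-odd-cycles} to obtain $n_0, \delta, \tau > 0$.

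Next I would verify the two counting hypotheses. For the even cycle, \cref{eq:2k-cycles} gives $\Hom(C_{2k}, G) = n^{2k}||\one_G||_{S^{2k}}^{2k}$; the combinatorial identity \cref{eq:Schatten-2k} expresses $||\cdot||_{S^{2k}}^{2k}$ as an average of products $\prod_{i=1}^{k} f(x_i, y_i)\bar{f(x_{i+1},y_i)}$, which are nonnegative and monotone in $f$ when $f \ge 0$, so $||\one_G||_{S^{2k}} \le C' p\, ||\nu||_{S^{2k}} \le C'C'' p$ and hence $\Hom(C_{2k}, G) \le (C'C'' p)^{2k} n^{2k} = C p^{2k} n^{2k}$. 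For the odd cycle, $\Inj(C_{2k+1}, G)$ equals $|\Aut(C_{2k+1})|$ times the number of copies of $C_{2k+1}$ in $G$, so the hypothesis makes $\Inj(C_{2k+1}, G) = o(p^{2k+1} n^{2k+1})$, which is $\le \delta p^{2k+1} n^{2k+1}$ for $n$ large.

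The main point is the dense pairs condition, and this is the step I expect to require the most care. Let $V(G) = V_1 \sqcup V_2 \sqcup \cdots$ be any partition into parts of size $\ge \tau n$; then there are at most $1/\tau$ parts and hence at most $1/\tau^2$ pairs. For a pair $(V_i, V_j)$ with $i \neq j$, using $\one_G \le C' p \nu$ and then splitting $\nu = 1 + (\nu - 1)$,
\[
    e_G(V_i, V_j) \le C' p \, n^2\, \E_{x,y}\!\left[\nu(x,y)\one_{V_i}(x)\one_{V_j}(y)\right] \le C' p \left(|V_i||V_j| + n^2 ||\nu - 1||\sq\right),
\]
where the last step uses the definition \cref{eq:cut} of the cut norm (with $A = V_i$, $B = V_j$). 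If $(V_i, V_j)$ is dense, i.e.\ $e_G(V_i, V_j) \ge K p |V_i||V_j| = 2C' p |V_i||V_j|$, then comparing the two bounds forces $|V_i||V_j| \le n^2 ||\nu - 1||\sq$, and so $e_G(V_i, V_j) \le 2C' p\, n^2 ||\nu - 1||\sq$. Summing over the at most $1/\tau^2$ dense pairs, the number of edges of $G$ lying between dense pairs is at most $(2C'/\tau^2)\, p n^2 ||\nu - 1||\sq$; since $||\nu - 1||\sq = o(1)$ while $C'$ and $\tau$ are fixed, this is $\le \epsilon p n^2 / 8$ once $n$ is large. Then \cref{thm:sparse-removal-odd-cycles} applies and we are done. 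The reason this last step is the crux is that it crucially uses point (1) of the remark following \cref{thm:sparse-removal-odd-cycles} — that only partitions with large parts need to be considered, so that the number of candidate dense pairs is bounded — together with the choice $K > C'$, which is precisely what makes "denseness" of a pair force $|V_i||V_j|$ down to $n^2||\nu - 1||\sq = o(n^2)$; the remaining steps are routine.
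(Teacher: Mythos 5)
Your proof is correct and follows essentially the same route as the paper: the paper deduces this corollary from \cref{thm:sparse-k-cycle-removal} via the intermediate "relative cycle removal" statement (\cref{cor:relative-k-cycle-removal}), whose key step is exactly your computation — the cut-norm bound on $\nu-1$ together with the lower bound $\tau$ on part sizes shows that pairs with average density above $2K$ cannot contribute, so the dense pairs condition holds. Your verification of the Schatten-norm hypothesis via monotonicity of \cref{eq:Schatten-2k} for nonnegative functions is also the paper's argument, so there is nothing to add.
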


\begin{remark}
To make sense of the assumptions about $\nu$ in \cref{cor:subgraphs-pseudorandom}, $V(G)$ is implicitly equipped with uniform probability. Note that one can replace these two assumptions with the single (stronger) condition $||\nu - 1||_{S^{2k}} = o(1)$; since $k \ge 2$, this is more general than the standard linear forms conditions \cite{conlon2014green,tao2006gaussian,tao2007ergodic}. For a better comparison with other notions of pseudorandomness, see the remark after \cref{cor:relative-k-cycle-removal} (which also considers the case of removing triangles, i.e., $2k+1 = 3$).
\end{remark}

\begin{remark}
One can view $p \nu$ as the indicator function of a weighted graph $\Gamma$, of which $G$ is a subgraph. Conlon, Fox and Zhao \cite{conlon2014extremal} proved similar $H$-removal lemmas (for a general graph $H$) under the assumption that $\Gamma$ is \emph{$(p, o(p^{c(H)}n))$-jumbled}, essentially meaning that $||\nu - 1||_{S^\infty} = o(p^{c(H) - 1})$; here $c(H) = 1 + 1/(2k-2)$ when $H$ is a cycle of length $2k+1 \ge 5$. Our pseudorandomness constraint $||\nu - 1||\sq = o(1)$ is strictly weaker (both in the type of norm bounded and in the bound itself), though it comes at the expense of the (fairly weak) additional assumption $||\nu||_{S^{2k}} \ll 1$.
\end{remark}

When $p \ge n^{-1/2}$, our removal lemmas also apply to graphs with few $4$-cycles; the corollary below is closely related to the results in \cite{conlon2021regularity,conlon2021graphs}, although we use a different regularity method.

\begin{corollary}[Graphs with not too many $4$-cycles] \label{cor:graphs-few-4-cycles}
Fix integers $j \ge 1$, $k \ge 2$, $\ell \ge 3$. For any $p \ge n^{-1/2}$ and any $n$-vertex graph $G$ with $O(p^4 n^4)$ $C_4$'s, the following hold true.
\begin{itemize}
    \item[$(i)$.] If $G$ has $o(p^{j} n^{j+1})$ $P_{j}$'s, then it has $o(pn^2)$ edges.
    \item[$(ii)$.] If $G$ has $o(p^{2\ell}n^{2\ell})$ $C_{2\ell}$'s, then it has $o(pn^2)$ edges.
    \item[$(iii)$.] If $G$ has $o(p^{2k+1} n^{2k+1})$ $C_{2k+1}$'s and every bipartite subgraph of $G$ has at most $o(p^4n^4)$ $C_4$'s,
    then $G$ can be made $\{C_3, C_5, \ldots, C_{2k+1}\}$-free by removing $o(pn^2)$ edges.
\end{itemize}
\end{corollary}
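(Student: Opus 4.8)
The plan is to derive parts $(i)$ and $(ii)$ from the transference-free \cref{thm:expected-cycles-paths}, and part $(iii)$ from \cref{thm:sparse-removal-odd-cycles}; in each case the work is just to verify the hypotheses, and the common first step is to control the Schatten norms of $\one_G$ using the assumed bound on the number of $C_4$'s. To that end, let $A$ be the adjacency matrix of $G$ and $N$ the number of copies of $C_4$ in $G$, so that $\Hom(C_4,G)=\Tr(A^4)$. A closed walk of length $4$ in $G$ either visits $4$ distinct vertices (these account for $8N$ of them) or visits at most $3$ (two opposite vertices of the walk must then coincide, giving at most $2\sum_v d_v^2$ of them, where the $d_v$ are the degrees), so $\Tr(A^4)\le 8N+2\sum_v d_v^2$. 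An elementary convexity estimate gives $\sum_v d_v^2=O(n\sqrt{N}+n^2)$, and since $N=O(p^4n^4)$ and $p\ge n^{-1/2}$ (so $p^2n\ge 1$), this is $O(p^4n^4)$; hence $\Hom(C_4,G)=O(p^4n^4)$, i.e.\ $||\one_G||_{S^4}=O(p)$ by \cref{eq:2k-cycles}. As the Schatten norms $||\one_G||_{S^q}$ are nonincreasing in $q$, we get $||\one_G||_{S^q}=O(p)$ for all $q\ge 4$; in particular, using \cref{eq:2k-cycles} again, $\Hom(C_{2m},G)\le\Hom(C_4,G)^{m/2}=O(p^{2m}n^{2m})$ for every integer $m\ge 2$, and the spectral norm of $\one_G$ is $O(p)$. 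These are exactly the boundedness hypotheses needed below. (It is here, in passing from a bound on the number of copies of $C_4$ to a bound on $\Hom(C_4,G)$, that $p\ge n^{-1/2}$ is used.)

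Parts $(i)$ and $(ii)$ are the contrapositives of \cref{thm:expected-cycles-paths}: if $G$ had at least $\eps pn^2$ edges for some fixed $\eps>0$, then, feeding in the bounds just established, that theorem would produce $\gg_\eps p^{j}n^{j+1}$ injective copies of $P_j$ (respectively $\gg_\eps p^{2\ell}n^{2\ell}$ of $C_{2\ell}$), hence that many copies, contradicting the hypothesis. The real content is inside \cref{thm:expected-cycles-paths}, namely the regularity-plus-counting machinery of \cref{thm:regularity} and \cref{lem:counting}: an easy lower bound on homomorphism counts (such as the Sidorenko inequality $\Hom(P_j,G)\ge n^{j+1}(2|E(G)|/n^2)^{j}$) must be upgraded to a lower bound on honest, injective counts, a passage that is delicate precisely in the range $p\approx n^{-1/2}$ and that the bounded-entropy structural approximation handles automatically.

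For part $(iii)$, apply \cref{thm:sparse-removal-odd-cycles} with the given $\eps$ and $k$, with $K:=1$, and with $C$ the (fixed) constant from the bound $\Hom(C_{2k},G)\le Cp^{2k}n^{2k}$ obtained above, getting thresholds $n_0,\delta,\tau>0$. Of the hypotheses, $\Hom(C_{2k},G)\le Cp^{2k}n^{2k}$ holds by the first step, and $\Inj(C_{2k+1},G)=2(2k+1)\cdot o(p^{2k+1}n^{2k+1})\le\delta p^{2k+1}n^{2k+1}$ for $n$ large. For the dense-pairs condition, take any partition $V(G)=V_1\cup V_2\cup\cdots$ with every part of size $\ge\tau n$: if some pair $(V_i,V_j)$ with $i\ne j$ had $e_G(V_i,V_j)\ge p|V_i||V_j|\ge p\tau^2n^2$, then the same convexity estimate as above, applied to the bipartite subgraph $G[V_i,V_j]$ of $G$, would force it to contain $\gg_\tau p^4n^4$ copies of $C_4$, contradicting the assumption that every bipartite subgraph of $G$ has $o(p^4n^4)$ copies of $C_4$ once $n$ is large. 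So, after enlarging $n_0$ if necessary, for $n\ge n_0$ there are no dense pairs at all, the dense-pairs condition holds vacuously, and \cref{thm:sparse-removal-odd-cycles} yields the conclusion; letting $\eps\to 0$ turns ``at most $\eps pn^2$ edges'' into ``$o(pn^2)$ edges''.

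The only substantive ingredients are \cref{thm:expected-cycles-paths} (and behind it \cref{thm:regularity} and \cref{lem:counting}) for parts $(i)$--$(ii)$, and \cref{thm:sparse-removal-odd-cycles} for part $(iii)$; granting those, the remaining work is the two convexity estimates above plus bookkeeping with the implied constants. The one mildly delicate point — and what I expect to be the main obstacle — is checking that everything survives all the way down to $p=n^{-1/2}$: this is exactly where the reduction from copies of $C_4$ to $\Hom(C_4,G)$ in the first step is needed, and, for $(i)$--$(ii)$, where the full strength of the Schatten-norm regularity method (rather than a crude count of non-injective walks) becomes essential.
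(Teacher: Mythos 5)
Your treatment of parts $(i)$ and $(ii)$ is correct and essentially identical to the paper's: you upgrade the bound on the number of $C_4$'s to $\Hom(C_4,G)=O(p^4n^4)$ using $p\ge n^{-1/2}$ (you re-derive the estimate that the paper cites as \cref{eq:4-cycle-counts}), and then take the contrapositive of \cref{thm:expected-cycles-paths}. No issues there.

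Part $(iii)$, however, has a genuine gap in the verification of the dense-pairs condition. You set $K:=1$ and claim that any pair $(V_i,V_j)$ with $e_G(V_i,V_j)\ge p|V_i||V_j|$ must force $\gg_\tau p^4n^4$ copies of $C_4$ in the bipartite subgraph between $V_i$ and $V_j$. The K\H{o}v\'ari--S\'os--Tur\'an/convexity lower bound $\#C_4\gg e^4|V_i|^{-2}|V_j|^{-2}$ is only valid once $e\gg |V_i||V_j|^{1/2}+|V_j|$; below that threshold a bipartite graph can be $C_4$-free. A pair can satisfy $e_G(V_i,V_j)\ge p|V_i||V_j|$ while $e_G(V_i,V_j)\lesssim |V_i||V_j|^{1/2}$ whenever $p\lesssim |V_j|^{-1/2}$, and since $|V_j|\ge\tau n$ with $\tau<1$, this happens throughout the range $n^{-1/2}\le p\lesssim(\tau n)^{-1/2}$ --- i.e.\ precisely at the extremal density the corollary is about. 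Concretely, if $p=n^{-1/2}$ and $G$ restricted to a balanced bipartition $(V_1,V_2)$ is a $C_4$-free incidence graph with $\asymp n^{3/2}$ edges, then $(V_1,V_2)$ is a dense pair for $K=1$ carrying essentially all $\asymp pn^2$ edges of $G$, so your claim that "there are no dense pairs at all" is false, and the weaker statement actually needed (at most $\epsilon pn^2/8$ edges between dense pairs) also fails for $K=1$.

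The fix is the one the paper uses (\cref{lem:dense-pairs-1}): take $K$ to be a large multiple of $1/\epsilon$ and bound the \emph{total number of edges} lying between dense pairs rather than ruling such pairs out. In the regime where KST is vacuous one uses the elementary bound $e\le e^2/(Kp|V_i||V_j|)\ll |V_i|/(Kp)+1/(Kp\tau)$, whose sum over the $\le 1/\tau$ parts is $\ll n/(Kp)+1/(Kp\tau^2)\le\epsilon pn^2/24$ thanks to $p^2n\ge 1$ and $K\gg 1/\epsilon$; in the complementary regime one gets $e\ll T^{1/4}n$ with $T=\sup_{H\subset G\text{ bipartite}}\Inj(C_4,H)=o(p^4n^4)$. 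With this replacement (and keeping your correct reduction of the hypotheses), the application of \cref{thm:sparse-removal-odd-cycles} goes through.
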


\begin{remark}
Parts $(i)$ and $(ii)$ of \cref{cor:graphs-few-4-cycles} follow from \cref{thm:expected-cycles-paths}, while part $(iii)$ follows from \cref{thm:sparse-removal-odd-cycles}; the latter generalizes the fact that 
\begin{itemize}
    \item[$(iii').$] \emph{If $G$ has $o(p^4n^4)$ $C_4$'s and $o(p^{2k+1} n^{2k+1})$ $C_{2k+1}$'s, then it can be made $\{C_3, C_5, \ldots, C_{2k+1}\}$-free by removing $o(pn^2)$ edges.}
\end{itemize}
When $2k+1 = 5$, $(iii')$ recovers the main result of Conlon, Fox, Sudakov and Zhao's first aforementioned paper (see \cite[Theorem 1.2]{conlon2021regularity}). For a general $k$, $(iii')$ essentially follows from the same authors' follow-up work on counting lemmas in $C_4$-free graphs. %(see the remark in \cite{conlon2021graphs} about graphs with $o(n^2)$ $C_4$'s). %The strengthening of $(iii')$ to $(iii)$ is made possible by several new features of our regularity method; 
Another generalization of $(iii')$, which applies to $k$-partite graphs (as required in arithmetic applications), is given in \cref{cor:k-cycle-removal-few-4-cycles}. %Also mention a triangle removal lemma? (Assuming also a Schatten norm bound..?)
\end{remark}

In particular, \cref{cor:graphs-few-4-cycles} applies to $C_4$-free graphs. More generally, we give an application of \cref{thm:sparse-removal-odd-cycles} to graphs lacking a \emph{quasi-smooth} family of bipartite graphs, following \cite{jiang2021bipartite,allen2014turan}.

\begin{definition}[Extremal numbers and quasi-smooth families] \label{def:quasi-smooth}
Given positive integers $m, n$ and a family of bipartite graphs $\mF$, we denote by $\ex(n, \mF)$ (respectively, $\ex(m, n, \mF)$) the maximal number of edges in a graph on $n$ vertices (respectively, a bipartite graph on $m+n$ vertices) which is $\mF$-free, meaning that it contains no (injective) copies of any graph in $\mF$. We then say that $\mF$ is $(\alpha, \beta)$-quasi-smooth for some reals $2 > \alpha > \beta \ge 1$ iff for $n \ge m \ge 1$, one has
\[
    \ex(m, n, \mF) \ll m n^{\alpha - 1} + n^\beta.
\]
\end{definition}

\begin{remark}
We do not explicitly require that $\ex(n, \mF) \gg n^\alpha$ (as in \cite{jiang2021bipartite}), although our results will only be nontrivial in that case.
So by our definition, given integers $t \ge s \ge 1$, F\"uredi \cite{furedi1996upper} showed that $\{K_{s,t}\}$ is $(\alpha, \beta)$-quasi-smooth with $\alpha = 2 - 1/s$ and $\beta = 2 - 2/s$; we only know the corresponding lower bound $\ex(n, \{K_{s,t}\}) \gg_{s,t} n^\alpha$ when $t$ is large enough in terms of $s$ \cite{alon1999norm}. %For more examples of (quasi-)smooth families, we direct the reader to \cite{allen2014turan,jiang2021bipartite}. %Another example of quasi-smooth families due to Allen, Keevash, Sudakov and Verstra\"ete \cite{allen2014turan} are $\{K_{2,t}, B_t\}$ (for $t \ge 2$), where $B_t$ consists of $t$ $C_4$'s sharing a common edge. BUT THE ISSUE IS YOU GET THE SAME 3/2 BOUND HERE AS IN K2,t-free graphs; so it's not very useful!
\end{remark}

\begin{corollary}[Graphs free of a quasi-smooth family] \label{cor:smooth-free-graphs}
Fix an integer $k \ge 2$,  
reals $2 > \alpha > \beta \ge 1$, and let $\mF$ be an $(\alpha, \beta)$-quasi-smooth family. Then an $\mF$-free $n$-vertex graph $G$ with 
\[
    \Hom(C_{2k}, G) = O(n^{2k(\alpha - 1)})
    \qquad 
    \text{and} 
    \qquad 
    \Inj(C_{2k+1}, G) = o(n^{(2k+1)(\alpha - 1)})
\]
can be made $\{C_3, C_5, \ldots, C_{2k+1}\}$-free by removing $o(n^\alpha)$ edges.
\end{corollary}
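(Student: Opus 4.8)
The plan is to deduce \cref{cor:smooth-free-graphs} from the sparse removal lemma \cref{thm:sparse-removal-odd-cycles}, applied with density parameter $p := n^{\alpha - 2}$. With this choice the three scales appearing in \cref{thm:sparse-removal-odd-cycles} become $p n^2 = n^\alpha$, $p^{2k} n^{2k} = n^{2k(\alpha-1)}$ and $p^{2k+1} n^{2k+1} = n^{(2k+1)(\alpha-1)}$, matching exactly the edge budget and the $C_{2k}$- and $C_{2k+1}$-count scales of the corollary. Thus the hypothesis $\Hom(C_{2k}, G) = O(n^{2k(\alpha-1)})$ furnishes the constant $C$ required in the $C_{2k}$-count bound of \cref{thm:sparse-removal-odd-cycles}, while $\Inj(C_{2k+1}, G) = o(n^{(2k+1)(\alpha-1)})$ guarantees $\Inj(C_{2k+1}, G) \le \delta\, p^{2k+1} n^{2k+1}$ once $n$ is large (for any $\delta > 0$). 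Since it suffices to remove at most $\epsilon n^\alpha$ edges for each fixed $\epsilon > 0$ — letting $\epsilon \downarrow 0$ then yields the claimed $o(n^\alpha)$ bound — the conclusion will follow as soon as the \emph{dense pairs condition} of \cref{thm:sparse-removal-odd-cycles} is verified.

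That verification is the substantive step, and here is how I would carry it out. Given a partition $V(G) = V_1 \sqcup V_2 \sqcup \cdots$ into parts of size $\ge \tau n$ (hence at most $1/\tau$ of them), consider a pair $i \ne j$ with, say, $a := |V_i| \le |V_j| =: b$. Since $G$ is $\mF$-free, so is the bipartite subgraph of $G$ between $V_i$ and $V_j$; by $(\alpha,\beta)$-quasi-smoothness,
\[
    e_G(V_i, V_j) \le \ex(a, b, \mF) \ll a b^{\alpha - 1} + b^\beta .
\]
Over the $\le \binom{1/\tau}{2}$ pairs the error term $b^\beta \le n^\beta$ sums to $O_\tau(n^\beta) = o(n^\alpha)$, because $\beta < \alpha$. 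For the main term, $b \ge \tau n$ and $\alpha - 2 < 0$ give $a b^{\alpha - 1} = (ab)\, b^{\alpha - 2} \le \tau^{\alpha - 2}\, p\, |V_i|\,|V_j|$. Hence for a pair to be ``dense'', i.e.\ to satisfy $e_G(V_i, V_j) \ge K p |V_i| |V_j|$, one would need $K$ to be at most a constant multiple of $\tau^{-(2 - \alpha)}$ (up to the negligible $b^\beta$ contribution). Therefore, once $K$ is chosen larger than an appropriate constant times $\tau^{-(2-\alpha)}$, there are \emph{no} dense pairs with both parts of size $\ge \tau n$ for all large $n$, and the dense pairs condition holds vacuously. (If one prefers to keep $K$ moderate, one can instead bound the total edge-contribution of dense pairs directly: being dense forces $b \ll K^{-1/(2-\alpha)} n$, so each such pair carries $\ll K^{-\alpha/(2-\alpha)} n^\alpha$ edges, and summing over $\le \tau^{-2}$ pairs makes the total below $\epsilon n^\alpha / 8$ for $K$ large in terms of $\epsilon$ and $\tau$.)

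I expect the only real obstacle to be the apparent circularity in ``choosing $K$ large relative to $\tau$'', since \cref{thm:sparse-removal-odd-cycles} returns $\tau$ as a function of $\epsilon, K, C$ and $k$. This dissolves once one notes that $\tau$ — which plays the role of the reciprocal of the number of parts in the underlying weak regularity partition (cf.\ the remark after \cref{thm:sparse-removal-odd-cycles}) — depends only on $\epsilon$, $C$ and $k$, not on the transference parameter $K$. So the correct order of quantifiers is: fix $\epsilon$ (and $C, k$), extract $\tau$, then take $K$ sufficiently large; the dense pairs condition then holds for every large $n$, and \cref{thm:sparse-removal-odd-cycles} applies. (Even if $\tau$ did depend on $K$, it would suffice that it not decay faster than some fixed power $K^{-\gamma}$ with $\gamma(2-\alpha) < 1$, which is an extremely mild requirement.) Everything else — matching exponents, invoking quasi-smoothness, and passing from ``$\le \epsilon n^\alpha$ for each $\epsilon$'' to ``$o(n^\alpha)$'' — is routine.
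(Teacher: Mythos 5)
Your reduction to \cref{thm:sparse-removal-odd-cycles} with $p = n^{\alpha-2}$ is exactly the paper's route, and the exponent bookkeeping is fine. The gap is in the dense-pairs verification, and specifically in the order of quantifiers you rely on. In the paper's proof of \cref{thm:sparse-k-cycle-removal}, the parameter $\delta$ is chosen via the dense removal lemma applied with target $\epsilon/(16K)$, and $\tau$ is then extracted from \cref{prop:reduction-to-structure} as a function of that $\delta$; so $\tau$ genuinely depends on $K$, and through the tower-type bounds of \cref{thm:dense-k-cycle-removal} it decays far faster than any fixed power $K^{-\gamma}$. Your primary argument needs $K \gg \tau^{-(2-\alpha)}$, and your fallback needs $K^{\alpha/(2-\alpha)} \gg \tau^{-2}/\epsilon$ (the $\tau^{-2}$ coming from summing a per-pair bound over all $\le \tau^{-2}$ pairs); both are therefore circular, and the parenthetical escape hatches (``$\tau$ does not depend on $K$'', or ``$\tau$ decays at most polynomially in $K$'') are false for the theorem as proved.

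The missing idea is the paper's \cref{lem:dense-pairs-2}: instead of bounding dense pairs one at a time and paying a factor of $\tau^{-2}$ in the sum, fix $j$ and let $W_j$ be the union of \emph{all} parts $V_{j'}$ ($j' \ne j$) that are dense with $V_j$, so that $e_G(V_j, W_j) \ge Kp\,|V_j|\,|W_j|$. Applying quasi-smoothness to the single bipartite graph between $V_j$ and $W_j$ and feeding the density lower bound back into that estimate (with $\gamma = 1/(2-\alpha)$) yields
\[
    e_G(V_j, W_j) \ll_{\mF} \frac{|V_j|}{(Kp)^{(\alpha-1)/(2-\alpha)}} + \frac{n^{1 - (\alpha-\beta)/(2-\alpha)}}{(Kp\,\tau^2)^{(\alpha-1)/(2-\alpha)}},
\]
and summing over $j$ turns the main term into $n\,(Kp)^{-(\alpha-1)/(2-\alpha)}$ with \emph{no} $\tau$ factor, since $\sum_j |V_j| = n$. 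With $p \ge n^{\alpha-2}$ this is below $\epsilon p n^2/16$ once $K$ is large in terms of $\epsilon$ and $\alpha$ alone, while the $\tau$-dependent secondary term carries a negative power of $n$ and is absorbed by enlarging $n_0$ in terms of $\tau$. That decoupling — $K = K(\epsilon,\alpha)$ first, then $\tau$ from the theorem, then $n_0$ large in terms of $\tau$ — is what your argument is missing. (Your restriction to off-diagonal pairs $i \ne j$ is correct and necessary here, since for a general quasi-smooth family the bipartite subgraph between a part and itself need not be $\mF$-free in the required sense.)
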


\begin{remark}
As a possible extension of their work on $C_4$-free graphs, Conlon, Fox, Sudakov and Zhao ask \cite{conlon2021graphs} about the existence of similar removal lemmas in $H$-free graphs for various bipartite graphs $H$, including $C_{2k}$ and $K_{s,t}$ for $t \ge s > 2$. \cref{cor:smooth-free-graphs} gives a partial answer to this question in the latter case (taking $\mF = \{K_{s,t}\}$, $\alpha = 2 - 1/s$ and $\beta = 2 - 2/s$); we expect the same to hold true when $\mF = \{C_{2k}\}$, but this is not known to be quasi-smooth for any $k \ge 3$ \cite{jiang2021bipartite,allen2014turan}.
\end{remark}

Separately from our results for sparse graphs, we also state a natural consequence of our regularity lemma in the regime where $q < r = 2$; this is a graph removal lemma for a family of ``super-dense'' graphs (with bounded Schatten $q$-norm for some $q < 2$), involving no tower-exponential bounds. Its proof is left to \cref{sec:super-dense-removal}.

%No need for injective copies in the dense setting

\begin{theorem}[Super-dense graph removal lemma] \label{thm:super-dense-removal}
For any graph $H$, $q \in [1, 2)$, $C > 0$ and $\epsilon \in (0, 1)$, there exists 
\[
    \delta \ge \exp \left(- O_{H,q,C}\left(\epsilon^{-O_{H,q}(1)} \right) \right)
\]
such that the following holds. Let $G$ be an $n$-vertex graph satisfying $||\lambda^G||_q \le C n$, where $(\lambda^G_i)_{i \ge 1}$ are the eigenvalues of $G$'s adjacency matrix. 
If $\Hom(H, G) \le \delta n^{|V(H)|}$, then one remove at most $\epsilon n^2$ edges from $G$ to eliminate all homomorphic copies of $H$.
\end{theorem}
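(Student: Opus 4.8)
The plan is to feed $\one_G$ into the regularity lemma \cref{thm:regularity-intro} (or \cref{thm:regularity}) with $r = 2$, clean up the resulting step function by deleting few edges, and conclude with an elementary embedding — arranging the parameters so that the regularity \emph{quality} is fixed before, and independently of, the partition complexity, which is what keeps the bounds single-exponential. Equip $V(G)$ with the uniform probability measure, so that the hypothesis reads $\|\one_G\|_{S^q} = \tfrac1n\|\lambda^G\|_q \le C$. Set $k := |V(H)|$; we may assume $H$ has an edge (else the hypothesis is vacuous for $\delta < 1$), and we may assume $\|\one_G\|_{S^q} > c_0\sqrt\epsilon$ for a suitable $c_0 = c_0(H)$, since otherwise $e(G) = \tfrac{n^2}{2}\|\one_G\|_{S^2}^2 \le \tfrac{n^2}{2}\|\one_G\|_{S^q}^2 < \epsilon n^2$ and we may just delete every edge. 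In particular $C > c_0\sqrt\epsilon$, so $\epsilon' := c_1\sqrt\epsilon/C \in (0,1)$ once $c_1 = c_1(H)$ is small. Applying \cref{thm:regularity-intro} to $\one_G$ with exponents $q < r = 2$ and parameter $\epsilon'$ gives partitions $\mP,\mQ$ of $V(G)$, each of entropy $\le O_q\!\big((\epsilon')^{-O_q(1)}\big) =: B$, with $\|\one_G - \E(\one_G\mid\mP\otimes\mQ)\|_{S^2} \le \epsilon'\|\one_G\|_{S^q} \le \epsilon'C$. Let $\mR := \mP\vee\mQ$ (so $\H(\mR) \le \H(\mP)+\H(\mQ) \le 2B$ by subadditivity) and $\tilde f := \E(\one_G\mid\mR\otimes\mR)$; since $\mR\otimes\mR$ refines $\mP\otimes\mQ$ and $\|\cdot\|_{S^2}=\|\cdot\|_{L^2}$, passing to the finer $\sigma$-algebra only decreases the distance, so $\|\one_G - \tilde f\|_{L^2} \le \epsilon'C$. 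The function $\tilde f$ is symmetric, $[0,1]$-valued, and constant on each block $R_i\times R_j$ with value $d_{ij} := e_G(R_i,R_j)/(|R_i||R_j|)$.

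Now the removal. Fix a small constant $c = c(k) > 0$ (of order $k^{-2}$) and choose $c_1$ so that $\epsilon'C = \tfrac12 c\sqrt\epsilon$. Delete from $G$: (a) every edge $\{x,y\}$ with $\tilde f(x,y) \le 1-c$ — since each such edge contributes $\ge c^2$ to $\sum_{x,y}(\one_G-\tilde f)^2(x,y) = n^2\|\one_G-\tilde f\|_{L^2}^2 \le (\epsilon'C)^2 n^2$, there are $\le c^{-2}(\epsilon'C)^2 n^2 = \tfrac14\epsilon n^2$ of them; and (b) every edge meeting a \emph{small} block $R_i$ with $|R_i| < \theta n$, where $\theta := \exp(-8B/\epsilon)$, so that by subadditivity of entropy the total measure of small blocks is $< \H(\mR)/\log(1/\theta) \le 2B/\log(1/\theta) = \tfrac14\epsilon$, and this costs $\le \tfrac14\epsilon n^2$ edges. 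In total at most $\tfrac12\epsilon n^2 < \epsilon n^2$ edges are removed; let $G'$ be what remains. Every edge of $G'$ now lies in a block $R_i\times R_j$ with $|R_i|,|R_j| \ge \theta n$ and $d_{ij} > 1-c$.

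Suppose for contradiction that $G'$ admits a homomorphism $\phi : H \to G'$. Sending each $v \in V(H)$ to the block $R_{\psi(v)}$ of $\phi(v)$, we obtain blocks of size $\ge \theta n$ with $d_{\psi(u)\psi(v)} > 1-c$ along every edge of $H$. A standard greedy embedding then produces many homomorphic copies of $H$ inside $\prod_v R_{\psi(v)}$: embed the vertices of $H$ one at a time, at each step restricting to images that are adjacent (as prescribed by $E(H)$) to the previously embedded vertices, and to ``typical'' images (having near-maximal codegree into the remaining blocks) that keep later steps feasible — by Markov's inequality each restriction costs only an $O(kc)$ fraction of a block, so for $c$ small enough at least a $4^{-k}$ fraction of $\prod_v|R_{\psi(v)}|$ survives. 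Hence $\Hom(H,G) \ge 4^{-k}\prod_v|R_{\psi(v)}| \ge 4^{-k}(\theta n)^k$ (this also covers the case $n < 1/\theta$, where every nonempty block is automatically ``large'' and the same estimate holds). With
\[
    \delta := \tfrac12\, 4^{-k}\, \theta^{k} = \exp\!\big(-O_{H,q,C}(\epsilon^{-O_q(1)})\big),
\]
this contradicts $\Hom(H,G) \le \delta n^{k}$. Therefore $G'$ has no homomorphic copy of $H$, and at most $\epsilon n^2$ edges have been deleted, as required.

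The step I expect to be the crux is the \emph{order of the parameter choices}: $\epsilon'$ must be fixed before the entropy bound $B$ is known (as $B$ is a function of $\epsilon'$), on pain of circularity. This is possible precisely because $\tilde f$ is thresholded at the \emph{absolute} level $1-c$, with $c = c(H)$ independent of $\epsilon$ and $B$ — this turns the final embedding into a one-line greedy argument rather than a genuine counting lemma, so $\epsilon'$ need only be a fixed power of $\sqrt\epsilon/C$; the complexity $B$ then enters only through $\theta$, hence through $\delta$, which is permitted to be exponentially small. It is also exactly here that the hypothesis $q < 2$ is used: it allows the choice $r = 2$, so the regularity error is controlled in $L^2 = S^2$ — precisely the norm needed both to bound the number of deleted edges in step (a) and to run the embedding — whereas the weaker $S^r$-norms for $r > 2$ would not suffice.
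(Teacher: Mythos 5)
Your proof is correct, and the cleaning step takes a genuinely different route from the paper's. The paper (in its appendix) also starts from the $S^q \to S^2$ regularity lemma with a parameter depending only on $\epsilon$, $H$, $q$, $C$, but it then removes three families of \emph{blocks}: those where $f_1 := \E(\one_G \mid \mP\otimes\mP)$ falls below a small threshold $\alpha = \tau^{1/2|E(H)|}$, those where the conditional average of $|f_2| := |\one_G - f_1|$ exceeds $\beta = \alpha^{|E(H)|}/2$, and those of small measure; to derive the contradiction it must then expand $\one_G = f_1 + f_2$ inside the homomorphism count restricted to the blocks hit by a surviving copy, lower-bounding the pure-$f_1$ term by $\alpha^{|E(H)|}\prod_w \P(A_w)$ and absorbing every term containing an $f_2$ factor via the conditional-average removal. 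You instead exploit the fact that $\one_G$ is $\{0,1\}$-valued: an edge sitting in a block of density at most $1-c$ contributes at least $c^2$ \emph{pointwise} to the squared $L^2$ error, so such edges can be deleted individually, and the surviving graph is supported on near-complete block pairs of size at least $\theta n$, where a one-line union bound over $E(H)$ (no counting lemma, no $f_2$ bookkeeping) already yields $\gg_k \prod_v |R_{\psi(v)}| \ge 4^{-k}(\theta n)^k$ homomorphisms. Both arguments hinge on the observation you correctly flag as the crux --- the regularity parameter is fixed by $\epsilon$, $H$, $C$ alone, so the partition complexity enters only through $\theta$ and hence $\delta$ --- and both land on $\delta \ge \exp(-O_{H,q,C}(\epsilon^{-O_{H,q}(1)}))$. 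What the paper's version buys is generality: it proves a weighted statement for arbitrary symmetric $f : X\times X \to [0,1]$, where one cannot threshold pointwise at $1-c$ (your step (a) would then have to bound $\|f\one_{\tilde f \le 1-c}\|_{L^1}$, which need not be small), and it keeps all blocks of density at least $\alpha$ rather than only the near-complete ones. What yours buys is a shorter and more transparent argument for the unweighted theorem exactly as stated.
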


\begin{remark}
As shown in \cite{nikiforov2012extremal}, most graphs $G$ have $||\lambda^G||_q \asymp_q n^{\frac{1}{2} + \frac{1}{q}}$ when $q \in [1, 2)$, which is not $O(n)$; however, \cref{thm:super-dense-removal} should be interesting when $q = 2 - \eps$, since one always has $||\lambda^G||_2 \le n$. 
\end{remark}

\subsection{Corollaries in additive combinatorics}

We now give examples of applications for each of \cref{cor:subgraphs-pseudorandom,cor:graphs-few-4-cycles,cor:smooth-free-graphs} (more precisely, of their $k$-partite generalizations), to finding translation-invariant patterns in certain sparse subsets of abelian groups. This makes use of bipartite graphs with weights of the form $f(x, y) := \varphi(x+y)$, for some function $\varphi : G \to [0, \infty)$, where $G$ is an abelian group of order $n$. Equipping $G \times G$ with the uniform probability measure, we extend the meaning of the cut and Schatten norms by setting
\[
    ||\varphi||\sq := ||f||\sq \qquad\quad \text{and} \qquad\quad 
    ||\varphi||_{S^q} := ||f||_{S^q},
    \qquad 
    \forall q \in [1, \infty].
\]
In this context, $||\varphi||_{S^q}$ is a normalized $L^q$ norm on the Fourier coefficients of $\varphi$, due to \cref{lem:Fourier-singular}. When $q = 2k$ is an even positive integer, we also have 
\[
    n^{2k-1} ||\varphi||_{S^{2k}}^{2k} = \sum_{a_1+\cdots+a_k = b_1+\cdots+b_k} \prod_{i=1}^k \varphi(a_i) \bar{\varphi(b_i)},
\]
which is often referred to as \emph{$k$-fold additive energy} (especially when $\varphi$ is an indicator function).

\iffalse
On the other hand, for $f(x,y) = \one_A(x+y) \ge 0$, from \cref{eq:Schatten-2k} one gets (is this called sth. like $k$-additive energy??)
\[
    ||f||_{L^1} \le ||f||_{S^{2k}} = \frac{1}{N} \left\vert \left\{(a_1,\ldots,a_k, b_1, \ldots, b_k) \in A^{2k} : a_1 + \cdots + a_k = b_1 + \cdots + b_k \right\}\right\vert^{1/2k}
\]
In particular, for $k = 2$, recover the (really trivial, see Gowers' blog post) bound for Sidon sets! Indeed, if $A$ is Sidon, then we get
\[
    |A| \le (2n^2 - n)^{1/4} ,
\]
with equality never quite reached (in reality, it's $(1 + o(1))\sqrt{n}$)

(Same for inverse thm for Gowers norm, etc.}
(Say a set is almost-Sidon if $o(n)$ nontrivial solutions to ...; makes sense for a family of sets, much like the pseudorandomness results)

(As before, the understanding is that $o(1)$ means $o_{n \to \infty}(1)$.)
\fi

We first give an application in the setting of a pseudorandom majorant. As in \cref{cor:subgraphs-pseudorandom}, the key feature is that the pseudorandomness condition is mild and easy to verify, compared to the linear forms conditions in \cite{conlon2014green,tao2006gaussian,tao2007ergodic} and the jumbledness requirements in \cite{conlon2014extremal}. In particular, one can apply the following result to find solutions to linear equations in the primes (see also \cref{cor:linear-patterns-pseudorandom-weighted} and the remark after \cref{cor:relative-k-cycle-removal} for the case $k = 3$). %The following results require a transference argument, so the implied constants have tower-exponential dependencies.
%this type of problem has also been studied, e.g., in \cite{schoen2014roth,conlon2021regularity,prendiville2022solving,conlon2014extremal}

%Mention the work of Prendiville and also the following links:
%https://www.cambridge.org/core/services/aop-cambridge-core/content/view/7FF78FC00DA78FF8EAB301A1E4A30144/S2050509416000025a.pdf/roths_theorem_for_four_variables_and_additive_structures_in_sums_of_sparse_sets.pdf
%https://arxiv.org/pdf/1106.1601.pdf

\begin{corollary}[Linear patterns in subsets of mildly-pseudorandom sets] \label{cor:linear-patterns-pseudorandom}
For any $\epsilon, C > 0$ and $k, \ell \in \Z$ with $k > 2\ell \ge 4$, there exists $\delta > 0$ such that the following holds. Let $a_1, \ldots, a_k$ be nonzero integers with $a_1 + \cdots + a_k = 0$, $G$ be an abelian group of order $n$ with $\gcd(n, a_1 \cdots a_k) = 1$, and $S \subset G$, $\nu : G \to [0, \infty)$ satisfy
\begin{itemize}
    \item[$(i)$] \emph{($\nu$ majorizes $\one_S$ after scaling).}  $\epsilon \one_S \le \frac{|S|}{n}\nu$,
    \item[$(ii)$] \emph{($\nu$ is mildly-pseudorandom).}  $||\nu||_{S^{2\ell}} \le C$ and $||\nu - 1||\sq \le \delta$. 
\end{itemize}
Then $S$ contains $\ge \delta |S|^k/n$ solutions in $x_1, \ldots, x_k$ to the linear equation $a_1 x_1 + \cdots + a_k x_k = 0$. 
\end{corollary}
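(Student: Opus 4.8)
The plan is a Ruzsa--Szemer\'edi-type argument: encode solutions of the equation as transversal $K$-cycles (I write $K := k$ for the number of terms, to avoid clashing with the ``$C_{2k}$'' notation of the removal lemmas) in an auxiliary $K$-partite graph, and invoke the $K$-partite generalization of \cref{cor:subgraphs-pseudorandom} (for odd $K$), respectively of \cref{thm:expected-cycles-paths} (for even $K$). Since $a_1 + \cdots + a_K = 0$, every constant tuple $(x,\ldots,x)$ with $x \in S$ solves the equation, so $S$ has at least $|S|$ solutions; hence if $|S|^{K-1} \le n/\delta$ the conclusion holds trivially, and we may assume $|S| \ge (n/\delta)^{1/(K-1)}$ (in particular $n$ large). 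Put $p := |S|/n$ and --- crucially for the normalization below --- $p' := p/\epsilon$. Build $G_S$ on parts $V_1,\ldots,V_K$, each a copy of $G$, with an edge between $w \in V_i$ and $w' \in V_{i+1}$ (cyclic indices) iff $(w'-w)/a_i \in S$; this is well-defined because $\gcd(n,a_i) = 1$, so multiplication by $a_i$ is a bijection of $G$. Reading off one element of $S$ per step, transversal closed walks $w^{(1)} \to \cdots \to w^{(K)} \to w^{(1)}$ correspond exactly to pairs $(w^{(1)}, (x_i)_{i \le K})$ with $w^{(1)} \in G$ arbitrary and $(x_i) \in S^K$ solving $\sum_i a_i x_i = 0$ (the cyclic telescoping sum of the increments $w^{(i+1)} - w^{(i)} = a_i x_i$ vanishes automatically). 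Hence the number of transversal $C_K$'s in $G_S$ is $\Theta_K(nN)$, where $N := \#\{(x_i) \in S^K : \sum_i a_i x_i = 0\}$. Moreover, for each $x \in S$ the constant-tuple solution yields, as $w^{(1)}$ ranges over $G$, a family of $n$ transversal $C_K$'s; over all $x$ these $n|S| = \epsilon p' n^2$ cycles are pairwise edge-disjoint (a shared edge between $V_i$ and $V_{i+1}$ would force $a_i x = a_i x'$, hence $x = x'$, then equal base points), and in fact they partition $E(G_S)$ --- each bipartite piece $V_i \leftrightarrow V_{i+1}$ having exactly $n|S| = \epsilon p' n^2$ edges.

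\textbf{Checking the hypotheses of the removal lemma.} Run the ($K$-partite) removal lemma with density parameter $p'$. On each piece, hypothesis $(i)$ gives $\one_{G_S}(w,w') = \one_S((w'-w)/a_i) \le p' \, \nu((w'-w)/a_i)$; after the measure-preserving relabelling $w \mapsto -w/a_i$, $w' \mapsto w'/a_i$ of the two parts, the majorant becomes $p' \, \nu(u+v)$, so by definition its cut and Schatten data are exactly those of $\nu$: $\|\nu\|_{S^{2\ell}} \le C = O(1)$ and $\|\nu-1\|\sq \le \delta$, which we arrange to be $o(1)$ by taking $\delta$ small. Monotonicity of Schatten norms in the exponent then gives $\|\nu\|_{S^{K-1}} \le \|\nu\|_{S^{2\ell}} \le C$ for odd $K$ (using $K-1 \ge 2\ell$, forced by $K > 2\ell$ and parity), and $\|\nu\|_{S^4} \le \|\nu\|_{S^{2\ell}} \le C$ in general (using $\ell \ge 2$); the latter, via the monotonicity of $\tfrac1n\Hom(C_{2m},\cdot)^{1/2m}$, yields $\Hom(C_4, G_S) = O((p')^4 n^4)$, supplying the ``not too many $4$-cycles'' input. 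This is precisely where $K > 2\ell \ge 4$ is used: $2\ell$ is the density exponent $q$, which must satisfy $q \ge 4$ (for the $C_4$ bound) and $q < K$ (so the regularity lemma of \cref{thm:regularity} can be run with $q < r \le K$ inside the counting and removal lemmas).

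\textbf{Conclusion.} Suppose for contradiction that $N < \delta|S|^K/n$. Then $G_S$ has fewer than $\Theta_K(\delta \epsilon^K)(p')^K n^K$ transversal $C_K$'s, which for $\delta$ small lies below the threshold of the removal lemma. For odd $K$, \cref{cor:subgraphs-pseudorandom} (in its $K$-partite form) then lets us delete at most $\tfrac{\epsilon}{2} p' n^2$ edges to make $G_S$ transversal-$C_K$-free; for even $K$, the $K$-partite form of \cref{thm:expected-cycles-paths} forces some bipartite piece of $G_S$ to have at most $\tfrac{\epsilon}{2} p' n^2$ edges. Both conclusions contradict the first paragraph: destroying the $\epsilon p' n^2$ pairwise edge-disjoint transversal $C_K$'s requires at least $\epsilon p' n^2$ deletions, and every piece has exactly $\epsilon p' n^2$ edges. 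Hence $N \ge \delta|S|^K/n$, as claimed.

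\textbf{Main obstacle.} I expect the difficulty to be organizational rather than conceptual: choosing the normalization so the pseudorandom majorant stays close to $1$ in the cut norm (absorbing the $\epsilon^{-1}$ from $(i)$ into $p'$, not into $\nu$), tracking the change of variables that transfers the Schatten and cut bounds on $\nu$ to the edge-weight functions, verifying the transversal $C_K$-count is $\Theta_K(nN)$ so that ``few solutions'' really does fall below the removal lemma's $\delta$-threshold, and handling the parity of $K$ by citing the appropriate $K$-partite removal lemma. The substantive content --- the sparse regularity and counting lemmas and the cycle removal lemmas built on them --- is already established and is used here as a black box.
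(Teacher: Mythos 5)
Your proposal is essentially the paper's own argument: the paper proves this result via \cref{cor:linear-patterns-pseudorandom-weighted}, which builds exactly your $K$-partite graph (edges $f_i(x_i,x_{i+1}) = f(b_i(x_i-x_{i+1}))$ with $b_i \equiv a_i^{-1}$), transfers the Schatten and cut-norm data of $\nu$ through the group automorphisms via \cref{lem:Fourier-singular}, applies the $k$-partite relative removal lemma \cref{cor:relative-k-cycle-removal}, and derives the contradiction from the $n|S|$ edge-disjoint ``diagonal'' cycles (phrased there as the set $T$ and the inequality $\|f_1\|_{L^1} \le \sum_i \|f_i \one_{E_i^c}\|_{L^1}$). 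Your verification of the hypotheses — absorbing $\epsilon^{-1}$ into $p'$ rather than into $\nu$, using $2\ell < K$ so that the density exponent $q = 2\ell$ satisfies $\sum_i 1/q_i = K/2\ell > 1$ — matches the paper's.

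The one point to fix is the parity split. In the $K$-partite setting there is no bipartiteness obstruction, and \cref{cor:relative-k-cycle-removal} applies uniformly for all $K \ge 3$; the odd/even dichotomy of \cref{cor:subgraphs-pseudorandom} versus \cref{thm:expected-cycles-paths} is an artifact of the single-graph (non-partite) formulations. Your even-$K$ branch as written cites a ``$K$-partite form of \cref{thm:expected-cycles-paths}'' that the paper does not establish: the transference-free inputs (\cref{prop:few-paths}, \cref{prop:few-even-cycles}) are proved for a \emph{single symmetric} function on $X \times X$, via Blakley--Roy/Sidorenko lower bounds whose extension to $K$ distinct asymmetric weight functions is not automatic. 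So either drop the split and run the removal-lemma route for both parities, or supply the asymmetric Sidorenko-type inequality separately. A second, minor point: the edge-disjointness contradiction requires the \emph{total} number of removed edges (summed over the $K$ bipartite pieces) to be below $n|S| = \epsilon p' n^2$, so the removal lemma must be invoked with roughly $\epsilon/(2K)$ per piece rather than $\epsilon/2$ — this is exactly why the paper substitutes $1/(2k)$ for $\epsilon$ when calling \cref{cor:relative-k-cycle-removal}.
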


\begin{remark}
Choosing $C > 2$ and $\delta < 1$, condition $(ii)$ follows from the simpler constraint $||\nu - 1||_{S^{2\ell}} \le \delta$.
\end{remark}

\begin{remark}
The equation $a_1 x_1 + \cdots + a_k x_k = 0$ has $|S|$ trivial solutions $x_1 = \cdots = x_k \in S$, so \cref{cor:linear-patterns-pseudorandom} only has nontrivial content when $\delta |S|^k / n > |S|$ (i.e., $|S| \gg n^{1/(k-1)}$). Also, note that a random subset of $G$ with $|S|$ elements is expected to contain $|S|^k/n$ solutions to such equations.
\end{remark}

Before we state our other applications, let us recall that a subset $S$ of an abelian group is called \emph{Sidon} if and only if it contains no nontrivial solutions to the equation $x + t = y + z$ (the trivial solutions being $(x, t) = (y, z)$ and $(x, t) = (z, y)$); this is equivalent to the absence of $4$-cycles in the associated bipartite graph, where $(x, y)$ is an edge whenever $x + y \in S$. Such sets have $|S| \ll \sqrt{n}$.

Prendiville \cite{prendiville2022solving} proved a result similar to \cref{cor:linear-patterns-pseudorandom}, where $S$ is a large ``almost-Sidon'' subset of $[n] := \{1, \ldots, n\}$ (meaning that $S$ contains few nontrivial solutions to $x + t = y + z$, and $|S| \gg \sqrt{n}$). One can also deduce this result, with weaker implied bounds, from the $k$-partite version of \cref{cor:graphs-few-4-cycles}. Below we consider another natural weakening of the Sidon condition, noting that a set is Sidon iff it contains at most one pair of the form $(x, x+h)$ for each $h \neq 0$.

\begin{corollary}[Linear patterns in generalized Sidon sets] \label{cor:linear-patterns-generalized-Sidon}
For any $\epsilon, C > 0$, $k \in \Z_{\ge 5}$, and nonzero integers $a_1, \ldots, a_k$ with $a_1 + \cdots + a_k = 0$, there exists $\delta > 0$ such that the following holds. Let $n \in \Z_{\ge 1}$ and $S \subset [n]$ satisfy
\begin{itemize}
    \item[$(i)$] \emph{($S$ is large).} $|S| \ge \epsilon n^{1/2}$,
    \item[$(ii)$] \emph{(Sidon-type condition).} For each $h \neq 0$, $S$ contains at most $C$ pairs of the form $(x, x+h)$.  
\end{itemize}
Then $S$ contains $\ge \delta |S|^k/n$ solutions in $x_1, \ldots, x_k$ to the linear equation $a_1 x_1 + \cdots + a_k x_k = 0$.
\end{corollary}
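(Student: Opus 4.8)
The plan is to derive this from the $k$-partite version of \cref{cor:graphs-few-4-cycles} — the same route that recovers Prendiville's theorem — by attaching to the equation the standard auxiliary ``blow-up of $C_k$'' graph and reading off solutions as $k$-cycles. First pick a prime $N$ with $\big(\sum_i|a_i|\big)n < N = O(n)$, so that $\gcd(N,a_1\cdots a_k)=1$ and a tuple in $S^k$ solves $a_1x_1+\cdots+a_kx_k=0$ over $\Z$ precisely when it does over $\Z/N\Z$; we work in $\Z/N\Z$ with uniform probability and set $p:=|S|/N\asymp|S|/n$, so $(i)$ says $p\gg N^{-1/2}$. Build the $k$-partite graph $\Gamma$ on classes $V_1,\dots,V_k$, each a copy of $\Z/N\Z$, joining $v_i\in V_i$ to $v_{i+1}\in V_{i+1}$ (indices cyclic) iff $(v_{i+1}-v_i)/a_{i+1}\in S$. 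Telescoping around the cycle shows that part-respecting homomorphic copies of $C_k$ in $\Gamma$ are in $N$-to-$1$ correspondence with solutions $(x_1,\dots,x_k)\in S^k$, and that the ``trivial'' solutions $x_1=\cdots=x_k=s$ ($s\in S$) yield $|S|N$ pairwise edge-disjoint copies of $C_k$ that together use every edge of $\Gamma$. Hence $\Gamma$ has $\Theta(p\,|V(\Gamma)|^2)$ edges, and making it $\{C_3,\dots,C_k\}$-free requires deleting at least $|S|N=\Theta(p\,|V(\Gamma)|^2)$ of them.

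Next I translate condition $(ii)$. Writing $q(h):=\#\{x\in S:x+h\in S\}$, we have $q(h)\le C$ for $h\ne0$, so the additive energy satisfies $E(S)=\sum_h q(h)^2\le|S|^2+C\sum_{h\ne0}q(h)\le(1+C)|S|^2$. Each bipartite pair of $\Gamma$ is, after an affine relabelling of one class, a circulant graph associated to $S$, with $N\cdot E(S)\le(1+C)N|S|^2=O_{\epsilon,C}(p^4|V(\Gamma)|^4)$ homomorphic $4$-cycles (using $|S|^2\gg N$, which is $(i)$), and a similar bound holds for $4$-cycles meeting three consecutive classes; so $\Gamma$ has $O(p^4|V(\Gamma)|^4)$ copies of $C_4$ overall, with the per-pair estimates supplying the bipartite-subgraph hypothesis needed in the odd case. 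We are therefore in the scope of the $k$-partite few-$4$-cycles removal lemma — the $(2k'+1)$-cycle case if $k$ is odd, the $2\ell$-cycle case if $k$ is even.

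Now apply that removal lemma in contrapositive form. By the first paragraph $\Gamma$ has $\gg p\,|V(\Gamma)|^2$ edges and cannot be made $\{C_3,\dots,C_k\}$-free by deleting $o(p\,|V(\Gamma)|^2)$ edges, so the removal lemma forces $\Hom(C_k,\Gamma)\ge\delta' p^k N^k$ for some $\delta'=\delta'(\epsilon,C,k,a_1,\dots,a_k)>0$. Discarding the degenerate copies of $C_k$ — the non-part-respecting ones and those with repeated vertices, which are lower order because the bipartite pieces of $\Gamma$ are $C_4$-sparse — leaves at least $\tfrac12\delta' p^kN^{k-1}\ge\delta|S|^k/n$ genuine solutions for a suitable $\delta>0$.

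The crux is the last two steps. One must fit $\Gamma$ exactly to the hypotheses of the $k$-partite ``few $4$-cycles'' lemma in the boundary regime $|S|\asymp\sqrt n$, where the $C_4$-count is only $O(p^4n^4)$ rather than $o(p^4n^4)$, and one must carefully separate the true solution count from degenerate configurations — it is here that the per-frequency strength of $(ii)$, rather than a mere bound on $E(S)$, is used, and, when degenerate cycles are not automatically negligible, one replaces $\Gamma$ by a variant in which the $k$ variables are grouped into as many blocks as the cycle length demanded by the removal lemma. Everything else (the choice of $N$, the correspondence between cycles and solutions, and the additive-energy computation) is routine.
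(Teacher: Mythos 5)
Your reduction to the $k$-partite few-$4$-cycles removal lemma does not go through, and the obstruction is exactly the one you flag as ``the crux'' without resolving. \cref{cor:k-cycle-removal-few-4-cycles} (and part $(iii)$ of \cref{cor:graphs-few-4-cycles}) requires $\Inj(C_4, G_i) \le \delta p_i^4 n^4$ where $\delta$ is the \emph{small} constant produced by the lemma itself, i.e.\ the $C_4$-count must be $o(p^4 n^4)$; this is forced by the dense-pairs verification via \cref{lem:dense-pairs-1}, where the term $T^{1/4}n/\tau$ must be $\le \epsilon p n^2/4$, hence $T \ll (\epsilon\tau)^4 p^4 n^4$ with $\tau$ itself tiny (coming from the regularity lemma). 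For a generalized Sidon set with $|S| \asymp \epsilon\sqrt{n}$, your computation gives each bipartite piece $\asymp N\cdot E(S) \asymp_{\epsilon,C} |S|^4 = p^4 N^4$ four-cycles, with a constant bounded below; so the hypothesis is simply not satisfied, and no choice of parameters fixes this. Your remark that one should use ``the per-frequency strength of $(ii)$ rather than a mere bound on $E(S)$'' points in the right direction, but the few-$4$-cycles lemma only ever sees the aggregate $C_4$-count, so that information cannot be fed into it. (The few-$4$-cycles route does recover Prendiville's theorem, but for \emph{almost-Sidon} sets, where the number of nontrivial additive quadruples is genuinely $o(|S|^4/n)$ --- a different and stronger hypothesis than condition $(ii)$ here.)

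The paper instead converts condition $(ii)$ into an exact structural statement: $S$ is $(2,t)$-Sidon for any integer $t > \max(C,1)$, so each bipartite graph $G_i$ in the cycle gadget is $K_{2,t}$-free. One then invokes \cref{cor:k-cycle-removal-kst-free} (via \cref{cor:linear-patterns-st-Sidon}), whose dense-pairs condition is verified by \cref{lem:dense-pairs-2} using the extremal bound $\ex(m,n,K_{2,t}) \ll mn^{1/2}+n$; this works exactly at the critical density $p \asymp n^{-1/2}$ because a $K_{2,t}$-free bipartite pair has few edges outright, with no loss of constants. The additive-energy computation $E(S) \le |S|^2 + C^2 n \ll_{\epsilon,C} |S|^4/n$ that you perform is still needed, but only to supply the Schatten-norm (density) hypothesis $\|\hat{\one}_S\|_4 \ll |S|$, not the dense-pairs hypothesis. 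If you want to salvage your write-up, replace the appeal to \cref{cor:k-cycle-removal-few-4-cycles} by an appeal to \cref{cor:k-cycle-removal-kst-free} with $s=2$, $t=C+1$, and keep your energy bound for the Schatten-norm condition.
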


\begin{remark}
\cref{cor:linear-patterns-generalized-Sidon} follows from (the $k$-partite version of) \cref{cor:smooth-free-graphs} and the fact that $K_{2,t}$ is quasi-smooth for all $t \ge 2$. One obtains a more general statement using a general complete bipartite graph $K_{s,t}$ instead; see \cref{cor:linear-patterns-st-Sidon}. 
\end{remark}

Finally, as an application of \cref{cor:graphs-few-4-cycles}, we give a two-dimensional analogue of Prendiville's result \cite{prendiville2022solving}, related to the multidimensional Roth theorem \cite{solymosi2003note,solymosi2004note,gowers2007hypergraph}. Given slopes $s_1, \ldots, s_k \in \Q \cup \{\infty\}$, let us call a sequence of lattice points $P = (p_1, \ldots, p_k) \in (\Z^2)^k$ an $(s_1, \ldots, s_k)$-polygon iff for each $i \in \Z/k\Z$, the points $p_i$ and $p_{i+1}$ lie on a line of slope $s_i$. We say that $P$ is non-degenerate when $p_1, \ldots, p_k$ are pairwise distinct.

\begin{corollary}[Polygonal patterns in large sets with few parallelograms] \label{cor:polygonal-patterns}

For any $\epsilon > 0$, $k \in \Z_{\ge 5}$, and slopes $s_i \in \Q \cup \{\infty\}$ with $s_i \neq s_{i+1}$ for $i \in \Z/k\Z$, there exists $\delta > 0$ such that the following holds. Let $n \in \Z_{\ge 1}$ and $S \subset [n] \times [n]$ satisfy
\begin{itemize}
    \item[$(i)$] \emph{($S$ is large).} $|S| \ge \epsilon n^{3/2}$,
    \item[$(ii)$] \emph{(Few parallelograms).} For each $i \in \Z/k\Z$, $S$ contains at most $\delta |S|^4/n^4$ non-degenerate $(s_i, s_{i+1}, s_i, s_{i+1})$-polygons.
\end{itemize}
Then $S$ contains at least $\delta |S|^k / n^k$ (possibly degenerate) $(s_1, \ldots, s_k)$-polygons.
\end{corollary}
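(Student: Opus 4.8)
The plan is to encode the problem as a cycle-counting statement in an auxiliary $k$-partite graph and then feed it into the $k$-partite version of \cref{cor:graphs-few-4-cycles}, exactly in the spirit of the Ruzsa--Szemer\'edi encoding of translation-invariant equations behind \cref{thm:dense-triangle-removal}, now carried out in two dimensions. First I would fix, for each $i \in \Z/k\Z$, the set $L_i$ of all lines of slope $s_i$ meeting $[n] \times [n]$, equipped with the uniform probability measure; since $s_i$ is a fixed element of $\Q \cup \{\infty\}$, one has $|L_i| = \Theta(n)$. Every $q \in [n]^2$ lies on a unique line $\ell_i(q) \in L_i$, and since $s_i \neq s_{i+1}$ any $\lambda \in L_i$ and $\mu \in L_{i+1}$ meet in a single point $\lambda \cap \mu$ of $\R^2$. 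I then form the $k$-partite graph $G$ with vertex classes $L_1, \ldots, L_k$ in cyclic order, joining $\lambda \in L_i$ to $\mu \in L_{i+1}$ precisely when $\lambda \cap \mu$ is a lattice point lying in $S$. The map $q \mapsto (\ell_i(q), \ell_{i+1}(q))$ is a bijection from $S$ onto the edges of $G$ between $L_i$ and $L_{i+1}$, so each of the $k$ bipartite pieces of $G$ has exactly $|S|$ edges and density $p := \Theta(|S|/n^2)$; the hypothesis $|S| \ge \eps n^{3/2}$ then gives $p = \Theta_\eps(n^{-1/2})$, which is the regime in which \cref{cor:graphs-few-4-cycles} operates.

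Next I would set up the dictionary between $G$ and $S$. To a copy of $C_k$ in $G$ that winds once around all $k$ classes, with vertices $\lambda_i \in L_i$ and $\lambda_i \sim \lambda_{i+1}$, associate the tuple $(p_1, \ldots, p_k)$ with $p_i := \lambda_{i-1} \cap \lambda_i \in S$; since $p_i, p_{i+1}$ both lie on $\lambda_i$, a line of slope $s_i$, this is an $(s_1, \ldots, s_k)$-polygon, and the correspondence is $O_k(1)$-to-one onto the (possibly degenerate) polygons, the only ambiguity coming from the dihedral symmetry of $C_k$. A short computation modulo $k$ shows that for $k \ge 5$ these winding $C_k$'s are the only short cycles that matter: $G$ has no odd cycle of length $< k$, and the $k$-partite counting/removal lemma registers only the $C_k$'s winding around all classes. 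Dually, a $C_4$ inside the piece between $L_i$ and $L_{i+1}$ has distinct $\lambda, \lambda' \in L_i$ and $\mu, \mu' \in L_{i+1}$, and its four corner points are automatically distinct (two distinct lines of equal slope share no point), so such a $C_4$ is exactly a non-degenerate $(s_i, s_{i+1}, s_i, s_{i+1})$-polygon; hence hypothesis $(ii)$ says precisely that each bipartite piece of $G$ has at most $\delta |S|^4 / n^4 = O(\delta\, p^4 n^4)$ copies of $C_4$. Finally, for every $q \in S$ the lines $\ell_1(q), \ldots, \ell_k(q)$ form a winding $C_k$ in $G$ (consecutive lines meet at $q \in S$), and these $|S|$ ``trivial'' polygons are pairwise edge-disjoint by the edge--point bijection.

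With this in hand I would conclude as follows. Choosing $\delta$ small enough, the $C_4$-count above puts $G$ in the hypothesis of the $k$-partite version of \cref{cor:graphs-few-4-cycles}, while $e(G) = k|S| \gg_\eps p n^2$. When $k$ is even, the contrapositive of the ``few winding $C_k$'s $\Rightarrow$ few edges'' form yields $\gg_\eps p^k n^k$ winding $C_k$'s, hence $\ge \delta |S|^k/n^k$ polygons. When $k$ is odd, \cref{cor:k-cycle-removal-few-4-cycles} asserts that, unless $G$ has $\gg p^k n^k$ winding $C_k$'s, one can delete $o(p n^2)$ edges to destroy all of them; but the $|S| \asymp p n^2$ pairwise edge-disjoint trivial polygons cannot all be destroyed by fewer than $|S|$ deletions, so $G$ must in fact have $\gg p^k n^k \asymp n^{k/2}$ winding $C_k$'s, i.e.\ at least $\delta |S|^k/n^k$ polygons (here one uses $|S| \ll p^k n^k$, which holds because $k \ge 5$, so that ``few polygons'' would indeed meet the removal lemma's hypothesis). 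In either parity this gives the desired conclusion.

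The genuinely analytic content is entirely inside the already-established \cref{cor:graphs-few-4-cycles} (and its $k$-partite refinement \cref{cor:k-cycle-removal-few-4-cycles}), so I expect the main obstacle to be the careful bookkeeping of the dictionary in the second step: matching the geometric quantities to $\Hom/\Inj$-style graph counts with the right bounded multiplicities, correctly accounting for degenerate polygons (which are allowed in the conclusion but must be excluded from the $C_4$ count), and, in the odd-$k$ case, exhibiting enough edge-disjoint trivial polygons to force a strictly positive count via the removal lemma. A secondary point requiring attention is checking that the density $p = \Theta_\eps(n^{-1/2})$ lands within the admissible range of \cref{cor:graphs-few-4-cycles} and that all the $\eps$-dependent constants can be absorbed.
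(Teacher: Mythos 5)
Your proposal is correct and follows essentially the same route as the paper: vertices are lines of slope $s_i$ (the paper labels them by the linear forms $a_ix+b_iy$ via the matrices $M_i$), points of $S$ become the edges of a $k$-partite graph, non-degenerate parallelograms become $C_4$'s, and the $|S|$ pairwise edge-disjoint trivial polygons contradict the conclusion of \cref{cor:k-cycle-removal-few-4-cycles} unless there are $\ge\delta|S|^k/n^k$ winding $k$-cycles. The even/odd split is unnecessary — the removal-lemma argument you give for odd $k$ is exactly what the paper runs uniformly for all $k\ge 5$.
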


\begin{remark}
It can be shown, using \cref{eq:4-cycle-counts}, that any set $S \subset [n] \times [n]$ which satisfies condition $(ii)$ of \cref{cor:polygonal-patterns} with small enough $\delta$ has $|S| \ll n^{3/2}$; thus \cref{cor:polygonal-patterns} concerns the densest (up to a constant) sets with few parallelograms.
\end{remark}

\iffalse 
\subsection{Structure of the argument and potential extensions}

Say what happens in each section and the final result for each type of transference. Then have two final sections about ...
include results about pairs too? Same boxes, just name two results in the parantheses

One natural type of extension would be to find more ways to perform the transference step. Another is to extend to continuous prob spaces by a limiting argument, or via the spectral analysis of matroids.

Another is to hypergraphs, including potential connections with higher-order Fourier analysis etc. To do this, first understand what happens to ...: essentially singular values generalize Fourier coeffs since you can approximate any set in the plane rather than a sequence etc.: polygonal patterns result vs.\ linear patterns result.
Hatami's H\"older-type inequality could lead to a counting lemma, the issue being that you need a bit more flexibility (not all norms the same)

%Cite this attempt: https://www.iith.ac.in/~subruk/pdf/tensor.pdf at singular values for tensors

Mention the question about generalizing this using SVD and connections between hypergraphs and nilsequences. A natural step would be to extend the Schatten $2k$-norms first for $k \in \N$ (which have a combinatorial interpretation, etc.))
\fi

\section{Notation and preliminaries} \label{sec:preliminaries}

\subsection{Asymptotic notation, sets and functions} \label{subsec:asymptotic}
We keep the notation $[n] := \{1, \ldots, n\}$ when $n$ is a positive integer. We may add restrictions in the subscripts of $\Z, \Q, \R, \C$ to denote the appropriate subsets; for instance, $\Z_{\ge a}$ denotes the set of all integers greater than or equal to $a$. Given a set $S \subset \C$ and some $k \in \C$, we also write $kS := \{ks : s \in S\}$; for instance, $2\Z$ denotes the set of all even integers. For $r \in \R$, we write $\lfloor r \rfloor$ for the greatest integer smaller than or equal to $r$.

Given any functions $f : X \to \C$ and $g : X \to [0, \infty)$, we write $f = O(g)$ or $f \ll g$ iff there is an absolute constant $C$ such that $|f| \le Cg$ pointwise. We write $f \asymp g$ when $f \ll g$ and $g \ll f$. If the implied constants may depend on a parameter $q$, we indicate this using subscripts: $f = O_q(g)$, $f \ll_q g$, respectively $f \asymp_q g$. If $X = \Z_{\ge N}$ for some $N \ge 1$, we write $f = o(g) = o_{n \to \infty}(g)$ iff $f(n)/g(n) \to 0$ as $n \to \infty$. %We also write $f = o_q(g) = o_{q; n \to \infty}(g)$ to mean that $f/g \to 0$ as $n \to \infty$ when the parameter $q$ is fixed (and the rate of decay may depend on $q$).

Given a subset $S \subset X$ of an implicit ambient space, we write $\one_S : X \to \{0, 1\}$ for the indicator function of $S$. When $S$ is an statement such as $x \neq y$, we may write $\one_S$ for its truth value. We also keep all the graph-related notations from \cref{not:graphs}; in particular, when $G$ is a graph, we have $\one_G(x, y) = \one_{x \neq y} \one_{\{x,y\} \in E(G)}$. 

Given a measurable function $f : (X, \Sigma, \mu) \to \C$ and some $q \in [1, \infty]$, we denote the $L^q$ norm of $f$ by $||f||_{L^q}$ when $\mu$ is a probability measure, and by $||f||_q$ when $\mu$ is the counting measure (to avoid potential confusion). We also write $||v||_q$ for the $L^q$ norms of vectors $v \in \C^n$.

Finally, we define the Fourier transform of a function $\varphi : G \to \C$ on a finite abelian group $G$ by
\begin{equation} \label{eq:Fourier}
    \hat{\varphi}(\chi) := \sum_{g \in G} f(g) \bar{\chi(g)},
    \qquad\qquad 
    \forall \chi \in \hat{G}.
\end{equation}

\subsection{Probability spaces, partitions and entropy} \label{subsec:partitions}

We will mostly work with finite probability spaces $(X, \Sigma, \P)$. If the $\sigma$-algebra $\Sigma$ is not specified, it should be understood that $\Sigma = 2^X$ (so $\P$ can be identified with a function from $X$ to $[0, 1]$ with total sum $1$). If the probability measure $\P$ is not specified, it should be understood that $\P$ is the uniform probability measure on $X$ (especially when $X = V(G)$ for some graph $G$). Given an absolutely integrable function $f : X \to \C$, we write $\E[f]$ for the expected value of $f$, and we may omit the square brackets when the content of the expectation is unambiguous. Given an event $A \in \Sigma$ with positive probability, we also write
\[
    \E[f \mid A] := \frac{\E[f \one_A]}{\E[\one_A]}
\]
for the conditional expected value of $f$ given $A$. If $X = X_1 \times \cdots \times X_k$ where $(X_i, \Sigma_i, \P_i)$ are probability spaces, then in an expression like $\E_{x_1, \ldots, x_k} f(x_1, \ldots, x_k)$, it should be understood that the $x_i$'s are sampled independently from $(X_i, \P_i)$ (thus $\E_{x_1, \ldots, x_k} f(x_1, \ldots, x_k) = \E[f]$, using the implicit product probability measure on $X_1 \times \cdots \times X_k$).

A \emph{partition} $\mP$ of a set $X$ is a collection of pairwise disjoint subsets of $X$ (called \emph{parts}) with union equal to $X$. Hence $|\mP|$ denotes the cardinality of this collection, i.e., the number of parts. We also denote $\diag(\mP) := \bigcup_{A \in \mP} (A \times A)$, which is a subset of $X \times X$.

If $(X, \Sigma)$ is a measurable space, we will only work with measurable partitions of $X$ (meaning that all parts belong to $\Sigma$); conversely, we write $\Sigma(\mP)$ for the smallest $\sigma$-algebra containing all the parts of a partition $\mP$. Given a measurable function $f : (X, \Sigma) \to S$, we write $\Part(f)$ for the partition of $X$ induced by $f$, i.e., $\Part(f) := \{f^{-1}(s) : s \in S\}$. Given an absolutely integrable function $f : (X, \Sigma, \P) \to S$, we write $\E(f \mid \mP) := \E(f \mid \Sigma(\mP))$ for the conditional expectation of $f$ with respect to $\mP$; if $X$ is finite, we can view $\E(f \mid \mP)$ as an averaged function over parts of $\mP$:
\begin{equation} \label{eq:conditional-exp}
    \E(f \mid \mP)(x) = \E[f \mid P_x],
    \qquad 
    \text{where } x \in P_x \in \mP.
\end{equation}
Given two partitions $\mP, \mQ$ of a set $X$, we say that $\mP$ is finer than $\mQ$ iff $\Sigma(\mP)$ is richer than $\Sigma(\mQ)$ (i.e., $\Sigma(\mP)$ contains all parts of $\mQ$). We write $\mP \vee \mQ$ for the common refinement of $\mP$ and $\mQ$, i.e., the coarsest partition which is finer than or equal to both $\mP$ and $\mQ$. Given partitions $\mP$ of $X$ and $\mQ$ of $Y$, we write $\mP \otimes \mQ$ for the product partition $\{A \times B : A \in \mP, B \in \mQ\}$ of $X \times Y$.

Finally, given a probability space $(X, \Sigma, \mP)$ and a measurable partition $\mP$ of $X$ with finitely many parts, we define its (Shannon) \emph{entropy} by
\[
    \H(\mP) := \sum_{A \in \mP} \P(A) \log \frac{1}{\P(A)},
\]
which increases when we pass to a finer partition; thus we have $0 = \H(\{X\}) \le \H(\mP) \le \H(\{\{x\} : x \in X\}) = \log |X|$. Given a finitely-valued random variable $f : X \to S$, we also define $\H(f) := \H(\Part(f)) = - \sum_{s \in S} \P(f = s) \log \P(f = s)$ (this is the most common definition of entropy). We will only use the inequality
\begin{equation} \label{eq:entropy-inequalities}
    \H(\mP \vee \mQ) \le \H(\mP) + \H(\mQ),
\end{equation}
which translates to $\H(f, g) \le \H(f) + \H(g)$ in random-variable notation; this follows from Jensen's inequality, and the difference $\I(\mP : \mQ) : \H(\mP) + \H(\mQ) - \H(\mP \vee \mQ)$ is called the \emph{mutual information} between $\mP$ and $\mQ$. We note that this property is also true for the simpler function $\log |\mP|$, but entropy is less sensitive to parts with small probability (which is crucial to our regularity lemma).

We write $M^T$ and $M^*$ for the transpose and the conjugate transpose of a matrix $M$. We now introduce a couple of matrices related to random variables and partitions, all in finite probability spaces with full $\sigma$-algebras.

\begin{definition}[Normalized matrices] \label{def:matrices}
Given finite probability spaces $(X, \P_X)$, $(Y, \P_Y)$, a function $f : X \times Y \to \C$, and an ordering $(x_i)_{1 \le i \le |X|}$ and $(y_j)_{1 \le j \le |Y|}$ of the elements of $X$ and $Y$, we define $\Mat(f)$ as the $|X| \times |Y|$ complex matrix with entries
\[
    \Mat(f)_{i,j} := f(x_i, y_j) \sqrt{\P_X(x_i) \P_Y(y_j)}.
\]
\end{definition}

\begin{remark}
To motivate this normalization, one can check that given any $k \ge 2$, finite probability spaces $(X_i, \P_i)$ (with orderings of their elements), and functions $f_i : X_i \to \C$ for $i \in \Z/k\Z$, one has
\begin{equation} \label{eq:k-cycle-count}
    \E_{x_1, \ldots, x_k} \prod_{i=1}^k f_i(x_i, x_{i+1}) = \Tr\left(\Mat(f_1)\cdots \Mat(f_k) \right).
\end{equation}
(In fact, \cref{eq:k-cycle-count} would hold if we used any normalization $\Mat(f)_{i,j} := f(x_i, y_j) \P_X(x_i)^\alpha \P_Y(y_j)^{1-\alpha}$ for $\alpha \in [0, 1]$; but having a symmetric normalization with $\alpha = 1/2$ leads to helpful additional properties, such as $\Mat(f)^T = \Mat(f^T)$ when $f^T(y, x) := f(x, y)$.)
\end{remark}

\begin{definition}[Averaging matrices]
Let $(X, \P_X)$ be a finite probability space, and let $(x_i)_{1 \le i \le |X|}$ be an ordering of the elements of $X$. Then for any partition $\mP$ of $X$, we define the matrix $A_{\mP} \in [0, \infty)^{n \times n}$ (which also implicitly depends on $\P_X$ and on the chosen ordering $(x_i)_i$) by
\[
    (A_{\mP})_{i,j} := \begin{cases}
    \frac{\sqrt{\P_X(x_i)\P_X(x_j)}}{\P_X(P)}, & \exists\ P \in \mP :\ x_i, x_j \in P, \\
    0, &\text{otherwise}.
    \end{cases}
\]
Hence $A_\mP$ is a block-diagonal matrix with $|\mP|$ diagonal blocks, the sum of the squared entries in each block being $1$, and each block having rank $1$ (in particular, $\rank (A_\mP) = |\mP|$).
\end{definition}

\begin{lemma}[Properties of averaging matrices] \label{lem:averaging-matrices}
Fix finite probability spaces $(X, \P_X)$ and $(Y, \P_Y)$, as well as orderings of their elements. With the understanding that $\mP$ and $\mR$ are partitions of $X$ and $\mQ$ is a partition of $Y$, the following hold true.
\begin{itemize}
    \item[$(i)$.] For any $f : X \times Y \to \C$, one has
    \[
        \Mat(\E(f \mid \mP \otimes \mQ)) = A_{\mP} \Mat(f) A_{\mQ}.
    \]
    \item[$(ii)$.] If $\mR$ is finer than or equal to $\mP$, then
    \[
        A_\mP A_{\mR} = A_{\mR} A_\mP = A_\mP.
    \]
    In particular, one has $A_\mP^2 = A_\mP$, so $A_\mP$ is a projection. Another consequence of this is that $(A_{\mR} - A_\mP)^2 = A_{\mR} - A_\mP$, so $A_{\mR} - A_\mP$ is also a projection.
    \item[$(iii)$.] For any complex matrix $M$ with $|X|$ rows, denoting $||A||_{S^2} := \Tr(A A^*)^{1/2}$ for now, one has
    \begin{equation} \label{eq:energy-increment-matrices}
        ||(A_{\mR} - A_\mP)M||_{S^2}^2 = ||A_{\mR} M||_{S^2}^2 - ||A_\mP M||_{S^2}^2.
    \end{equation}
    \item[$(iv)$.] The nonzero singular values (and eigenvalues) of $A_\mP$ consist of $|\mP|$ ones. If $\mR$ is finer than $\mP$, then the nonzero singular values (and eigenvalues) of $A_\mR - A_\mP$ consist of $|\mR| - |\mP|$ ones.
\end{itemize}
\end{lemma}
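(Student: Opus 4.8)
The plan is to identify $A_\mP$ with a concrete orthogonal projection and then read off all four parts mechanically. For each part $P \in \mP$, let $v_P \in \C^{|X|}$ be the vector supported on the indices of $P$ with $(v_P)_i = \sqrt{\P_X(x_i)/\P_X(P)}$ when $x_i \in P$ and $(v_P)_i = 0$ otherwise. Then $||v_P||_2^2 = \sum_{x_i \in P}\P_X(x_i)/\P_X(P) = 1$, the $v_P$ have pairwise disjoint supports (hence are orthonormal), and directly from the definition $A_\mP = \sum_{P \in \mP} v_P v_P^*$. Therefore $A_\mP$ is the orthogonal projection onto $V_\mP := \mathrm{span}\{v_P : P \in \mP\}$, a subspace of dimension $|\mP|$. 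In particular $A_\mP = A_\mP^* = A_\mP^2$, and since $A_\mP$ is positive semidefinite and self-adjoint its singular values equal its eigenvalues, namely $1$ with multiplicity $|\mP|$ and $0$ otherwise; this already yields $A_\mP^2 = A_\mP$ from $(ii)$ and the first half of $(iv)$.

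For $(i)$ I would simply expand both sides entrywise. Writing $x_i \in P_i \in \mP$ and $y_j \in Q_j \in \mQ$, the entry $(A_\mP \Mat(f) A_\mQ)_{i,j}$ is a sum over $x_k \in P_i$, $y_l \in Q_j$ of $\tfrac{\sqrt{\P_X(x_i)\P_X(x_k)}}{\P_X(P_i)}\, f(x_k,y_l)\sqrt{\P_X(x_k)\P_Y(y_l)}\,\tfrac{\sqrt{\P_Y(y_l)\P_Y(y_j)}}{\P_Y(Q_j)}$, which collapses to $\sqrt{\P_X(x_i)\P_Y(y_j)}\cdot \tfrac{1}{\P_X(P_i)\P_Y(Q_j)}\sum_{x_k\in P_i,\, y_l\in Q_j} f(x_k,y_l)\P_X(x_k)\P_Y(y_l)$; the trailing average is exactly $\E[f \mid P_i\times Q_j] = \E(f\mid\mP\otimes\mQ)(x_i,y_j)$ by \cref{eq:conditional-exp}, so the product equals $\Mat(\E(f\mid\mP\otimes\mQ))_{i,j}$.

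The one structural point, which drives $(ii)$--$(iv)$, is the nesting $V_\mP \subseteq V_\mR$ when $\mR$ refines $\mP$. This follows from the coordinatewise identity $v_P = \sum_{R\in\mR,\, R\subseteq P}\sqrt{\P_X(R)/\P_X(P)}\; v_R$. Granting $V_\mP\subseteq V_\mR$, standard facts about orthogonal projections give $A_\mR A_\mP = A_\mP$ (projecting onto the larger space fixes vectors of the smaller) and $A_\mP A_\mR = (A_\mR A_\mP)^* = A_\mP$, which is $(ii)$; then $(A_\mR-A_\mP)^2 = A_\mR - A_\mR A_\mP - A_\mP A_\mR + A_\mP^2 = A_\mR - A_\mP$, so $A_\mR - A_\mP$ is self-adjoint and idempotent, hence an orthogonal projection. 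Its range lies in $V_\mR$ (it is fixed by $A_\mR$) and is annihilated by $A_\mP$, so it is the projection onto $V_\mR\ominus V_\mP$, of dimension $|\mR|-|\mP|$, which gives $(iv)$. For $(iii)$, put $B = A_\mR - A_\mP$; using $B = B^* = B^2$ and cyclicity of the trace, $||BM||_{S^2}^2 = \Tr(BMM^*B^*) = \Tr(B^2 MM^*) = \Tr(BMM^*) = \Tr(A_\mR MM^*) - \Tr(A_\mP MM^*)$, while likewise $||A_\mR M||_{S^2}^2 = \Tr(A_\mR^2 MM^*) = \Tr(A_\mR MM^*)$ and $||A_\mP M||_{S^2}^2 = \Tr(A_\mP MM^*)$, so $(iii)$ follows by subtraction.

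I do not expect a genuine obstacle here: the argument is elementary linear algebra of orthogonal projections onto nested subspaces. The only things that require care are the normalization conventions in $\Mat$ and $A_\mP$ (so that $||v_P||_2 = 1$ and the conditional-expectation average in $(i)$ comes out exactly right) and the refinement identity for $v_P$; once those are pinned down, each part is a short computation.
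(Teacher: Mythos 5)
Your proof is correct and follows essentially the same route as the paper: identify $A_\mP$ as an orthogonal projection, verify $(i)$ by direct entrywise computation, use idempotence and cyclicity of the trace for $(iii)$, and read off $(iv)$ from the projection structure (the paper gets the rank count by applying $(iii)$ with $M$ the identity, while you compute $\dim(V_\mR \ominus V_\mP)$ directly — a cosmetic difference). Your explicit rank-one decomposition $A_\mP = \sum_P v_P v_P^*$ and the refinement identity for $v_P$ just make precise what the paper dispatches with "follow quickly from our definitions."
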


\begin{proof}
Parts $(i)$ and $(ii)$ follow quickly from our definitions and \cref{eq:conditional-exp}.
Part $(iii)$ is Pythagoras' theorem; more precisely, we can expand
\[
\begin{aligned}
    \Tr\left((A_\mR - A_\mP)M M^* (A_\mR - A_\mP) \right)
    &= 
    \Tr\left((A_\mR - A_\mP)^2 M M^*\right)
    \\
    &= 
    \Tr\left((A_\mR - A_\mP) M M^*\right)
    \\
    &=
    \Tr(A_\mR M M^*) - \Tr(A_\mP M M^*)
    \\
    &= 
    \Tr(A_\mR M M^* A_\mR) - \Tr(A_\mP M M^* A_\mP),
\end{aligned}
\]
using the circular symmetry and the linearity of the trace. Part $(iv)$ follows from the fact that projections can only have eigenvalues of $0$ and $1$, and from \cref{eq:energy-increment-matrices} when $M$ is the identity matrix (thus $A_\mR - A_\mP$ has a total of $||A_\mR||_{S^2}^2 - ||A_\mP||_{S^2}^2 = |\mR| - |\mP|$ singular values equal to $1$).
\end{proof}

\subsection{Schatten norms and related energies} \label{subsec:Schatten}

\begin{definition}[Schatten norms of matrices]
Given $q \in [1, \infty)$ and a matrix $M \in \C^{m \times n}$ with singular values $\sigma_1 \ge \sigma_2 \ge \ldots \ge \sigma_{\min(m,n)}$, we define its \emph{Schatten norms} by
\[
    ||M||_{S^q} := \left(\sum_{j=1}^{\min(m,n)} \sigma_j^q\right)^{1/q}
    \qquad\text{and}\qquad 
    ||M||_{S^\infty} := \max_{1 \le j \le \min(m,n)} \sigma_j.
\]
\end{definition}

\begin{lemma} \label{lem:Schatten-norms-matrices}
For $q, q_i, r \in [1, \infty]$ and any suitably-sized complex matrices $M, A_1, \ldots, A_k$, the following hold true.
\begin{itemize}
    \item[$(i)$.] \emph{(Norm)} $||\cdot||_{S^q}$ is a unitary-invariant norm on $\C^{m \times n}$, and $||M||_{S^q} = ||M^T||_{S^q} = ||M^*||_{S^q}$.
    \item[$(ii)$.] \emph{(Monotonicity and interpolation)} For $q_0 < q_1$, one has $||M||_{S^{q_0}} \ge ||M||_{S^{q_1}}$. Moreover, for $\theta \in [0, 1]$ and $\frac{1}{q_\theta} := \frac{1-\theta}{q_0} + \frac{\theta}{q_1}$, one has 
    \[
        ||M||_{S^{q_\theta}} \le ||M||_{S^{q_0}}^{1-\theta} \cdot  ||M||_{S^{q_1}}^{\theta}.
    \]
    \item[$(iii)$.] \emph{(Special cases)} $||M||_{S^\infty}$ is the spectral ($L^2$ operator) norm of $M$, often written $||M||$ or $||M||_2$, while $||M||_{S^2}$ is the $L^2$ norm on the entries of $M$ (also known as the Frobenius norm). More generally, one has $||M||_{S^{2k}}^{2k} = \Tr((MM^*)^k)$ for $k \in \Z_{\ge 1}$. In particular, if $M$ is the adjacency matrix of an undirected graph $G$ and $k \ge 2$, then $||M||_{S^{2k}}^{2k} = \Hom(C_{2k}, G)$.
    \item[$(iv)$.] \emph{(Doubling property)} One has
    \begin{equation} \label{eq:halfing}
        ||M||_{S^q} = ||MM^*||_{S^{q/2}}^{1/2}.
    \end{equation}
    \item[$(v)$.] \emph{(H\"older-type inequality)} If $\frac{1}{r} = \frac{1}{q_1} + \cdots + \frac{1}{q_k}$, one has $||A_1 \cdots A_k||_{S^r} \le ||A_1||_{S^{q_1}} \cdots ||A_k||_{S^{q_k}}$. In particular, given any diagonal matrix $I_A$ with entries in $\{0, 1\}$, and any partitions $\mP, \mR$ of a suitably-sized set such that $\mR$ is finer than or equal to $\mP$, one has
    \begin{equation} \label{eq:ignore-averaging}
        \max\left(||I_A M||_{S^r},\ ||A_{\mP}M||_{S^r},\ ||(A_{\mR} - A_\mP)M||_{S^r} \right) \le ||M||_{S^r}.
    \end{equation}
    \item[$(vi)$.] \emph{(Trace norm inequality)} When $q = 1$, $||M||_{S^1}$ (also known as the trace norm or nuclear norm) satisfies
    \[
        |\Tr(M)| \le ||M||_{S^1}.
    \]
\end{itemize}

\end{lemma}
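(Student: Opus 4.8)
The plan is to reduce all six parts to one observation---that $||M||_{S^q}$ is the $\ell^q$ norm of the ordered vector $\sigma(M) = (\sigma_j)_j$ of singular values of $M$---together with two elementary facts: $(a)$ the matrices $M$, $M^T$ and $M^*$ share the same singular values, since $MM^*$ and $M^*M$ have equal nonzero spectra and $M^T(M^T)^* = \overline{M^*M}$ is the entrywise conjugate of a Hermitian matrix, hence has the same (real) eigenvalues; and $(b)$ $MM^*$ is positive semidefinite with eigenvalues $\sigma_j^2$. Part $(i)$ then follows: unitary invariance holds because unitary equivalence preserves singular values, the transpose/adjoint identities are fact $(a)$, and the only substantive point---the triangle inequality for $||\cdot||_{S^q}$---I would cite from a standard reference (e.g.\ Bhatia, \emph{Matrix Analysis}) or, for a self-contained argument, deduce from the duality $||M||_{S^q} = \sup\{ |\Tr(MX^*)| : ||X||_{S^{q'}} \le 1 \}$ with $\tfrac1q + \tfrac1{q'} = 1$, itself a consequence of von Neumann's trace inequality. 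Parts $(ii)$ and $(iv)$ are then one line each: $(ii)$ is the monotonicity and log-convexity (Littlewood's inequality) of $\ell^q$ norms applied to $\sigma(M)$, and $(iv)$ holds because $MM^*$ has singular values $\sigma_j^2$ by fact $(b)$, so $||MM^*||_{S^{q/2}}^{q/2} = \sum_j \sigma_j^q = ||M||_{S^q}^q$ (with the obvious modification when $q = \infty$).

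For part $(iii)$: $||M||_{S^\infty} = \sigma_1$ is by definition the operator norm; $||M||_{S^2}^2 = \sum_j \sigma_j^2 = \Tr(MM^*) = \sum_{i,j} |M_{i,j}|^2$ is the squared Frobenius norm; and more generally $||M||_{S^{2k}}^{2k} = \sum_j (\sigma_j^2)^k = \Tr((MM^*)^k)$ by fact $(b)$. When $M$ is the adjacency matrix of a graph $G$ we have $M = M^*$, so this equals $\Tr(M^{2k})$, the number of closed walks of length $2k$ in $G$; expanding $\Tr(M^{2k}) = \sum_{x_1, \ldots, x_{2k}} \prod_i \one_G(x_i, x_{i+1})$ (indices mod $2k$) and comparing with \cref{not:graphs} identifies this with $\Hom(C_{2k}, G)$ for $k \ge 2$.

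Part $(v)$ is the classical Hölder inequality for Schatten norms. I would first establish the two-factor case $||AB||_{S^r} \le ||A||_{S^p} ||B||_{S^q}$ for $\tfrac1r = \tfrac1p + \tfrac1q$---either quoting it or combining the submultiplicativity of singular values $\sigma_{i+j-1}(AB) \le \sigma_i(A)\sigma_j(B)$ with Hölder's inequality on sequences---and then iterate to get the $k$-factor version. For the ``in particular'' clause, the key point is that $I_A$, $A_\mP$ and $A_\mR - A_\mP$ are all orthogonal projections: $I_A$ visibly, and $A_\mP$, $A_\mR - A_\mP$ by part $(ii)$ of \cref{lem:averaging-matrices}, being real, symmetric and idempotent; so each has all singular values in $\{0,1\}$ and hence $S^\infty$-norm at most $1$, and applying the two-factor inequality with one exponent $\infty$ then yields \cref{eq:ignore-averaging}. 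Finally, part $(vi)$ follows by taking a singular value decomposition $M = U\Sigma V^*$: then $\Tr(M) = \Tr(\Sigma V^* U) = \sum_j \sigma_j (V^*U)_{j,j}$, and since $V^*U$ is unitary each $|(V^*U)_{j,j}| \le 1$, whence $|\Tr(M)| \le \sum_j \sigma_j = ||M||_{S^1}$.

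The only genuinely non-elementary ingredient here is the triangle inequality in $(i)$---equivalently von Neumann's trace inequality, or the Ky Fan dominance principle---so I expect that to be the one point where the proof leans on a textbook fact rather than a self-contained computation; all of $(ii)$--$(vi)$ reduce to routine manipulations of singular values and traces once the singular-value description of $||\cdot||_{S^q}$ is in hand.
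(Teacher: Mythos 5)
Your proposal is correct and follows essentially the same route as the paper: reduce everything to the singular-value description of $||\cdot||_{S^q}$, cite a standard reference for the triangle inequality in $(i)$ and for the two-factor H\"older inequality in $(v)$ (the paper uses Zhan and Ball--Carlen--Lieb), deduce \cref{eq:ignore-averaging} from the fact that $I_A$, $A_\mP$ and $A_\mR - A_\mP$ are projections with $S^\infty$-norm at most $1$, and prove $(vi)$ by the identical computation $\Tr(M) = \Tr(\Sigma V^*U) = \sum_j \sigma_j (V^*U)_{j,j}$ with $|(V^*U)_{j,j}| \le 1$. No substantive differences.
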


\begin{proof}
For the fact that $||\cdot ||_{S^q}$ defines a norm, see \cite{bhatia2000cartesian,zhan2002matrix}. Otherwise, parts $(i)$, $(iii)$ and $(iv)$ follow from basic properties of singular values, and part $(ii)$ follows from the monotonicity and interpolation properties of $L^q$ norms (for the counting measure). 

The H\"older-type inequality in $(v)$ reduces to the case $k = 2$ (i.e., $||AB||_{S^r} \le ||A||_{S^p} ||B||_{S^q}$ when $\frac{1}{r} = \frac{1}{p} + \frac{1}{q}$); we refer the reader to \cite[Corollary 4.27]{zhan2002matrix} and \cite[\S III]{ball2002sharp} for this case. Then, \cref{eq:ignore-averaging} follows by taking $p = \infty$ and using that $||I_A||_{S^\infty}$, $||A_\mP||_{S^\infty}$, $||A_\mR - A_\mP||_{S^\infty}$ are either $0$ or $1$ (by \cref{lem:averaging-matrices}). 

Finally, let us give a short proof of the trace norm inequality in $(vi)$. Let $M = U\Sigma V^*$ be a singular value decomposition of $M$, where $\Sigma$ has diagonal entries $\sigma_i$. Then
\[
    \Tr(M) = \Tr(U\Sigma V^*) = \Tr(\Sigma V^* U) 
    =
    \sum_i \sigma_i w_i,
\]
where $w_i$ is the $(i,i)$th entry of $V^*U$, i.e., the inner product between $U$'s $i$th column and $V$'s $i$th row. But $|w_i| \le 1$ by Cauchy--Schwarz (since $U, V$ are unitary), and thus $|\Tr(M)| \le \sum_i \sigma_i = ||M||_{S^1}$.
\end{proof}

\begin{definition}[Schatten norms of functions]
Given $q \in [1, \infty]$, finite probability spaces $(X, \P_X)$, $(Y, \P_Y)$, and a function $f : X \times Y \to \C$, we define the $q$-\emph{Schatten norm} of $f$ by
\[
    ||f||_{S^q} := ||\Mat(f)||_{S^q},
\]
Note that this does not depend on the chosen ordering of the elements of $X$ and $Y$ (implicit in the notation $\Mat(f)$), since permutation matrices are unitary.
\end{definition}

\begin{lemma} \label{lem:Schatten-norms-functions}
Given finite probability spaces $(X, \P_X)$, $(Y, \P_Y)$, a function $f : X \times Y \to \C$, and $q, q_i \in [1, \infty]$, the following hold true.
\begin{itemize}
    \item[$(i)$.] $|| \cdot ||_{S^q}$ defines a norm on functions from $X \times Y$ to $\C$, which satisfies the same monotonicity and interpolation properties as in \cref{lem:Schatten-norms-matrices}. Moreover, $||f^T||_{S^q} = ||f||_{S^q}$.
    \item[$(ii)$.] When $q \in 2\Z \cup \{\infty\}$, one can rewrite $||f||_{S^q}$ as in \cref{eq:Schatten-2k} and \cref{eq:Schatten-infty}. In particular, one has $||f||_{S^2} = ||f||_{L^2}$, $||f||_{S^4} = ||f||\gow$, and $||f||_{S^\infty} \ge ||f||\sq$.
    \item[$(iii)$.] For any $A \subset X$ and $B \subset Y$, and any partitions $\mP$ of $X$ and $\mQ$ of $Y$, one has
    \[
        \max \left(||f \one_{A \times B}||_{S^q} ,\ ||\E(f \mid \mP \otimes \mQ)||_{S^q}\right) \le ||f||_{S^q}.
    \]
    Moreover, $\Mat(\E(f \mid \mP \otimes \mQ))$ has at most $\max(|\mP|, |\mQ|)$ nonzero singular values; in particular, the constant function $\one_{X \times Y}$ has $||\one_{X \times Y}||_{S^q} = 1$.
    \item[$(iv)$.] \emph{(Cycle counting inequality)}
    Let $k \in \Z_{\ge 2}$ and suppose that $\sum_{i=1}^k \frac{1}{q_i} \ge 1$.
    For $i \in \Z/k\Z$, let $(X_i, \P_i)$ be finite probability spaces and $f_i : X_i \times X_{i+1} \to \C$. Then one has
    \[
        \left\vert \E_{x_1, \ldots, x_k} \prod_{i=1}^k f_i(x_i, x_{i+1}) \right\vert 
        \le 
        \prod_{i=1}^k ||f_i||_{S^{q_i}}.
    \]
\end{itemize}
\end{lemma}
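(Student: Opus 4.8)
The plan is to reduce the multilinear cycle-counting inequality to the Hölder-type inequality for Schatten norms of matrices (\cref{lem:Schatten-norms-matrices}$(v)$) combined with the trace norm inequality (\cref{lem:Schatten-norms-matrices}$(vi)$), using the trace identity \cref{eq:k-cycle-count} as the bridge. Concretely, set $M_i := \Mat(f_i)$, so that $M_i$ is an $|X_i| \times |X_{i+1}|$ matrix and the product $M_1 M_2 \cdots M_k$ is a square $|X_1| \times |X_1|$ matrix. By \cref{eq:k-cycle-count} (applied with the given probability spaces and functions, and indices taken mod $k$), the left-hand side equals $\left\vert \Tr(M_1 M_2 \cdots M_k) \right\vert$.

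The main chain of inequalities is then: first apply the trace norm inequality to get $\left\vert \Tr(M_1 \cdots M_k) \right\vert \le ||M_1 \cdots M_k||_{S^1}$; then apply the Hölder-type inequality, which requires exponents $q_i$ with $\frac{1}{q_1} + \cdots + \frac{1}{q_k} = 1$. The hypothesis only gives $\sum_i 1/q_i \ge 1$, so the one genuinely nontrivial bookkeeping step is to pass to a valid exponent tuple: choose $q_i' \ge q_i$ with $\sum_i 1/q_i' = 1$ exactly (possible since decreasing the $1/q_i$'s continuously down to $0$ lets us hit any target $\le \sum_i 1/q_i$), and then invoke monotonicity of Schatten norms in the exponent (\cref{lem:Schatten-norms-matrices}$(ii)$) to get $||M_i||_{S^{q_i'}} \le ||M_i||_{S^{q_i}}$. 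Stringing these together:
\[
    \left\vert \E_{x_1, \ldots, x_k} \prod_{i=1}^k f_i(x_i, x_{i+1}) \right\vert
    = \left\vert \Tr(M_1 \cdots M_k) \right\vert
    \le ||M_1 \cdots M_k||_{S^1}
    \le \prod_{i=1}^k ||M_i||_{S^{q_i'}}
    \le \prod_{i=1}^k ||M_i||_{S^{q_i}}
    = \prod_{i=1}^k ||f_i||_{S^{q_i}},
\]
where the last equality is the definition of the Schatten norm of a function.

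I do not expect any real obstacle here: every ingredient is already recorded in the excerpt (the trace formula \cref{eq:k-cycle-count}, parts $(ii)$, $(v)$, $(vi)$ of \cref{lem:Schatten-norms-matrices}, and the definition $||f_i||_{S^{q_i}} = ||\Mat(f_i)||_{S^{q_i}}$). The only point deserving a sentence of care is the case where some $q_i = \infty$: then $1/q_i = 0$ and one simply omits that index when solving $\sum 1/q_i' = 1$, or more cleanly notes that the Hölder-type inequality in \cref{lem:Schatten-norms-matrices}$(v)$ already allows exponents equal to $\infty$, so the argument goes through verbatim with the convention $1/\infty = 0$. One should also remark that $M_1 \cdots M_k$ is indeed square (its dimensions are $|X_1| \times |X_{k+1}| = |X_1| \times |X_1|$ since indices are mod $k$), so that taking its trace makes sense.
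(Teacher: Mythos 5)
Your proposal is correct and takes essentially the same route as the paper: both pass to $\Tr(\Mat(f_1)\cdots\Mat(f_k))$ via \cref{eq:k-cycle-count}, bound this by the $S^1$ norm using the trace norm inequality, apply the H\"older-type inequality after replacing the exponents by $q_i'$ with $\sum_i 1/q_i' = 1$ (the paper simply takes $q_i' := q_i \sum_j 1/q_j$ explicitly rather than arguing existence), and conclude by monotonicity of the Schatten norms in the exponent.
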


\begin{proof}
Parts $(i)$ and $(ii)$ follow from the first three parts of \cref{lem:Schatten-norms-matrices}, \cref{eq:k-cycle-count}, and the fact that the operator norm of an $m \times n$ complex matrix $M$ can be rewritten as
\[
    ||M||_{S^\infty} = \sup_{\substack{v \in \C^m, w \in \C^n \\ ||v||_2 = ||w||_2 = 1}} |v^T M w|,
\]
where we can make the change of variables $v_j := g(x_j) \sqrt{\P(x_j)}$ and $w_j := h(y_j) \sqrt{\P(y_j)}$ for some functions $g : X \to \C, h : Y \to \C$ with $L^2$ norms equal to $1$. The first part of $(iii)$ follows from \cref{lem:Schatten-norms-matrices}.(v), and the second part follows from the facts that $\Mat(\E(f \mid \mP \otimes \mQ)) = A_\mP \Mat(f) A_\mQ$, $\rank(A_\mP) = |\mP|$, and $\rank(A_\mQ) = |\mQ|$.

Finally, let us prove the cycle counting inequality in $(iv)$. Fixing an ordering of the elements of each $X_i$ and letting $F_{i} := \Mat(f_{i})$, we need to show (by \cref{eq:k-cycle-count}) that 
\[
    \left\vert \Tr\left( F_1 \cdots F_k \right)\right\vert \le \prod_{i=1}^k ||F_i||_{S^{q_i}}.
\]
For $i \in \Z/k\Z$, define $q_i' \in [q_i, \infty]$ by $q_i' := q_i \sum_{j=1}^k \frac{1}{q_j}$, so that $\sum_{i=1}^k \frac{1}{q_i'} = 1$. Then by the trace norm and H\"older-type inequalities in \cref{lem:Schatten-norms-matrices}, and the monotonicity of Schatten norms, we have
\[
    \left\vert \Tr\left( F_1 \cdots F_k \right)\right\vert \le \left\vert \left\vert F_1 \cdots F_k \right\vert\right\vert_{S^1} 
    \le 
    \prod_{i=1}^k ||F_i||_{S^{q_i'}} \le 
    \prod_{i=1}^k ||F_i||_{S^{q_i}},
\]
as we wanted.
\end{proof}

\begin{remark}
The cycle counting inequality in \cref{lem:Schatten-norms-functions} can be regarded as a generalization of the Gowers--Cauchy--Schwarz inequality of order $2$ (the latter is the special case $k = 4$, $q_1 = \cdots = q_4 = 1/4$). When $q_1 = \cdots = q_k \in 2\Z$, it also follows from Hatami's H\"older-type inequality \cite{hatami2009generalizations}.

In fact, when $q, q_1, \ldots, q_k$ are even integers, most (if not all) of the properties in \cref{lem:Schatten-norms-matrices,lem:Schatten-norms-functions} can be proven combinatorially, using repeated (and rather convoluted) applications of Cauchy--Schwarz, similar to those in \cite{conlon2017finite}. Following this idea, it is possible that some of the generalized Gowers norms considered by Hatami and then Conlon and Lee \cite{hatami2009generalizations,conlon2017finite} could function as a higher-order substitute for the Schatten norms, in an extension of our work to hypergraphs.
% The statements for rational $q, q_1, \ldots, q_k$ can then be recovered by considering the matrices $A_b := U D^{1/b} V^*$ for all

%If $A = U \Sigma V^*$, you can consider $A' := U \Sigma^{1/b} V^*$, so that $||A||_{S^{2a/b}} = ||A'||_{S^{2a}}^b$.

%This could be relevant for generaliz.\ to higher order; talk briefly about the work of ...  on higher norms (either here or in the intro!)
\end{remark}

\begin{open}
Obtain compatible versions of \cref{thm:regularity} and \cref{lem:counting} for hypergraphs, using combinatorial analogues of the Schatten norms.
\end{open}

We also mention a relationship between Schatten norms and $L^q$ norms of Fourier coefficients, connecting the graph-theoretic and Fourier-analytic methods in additive combinatorics.

\begin{lemma}[Fourier coefficients and singular values] \label{lem:Fourier-singular}
Let $G$ be a finite abelian group (equipped with the uniform probability measure) and $a, b \in \Z$ be such that $\gcd(|G|, ab) = 1$. Let $f : G \to \C$, and define $\tilde{f} : G \times G \to \C$ by $\tilde{f}(x, y) := f(ax+by)$. Then one has
\[
    ||\tilde{f}||_{S^q} = \frac{1}{|G|} ||\hat{f}||_{q}.
\]
\end{lemma}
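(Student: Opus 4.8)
The plan is to identify the singular values of the matrix $\Mat(\tilde f)$ explicitly via Fourier analysis on $G$, after first normalizing away the coefficients $a$ and $b$.

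\emph{Reduction to $a=b=1$.} Because $\gcd(|G|,ab)=1$, multiplication by $a$ is a bijection of $G$: if $ag=0$ then $\ord(g)\mid a$, while also $\ord(g)\mid|G|$, so $\gcd(|G|,a)=1$ forces $g=0$, and an injective selfmap of a finite set is bijective; the same holds for $b$. Writing $h(u,v):=f(u+v)$, we have $\tilde f(x,y)=h(ax,by)$, so $\Mat(\tilde f)$ is obtained from $\Mat(h)$ by permuting rows and columns via the bijections $x\mapsto ax$ and $y\mapsto by$. Since permutation matrices are unitary and $||\cdot||_{S^q}$ is unitarily invariant, $||\tilde f||_{S^q}=||h||_{S^q}$, and putting $n:=|G|$ it remains to prove $||h||_{S^q}=\tfrac1n||\hat f||_q$.

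\emph{Passing to a concrete matrix.} As $G$ carries the uniform measure, $\Mat(h)_{x,y}=\tfrac1n f(x+y)$, i.e.\ $\Mat(h)=\tfrac1n M$ where $M\in\C^{G\times G}$ is defined by $M_{x,y}=f(x+y)$. It thus suffices to show that the singular values of $M$, with multiplicity, are exactly $\{|\hat f(\chi)|:\chi\in\hat G\}$: granting this, $||M||_{S^q}^q=\sum_{\chi\in\hat G}|\hat f(\chi)|^q=||\hat f||_q^q$ for $q<\infty$ and $||M||_{S^\infty}=\max_{\chi\in\hat G}|\hat f(\chi)|=||\hat f||_\infty$, whence $||h||_{S^q}=\tfrac1n||M||_{S^q}=\tfrac1n||\hat f||_q$.

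\emph{Fourier diagonalization.} The normalized characters $\{\chi/\sqrt n:\chi\in\hat G\}$ form an orthonormal basis of $\C^G$, and there are $n$ of them, so they account for all $n$ singular values of $M$. For $\chi\in\hat G$, substituting $z=x+y$ and using $\chi(z-x)=\chi(z)\overline{\chi(x)}$,
\[
    (M\chi)(x)=\sum_{y\in G}f(x+y)\chi(y)=\sum_{z\in G}f(z)\chi(z-x)=\overline{\chi(x)}\sum_{z\in G}f(z)\chi(z)=\overline{\chi(x)}\,\hat f(\bar\chi),
\]
where $\bar\chi$ is the conjugate character and $\sum_z f(z)\chi(z)=\hat f(\bar\chi)$ by the definition of the Fourier transform; that is, $M\chi=\hat f(\bar\chi)\,\bar\chi$. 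Now $M^*_{x,y}=\overline{M_{y,x}}=\overline{f(x+y)}$, so $M^*$ is the matrix of $(u,v)\mapsto\overline{f(u+v)}$, and the same computation (using $\widehat{\overline f}(\bar\eta)=\overline{\hat f(\eta)}$) gives $M^*\eta=\overline{\hat f(\eta)}\,\bar\eta$ for every $\eta\in\hat G$, in particular $M^*\bar\chi=\overline{\hat f(\bar\chi)}\,\chi$. Composing, $M^*M\chi=\hat f(\bar\chi)\,\overline{\hat f(\bar\chi)}\,\chi=|\hat f(\bar\chi)|^2\,\chi$, so each $\chi$ is an eigenvector of the positive semidefinite matrix $M^*M$ with eigenvalue $|\hat f(\bar\chi)|^2$. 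As $\chi\mapsto\bar\chi$ is a bijection of $\hat G$, the eigenvalues of $M^*M$ are precisely $\{|\hat f(\chi)|^2:\chi\in\hat G\}$, hence the singular values of $M$ are $\{|\hat f(\chi)|:\chi\in\hat G\}$, completing the proof.

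I do not expect a genuine obstacle: the only points requiring care are the verification that multiplication by $a$ and $b$ are bijections (the sole place where $\gcd(|G|,ab)=1$ is used) and keeping track of the Fourier convention so that the conjugate character lands in the right slot. Alternatively, one can avoid the $M^*M$ computation by exhibiting the factorization $M=\overline\Psi\,D\,\Psi^*$, where $\Psi$ is the unitary matrix with columns $\chi/\sqrt n$ and $D=\diag\big(\hat f(\bar\chi)\big)_{\chi\in\hat G}$, and reading the singular values off the diagonal of $|D|$.
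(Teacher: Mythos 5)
Your proof is correct and follows essentially the same route as the paper: reduce to $a=b=1$ via the unitary (permutation) invariance of Schatten norms, then diagonalize the matrix $(x,y)\mapsto f(x+y)$ using the characters of $G$. The paper exhibits the factorization $M = UDU^T$ via the inverse Fourier transform, whereas you compute the action of $M^*M$ on characters directly (and mention the factorization as an alternative); these are the same argument in two presentations.
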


\begin{proof}
Since the maps $x \mapsto ax$ and $y \mapsto by$ are automorphisms of $G$, and permutations of the rows and columns of a matrix preserve its singular values, we have
\[
    ||\tilde{f}||_{S^q}
    =
    \frac{1}{|G|}||M||_{S^q},
\]
where $M$ is the matrix with entries $M_{x,y} = f(x+y)$ (for some ordering of the elements of $G$). It thus suffices to show that the singular values of $M$ are precisely the absolute values of the Fourier coefficients of $f$. Indeed, the inverse Fourier transform identity implies that for any $\chi \in \hat{G}$, one has
\[
    f(x+y) = \frac{1}{|G|} \sum_{\chi} \chi(x) \hat{f}(\chi) \chi(y),
\]
which translates to the identity of $|G| \times |G|$ matrices (noting that $|G| = |\hat{G}|$)
\[
    M = U D U^T,
    \qquad\quad \text{where:}\qquad\quad  
    U_{x,\chi} = \chi(x),
    \qquad 
    D_{\chi_1, \chi_2} = \hat{f}(\chi_1) \one_{\chi_1 = \chi_2}.
\]
Moreover, the matrices $U$ and $U^T$ are unitary by standard properties of the Fourier transform. Hence $M M^* = U D D^* U^*$, which is a diagonalization of $MM^*$, and thus the singular values of $M$ are the square roots of the diagonal entries of $D D^*$, i.e., $|\hat{f}(\chi)|$ for $\chi \in \hat{G}$.
\end{proof}

We end this subsection by defining a family of energy functions related to the Schatten norms, which will be used in the proof of our regularity lemma (and may be of independent interest).

\begin{definition}[Generalized $L^2$ energy] \label{def:generalized-L2-energy}
Fix a finite probability space $(X, \P_X)$ and an ordering of its elements. For any complex matrix $F$ with $|X|$ rows, any integer $\ell \ge 1$, and any partition $\mP$ of $X$, we define the (left) $\ell$th $L^2$ energy of $\mP$ with respect to $F$ by
\[
    \mE_{F,\ell}(\mP) := ||A_\mP F_\ell||_{S^2}^2,
\]
where $F_\ell$ can be defined inductively by $F_1 := F$ and $F_{\ell+1} := F_\ell F_\ell^*$. In other words,
\[
    F_\ell := 
    \begin{cases} 
        F, & \ell = 1, \\
        (FF^*)^{2^{\ell-2}}, & \ell \ge 2.
    \end{cases}
\]
We also define the \emph{total} $\ell$th $L^2$ energy of $F$ by $\mE_{M,\ell} := ||F_\ell||_{S^2}^2$. Given another probability space $(Y, \P_Y)$ and a function $f : X \times Y \to \C$, we define $\mE_{f,\ell}(\mP) := \mE_{\Mat(f),\ell}(\mP)$ and $\mE_{f,\ell} := \mE_{\Mat(f),\ell}$.
\end{definition}

\begin{remark}
With the notation above, for $\ell \ge 1$ one has
\[
    \mE_{F,\ell} = \Tr \left(F_\ell F_\ell^* \right) =
    \Tr \left(F_{\ell+1} \right)
    = \Tr \left((FF^*)^{2^{\ell-1}} \right)
    =
    ||F||_{S^{2^{\ell}}}^{2^{\ell}},
\]
and thus $\mE_{f,\ell} = ||f||_{S^{2^\ell}}^{2^\ell}$ for any $f : X \times Y \to \C$. Also, $\mE_{f,\ell}(\mP)$ can only increase when we pass to a finer partition, due to \cref{eq:energy-increment-matrices}.
\end{remark}

\begin{lemma}[Energy increment] \label{lem:energy-increment}
Let $(X, \P_X)$, $(Y, \P_Y)$ be finite probability spaces, $f : X \times Y \to \C$, and $\mP = \mP_X \otimes \mP_Y$, $\mR = \mR_X \otimes \mR_Y$ be partitions of $X \times Y$ such that $\mR$ is finer than or equal to $\mP$. Then for all $\ell \ge 1$, one has
\[
\begin{aligned}
    ||\E(f \mid \mR) - \E(f \mid \mP)||_{S^{2^\ell}} 
    &\le 
    (\mE_{f,\ell}(\mR_X) - \mE_{f,\ell}(\mP_X))^{1/2^\ell}
    \\
    &+
    (\mE_{f^T,\ell}(\mR_Y) - \mE_{f^T,\ell}(\mP_Y))^{1/2^\ell},
\end{aligned}
\]
where $f^T : Y \times X \to \C$ is defined by $f^T(y, x) := f(x, y)$.
\end{lemma}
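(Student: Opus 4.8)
Writing $F:=\Mat(f)$, the plan is to reduce everything to a one-sided statement about a single orthogonal projection, and then to prove that statement by iterating the doubling identity \cref{eq:halfing}. First I would record the easy fact that, since $\mP=\mP_X\otimes\mP_Y$ and $\mR=\mR_X\otimes\mR_Y$ are product partitions, $\mR$ being finer than or equal to $\mP$ forces $\mR_X$ to be finer than or equal to $\mP_X$ and $\mR_Y$ finer than or equal to $\mP_Y$; hence $A_{\mR_X}-A_{\mP_X}$ and $A_{\mR_Y}-A_{\mP_Y}$ are orthogonal projections by \cref{lem:averaging-matrices}.$(ii)$, with $S^\infty$-norm at most $1$ by \cref{lem:averaging-matrices}.$(iv)$. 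By \cref{lem:averaging-matrices}.$(i)$, $\Mat(\E(f\mid\mR))=A_{\mR_X}FA_{\mR_Y}$ and $\Mat(\E(f\mid\mP))=A_{\mP_X}FA_{\mP_Y}$, so the one-step telescoping identity
\[
A_{\mR_X}FA_{\mR_Y}-A_{\mP_X}FA_{\mP_Y}=(A_{\mR_X}-A_{\mP_X})\,F\,A_{\mR_Y}+A_{\mP_X}\,F\,(A_{\mR_Y}-A_{\mP_Y}),
\]
the triangle inequality, and \cref{lem:Schatten-norms-matrices}.$(v)$ (multiplying by the contractions $A_{\mR_Y},A_{\mP_X}$ cannot increase an $S^{2^\ell}$-norm) reduce the lemma to bounding $||(A_{\mR_X}-A_{\mP_X})F||_{S^{2^\ell}}$ and $||F(A_{\mR_Y}-A_{\mP_Y})||_{S^{2^\ell}}$. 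After a routine conjugate-transpose manipulation (using $||M^*||_{S^q}=||M||_{S^q}$, the reality and symmetry of the averaging matrices, the identity $\Mat(f)^*=\overline{\Mat(f^T)}$, and the invariance of singular values under entrywise conjugation), the second norm becomes $||(A_{\mR_Y}-A_{\mP_Y})\Mat(f^T)||_{S^{2^\ell}}$, so both terms are instances of the same one-sided problem.

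The crux is the claim that for any self-adjoint matrix $P$ with $||P||_{S^\infty}\le 1$ and any matrix $G$ of compatible size, one has $||PG||_{S^{2^\ell}}^{2^\ell}\le||P\,G_\ell||_{S^2}^2$ for all $\ell\ge 1$, where $G_1:=G$ and $G_{m+1}:=G_mG_m^*$ as in \cref{def:generalized-L2-energy}. I would prove this by iterating \cref{eq:halfing} and \cref{lem:Schatten-norms-matrices}.$(v)$: for $1\le k<\ell$,
\[
||PG_k||_{S^{2^{\ell-k+1}}}^{2^{\ell-k+1}}=||(PG_k)(PG_k)^*||_{S^{2^{\ell-k}}}^{2^{\ell-k}}=||P\,G_{k+1}\,P||_{S^{2^{\ell-k}}}^{2^{\ell-k}}\le||P\,G_{k+1}||_{S^{2^{\ell-k}}}^{2^{\ell-k}},
\]
using $P^*=P$, $G_kG_k^*=G_{k+1}$, and $||MP||_{S^r}\le||M||_{S^r}||P||_{S^\infty}\le||M||_{S^r}$; every Schatten exponent that appears lies in $\{2,4,\ldots,2^\ell\}$, so the doubling identity is always applicable. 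Chaining these inequalities from $k=1$ down to $k=\ell$ yields $||PG||_{S^{2^\ell}}^{2^\ell}\le||PG_\ell||_{S^2}^2$ (the case $\ell=1$ being a trivial equality).

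To finish, I would apply the claim with $P=A_{\mR_X}-A_{\mP_X}$ and $G=F$: its right-hand side equals $||(A_{\mR_X}-A_{\mP_X})F_\ell||_{S^2}^2=||A_{\mR_X}F_\ell||_{S^2}^2-||A_{\mP_X}F_\ell||_{S^2}^2=\mE_{f,\ell}(\mR_X)-\mE_{f,\ell}(\mP_X)$ by the Pythagoras identity \cref{eq:energy-increment-matrices} (with $M=F_\ell$) and \cref{def:generalized-L2-energy}, so taking $2^\ell$-th roots bounds the first term by $(\mE_{f,\ell}(\mR_X)-\mE_{f,\ell}(\mP_X))^{1/2^\ell}$. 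Applying the claim instead to $\Mat(f^T)$ and $P=A_{\mR_Y}-A_{\mP_Y}$ bounds the second term by $(\mE_{f^T,\ell}(\mR_Y)-\mE_{f^T,\ell}(\mP_Y))^{1/2^\ell}$, and adding the two estimates gives the lemma. The only real obstacle I anticipate is the bookkeeping inside the claim — specifically, recognizing that $(PG_k)(PG_k)^*=PG_{k+1}P$, so the projection reappears in the middle and can be discarded from the outside by submultiplicativity — together with the fiddly but routine verification that, because all partitions are products, the $X$- and $Y$-directions genuinely decouple.
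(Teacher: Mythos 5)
Your proposal is correct and follows essentially the same route as the paper's proof: the same telescoping decomposition of $A_{\mR_X}FA_{\mR_Y}-A_{\mP_X}FA_{\mP_Y}$, the same iteration of the doubling identity \cref{eq:halfing} combined with discarding the outer projection via \cref{eq:ignore-averaging}, and the same Pythagoras identity \cref{eq:energy-increment-matrices} to produce the energy difference. The only difference is cosmetic — you package the iteration as a standalone claim about an arbitrary self-adjoint contraction $P$ rather than working directly with $A_{\mR_X}-A_{\mP_X}$.
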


\begin{proof}
Fix orderings of the elements of $X$ and $Y$, and let $F := \Mat(f)$. By \cref{lem:averaging-matrices} and the triangle inequality, the quantity to bound becomes
\[
\begin{aligned}
    ||A_{\mR_X} F A_{\mR_Y} - A_{\mP_X} F A_{\mP_Y}||_{S^{2^\ell}}
    &\le 
    ||(A_{\mR_X} - A_{\mP_X})F A_{\mR_Y}||_{S^{2^\ell}}
    +
    ||A_{\mP_X} F (A_{\mR_Y} - A_{\mP_Y})||_{S^{2^\ell}}
    \\
    &\le 
    ||(A_{\mR_X} - A_{\mP_X})F||_{S^{2^\ell}}
    +
    ||F (A_{\mR_Y} - A_{\mP_Y})||_{S^{2^\ell}}
    \\
    &=
    ||(A_{\mR_X} - A_{\mP_X})F||_{S^{2^\ell}}
    +
    ||(A_{\mR_Y} - A_{\mP_Y})F^T||_{S^{2^\ell}}.
\end{aligned}
\]
By symmetry, it now suffices to show that
\[
    ||(A_{\mR_X} - A_{\mP_X})F||_{S^{2^\ell}} \le \left(\mE_{F,\ell}(\mR_X) - \mE_{F,\ell}(\mP_X) \right)^{1/2^\ell}.
\]
But by \cref{eq:halfing} and \cref{eq:ignore-averaging}, for $1 \le j \le \ell-1$ we have
\[
\begin{aligned}
    ||(A_{\mR_X} - A_{\mP_X}) F_j||_{S^{2^{\ell+1-j}}} &= ||(A_{\mR_X} - A_{\mP_Y}) F_j F_j^* (A_{\mR_X} - A_{\mP_X})||^{1/2}_{S^{2^{\ell-j}}}
    \\
    &\le ||(A_{\mR_X} - A_{\mP_X}) F_{j+1}||^{1/2}_{S^{2^{\ell-j}}}.
\end{aligned}
\]
By inducting on $j$, this implies that
\[
    ||(A_{\mR_X} - A_{\mP_X}) F||_{S^{2^\ell}} \le ||(A_{\mR_X} - A_{\mP_X}) F_{\ell}||^{1/2^{\ell-1}}_{S^{2}},
\]
and we can further write, by \cref{eq:energy-increment-matrices},
\[
\begin{aligned}
    ||(A_{\mR_X} - A_{\mP_X}) F_{\ell}||_{S^2}^2
    =
    ||A_{\mR_X} F_{\ell}||_{S^2}^2 - 
    ||A_{\mP_X} F_{\ell}||_{S^2}^2
    = \mE_{F,\ell}(\mR) - \mE_{F,\ell}(\mP).
\end{aligned}
\]
Putting the last two equations together, we obtain our conclusion.
\end{proof}

\section{The regularity lemma} \label{sec:regularity}

%\subsection{Ingredients for the regularity lemma}

We start by presenting an abstract energy increment argument, the kind of which is typically employed in the proofs of regularity lemmas. Following Tao \cite{tao2005szemer}, we will in fact phrase this as an energy optimization argument, which depends on both an \emph{energy function} $\Energy$ and a \emph{complexity function} $\Complexity$. This discussion pertains to weak regularity lemmas; a strong regularity lemma would require an additional level of iteration.

\begin{lemma}[Abstract energy optimization argument] \label{lem:energy-optimization}
Let $X$ be a finite set and $\mathscr{P}$ be a set of partitions of $X$, which contains the discrete partition $\mD := \{\{x\} : x \in X\}$, the trivial partition $\mT := \{X\}$, and which is closed under taking common refinements (i.e., if $\mP, \mQ \in \mathscr{P}$, then $\mP \vee \mQ \in \mathscr{P}$). Consider two functions $\Energy, \Complexity : \mathscr{P} \to \R$ with the following properties:
\begin{itemize}
    \item[$(i)$] \emph{(Energy is nondecreasing)}. For any $\mP, \mP' \in \mathscr{P}$ such that $\mP'$ is finer than $\mP$, one has $\Energy(\mP) \le \Energy(\mP')$.
    \item[$(ii)$] \emph{(Complexity is subadditive)}. For any $\mP, \mQ \in \mathscr{P}$, one has $\Complexity(\mP \vee \mQ) \le \Complexity(\mP) + \Complexity(\mQ)$. Moreover, $\Complexity(\mT) = 0$.
\end{itemize}
Let $\eps > 0$. Then there exists a partition $\mP \in \mathscr{P}$ with $\Complexity(\mP) \le \frac{1}{\eps}$, such that for any $\mQ \in \mathscr{P}$, one has
\[
    \Energy(\mP \vee \mQ) - \Energy(\mP) \le \eps\ \Complexity(\mQ) \left(\Energy(\mD) - \Energy(\mT)\right).
\]
\end{lemma}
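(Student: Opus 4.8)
The plan is to run a greedy energy-increment argument: starting from the trivial partition, repeatedly refine by a partition witnessing a large energy jump, until no such partition exists. The normalization $\Complexity(\mathcal{P}) \le 1/\eps$ and the factor $\eps\,\Complexity(\mathcal{Q})$ in the conclusion suggest measuring ``progress'' not by the number of steps but by the accumulated complexity, so the key is to track the ratio of energy gained to complexity spent.

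Here is how I would carry it out. Define a sequence of partitions $\mathcal{P}_0 = \mathcal{T}, \mathcal{P}_1, \mathcal{P}_2, \dots$ in $\mathscr{P}$ as follows: given $\mathcal{P}_i$, if there exists $\mathcal{Q} \in \mathscr{P}$ with
\[
    \Energy(\mathcal{P}_i \vee \mathcal{Q}) - \Energy(\mathcal{P}_i) > \eps\,\Complexity(\mathcal{Q})\left(\Energy(\mathcal{D}) - \Energy(\mathcal{T})\right),
\]
then set $\mathcal{P}_{i+1} := \mathcal{P}_i \vee \mathcal{Q}_i$ for some such $\mathcal{Q}_i$; otherwise stop and output $\mathcal{P}_i$, which by construction satisfies the desired energy bound for all $\mathcal{Q}$. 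It remains to show this process terminates, and that the output has complexity at most $1/\eps$. By subadditivity of $\Complexity$ and an induction, $\Complexity(\mathcal{P}_i) \le \sum_{j < i} \Complexity(\mathcal{Q}_j)$ (using $\Complexity(\mathcal{P}_0) = \Complexity(\mathcal{T}) = 0$). On the other hand, since each $\mathcal{P}_{i+1}$ is finer than $\mathcal{P}_i$, the energies $\Energy(\mathcal{P}_i)$ form a nondecreasing sequence, and telescoping the defining inequality gives
\[
    \Energy(\mathcal{P}_i) - \Energy(\mathcal{T}) = \sum_{j<i} \left(\Energy(\mathcal{P}_{j+1}) - \Energy(\mathcal{P}_j)\right) > \eps\left(\sum_{j<i}\Complexity(\mathcal{Q}_j)\right)\left(\Energy(\mathcal{D}) - \Energy(\mathcal{T})\right) \ge \eps\,\Complexity(\mathcal{P}_i)\left(\Energy(\mathcal{D}) - \Energy(\mathcal{T})\right).
\]
Since $\mathcal{D}$ is the finest partition and every $\mathcal{P}_i$ is finer than $\mathcal{T}$, monotonicity of $\Energy$ gives $\Energy(\mathcal{T}) \le \Energy(\mathcal{P}_i) \le \Energy(\mathcal{D})$; hence the left side is at most $\Energy(\mathcal{D}) - \Energy(\mathcal{T})$, forcing $\eps\,\Complexity(\mathcal{P}_i) < 1$, i.e. $\Complexity(\mathcal{P}_i) < 1/\eps$, at every stage. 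This bounds the complexity of every partition in the sequence (including the eventual output) by $1/\eps$.

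The one genuinely delicate point is \textbf{termination}: a priori $\Complexity$ could take arbitrarily small positive values, so the complexity bound alone does not cap the number of steps. The fix is to note $X$ is finite, so $\mathscr{P}$ is finite; since each refinement $\mathcal{P}_{i+1}$ is strictly finer than $\mathcal{P}_i$ (the defining inequality with $\Energy(\mathcal{D}) - \Energy(\mathcal{T}) \ge 0$ forces $\Energy(\mathcal{P}_{i+1}) > \Energy(\mathcal{P}_i)$ unless $\Energy(\mathcal{D}) = \Energy(\mathcal{T})$, in which case the conclusion is trivial with $\mathcal{P} = \mathcal{T}$), the partitions $\mathcal{P}_i$ are all distinct and the process must halt. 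When it halts at $\mathcal{P}_i$, that partition satisfies both required properties, completing the proof. (I should double-check the edge case $\Energy(\mathcal{D}) = \Energy(\mathcal{T})$: then the right-hand side of the conclusion is $0$, and taking $\mathcal{P} = \mathcal{T}$ works since by monotonicity $\Energy(\mathcal{T} \vee \mathcal{Q}) \le \Energy(\mathcal{D}) = \Energy(\mathcal{T})$ for all $\mathcal{Q}$, while $\Complexity(\mathcal{T}) = 0 \le 1/\eps$.)
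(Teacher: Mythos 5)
Your proof is correct, but it takes a genuinely different route from the paper. The paper follows Tao's ``energy optimization'' formulation: it selects in one shot a partition $\mP$ maximizing the penalized quantity $\Energy(\mP) - \eps\,\Complexity(\mP)(\Energy(\mD) - \Energy(\mT))$ over the (finite) family $\mathscr{P}$, and then both conclusions fall out of comparing the maximizer against $\mT$ and against $\mP \vee \mQ$. You instead run the classical greedy energy-increment iteration, which is the argument the optimization trick is designed to repackage. Both proofs use finiteness of $\mathscr{P}$ in an essential way (the paper to guarantee the maximum is attained, you to guarantee termination), and both handle the degenerate case $\Energy(\mD) = \Energy(\mT)$ identically. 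The paper's version is shorter and avoids the termination discussion entirely; yours is more constructive and makes the ``accumulated complexity versus accumulated energy'' bookkeeping explicit. One small point you leave implicit: your termination argument needs $\Complexity(\mQ_i) \ge 0$ so that the defining inequality forces a strict energy increase (otherwise $\mP_{i+1}$ could equal $\mP_i$ and the process could loop). This is not among the stated hypotheses, but it does follow from subadditivity applied with $\mQ = \mP$, since $\mP \vee \mP = \mP$ gives $\Complexity(\mP) \le 2\,\Complexity(\mP)$; it would be worth recording that observation explicitly.
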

\begin{proof}
If $\Energy(\mD) = \Energy(\mT)$ (i.e., the energy function is constant), then we can easily pick $\mP = \mT$, for example. Otherwise, let $\mP \in \mathscr{P}$ be a partition which maximizes the real quantity
\[
    \Energy(\mP) - \eps\ \Complexity(\mP) \left(\Energy(\mD) - \Energy(\mT)\right).
\]
Then by comparing the optimizer $\mP$ to the trivial partition $\mT$, we find that
\[
    \Energy(\mT) \le \Energy(\mP) - \eps\ \Complexity(\mP) \left(\Energy(\mD) - \Energy(\mT)\right)
    \le 
    \Energy(\mD) - \eps\ \Complexity(\mP) \left(\Energy(\mD) - \Energy(\mT)\right),
\]
and thus $\Complexity(\mP) \le 1/\eps$. Moreover, given any $\mQ \in \mathscr{P}$, by comparing $\mP$ to $\mP \vee \mQ$ we find that
\[
    \Energy(\mP) - \eps\ \Complexity(\mP) \left(\Energy(\mD) - \Energy(\mT)\right)
    \ge 
    \Energy(\mP \vee \mQ) - \eps\ \Complexity(\mP \vee \mQ) \left(\Energy(\mD) - \Energy(\mT)\right),
\]
which, combined with the subadditivity of $\Complexity$, yields $\Energy(\mP \vee \mQ) - \Energy(\mP) \le \eps\ \Complexity(\mQ) \left(\Energy(\mD) - \Energy(\mT)\right)$. 
\end{proof}

\begin{remark}
Intuitively, \cref{lem:energy-optimization} finds a partition $\mP$ with a good balance of energy and complexity: it captures most of the energy needed for applications (in the sense that small refinements of $\mP$ only see a small increase in energy), while having bounded complexity. Here are some concrete objects that can fulfill the abstract roles of $\Complexity$, $\Energy$ and $\mathscr{P}$ from \cref{lem:energy-optimization}:
\begin{itemize}
    \item One can take $\Complexity(\mP)$ to be $\log |\mP|$, or the \emph{entropy} $\H(\mP)$ (by \cref{eq:entropy-inequalities}); Tao uses both in \cite{tao2005szemer}.
    \item If $\Omega$ is a probability space and $f : \Omega \to \C$ is an $L^2$ random variable, one can take $\Energy(\mP)$ to be the $L^2$ energy $\E[\E(f \mid \mP_1 \otimes \mP_2)^2]$. In this case, one can regard the optimizer $\E(f \mid \mP_1 \otimes \mP_2)$ as the best approximation to $f$ (of a certain type) with not too many different values.
    \item Alternatively, given a partition $\mP_0$ of $\Omega$ (e.g., induced by a discrete random variable $f$), one can take $\Energy(\mP)$ to be the \emph{mutual information} $\I(\mP_0 : \mP) = \H(\mP_0) + \H(\mP) - \H(\mP_0 \vee \mP)$; this is essentially what Tao does in \cite{tao2005szemer}. In this case, one can regard the optimizer $\mP$ as the best approximation to $\mP_0$ (of a certain type) with bounded complexity.
    \item For a logarithmic and \emph{relative} version of the energy $\mE(\mP)$, see \cref{sec:sharp-regularity}.
    \item If $\Omega = X \times Y$, one can let $\mathscr{P}$ consist of all partitions of the form $\mP \otimes \mQ$, where $\mP$ is a partition of $X$ and $\mQ$ is a partition of $Y$. This is often the case in the proof of regularity lemmas (see \cref{sec:sharp-regularity}), but we will actually end up using $\Omega = X$ and $\Omega = Y$ separately. 
\end{itemize}
\end{remark}

In our argument, we will use the full space $\mathscr{P}$ of partitions of a probability space $(X, \P_X)$, both the complexity functions $\log |\mP|$ and $\H(\mP)$, and the generalized $L^2$ energy functions from \cref{def:generalized-L2-energy} (which are monotonic by \cref{eq:energy-increment-matrices}). To be more precise, we will use a sum of finitely many such energy functions, which resembles a \emph{simultaneous} energy increment argument \cite{tao2007ergodic}.

Whether it is helpful to use $\log |\mP|$ or $\H(\mP)$ as the complexity function depends on the uniformity norm one wishes to bound. Indeed, while the cut norm of a function $f : X \times Y \to \C$ depends on the interaction between $f$ and subsets of $X$ and $Y$ (which correspond to partitions with at most two parts), the \emph{spectral norm} $S^\infty$ can be characterized in terms of the interaction between $f$ and partitions with bounded entropy; this is the content of the following lemma.

\begin{lemma}[Spectral norms and entropy] \label{lem:modify-spectral}
Given finite probability spaces $(X, \P_X)$, $(Y, \P_Y)$, and $f : X \times Y \to \C$, one has
\[
    ||f||_{S^\infty} \le \sup_{g,h}^* \left\vert \E_{x,y}[f(x,y)g(x)h(y)]\right\vert,
\]
where the supremum is over all functions $g : X \to \C$, $h : Y \to \C$ with $L^2$ norms at most equal to $3$, and values from $\{0\} \cup \{2^k, i2^k, -2^k, -i2^k : k \in \Z_{\ge 0}\}$. In particular, viewing such $g$ and $h$ as random variables, one has
\[
    \H(g) + \H(h) \ll 1.
\]
\end{lemma}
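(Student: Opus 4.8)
The plan is to combine the variational formula for the spectral norm with an \emph{unbiased} randomized rounding of the optimal test functions into the allowed dyadic values, and to observe that — set up correctly — this rounding never pushes the $L^2$ norm beyond $3$; that deterministic control is exactly why the constant $3$ works, and it removes the usual difficulty of coupling a lower bound on the bilinear form with an upper bound on the norms. Concretely, I would first use \cref{lem:Schatten-norms-functions} (equivalently \cref{eq:Schatten-infty}): since $X,Y$ are finite, $||f||_{S^\infty} = \sup\{|\E_{x,y}[f(x,y)g_0(x)h_0(y)]| : ||g_0||_{L^2} = ||h_0||_{L^2} = 1\}$ is attained; fix optimizers $g_0,h_0$ and, after multiplying $h_0$ by a unimodular scalar, assume $\E_{x,y}[fg_0h_0] = \sigma := ||f||_{S^\infty}\ge 0$.

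Next I would define the rounding. For $x$ with $g_0(x)\ne 0$ set $p(x) := |\Re g_0(x)|/(|\Re g_0(x)|+|\Im g_0(x)|)$; with probability $p(x)$ output an \emph{unbiased dyadic rounding} of the real number $\Re g_0(x)/p(x)$, and with probability $1-p(x)$ output $i$ times an unbiased dyadic rounding of $\Im g_0(x)/(1-p(x))$ (and set $g(x):=0$ when $g_0(x)=0$); round $h_0$ to $h$ independently in the same fashion. Here the unbiased dyadic rounding of a real $t$ outputs, if $|t|\ge 1$, one of the two powers of $2$ bracketing $|t|$ times $\sgn t$ with probabilities making the mean equal to $t$, and if $|t|<1$ outputs $\sgn t$ with probability $|t|$ and $0$ otherwise. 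By construction every value of $g$ lies in $\{0\}\cup\{\pm 2^k,\pm i2^k : k\ge 0\}$, and one checks $\E[g(x)] = p(x)\cdot\tfrac{\Re g_0(x)}{p(x)} + (1-p(x))\cdot i\,\tfrac{\Im g_0(x)}{1-p(x)} = g_0(x)$.

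The crux is then a \emph{deterministic} norm bound. In either branch the real number being rounded has absolute value $\mu_0(x) := |\Re g_0(x)|+|\Im g_0(x)|$, so $|g(x)|\le 2\mu_0(x)$ if $\mu_0(x)\ge 1$ and $|g(x)|\le 1$ otherwise; combined with $\mu_0(x)^2\le 2|g_0(x)|^2$ this gives $|g(x)|^2 \le 8|g_0(x)|^2\one_{\mu_0(x)\ge 1} + \one_{\mu_0(x)<1}$ pointwise, whence $||g||_{L^2}^2 \le 8\,||g_0||_{L^2}^2 + 1 = 9$ for \emph{every} outcome of the randomness, and likewise $||h||_{L^2}\le 3$. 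On the other hand, since $g$ and $h$ use independent randomness, $\E\big[\E_{x,y}[fgh]\big] = \E_{x,y}\big[f\cdot\E[g(x)]\cdot\E[h(y)]\big] = \E_{x,y}[fg_0h_0] = \sigma$; taking real parts, some outcome satisfies $\Re\E_{x,y}[fgh]\ge\sigma$. For that outcome $g,h$ are admissible test functions and $|\E_{x,y}[fgh]|\ge\sigma=||f||_{S^\infty}$, which proves the inequality.

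For the entropy tailpiece, $||g||_{L^2}\le 3$ forces $\P(|g|=2^k)\le 9\cdot 4^{-k}$, so the magnitude of $g$ contributes $\sum_{k\ge 0}\P(|g|=2^k)\log\tfrac1{\P(|g|=2^k)}\ll 1$ (a geometric-times-linear tail), the four phases contribute at most $\log 4$, and hence $\H(g)\ll 1$, and similarly $\H(h)$. The step that superficially looks like the main obstacle — finding a single realization that is good for both the bilinear form and the two norms — becomes painless precisely because the norm bound is deterministic rather than in expectation; the real engineering is choosing the rounding (the skewed real/imaginary split, rather than a symmetric split or a nearest-allowed-value rounding) so that this deterministic bound lands at exactly $\le 3$ while staying inside the prescribed value set.
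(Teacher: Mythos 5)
Your proof is correct, and it reaches the conclusion by a genuinely different mechanism than the paper's. The paper's Appendix argument is fully deterministic: it observes that the bilinear form is an affine function of each individual value $g_0(x)$, proves via convexity that $|\lambda|$ on a line segment is maximized at an endpoint, and then greedily replaces the values of $g_0,h_0$ one at a time (first moving to a semi-axis along a slope-$\pm1$ segment, then out to the nearest larger power of $2$), each replacement not decreasing the bilinear form and satisfying the pointwise bound $|z_1|\le\max(1,2\sqrt2|z_0|)$. You instead construct an \emph{unbiased randomized rounding} into the dyadic value set and recover the bilinear form in expectation, then apply the first-moment method to a single real-valued functional; the decisive observation that makes this painless is that your $L^2$ bound is deterministic (every realization satisfies $|g(x)|^2\le 8|g_0(x)|^2\one_{\mu_0\ge1}+\one_{\mu_0<1}$, hence $||g||_{L^2}^2\le 9$), so you never need to couple the norm constraint with the value of the bilinear form. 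Both routes land on exactly the same pointwise inequality $|g(x)|\le\max(1,2\sqrt2|g_0(x)|)$ and hence the same constant $3$, and the entropy computation is identical. Your approach buys a verification that is purely algebraic (no geometric picture), and the skewed real/imaginary split with weights proportional to $|\Re g_0|$ and $|\Im g_0|$ is exactly the right choice to make the rounded magnitude equal $|\Re g_0|+|\Im g_0|$ in both branches; the paper's approach buys an explicit deterministic construction. I checked the edge cases ($\Re g_0(x)=0$ or $\Im g_0(x)=0$, where one branch has probability zero; attainment of the supremum by compactness; independence needed only between the randomness of $g$ and of $h$, not across points) and found no gaps.
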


\begin{proof}
We leave this computation to \cref{sec:spectral-norms-entropy}.
\end{proof}

%\subsection{The regularity lemma}

We are now ready to state and prove our sparse weak regularity lemma, which generalizes \cref{thm:regularity-intro}. For technical reasons, we state our result for a finite family of $k \ge 1$ functions rather than a single function $f : X \times Y \to \C$.

\begin{theorem}[Simultaneous weak regularity lemma with Schatten norms] \label{thm:regularity}
Let $\{(X_i, \P_i)\}_{1 \le i \le m}$ be finite probability spaces, $1 \le q_t < r_t \le \infty$, $i_t, j_t \in \{1, \ldots, m\}$ and $f_t : X_{i_t} \times X_{j_t} \to \C$ for $t \in \{1, \ldots, k\}$, and $\eps \in (0, 1)$.
Then there exist partitions $\mP_i, \mP_i'$ of $X_i$ such that
\begin{equation}\label{eq:cut-norm-bound}
    \max_i \log |\mP_i| \ll_{k,q} \frac{1}{\eps^{\alpha(q,\infty)}}, \quad\quad
    \max_t \left\vert \left\vert f_t - \E(f_t \mid \mP_{i_t} \otimes \mP_{j_t})
    \right\vert\right\vert\sq \le \eps \max_t ||f_t||_{S^{q_t}},
\end{equation}
respectively
\begin{equation}\label{eq:spec-norm-bound}
    \max_i \H(\mP'_i) \ll_{k,q,r} \frac{1}{\eps^{\alpha(q,r)}},
    \quad\quad
    \max_t \left\vert \left\vert f_t - \E(f_t \mid \mP'_{i_t} \otimes \mP'_{j_t})
    \right\vert\right\vert_{S^{r_t}} \le \eps \max_t ||f_t||_{S^{q_t}},
\end{equation}
where $q := \max_t q_t$, $r := q \min_{t} (r_t/q_t)$, and $\alpha(q,r) := 2^{\max(\lceil \log_2 q \rceil, 1)}/(1 - \frac{q}{r})$.
\end{theorem}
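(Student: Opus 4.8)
The plan is to run the abstract energy optimization argument of \cref{lem:energy-optimization} twice against a single energy function built from the generalized $L^2$ energies of \cref{def:generalized-L2-energy}: once with $\log|\cdot|$ as the complexity function, to produce the cut-norm partitions of \cref{eq:cut-norm-bound}, and once with Shannon entropy $\H(\cdot)$, to produce the Schatten-norm partitions of \cref{eq:spec-norm-bound}. After normalizing so that $\max_t ||f_t||_{S^{q_t}} = 1$, I would set $\ell := \max(\lceil\log_2 q\rceil,1)$, so that $2^\ell \ge \max(q,2) \ge q_t$ for all $t$ and $\alpha(q,r) = 2^\ell/(1-q/r)$, and work in the poset $\mathscr{P}$ of tuples $\bm{\mP} = (\mP_i)_{i=1}^m$ of partitions of the $X_i$ (componentwise refinement, common refinement $\bm{\mP}\vee\bm{\mQ}$, all-discrete tuple $\bm{\mD}$, all-trivial tuple $\bm{\mT}$), with energy
\[
    \mE(\bm{\mP}) := \sum_{t=1}^{k}\Big(\mE_{f_t,\ell}(\mP_{i_t}) + \mE_{f_t^T,\ell}(\mP_{j_t})\Big).
\]
This is nondecreasing under refinement by the remark after \cref{def:generalized-L2-energy}, and lies in $[0,2k]$ since each $\mE_{f_t,\ell}(\mP) \le \mE_{f_t,\ell} = ||f_t||_{S^{2^\ell}}^{2^\ell} \le ||f_t||_{S^{q_t}}^{2^\ell} \le 1$; meanwhile $\sum_i\log|\mP_i|$ and $\sum_i\H(\mP_i)$ are subadditive and vanish at $\bm{\mT}$, by $|\mP\vee\mQ|\le|\mP|\,|\mQ|$ and \cref{eq:entropy-inequalities}. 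So \cref{lem:energy-optimization} applies with either complexity function and with $\mE(\bm{\mD}) - \mE(\bm{\mT}) \le 2k$.

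For \cref{eq:cut-norm-bound}, I would invoke \cref{lem:energy-optimization} with complexity $\sum_i\log|\mP_i|$ and a parameter $\eps_1 \asymp_{k,q} \eps^{2^\ell}$, getting $\bm{\mP}$ with $\sum_i\log|\mP_i| \le 1/\eps_1$ such that refining $\bm{\mP}$ by any $\bm{\mQ}$ raises $\mE$ by at most $2k\,\eps_1\sum_i\log|\mQ_i|$. If $||f_t - \E(f_t\mid\mP_{i_t}\otimes\mP_{j_t})||\sq > \eps$ for some $t$, I would take witnessing sets $A\subset X_{i_t}$, $B\subset X_{j_t}$, refine $\mP_{i_t}$ by $\{A,A^c\}$ and $\mP_{j_t}$ by $\{B,B^c\}$ to form $\bm{\mR}$ (costing $\sum_i\log|\mQ_i|\le 2\log 2$), and set $g_t := f_t - \E(f_t\mid\mP_{i_t}\otimes\mP_{j_t})$. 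Since $\one_{A\times B}$ is $\mR_{i_t}\otimes\mR_{j_t}$-measurable and $\E(g_t\mid\mR_{i_t}\otimes\mR_{j_t}) = \E(f_t\mid\mR_{i_t}\otimes\mR_{j_t}) - \E(f_t\mid\mP_{i_t}\otimes\mP_{j_t})$,
\[
    \eps < \big|\E[g_t\,\one_{A\times B}]\big| = \big|\E[\E(g_t\mid\mR_{i_t}\otimes\mR_{j_t})\,\one_{A\times B}]\big| \le ||\E(g_t\mid\mR_{i_t}\otimes\mR_{j_t})||\sq \le ||\E(g_t\mid\mR_{i_t}\otimes\mR_{j_t})||_{S^{2^\ell}},
\]
using \cref{lem:Schatten-norms-functions}(ii) and monotonicity of Schatten norms. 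By \cref{lem:energy-increment} the last quantity is $\le 2\big(\mE(\bm{\mR}) - \mE(\bm{\mP})\big)^{1/2^\ell}$, so $\mE(\bm{\mR}) - \mE(\bm{\mP}) > (\eps/2)^{2^\ell}$, which contradicts the optimization bound $4k\log 2\cdot\eps_1$ once $\eps_1$ is a small enough multiple (depending on $k,q$) of $\eps^{2^\ell}$. Hence no such $t$ exists, and $\max_i\log|\mP_i| \le 1/\eps_1 \ll_{k,q}\eps^{-2^\ell} = \eps^{-\alpha(q,\infty)}$.

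For \cref{eq:spec-norm-bound}, the argument is the same with three modifications. First, I would use complexity $\sum_i\H(\mP_i)$ and a parameter $\eps_2$. Second, I would aim to make $||g_t||_{S^\infty}$ (rather than $||g_t||\sq$) small: if $||g_t||_{S^\infty} > \eps_\infty$ with $\eps_\infty := (\eps/2)^{1/(1-q/r)} \le 1$, then \cref{lem:modify-spectral} produces $g : X_{i_t}\to\C$, $h : X_{j_t}\to\C$ with $||g||_{L^2}, ||h||_{L^2}\le 3$, $\H(g)+\H(h)\ll 1$, and $|\E_{x,y}[g_t(x,y)g(x)h(y)]| > \eps_\infty$; refining $\mP_{i_t}$ by $\Part(g)$ and $\mP_{j_t}$ by $\Part(h)$ (at $O(1)$ entropy cost), the identity $|\E_{x,y}[\E(g_t\mid\mR_{i_t}\otimes\mR_{j_t})(x,y)g(x)h(y)]| \le ||g||_{L^2}||h||_{L^2}\,||\E(g_t\mid\mR_{i_t}\otimes\mR_{j_t})||_{S^\infty} \le 9\,||\E(g_t\mid\mR_{i_t}\otimes\mR_{j_t})||_{S^\infty}$ (from the operator-norm characterization used in the proof of \cref{lem:Schatten-norms-functions}) leads, exactly as above, to $\mE(\bm{\mR}) - \mE(\bm{\mP}) \gg_{q}\eps_\infty^{2^\ell}$, contradicting the optimization bound once $\eps_2$ is a suitable multiple (depending on $k,q,r$) of $\eps_\infty^{2^\ell} = (\eps/2)^{\alpha(q,r)}$. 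Third, having forced $||g_t||_{S^\infty} \le \eps_\infty$ for every $t$, I would upgrade this to the desired $S^{r_t}$ bound by interpolation: since $||g_t||_{S^{q_t}} \le 2||f_t||_{S^{q_t}} \le 2$ (by \cref{lem:Schatten-norms-functions}(iii)) and $1-q_t/r_t \ge 1-q/r \ge 0$ with $\eps_\infty \le 1$, \cref{lem:Schatten-norms-functions}(i) gives $||g_t||_{S^{r_t}} \le ||g_t||_{S^{q_t}}^{q_t/r_t}||g_t||_{S^\infty}^{1-q_t/r_t} \le 2\,\eps_\infty^{1-q/r} = \eps$. Thus $\max_i\H(\mP_i') \le 1/\eps_2 \ll_{k,q,r}\eps^{-\alpha(q,r)}$, which completes the proof.

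The genuinely load-bearing step is this last interpolation, and it is where the ``sparse'' character of the method lives: what makes the trade of a weak $S^\infty$ bound for a strong $S^{r_t}$ bound affordable is that the density norm $||g_t||_{S^{q_t}}$ stays $O(1)$ even when $||g_t||_{L^2}=||g_t||_{S^2}$ is enormous, so one interpolates between $S^{q_t}$ and $S^\infty$ rather than between $L^2$ and $S^\infty$. Everything else is routine given the machinery already in place — \cref{lem:energy-increment} is precisely what lets the energy-increment argument proceed with the $S^{2^\ell}$ energy in place of the $L^2$ energy, \cref{lem:modify-spectral} supplies the bounded-entropy test functions, and the exponent bookkeeping (including the degenerate cases $r_t = \infty$, where $1-q_t/r_t = 1$, and $X_i$ appearing in several $f_t$) needs only care, not ideas.
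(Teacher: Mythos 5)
Your proof is correct and follows essentially the same route as the paper's: the same generalized $L^2$ energies at level $\ell=\max(\lceil\log_2 q\rceil,1)$ fed into \cref{lem:energy-optimization}, \cref{lem:energy-increment} to convert energy increments into $S^{2^\ell}$ bounds, \cref{lem:modify-spectral} for the bounded-entropy test functions, and interpolation between $S^{q_t}$ and $S^\infty$ to reach $S^{r_t}$. The only differences are organizational (a single optimization over tuples of partitions rather than one per $X_i$, a contradiction with a witness rather than a supremum, and interpolating at the end rather than reducing to $r=\infty$ at the start), and none affects correctness or the exponent $\alpha(q,r)$.
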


\iffalse
\begin{remark}
In particular, if $G$ is a graph on $n$ vertices with edge density $p$, then taking $k = 1$, $X_1 = V(G)$, $f = \one_{G}/p$ and $q = 2\ell < r$ yields, by \cref{eq:Schatten-2k},
\[
    \max\left(\left\vert\left\vert \one_{G} - \E(\one_{G} \mid \mP)\right\vert\right\vert\sq,
    \left\vert\left\vert \one_{G} - \E(\one_{G} \mid \mP')\right\vert\right\vert_{S^{r}}\right) \le \eps p  \sqrt[2\ell]{\frac{\Hom(C_{2\ell},G)}{p^{2\ell}n^{2\ell}}},
\]
for some partitions $\mP = \mP_1 \otimes \mP_1$ and $\mP' = \mP_1' \otimes \mP_1'$. We note that when $\ell \ge 2$ and $G$ is a pseudorandom (or ``typical'') graph with $n$ vertices and $pn^2$ edges, one should expect $\Hom(C_{2\ell},G)/p^{2\ell}n^{2\ell} \asymp 1$. %more generally, one should have $||f||_{S^q} \ll 1$ provided that $|f|$ is bounded above by a pseudorandom majorant and $q \ge 4$. This of course depends on one's definition of pseudorandomness, which we detail in \cref{sec:transference,sec:graphs}.
\end{remark}
\fi

\begin{remark}
Ultimately, we will only use \cref{thm:regularity} for functions $f$ with values in $[0, \infty)$, but it is not much harder to treat all complex-valued functions. Also, our sparse regularity method will only use \cref{eq:spec-norm-bound}, but the cut norm bound in \cref{eq:cut-norm-bound} may be of independent interest to the reader (e.g., since it fits better with the counting lemmas of \cite{conlon2021regularity,conlon2021graphs}). 

While Schatten norms are stronger than cut norms, the drawback of using \cref{eq:spec-norm-bound} is that it only controls the entropies $\H(\mP'_i)$ (which could be much smaller than $\log |\mP'_i|$). However, this disadvantage can be removed if one is allowed to disregard a small number of elements of each $X_i$, which is the case for most applications.
\end{remark}

\begin{proof}[Proof of \cref{thm:regularity}]
We first note that it suffices to consider the case when all $r_t = \infty$, since Schatten norms can be interpolated. Indeed, assuming that \cref{eq:spec-norm-bound} holds when all $r_t = \infty$, one can apply it for $2^{1/(1-\frac{r}{q})}\eps^{1/(1-\frac{q}{r})}$ in place of $\eps$ (for any given $r \in (q, \infty)$); this yields partitions $\mP'_i$ with
\[
    \max_i \H(\mP'_i) \ll_q \frac{1}{2^{\alpha(q,\infty)/(1-\frac{r}{q})}\eps^{\alpha(q,\infty)/(1 - \frac{q}{r})}}
    \ll_{q,r}
    \frac{1}{\eps^{\alpha(q,r)}},
\]
such that
\[
    \max_t ||f_t - \E(f_t \mid \mP'_{i_t} \otimes \mP'_{j_t})||_{S^\infty} \le 2^{1/(1-\frac{r}{q})} \eps^{1/(1-\frac{q}{r})} \max_t ||f_t||_{S^{q_t}}.
\]
But then, given any values $r_t \in (q_t, \infty]$ and setting $r := q \min_t(r_t/q_t)$, \cref{lem:Schatten-norms-functions} implies that if $M := \max_t ||f_t||_{S^{q_t}}$, then for each $t \in \{1, \ldots, k\}$ one has
\[
\begin{aligned}
    &||f_t - \E(f_t \mid \mP'_{i_t} \otimes \mP'_{j_t})||_{S^{r_t}} 
    \\
    &\le
    ||f_t - \E(f_t \mid \mP'_{i_t} \otimes \mP'_{j_t})||_{S^{q_t}}^{q_t/r_t} \cdot ||f_t - \E(f_t \mid \mP'_{i_t} \otimes \mP'_{j_t})||_{S^\infty}^{1 - q_t/r_t}
    \\
    &\le 
    \left(||f_t||_{S^{q_t}} + ||\E(f_t \mid \mP'_{i_t} \otimes \mP'_{j_t})||_{S^{q_t}}\right)^{q_t/r_t} \cdot \left(2^{1/(1-\frac{r}{q})}\eps^{1/(1-\frac{q}{r})} M\right)^{1 - q_t/r_t}
    \\
    &\le 
    \left(2M\right)^{q_t/r_t} \cdot 2^{-q_t/r_t}\eps M^{1 - q_t/r_t}
    \\
    &=
    \eps M.
\end{aligned}
\]

After this reduction, it remains to find partitions $\mP_i$ which satisfy \cref{eq:cut-norm-bound}, and $\mP_i'$ which satisfy \cref{eq:spec-norm-bound} for $r = r_t = \infty$. For now, fix some partitions $\mP_i$ and $\mP_i'$ of $X_i$, which will be chosen later via the abstract energy optimization argument in \cref{lem:energy-optimization}; it will take some work to explain how exactly this optimization argument will be applied. By \cref{eq:cut} and \cref{lem:modify-spectral}, we can express the uniformity norms of interest (for $t \in \{1, \ldots, k\}$) as
\begin{gather*}
    ||f_t - \E(f_t \mid \mP_{i_t} \otimes \mP_{j_t})||\sq = \sup_{\substack{g_1 : X_{i_t} \to \{0,1\} \\ g_2 : X_{j_t} \to \{0,1\}}} |\E_{x_1,x_2}[(f_t - \E(f_t \mid \mP_{i_t} \otimes \mP_{j_t}))(x_1,x_2)g_1(x_1)g_2(x_2)]|,
    \\
    ||f_t - \E(f_t \mid \mP'_{i_t} \otimes \mP'_{j_t})||_{S^\infty} \le \sup_{\substack{h_1 : X_{i_t} \to \C \\ h_2 : X_{j_t} \to \C}}^* \left\vert \E_{x_1,x_2}[(f_t - \E(f_t \mid \mP'_{i_t} \otimes \mP'_{j_t}))(x_1,x_2)h_1(x_1)h_2(x_2)]\right\vert,
\end{gather*}
where $x_1$ is sampled from $(X_{i_t}, \P_{i_t})$, $x_2$ is sampled from $(X_{j_t}, \P_{j_t})$, and $\sup_{h_1,h_2}^*$ has the same meaning as in \cref{lem:modify-spectral}.
In particular, all functions involved in both suprema have $||g_u||_{L^2} \ll 1$ and $||h_u||_{L^2} \ll 1$, while each $g_u$ in the first supremum takes at most two values, and each $h_u$ in the second supremum has $\H(h_u) \ll 1$ (for $u \in \{1, 2\}$). Taking a maximum over $t$ and suppressing dependencies on $x_1, x_2$ for notational simplicity, we get
\begin{gather*}
    \max_{t \in \{1, \ldots, k\}} ||f_t - \E(f_t \mid \mP_{i_t} \otimes \mP_{j_t})||\sq = \sup_{\substack{g^t_1 : X_{i_t} \to \{0,1\} \\ g^t_2 : X_{j_t} \to \{0,1\} \\ t \in \{1, \ldots, k\}}} |\E[(f_t - \E(f_t \mid \mP_{i_t} \otimes \mP_{j_t}))g^t_1 g^t_2]|,
    \\
    \max_{t \in \{1, \ldots, k\}} ||f_t - \E(f_t \mid \mP'_{i_t} \otimes \mP'_{j_t})||_{S^\infty} \le \sup_{\substack{h^t_1 : X_{i_t} \to \C \\ h^t_2 : X_{j_t} \to \C \\ t \in \{1, \ldots, k\}}}^* \left\vert \E[(f_t - \E(f_t \mid \mP'_{i_t} \otimes \mP'_{j_t}))h^t_1 h^t_2]\right\vert,
\end{gather*}
Thus it suffices to bound the right-hand sides for any specific choice of the tuples of functions $\{g^t_u\}_{t,u}$, respectively $\{h^t_u\}_{t,u}$. Fix such tuples of functions, and consider the partitions of $X_i$ that they collectively induce:
\[
    \mQ_i := \bigvee_{\substack{t \in \{1, \ldots, k\} \\ i_t = i}} \Part(g^t_1) \vee 
    \bigvee_{\substack{t \in \{1, \ldots, k\} \\ j_t = i}} \Part(g^t_2),
    \qquad\quad
    \mQ_i' := \bigvee_{\substack{t \in \{1, \ldots, k\} \\ i_t = i}} \Part(h^t_1) \vee 
    \bigvee_{\substack{t \in \{1, \ldots, k\} \\ j_t = i}} \Part(h^t_2).
\] 
Due to the nature of the functions $g^t_u$ and $h^t_u$, these partitions satisfy the the complexity bounds
\begin{equation}\label{eq:complexity-bounds}
    \log |\mQ_i| \ll_k 1 \qquad\quad \text{and} \qquad\quad 
    \H(\mQ_i') \ll_k 1.
\end{equation}
For notational convenience, we also denote
\begin{equation}\label{eq:joint-partitions}
    \mR_i := \mP_i \vee \mQ_i \qquad\quad \text{and} \qquad\quad 
    \mR_i' := \mP_i' \vee \mQ_i',
\end{equation}
which begins to resemble the setting of our abstract energy optimization argument (\cref{lem:energy-optimization}). Now for $t \in \{1, \ldots, k\}$, using \cref{eq:Schatten-infty} and the fact that the function $(x_1, x_2) \mapsto g_1^{i,j}(x_1) g_2^{i,j}(x_2)$ is $(\mR_{i_t} \otimes \mR_{j_t})$-measurable, we can write
\[
\begin{aligned}
    \left\vert \E[(f_t - \E(f_t \mid \mP_{i_t} \otimes \mP_{j_t}))g^t_1 g^t_2]\right\vert
    &=
    \left\vert \E[(\E(f_t \mid \mR_{i_t} \otimes \mR_{j_t}) - \E(f_t \mid \mP_{i_t} \otimes \mP_{j_t})) g^t_1 g^t_2]\right\vert
    \\
    &\le
    ||\E(f_t \mid \mR_{i_t} \otimes \mR_{j_t}) - \E(f_t \mid \mP_{i_t} \otimes \mP_{j_t})||_{S^\infty} \cdot ||g_1^t||_{L^2} \cdot ||g_2^t||_{L^2} 
    \\
    &\ll
    ||\E(f_t \mid \mR_{i_t} \otimes \mR_{j_t}) - \E(f_t \mid \mP_{i_t} \otimes \mP_{j_t})||_{S^\infty},
\end{aligned}
\]
and similarly,
\[
    \left\vert \E[(f_t - \E(f_t \mid \mP'_{i_t} \otimes \mP'_{j_t}))h^t_1 h^t_2]\right\vert \ll
    ||\E(f_t \mid \mR'_{i_t} \otimes \mR'_{j_t}) - \E(f_t \mid \mP'_{i_t} \otimes \mP'_{j_t})||_{S^\infty}.
\]
Putting things together, to obtain the desired uniformity norm bounds in \cref{eq:cut-norm-bound,eq:spec-norm-bound}, it suffices to have
\begin{gather}
    \max_t ||\E(f_t \mid \mR_{i_t} \otimes \mR_{j_t}) - \E(f_t \mid \mP_{i_t} \otimes \mP_{j_t})||_{S^\infty} \le \eps \max_t ||f_t||_{S^q}, \label{eq:first-supremum}
    \\
    \max_t ||\E(f_t \mid \mR'_{i_t} \otimes \mR'_{j_t}) - \E(f_t \mid \mP'_{i_t} \otimes \mP'_{j_t})||_{S^\infty} \le \eps \max_t ||f_t||_{S^q}.
    \label{eq:second-supremum}
\end{gather}

We will first present the argument for \cref{eq:first-supremum}, and then mention what changes for \cref{eq:second-supremum}. By \cref{lem:energy-increment} for $\ell := \max(\lceil \log_2 q \rceil, 1) \ge \log_2 q$, for each $t$ we have
\begin{equation} \label{eq:final-energy-bound}
\begin{aligned}
    ||\E(f_t \mid \mR_{i_t} \otimes \mR_{j_t}) - \E(f_t \mid \mP_{i_t} \otimes \mP_{j_t}))||_{S^\infty}
    &\le 
    (\mE_{f_t,\ell}(\mR_{i_t}) - \mE_{f_t,\ell}(\mP_{i_t}))^{1/2^\ell} 
    \\
    &+ 
    (\mE_{f_t^T,\ell}(\mR_{j_t}) - \mE_{f_t^T,\ell}(\mP_{j_t}))^{1/2^\ell},
\end{aligned}
\end{equation}
where all energies are bounded by $\mE_{f_t,\ell} = ||f_t||_{S^{2^\ell}}^{2^\ell} \le ||f_t||_{S^q}^{2^\ell} \le ||f_t||_{S^{q_t}}^{2^\ell}$. We are finally ready to choose $\mP_i$ by applying the energy optimization argument in \cref{lem:energy-optimization}, once for each $i \in \{1, \ldots, m\}$, to the set $\Omega = X_i$, in the search space $\mathscr{P}$ of all partitions of $X_i$, with the complexity function $\log |\cdot|$, $c\eps^{2^\ell}$ in place of $\eps$ (for some $c > 0$ to be chosen shortly), and with the energy function
\[
    \mE_i(\mP_i) := \sum_{\substack{t \in \{1, \ldots, k\} \\ i_t = i}} \mE_{f_t,\ell}(\mP_i) + \sum_{\substack{t \in \{1, \ldots, k\} \\ j_t = i}} \mE_{f_t^T,\ell}(\mP_i).
\]
Recalling the notation $M := \max_t ||f_t||_{S^{q_t}}$, note that the energy function $\mE_i$ is nonnegative and bounded above by
\[
    \sum_{\substack{t \in \{1, \ldots, k\} \\ i_t = i}} \mE_{f_t,\ell} + 
    \sum_{\substack{t \in \{1, \ldots, k\} \\ j_t = i}} \mE_{f_t^T,\ell} 
    \le 
    \sum_{\substack{t \in \{1, \ldots, k\} \\ i_t = i}} ||f_t||_{S^{q_t}}^{2^\ell} + 
    \sum_{\substack{t \in \{1, \ldots, k\} \\ j_t = i}} ||f_t||_{S^{q_t}}^{2^\ell} 
    \le 2k 
    M^{2^\ell}.
\]
Thus the resulting $\mP_i$ satisfy 
\[
    \log |\mP_i| \ll \frac{1}{c\eps^{2^\ell}} \ll_c \frac{1}{\eps^{\alpha(q,\infty)}}
\]
and, due to the conclusion of \cref{lem:energy-optimization}, \cref{eq:joint-partitions} and \cref{eq:complexity-bounds},
\[
    \mE_i(\mR_i) - \mE_i(\mP_i)
    \ll_k c\eps^{2^\ell} \cdot 2k M^{2^\ell} \cdot  \log |\mQ_i|
    \ll_k 
    c (\eps M)^{2^\ell}.
\]
Choosing $c = c(k,q) > 0$ small enough in terms of $k$ and $\ell = \ell(q)$, we conclude that
\[
    \sum_{\substack{t \in \{1, \ldots, k\} \\ i_t = i}} (\mE_{f_t,\ell}(\mR_i) -
    \mE_{f_t,\ell}(\mP_i)) 
    + 
    \sum_{\substack{t \in \{1, \ldots, k\} \\ j_t = i}} (\mE_{f_t^T,\ell}(\mR_i) -
    \mE_{f_t^T,\ell}(\mP_i))
    \le \left(\frac{\eps M}{2}\right)^{2^\ell},
\]
where each term in the two sums is nonnegative.
Putting this together with \cref{eq:final-energy-bound}, we obtain
\[
    ||\E(f_t \mid \mR_{i_t} \otimes \mR_{j_t}) - \E(f_t \mid \mP_{i_t} \otimes \mP_{j_t}))||_{S^\infty} \le 
    2^{-1} \eps M + 2^{-1} \eps M
    =
    \eps M,
\]
and taking a maximum over $t \in \{1, \ldots, k\}$ yields the desired bound in \cref{eq:first-supremum}. 

To establish \cref{eq:second-supremum}, we choose the partitions $\mP_i'$ via \cref{lem:energy-optimization} in the same way, except that we use the complexity function $\H(\cdot)$ instead of $\log |\cdot|$. This yields
\[
    \H(\mP_i') \ll_c \frac{1}{\eps^{\alpha(q,\infty)}}
    \qquad \quad 
    \text{and} 
    \qquad \quad 
    \mE_i(\mR_i') - \mE_i(\mP_i')
    \ll_k c\eps^{2^\ell} \cdot 2k M^{2^\ell} \cdot  \H(\mQ_i'),
\]
and choosing a suitable constant $c = c(k, q)$ is now possible due to the entropy bound in  \cref{eq:complexity-bounds}.
\end{proof}

\begin{remark}
One can also give a constructive proof of a statement like \cref{thm:regularity} using singular value decomposition (related to the spectral methods of Frieze and Kannan \cite{frieze1999simple,frieze1999quick}),
at the expense of inserting cutoffs of the form $\one_{R_{i}^c \times R_{j}^c}$ in the uniformity norms, for some small exceptional sets $R_i$. Such cutoffs are compatible with our methods, and can eliminate the need to use entropy as a measure of complexity. We chose the method of proof above partly since we find it cleaner, and partly since the generalized $L^2$ energies from \cref{def:generalized-L2-energy} might find other applications (in particular, they may have higher-order combinatorial analogues, applicable in the setting of hypergraphs).
\end{remark}

\section{The counting lemma} \label{sec:counting}

For comparison purposes, we first state a version of the $k$-cycle counting lemma for dense weighted graphs (see a similar version for triangles, e.g., in \cite[Lemma 3.4]{conlon2021regularity}).

\begin{knownlemma}[Dense cycle counting lemma]
Let $k \in \Z_{\ge 3}$ and $\eps, C > 0$. Let $(X_i, \P_i)$ be finite probability spaces and $f_i, \tilde{f}_i : X_i \times X_{i+1} \to [0, \infty)$ be functions satisfying $||f_i||_{L^\infty} \le C$, $||\tilde{f}_i||_{L^\infty} \le C$, and $||f_i - \tilde{f}_i||\sq \le \eps$, for $i \in \Z/k\Z$. Then
\[
    \left\vert \E_{x_1, \ldots, x_k} \prod_{i=1}^k f_{i}(x_i,x_{i+1})
    -
    \E_{x_1, \ldots, x_k} \prod_{i=1}^k \tilde{f}_{i}(x_i,x_{i+1}) 
    \right\vert
    \le k C^{k-1}
    \eps.
\]
\end{knownlemma}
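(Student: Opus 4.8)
The plan is to use the standard telescoping/hybrid argument: replace the factors $f_i$ by $\tilde f_i$ one at a time, and control each swap by a single application of the cut norm. First I would write the difference of the two products as a telescoping sum
\[
    \prod_{i=1}^k f_i(x_i,x_{i+1}) - \prod_{i=1}^k \tilde f_i(x_i,x_{i+1})
    = \sum_{\ell=1}^k \Big(\prod_{i<\ell} \tilde f_i(x_i,x_{i+1})\Big)\big(f_\ell(x_\ell,x_{\ell+1}) - \tilde f_\ell(x_\ell,x_{\ell+1})\big)\Big(\prod_{i>\ell} f_i(x_i,x_{i+1})\Big),
\]
so that after taking expectations and the triangle inequality it suffices to bound each of the $k$ terms by $C^{k-1}\eps$.

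For a fixed $\ell$, I would take expectations over all variables \emph{except} $x_\ell$ and $x_{\ell+1}$ first. The key point is that for each fixed value of $x_\ell$ and $x_{\ell+1}$, the remaining average
\[
    g(x_\ell) := \E_{x_i,\, i\neq \ell,\ell+1}\Big[\prod_{i<\ell}\tilde f_i(x_i,x_{i+1})\Big],
    \qquad
    h(x_{\ell+1}) := \E\Big[\prod_{i>\ell} f_i(x_i,x_{i+1})\Big]
\]
factors as a function of $x_\ell$ times a function of $x_{\ell+1}$ — this is because the path from $x_{\ell+1}$ around to $x_\ell$ (through the edges $i\neq\ell$) splits into two independent chains once $x_\ell,x_{\ell+1}$ are frozen: the chain $x_{\ell+1}x_{\ell+2}\cdots$ and the chain $\cdots x_{\ell-1}x_\ell$ share no vertices. (Here I am using that $k\geq 3$, so these two chains are genuinely disjoint; for $k=3$ one of them is a single edge.) Moreover, since $0\le f_i,\tilde f_i\le C$ pointwise, we get $0\le g\le C^{\ell-1}$ and $0\le h\le C^{k-\ell}$ pointwise, hence $\|g\|_{L^\infty}\|h\|_{L^\infty}\le C^{k-1}$.

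It then remains to observe that
\[
    \Big|\E_{x_\ell,x_{\ell+1}}\big[(f_\ell-\tilde f_\ell)(x_\ell,x_{\ell+1})\, g(x_\ell)\, h(x_{\ell+1})\big]\Big|
    \le C^{k-1}\,\|f_\ell-\tilde f_\ell\|\sq,
\]
which follows from the definition of the cut norm in \cref{eq:cut} after normalizing $g/\|g\|_{L^\infty}$ and $h/\|h\|_{L^\infty}$; strictly speaking the cut norm is defined via $\{0,1\}$-valued $g,h$, but the bound $|\E[u(x)g(x)h(y)]|\le \|g\|_{L^\infty}\|h\|_{L^\infty}\|u\|\sq$ for arbitrary bounded $g,h$ is the routine extension obtained by writing bounded functions as averages of indicator functions (splitting into real and imaginary, positive and negative parts, and using the layer-cake representation), at the cost of an absolute constant — or one can cite the standard fact that the cut norm with $[0,1]$-valued weights is comparable to the one with $\{0,1\}$-valued weights. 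Summing the $k$ resulting bounds $C^{k-1}\|f_\ell-\tilde f_\ell\|\sq\le C^{k-1}\eps$ gives the claimed $kC^{k-1}\eps$.

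The main obstacle — really the only non-bookkeeping point — is verifying the factorization of the "frozen" average into a product of a function of $x_\ell$ and a function of $x_{\ell+1}$; this is where the cyclic structure of $C_k$ and the hypothesis $k\ge 3$ enter, and it is exactly what lets a cut-norm (rank-type) estimate be applied. Everything else is the telescoping identity and the elementary extension of the cut-norm inequality to bounded (rather than indicator) test functions.
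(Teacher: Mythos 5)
Your telescoping decomposition and the final cut-norm estimate are the right ingredients, but the step you yourself single out as the crux --- the factorization of the average over the remaining variables --- is false as stated. The cycle minus the edge $\{\ell,\ell+1\}$ is a single connected path from $x_{\ell+1}$ back around to $x_\ell$, not two disjoint chains: in your own splitting, $\prod_{i<\ell}\tilde f_i$ involves the vertices $x_1,\dots,x_\ell$ while $\prod_{i>\ell}f_i$ involves $x_{\ell+1},\dots,x_k$ \emph{and} $x_{k+1}=x_1$, so the two products share the variable $x_1$ and the joint expectation over $\{x_i\}_{i\neq\ell,\ell+1}$ does not split as $g(x_\ell)h(x_{\ell+1})$. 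Concretely, for $k=3$ and $\ell=1$ the quantity in question is $\E_{x_3}[f_2(x_2,x_3)f_3(x_3,x_1)]$, a genuinely two-variable function (a matrix product) which has no reason to have rank one. As written, the cut-norm inequality therefore cannot be applied to it.

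The standard repair is to \emph{condition on} the remaining variables rather than integrate them out. For each fixed assignment of $\{x_i\}_{i\neq \ell,\ell+1}$, each of $x_\ell$ and $x_{\ell+1}$ lies on exactly one surviving edge of the cycle (here $k\ge 3$ guarantees these are two \emph{distinct} edges), so the integrand genuinely factors as $(f_\ell-\tilde f_\ell)(x_\ell,x_{\ell+1})\,G(x_\ell)\,H(x_{\ell+1})\,c$ with $0\le G,H\le C$ and $0\le c\le C^{k-3}$, where $G,H,c$ depend on the frozen variables; one applies the cut-norm bound for each such assignment and then averages. A second, smaller point: since the lemma asserts the constant $kC^{k-1}$ exactly, you cannot afford to lose an absolute constant when extending the cut norm from indicator to bounded test functions. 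For \emph{nonnegative} test functions with values in $[0,1]$ there is no loss (the functional is bilinear and the extreme points of $[0,1]^X$ are indicators), so the hypothesis $f_i,\tilde f_i\ge 0$ must be invoked here; the real/imaginary and positive/negative decomposition you mention as a fallback would cost a factor and fail to give the stated bound.
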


Like in the case of regularity lemmas, a \emph{sparse} counting lemma should not require $L^\infty$ bounds on the functions $f_i$. The counting lemmas of \cite{conlon2014green,conlon2021regularity,conlon2021graphs}, for instance, assume that $\tilde{f}_i$ remain $L^\infty$-bounded, but have weaker assumptions about $f_i$. In our sparse counting lemma below, we eliminate all of the $L^\infty$ constraints, by replacing them with appropriate Schatten norm bounds. This weakening of the \emph{density norm} bounds comes at the expense of stronger \emph{uniformity norm} bounds, as we also replace the cut norm by the appropriate Schatten norms.

\begin{lemma}[Sparse cycle counting lemma with Schatten norms] \label{lem:counting}
Let $k \in \Z_{\ge 2}$, $q_1, \ldots, q_k \in [1, \infty)$, and $r_1, \ldots, r_k \in (1, \infty]$ be such that for each $i \in \Z/k\Z$,
\[
    \frac{1}{r_i} + \sum_{j \neq i} \frac{1}{q_j} \ge 1.
\]
Also, let $\eps, C > 0$, $(X_i, \P_i)$ be finite probability spaces, and $f_{i}, \tilde{f}_{i} : X_i \times X_{i+1} \to \C$ be functions satisfying $||f_{i}||_{S^{q_i}} \le C$, $||\tilde{f}_{i}||_{S^{q_i}} \le C$, and $||f_{i} - \tilde{f}_{i}||_{S^{r_i}} \le \eps$, for $i \in \Z/k\Z$. Then
\[
    \left\vert \E_{x_1, \ldots, x_k} \prod_{i=1}^k f_{i}(x_i,x_{i+1})
    -
    \E_{x_1, \ldots, x_k} \prod_{i=1}^k \tilde{f}_{i}(x_i,x_{i+1}) 
    \right\vert
    \le k C^{k-1}
    \eps.
\]
\end{lemma}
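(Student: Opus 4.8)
The plan is to telescope the difference $\prod_i f_i - \prod_i \tilde f_i$ into $k$ single-coordinate swaps, and bound each swap using the cycle counting inequality from \cref{lem:Schatten-norms-functions}.(iv). Concretely, write
\[
    \prod_{i=1}^k f_i(x_i, x_{i+1}) - \prod_{i=1}^k \tilde f_i(x_i, x_{i+1})
    = \sum_{m=1}^k \left( \prod_{i < m} \tilde f_i(x_i, x_{i+1}) \right) \bigl(f_m - \tilde f_m\bigr)(x_m, x_{m+1}) \left( \prod_{i > m} f_i(x_i, x_{i+1}) \right),
\]
so that taking expectations and the triangle inequality reduces matters to showing, for each $m \in \Z/k\Z$,
\[
    \left\vert \E_{x_1, \ldots, x_k} \left[ \left(\prod_{i<m} \tilde f_i(x_i,x_{i+1})\right) (f_m - \tilde f_m)(x_m, x_{m+1}) \left(\prod_{i>m} f_i(x_i,x_{i+1})\right) \right] \right\vert \le C^{k-1} \eps.
\]

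The key step is to recognize each term as a $k$-cycle count in the sense of \cref{eq:k-cycle-count}: the integrand is a product of $k$ functions $g_i : X_i \times X_{i+1} \to \C$ around the cycle $\Z/k\Z$, where $g_m = f_m - \tilde f_m$ and the other $g_i$ are either $f_i$ or $\tilde f_i$. Applying the cycle counting inequality in \cref{lem:Schatten-norms-functions}.(iv) with the exponent $r_m$ assigned to the slot $m$ and the exponent $q_i$ assigned to every other slot $i$, the hypothesis $\frac{1}{r_m} + \sum_{i \neq m} \frac{1}{q_i} \ge 1$ is exactly the condition needed to apply that inequality (the sum of reciprocals is at least $1$). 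This yields the bound
\[
    \|f_m - \tilde f_m\|_{S^{r_m}} \cdot \prod_{i \neq m} \|g_i\|_{S^{q_i}} \le \eps \cdot C^{k-1},
\]
using $\|f_i - \tilde f_i\|_{S^{r_m}} \le \eps$ for the special slot and $\|f_i\|_{S^{q_i}}, \|\tilde f_i\|_{S^{q_i}} \le C$ for the remaining $k-1$ slots. Summing over the $k$ values of $m$ gives the claimed bound $k C^{k-1} \eps$.

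There is essentially no obstacle here; the content is entirely in \cref{lem:Schatten-norms-functions}.(iv), which was proved earlier, and the hypotheses on the exponents $q_i, r_i$ have been arranged precisely so that this argument goes through. The one minor point to be careful about is that \cref{lem:Schatten-norms-functions}.(iv) requires $k \ge 2$ and a sum of reciprocals that is $\ge 1$; since $r_i > 1$ and we allow $r_i = \infty$, one should note that $\frac{1}{r_i}$ is well-defined (with $\frac{1}{\infty} = 0$) and the inequality is used with the $q_j$'s (which lie in $[1,\infty)$ so their reciprocals are positive) making up the deficit. No issue of integrability arises since all the spaces are finite and all functions are complex-valued with finite Schatten norms.
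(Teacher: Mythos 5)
Your proposal is correct and matches the paper's proof: both telescope the difference by swapping $f_i$ for $\tilde f_i$ one index at a time and bound each of the $k$ resulting terms by the cycle counting inequality of \cref{lem:Schatten-norms-functions}, using the exponent $r_m$ on the swapped slot and $q_i$ elsewhere. Nothing to add.
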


\begin{proof}
Replacing $f_{i}$ with $\tilde{f}_{i}$ one $i$ at a time, it is enough to show (by the circular symmetry of the indices and the equivalent conditions on $f_{i}$ and $\tilde{f}_{i}$) that
\[
    \left\vert \E_{x_1, \ldots, x_k} (f_{1} - \tilde{f}_{1})(x_1,x_2) \prod_{i=2}^k f_{i}(x_i,x_{i+1}) \right\vert \le C^{k-1} \eps.
\]
But is an immediate consequence of the cycle counting inequality from \cref{lem:Schatten-norms-functions}, our assumed Schatten norm bounds, and the fact that $\frac{1}{r_i} + \sum_{j \neq i} \frac{1}{q_j} \ge 1$.
\end{proof}

\begin{remark}
The counting lemmas considered in \cite{conlon2021regularity,conlon2021graphs} are one-sided, in the sense that they only provide lower bounds for expressions like $\E_{x_1, \ldots, x_k} \prod_{i=1}^k f_{i}(x_i,x_{i+1})$. Conlon, Fox, Sudakov and Zhao also asked \cite{conlon2021graphs} about potential two-sided results: specifically, for which (connected) subgraphs $F$ is there a two-sided $F$-counting lemma, in graphs $G$ with no $4$-cycles? \cref{lem:counting} answers this question when $F$ is a cycle or a path, in the broader setting of graphs $G$ with not too many $4$-cycles; the only drawback is that it uses the slightly stronger $S^\infty$ uniformity norm instead of the cut norm. The reader should compare the corollary below with the notion of \emph{countable} graphs from \cite{conlon2021graphs}.
\end{remark}

\begin{corollary}[Two-sided counting lemma in graphs with not too many $C_4$'s] \label{cor:two-sided-counting}
For any graph $F \in \{C_k : k \ge 5\} \cup \{P_k : k \ge 1\}$ and any $\epsilon, C > 0$, there exists $\delta > 0$ such that the following holds. Let $G$ be an $n$-vertex graph with $\Inj(C_4, G) \le Cn^2$, and $H \in [0, \infty)^{V(G) \times V(G)}$ be a symmetric matrix with $\Tr(H^4) \le Cn^4$, such that
\[
    \left\vert \frac{\sum_{x,y} \one_{G}(x,y) a(x)b(y)}{n^{3/2}} - \frac{\sum_{x,y} H(x,y)a(x)b(y)}{n^2} \right\vert \le \delta
    \qquad
    \forall a, b : V(G) \to \C,\  ||a||_{2}, ||b||_{2} \le \sqrt{n}.
\]
Then for every family $(A_v)_{v \in V(F)}$ of subsets $A_v \subset V(G)$, one has
\[
    \left\vert 
    \frac{\sum_{x_v \in A_v, \forall v \in V(F)} \prod_{\{u, v\} \in E(F)} \one_{G}(x_u,x_v)}{n^{|V(F)| - |E(F)|/2}}
    -
    \frac{\sum_{x_v \in A_v, \forall v \in V(F)} \prod_{\{u, v\} \in E(F)} H(x_u,x_v)}{n^{|V(F)|}}
    \right\vert
    \le \epsilon.
\]
\end{corollary}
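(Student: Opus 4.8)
The plan is to reduce the corollary to the sparse cycle counting lemma (\cref{lem:counting}). Equip $V(G)$ with the uniform probability measure and set $f := n^{1/2}\one_G$ and $\tilde f := H$, viewed as functions $V(G)\times V(G)\to[0,\infty)$. First I would translate the hypotheses into Schatten-norm statements. By \cref{eq:Schatten-infty}, applied with $u(x) = a(x)$, $w(y) = b(y)$ (so that $\|u\|_{L^2}\le1\iff\|a\|_2\le\sqrt n$) and using that $\E_{x,y}[(f-\tilde f)(x,y)u(x)w(y)] = n^{-3/2}\sum_{x,y}\one_G(x,y)a(x)b(y)-n^{-2}\sum_{x,y}H(x,y)a(x)b(y)$, the uniformity hypothesis says exactly $\|f-\tilde f\|_{S^\infty}\le\delta$. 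For the density norms: since $\Mat(H)_{x,y}=H(x,y)/n$, \cref{lem:Schatten-norms-matrices}.(iii) gives $\|\tilde f\|_{S^4}^4 = n^{-4}\Tr(H^4)\le C$, while $\|f\|_{S^4}^4 = n^2\|\one_G\|_{S^4}^4 = n^{-2}\Hom(C_4,G)$ by \cref{eq:2k-cycles}; and the hypothesis $\Inj(C_4,G)\le Cn^2$ forces $\Hom(C_4,G)\ll_C n^2$ (expand $\Hom(C_4,G)=\Tr(A_G^4)=\Inj(C_4,G)+2\sum_v\deg(v)^2-2|E(G)|$ and bound $\sum_v\deg(v)^2\ll_C n^2$ by a routine Cauchy--Schwarz estimate on codegrees). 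Thus $C_0 := \max(1,\|f\|_{S^4},\|\tilde f\|_{S^4}) = O_C(1)$ and $\|f-\tilde f\|_{S^\infty}\le\delta$.

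Next I would write the $F$-count as a cyclic product of edge functions. Since $F$ is a cycle or a path, each of its vertices lies on at most two edges, so each vertex can be assigned injectively to one incident edge, which will carry the indicator cutoff $\one_{A_v}$. If $F=C_k$ with $k\ge5$ and vertices $w_1,\dots,w_k$ in cyclic order, set $f_i(x,y):=\one_{A_{w_i}}(x)f(x,y)$ and $\tilde f_i(x,y):=\one_{A_{w_i}}(x)\tilde f(x,y)$ on a $k$-cycle of copies of $V(G)$; a direct expansion (the factor $n^{k/2}$ from the $k$ copies of $n^{1/2}$ matching $n^{|E(F)|/2}$) shows that $\E_{x_1,\dots,x_k}\prod_{i\in\Z/k\Z}f_i(x_i,x_{i+1})$ and $\E_{x_1,\dots,x_k}\prod_{i\in\Z/k\Z}\tilde f_i(x_i,x_{i+1})$ are exactly the two normalized counts in the corollary. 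If $F=P_k$ with vertices $v_0,\dots,v_k$, I would close the path into a $(k+1)$-cycle by appending one extra edge carrying the constant-in-$y$ function $f_{k+1}(x,y)=\tilde f_{k+1}(x,y):=\one_{A_{v_k}}(x)$ — this contributes a factor of $1$ and leaves both counts unchanged — while edge $i$ for $1\le i\le k$ carries $\one_{A_{v_{i-1}}}(x)f(x,y)$ respectively $\one_{A_{v_{i-1}}}(x)\tilde f(x,y)$; again a direct expansion recovers the two quantities of the corollary, with the bookkeeping $n^{k/2-(k+1)}=n^{-(|V(F)|-|E(F)|/2)}$.

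Then I would invoke \cref{lem:counting}. On each ``$\one_G/H$'' edge one has $\|f_i\|_{S^4}\le\|f\|_{S^4}\le C_0$, $\|\tilde f_i\|_{S^4}\le\|\tilde f\|_{S^4}\le C_0$ (by \cref{lem:Schatten-norms-functions}.(iii)) and $\|f_i-\tilde f_i\|_{S^\infty}\le\|f-\tilde f\|_{S^\infty}\le\delta$; on the appended constant edge $\|f_{k+1}\|_{S^q}\le\|\one_{V(G)\times V(G)}\|_{S^q}=1$ for every $q$, and $\|f_{k+1}-\tilde f_{k+1}\|_{S^r}=0$. For $F=C_k$ take all $q_i=4$ and $r_i=\infty$; then $\tfrac1{r_i}+\sum_{j\neq i}\tfrac1{q_j}=\tfrac{k-1}{4}\ge1$ since $k\ge5$. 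For $F=P_k$ take $q_i=4$, $r_i=\infty$ on the $k$ real edges and $q_{k+1}=1$, with $r_{k+1}\in(1,\infty]$ large enough that $\tfrac1{r_{k+1}}+\tfrac{k}{4}\ge1$ (e.g.\ $r_{k+1}=\infty$ if $k\ge4$ and $r_{k+1}=\tfrac4{4-k}$ otherwise — any choice works since the associated difference norm is $0$); the exponent condition at a real edge then reads $\tfrac1{q_{k+1}}+\tfrac{k-1}{4}=1+\tfrac{k-1}{4}\ge1$, which is automatic. In every case \cref{lem:counting} applies with $m:=|E(F)|$ (cycles) or $m:=|E(F)|+1$ (paths) functions, density bound $C_0$ and uniformity bound $\delta$, giving that the left-hand side of the corollary is at most $m\,C_0^{\,m-1}\delta$; choosing $\delta=\delta(\epsilon,C,F)>0$ with $m\,C_0^{\,m-1}\delta\le\epsilon$ finishes the proof.

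The main obstacle is the path case with few edges (and, relatedly, seeing that $C_5$ is already the shortest admissible cycle). Because the only density bound available on $f$ is $\|f\|_{S^4}\ll_C1$, one is forced to use $q_i=4$ on the ``$\one_G/H$'' edges, and one must use $r_i$ \emph{strictly} larger than $4$ there (to make $\|f_i-\tilde f_i\|_{S^{r_i}}$ small by interpolating between the bounded $S^4$-norm and the small $S^\infty$-norm); but then the exponent condition $\tfrac1{r_i}+\sum_{j\neq i}\tfrac1{q_j}\ge1$ of \cref{lem:counting} fails on a cycle or closed-up path with too few such edges. The device that repairs this for all paths is to close the path with a \emph{rank-one} constant edge: its Schatten norm is $\le1$ in \emph{every} $S^q$, so it can absorb a full unit $\tfrac1{q_{k+1}}=1$ of the Hölder budget at no cost, and since it is identical on the $G$- and $H$-sides it imposes no uniformity condition of its own. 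The remaining points requiring care are purely clerical: checking via \cref{lem:Schatten-norms-functions}.(iii) that absorbing the cutoffs $\one_{A_v}$ does not inflate the $S^4$-norms, and checking that the powers of $n$ in the two normalizations match precisely.
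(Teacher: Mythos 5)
Your proposal is correct and follows essentially the same route as the paper's proof: translate the hypotheses into $\|f\|_{S^4},\|\tilde f\|_{S^4}\ll_C 1$ (passing from $\Inj(C_4,G)$ to $\Hom(C_4,G)$ as in \cref{eq:4-cycle-counts}) and $\|f-\tilde f\|_{S^\infty}\le\delta$ via \cref{eq:Schatten-infty}, absorb the cutoffs $\one_{A_v}$ using \cref{lem:Schatten-norms-functions}, and apply \cref{lem:counting} with $q_i=4$, closing paths with a constant rank-one function of Schatten $1$-norm equal to $1$. The only differences are cosmetic (placement of the indicator cutoffs, and your slightly more explicit choice of $r$ on the appended constant edge for short paths).
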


\begin{proof}
Equip $V(G)$ with the uniform probability measure, and define $f, \tilde{f} : V(G) \times V(G) \to [0, \infty)$ by $f := \one_{G}/p$ and $\tilde{f} := H$, for $p = n^{-1/2}$ (one can in fact use any $p \ge n^{-1/2}$ to obtain a slightly more general result). A short computation of Conlon, Fox, Sudakov and Zhao \cite[p.\ 2389, below $(11)$]{conlon2021regularity} shows that for any $n$-vertex graph $G$, one has
\begin{equation} \label{eq:4-cycle-counts}
    \Hom(C_4, G) 
    \ll n^2 + \Inj(C_4, G),
\end{equation}
with an absolute implied constant. Using \cref{eq:2k-cycles} and our assumption that $\Inj(C_4, G) \le Cn^4$, this yields
\[
    ||f||_{S^4}^4 = \frac{\Hom(C_4, G)}{p^4 n^4}
    \le \frac{C' n^2}{p^4 n^4} = C',
\]
for some constant $C' > 0$ (depending only on $C$). Similarly, $\Tr(H^4) \le Cn^4$ translates to $||\tilde{f}||_{S^4} \le C^{1/4}$, and the assumed $\delta$-bound involving functions $a, b : V(G) \to \C$ simply states that
\[
    ||f - \tilde{f}||_{S^\infty} \le \delta,
\]
by \cref{eq:Schatten-infty} (in \cite{conlon2021graphs}, $a$ and $b$ are required to be indicator functions of subsets of $V(G)$, which amounts to the less restrictive cut norm bound $||f - \tilde{f}||\sq \le \delta$). Finally, the desired conclusion translates to
\begin{equation} \label{eq:counting-conclusion-1}
    \left\vert 
    \E_{x_1, \ldots, x_k} \prod_{v \in \Z/k\Z} (f \one_{A_v \times A_{v+1}})(x_v,x_{v+1})
    -
    \E_{x_1, \ldots, x_k} \prod_{v \in \Z/k\Z} (\tilde{f} \one_{A_v \times A_{v+1}})(x_v,x_{v+1})
    \right\vert
    \le \epsilon,
\end{equation}
when $F = C_k$ is a $k$-cycle with $k \ge 5$ (and $V(F) = \Z/k\Z$), respectively
\begin{equation} \label{eq:counting-conclusion-2}
    \left\vert 
    \E_{x_1, \ldots, x_k} \prod_{v = 1}^{k-1} (f \one_{A_v \times A_{v+1}})(x_v,x_{v+1})
    -
    \E_{x_1, \ldots, x_k} \prod_{v = 1}^{k-1} (\tilde{f} \one_{A_v \times A_{v+1}})(x_v,x_{v+1})
    \right\vert
    \le \epsilon,
\end{equation}
when $F = P_{k-1}$ is a $(k-1)$-path with $k \ge 2$ (and vertex set $V(F) = \{1, \ldots, k\}$). Letting $f_v := f \one_{A_v \times A_{v+1}}$ and $\tilde{f}_v := \tilde{f} \one_{A_v \times A_{v+1}}$, we still have $||f_v||_{S^4}, ||\tilde{f}_v||_{S^4} \ll_C 1$ and $||f_v - \tilde{f}_v||_{S^\infty} \le \delta$ for each $v$, by \cref{lem:Schatten-norms-functions}. 

Thus \cref{eq:counting-conclusion-1} follows immediately from \cref{lem:counting}, by picking a small enough $\delta$ and noting that $\frac{1}{\infty} + \sum_{v=1}^{k-1} \frac{1}{4} \ge 1$. Similarly, \cref{eq:counting-conclusion-2} follows from \cref{lem:counting}, by inserting constant functions $f_k = \tilde{f}_k := \one_{V(G)}$ with $||f_k||_{S^1} = ||\tilde{f}_k||_{S^1} = 1$ (by \cref{lem:Schatten-norms-functions}), and noting that $\frac{1}{\infty} + \sum_{v=1}^{k-2} \frac{1}{4} + 1 \ge 1$. 
\end{proof}

\iffalse
It's likely that Schatten norms can only control cycle-type averages, since that's what they look like. See Hatami's inequality! You'd need another type of graph norm for each graph you try to remove, and your regularity lemma doesn't cover those (since it's very dependent on the $S^\infty$ + entropy trick)
\begin{remark}
It would be of interest to obtain two-sided counting lemmas like \cref{lem:counting} for more general subgraphs $F$ (starting, e.g., with trees), assuming no $L^\infty$ bounds. Since our simultaneous regularity lemma (\cref{thm:regularity}) is already equipped to deal with general subgraphs, this would extend the methods in this paper to other sparse graph removal lemmas. We restrict our focus to $k$-cycles (and $k$-paths) in this paper, partly since this is enough for the additive-combinatorial application of counting solutions to a linear equation in sparse sets.
\end{remark}
\fi

\iffalse
\begin{remark}
If $q > k-1$, then the condition $\frac{1}{r} + \frac{k-1}{q} \ge 1$ from  \cref{lem:counting} is equivalent to $r \in (q, \frac{q}{q-k-1}]$. If $q \le k-1$, then any $r \in (q, \infty]$ works. In particular, $r = k$ always works: $\frac{1}{k} + \frac{k-1}{q} > \frac{1}{k} + \frac{k-1}{k} = 1$.
\end{remark}
\fi

Our next lemma is a bit technical, but it will ultimately allow us to consider only proper $k$-cycles (as opposed to homomorphic copies of $k$-cycles) in results about non-$k$-partite graphs such as \cref{cor:graphs-few-4-cycles}. This is done by following an idea from \cite{conlon2021regularity} to carefully partition the space $X$ into $k$ parts $X^{(1)}, \ldots, X^{(k)}$, and to only consider the tuples $(x_1, \ldots, x_k) \in X^{(1)} \times \cdots \times X^{(k)}$. 

\begin{lemma}[Restricting to distinct vertices] \label{lem:distinct-vertices}
Let $k \in \Z_{\ge 2}$, $(X, \P)$ be a finite space equipped with uniform probability, and $f : X^k \to [0,\infty)$ be a $(\bigotimes_{i=1}^k \mP)$-measurable function, for some partition $\mP$ of $X$. If $R$ denotes the union of all parts of $\mP$ with fewer than $k(k+1)$ elements, then there exists a partition of $X$ into $k$ disjoint parts $X^{(1)}, \ldots, X^{(k)}$ such that
\[
    \E_{x_1, \ldots, x_k} f(x_1,\ldots,x_k) \prod_{i=1}^k \one_{R^c}(x_i)
    \ll_k \E_{x_1, \ldots, x_k} f(x_1,\ldots,x_k) \prod_{i=1}^k\one_{X^{(i)}}(x_i).
\]
\end{lemma}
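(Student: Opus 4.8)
The plan is to use the $(\bigotimes_{i=1}^k \mP)$-measurability of $f$ to rewrite both expectations as sums over $k$-tuples of parts of $\mP$, and then to construct $X^{(1)}, \ldots, X^{(k)}$ by splitting each part of $\mP$ as evenly as possible among $k$ classes. Write $P(x) \in \mP$ for the part containing $x$, and for a cell $P_1 \times \cdots \times P_k$ write $f(P_1, \ldots, P_k)$ for the (constant) value of $f$ there; then for any subsets $S_1, \ldots, S_k \subseteq X$ one has
\[
    \E_{x_1,\ldots,x_k} f(x_1,\ldots,x_k)\prod_{i=1}^k \one_{S_i}(x_i) = \sum_{P_1,\ldots,P_k \in \mP} f(P_1,\ldots,P_k) \prod_{i=1}^k \frac{|P_i \cap S_i|}{|X|}.
\]
Call a part $P \in \mP$ \emph{large} if $|P| \ge k(k+1)$; then $|P \cap R^c|$ equals $|P|$ if $P$ is large and $0$ otherwise, so taking $S_i = R^c$ shows that the left-hand side of the claimed inequality is exactly $\sum f(P_1,\ldots,P_k)\prod_i |P_i|/|X|$, with the sum restricted to tuples of large parts.

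Next I would split each $P \in \mP$ into pieces $P^{(1)}, \ldots, P^{(k)}$ with $\bigl|\,|P^{(i)}| - |P^{(j)}|\,\bigr| \le 1$ for all $i,j$, and set $X^{(i)} := \bigcup_{P \in \mP} P^{(i)}$; this is a partition of $X$ into $k$ parts (each part being nonempty as soon as some large part exists, which is the only case in which the claimed bound is nontrivial). For each $i$ and each $P$ one has $|P^{(i)}| \ge \lfloor |P|/k\rfloor \ge |P|/k - 1$, and if $P$ is large then $|P|/k - 1 \ge |P|/(k+1)$, since $|P|/k - |P|/(k+1) = |P|/(k(k+1)) \ge 1$ precisely when $|P| \ge k(k+1)$. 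Hence $|P \cap X^{(i)}| = |P^{(i)}| \ge |P|/(k+1)$ for every large $P$ and every $i$; this elementary inequality, tying the threshold $k(k+1)$ to the loss factor $k+1$, is really the only point needing care.

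Finally, applying the displayed identity with $S_i = X^{(i)}$, discarding (using $f \ge 0$) every tuple in which some $P_i$ is not large, and then invoking $|P_i \cap X^{(i)}| \ge |P_i|/(k+1)$ on the surviving (all-large) tuples, gives
\begin{align*}
    \E_{x_1,\ldots,x_k} f(x_1,\ldots,x_k)\prod_{i=1}^k \one_{X^{(i)}}(x_i)
    &\ge \frac{1}{(k+1)^k} \sum_{\substack{P_1,\ldots,P_k \in \mP \\ \text{all large}}} f(P_1,\ldots,P_k)\prod_{i=1}^k \frac{|P_i|}{|X|} \\
    &= \frac{1}{(k+1)^k}\, \E_{x_1,\ldots,x_k} f(x_1,\ldots,x_k)\prod_{i=1}^k \one_{R^c}(x_i),
\end{align*}
which is the claim with implied constant $(k+1)^k = O_k(1)$. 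A uniformly random assignment of each element of $X$ to one of the $k$ classes would yield the same bound in expectation, but the deterministic even split has the advantage of working for all large parts simultaneously and with an explicit constant, so I expect no genuine obstacle here beyond careful bookkeeping.
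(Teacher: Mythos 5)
Your argument is correct and is essentially identical to the paper's proof: the same even split of each part of $\mP$ into $k$ nearly-equal pieces, the same elementary inequality $\lfloor |P|/k\rfloor \ge |P|/(k+1)$ for parts of size at least $k(k+1)$, and the same cell-by-cell comparison using the nonnegativity and $(\bigotimes_i \mP)$-measurability of $f$, yielding the constant $(k+1)^k$. No issues.
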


\iffalse
\begin{remark}
It is crucial for our proof of this result that $X$ is equipped with the uniform probability distribution, since this allows us to split each part of the arbitrary partition $\mP_0$ into $k$ subparts of almost equal probabilities.
\end{remark}
\fi

\begin{proof}
We split each part $A \in \mP$ arbitrarily into $k$ (possibly empty) parts of sizes differing by at most $1$, labeled by $A^{(i)}$ for $i \in \Z/k\Z$, and let $X^{(i)}$ be the union of all such $A^{(i)}$.

Now let $A_1, \ldots, A_k$ denote some (not necessarily distinct) parts of $\mP$, which have at least $k(k+1)$ vertices (i.e., $A_1, \ldots, A_k \subset R^c$). Consider the expectation
\[
    \E_{x_1, \ldots, x_k} f(x_1,\ldots,x_k) \prod_{i=1}^k \one_{X^{(i)} \cap A_i}(x_i),
\]
where $x_1, \ldots, x_k$ still range over $X$, although only the values $(x_1, \ldots, x_k) \in A_1 \times \cdots \times A_k$ are relevant. Note that for these values, $f(x_1,\ldots,x_k)$ is equal to some nonnegative constant $c(A_1,\ldots,A_k)$. Moreover, by the construction of $X^{(i)}$, the intersection $X^{(i)} \cap A_i$ contains at least 
\[
    \left\lfloor \frac{|A_i|}{k} \right\rfloor \ge \frac{|A_i| - k}{k} \ge \frac{|A_i|}{k+1}
\]
vertices, and thus $\E_{x_i} \one_{X^{(i)} \cap A_i}(x_i) \ge \frac{1}{k+1} \E_{x_i} \one_{A_i}(x_i)$ (since we are using the uniform probability distribution on $X$). It follows that
\[
\begin{aligned}
    \E_{x_1,\ldots,x_k} f(x_1,\ldots,x_k) \prod_{i=1}^k \one_{X^{(i)} \cap A_i}(x_i)
    &= 
    c(A_1, \ldots, A_k) \E_{x_1,\ldots,x_k} \prod_{i=1}^k \one_{X^{(i)} \cap A_i}(x_i)
    \\ 
    &\ge 
    \frac{1}{(k+1)^k} c(A_1, \ldots, A_k) \E_{x_1,\ldots,x_k} \prod_{i=1}^k \one_{A_i}(x_i)
    \\
    &=
    \frac{1}{(k+1)^k} \E_{x_1,\ldots,x_k} f(x_1,\ldots,x_k) \prod_{i=1}^k \one_{A_i}(x_i),
\end{aligned}
\]
and summing over all choices of $A_1 \times \cdots \times A_k$, we conclude that
\[
\begin{aligned}
    \E_{x_1,\ldots,x_k} f(x_1,\ldots,x_k) \prod_{i=1}^k \one_{R^c}(x_i)
    &\ll_k 
    \E_{x_1,\ldots,x_k} f(x_1,\ldots,x_k) \prod_{i=1}^k \one_{X^{(i)} \cap R^c}(x_i)
    \\
    &\le 
    \E_{x_1,\ldots,x_k} f(x_1,\ldots,x_k) \prod_{i=1}^k \one_{X^{(i)}}(x_i),
\end{aligned}
\]
using the nonnegativity of $f$ once again.
\end{proof}

We can now present the natural consequence of our regularity and counting lemmas (with no transference-step machinery or assumptions involved), which reduces a general $k$-cycle count in a weighted $k$-partite graph to an analogous count where the weight functions are fairly structured. When all vertex sets are equal, \cref{lem:distinct-vertices} also allows us to consider strictly proper $k$-cycles (with distinct vertices) in the upper bound.

\begin{proposition}[Reduction to low complexity] \label{prop:reduction-to-structure}
For any $k \in \Z_{\ge 2}$ and $q_1, \ldots, q_k \in [1, \infty)$ with $\sum_i \frac{1}{q_i} > 1$, and any $\eps, C > 0$, there exists $\tau > 0$ such that the following hold. Let $(X_i, \P_i)$ be finite probability spaces, and $f_{i} : X_i \times X_{i+1} \to \C$ satisfy $||f_{i}||_{S^{q_i}} \le C$, for $i \in \Z/k\Z$.
\begin{itemize}
    \item[$(i)$.] There exist partitions $\mP_i'$ of $X_i$ with $\H(\mP_i') = O_{\eps,k,q_1, \ldots, q_k,C}(1)$, such that
    \[
        \left\vert \E_{x_1, \ldots, x_k} \prod_{i=1}^k f_{i}(x_i,x_{i+1})
        -
        \E_{x_1, \ldots, x_k} \prod_{i=1}^k \E(f_{i} \mid \mP'_i \otimes \mP'_{i+1})(x_i,x_{i+1}) 
        \right\vert
        \le
        \eps.
    \]
    \item[$(ii)$.] If all $f_{i}$ take values in $[0, \infty)$, then there exist partitions $\mP_i = \{R_i, A_{i,1}, A_{i,2}, \ldots\}$ of $X_i$, with $\P_i(R_i) \le \eps^2$ and $\P(A_{i,j}) \ge \tau$ for all $i, j$, such that
    \[
        \E_{x_1, \ldots, x_k} \prod_{i=1}^k \E(f_{i} \mid \mP_i \otimes \mP_{i+1})(x_i, x_{i+1})\one_{R_i^c}(x_i) \le \eps + \E_{x_1, \ldots, x_k} \prod_{i=1}^k f_{i}(x_i,x_{i+1}).
    \]
    \item[$(iii)$.] If additionally all spaces $(X_i, \P_i) = (X, \P)$ are equal and $\P$ is the uniform probability distribution, and $|X|$ is sufficiently large in terms of $\eps,k,q_1, \ldots, q_k, C$, then in part (ii) one can take all $\mP_i = \mP$ and $R_i = R$ to be equal, and also conclude the stronger bound
    \[
        \E_{x_1, \ldots, x_k} \prod_{i=1}^k \E(f_{i} \mid \mP \otimes \mP)(x_i,x_{i+1}) \one_{R^c}(x_i) 
        \ll_k
        \eps + \E_{x_1, \ldots, x_k} \one_{x_1,\ldots,x_k \textnormal{ distinct}}\prod_{i=1}^k f_{i}(x_i,x_{i+1}).
    \]
\end{itemize}
\end{proposition}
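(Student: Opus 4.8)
The plan is to establish the three parts in order, using the regularity lemma (\cref{thm:regularity}) together with the counting lemma (\cref{lem:counting}) for (i), an entropy-pruning step for (ii), and \cref{lem:distinct-vertices} for (iii). For part (i): since $\sum_i \tfrac1{q_i}>1$, for each $i$ one has $1-\sum_{j\neq i}\tfrac1{q_j}<\tfrac1{q_i}$, so I may fix exponents $r_i\in(q_i,\infty]$ with $\tfrac1{r_i}+\sum_{j\neq i}\tfrac1{q_j}\geq 1$ (taking $r_i$ slightly below $(1-\sum_{j\neq i}\tfrac1{q_j})^{-1}$ when that quantity is positive, and $r_i=\infty$ otherwise). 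Apply \cref{thm:regularity} to the spaces $X_1,\dots,X_k$, the functions $f_t:X_t\times X_{t+1}\to\C$ (indices mod $k$), the exponents $q_t,r_t$, and a regularity parameter $\eps_0$ to be chosen; \cref{eq:spec-norm-bound} then produces partitions $\mP_i'$ of $X_i$ with $\H(\mP_i')\ll_{k,q_1,\dots,q_k}\eps_0^{-O_{q_1,\dots,q_k}(1)}$ and $||f_i-\tilde f_i||_{S^{r_i}}\leq\eps_0 C$, where $\tilde f_i:=\E(f_i\mid\mP_i'\otimes\mP_{i+1}')$. Since conditional expectation does not increase Schatten norms (\cref{lem:Schatten-norms-functions}), also $||\tilde f_i||_{S^{q_i}}\leq C$, so \cref{lem:counting} gives $|\E\prod_i f_i-\E\prod_i\tilde f_i|\leq kC^{k-1}\cdot\eps_0 C$; taking $\eps_0:=\eps/(kC^k)$ proves (i), with $\H(\mP_i')=O_{\eps,k,q_1,\dots,q_k,C}(1)$.

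For part (ii), apply part (i) with $\eps/2$ in place of $\eps$ to obtain partitions $\mP_i'$ with $\H(\mP_i')\leq M$ for some $M=M(\eps,k,q_1,\dots,q_k,C)$ and with $|\E\prod_i f_i-\E\prod_i\tilde f_i|\leq\eps/2$, where $\tilde f_i=\E(f_i\mid\mP_i'\otimes\mP_{i+1}')$. Set $\tau:=e^{-M/\eps^2}$, let $R_i$ be the union of the parts of $\mP_i'$ of $\P_i$-probability $<\tau$, and put $\mP_i:=\{R_i\}\cup\{A\in\mP_i':\P_i(A)\geq\tau\}$, so the remaining parts $A_{i,j}$ satisfy $\P_i(A_{i,j})\geq\tau$. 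Each part of $R_i$ contributes at least $\P_i(A)\log\tfrac1\tau$ to $\H(\mP_i')$, so $\P_i(R_i)\log\tfrac1\tau\leq\H(\mP_i')\leq M=\eps^2\log\tfrac1\tau$, i.e.\ $\P_i(R_i)\leq\eps^2$. The key observation is that when $x_i\notin R_i$ for every $i$, each $(x_i,x_{i+1})$ lies in a part $A\times B$ of $\mP_i\otimes\mP_{i+1}$ with $A,B$ large, and $A\times B$ is also a part of $\mP_i'\otimes\mP_{i+1}'$, so $\E(f_i\mid\mP_i\otimes\mP_{i+1})(x_i,x_{i+1})=\tilde f_i(x_i,x_{i+1})$ there. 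Since $\tilde f_i\geq 0$, dropping the indicators $\one_{R_i^c}$ only increases the average, whence the left-hand side of (ii) equals $\E\prod_i\tilde f_i(x_i,x_{i+1})\one_{R_i^c}(x_i)\leq\E\prod_i\tilde f_i(x_i,x_{i+1})\leq\eps/2+\E\prod_i f_i(x_i,x_{i+1})\leq\eps+\E\prod_i f_i(x_i,x_{i+1})$.

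For part (iii), all $(X_i,\P_i)=(X,\P)$ with $\P$ uniform, so running the regularity lemma with a single space ($m=1$) gives one partition $\mP'$; forming $R$ and $\mP=\{R\}\cup\{A\in\mP':\P(A)\geq\tau\}$ as in (ii) already yields the conclusion of (ii) with all $\mP_i=\mP$, $R_i=R$. For the stronger bound, note (as in (ii)) that its left-hand side equals $\E\,g'(x)\prod_i\one_{R^c}(x_i)$, where $g'(x):=\prod_i\tilde f_i(x_i,x_{i+1})$ is nonnegative and $(\bigotimes_{i=1}^k\mP')$-measurable, with $\tilde f_i=\E(f_i\mid\mP'\otimes\mP')$. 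Apply \cref{lem:distinct-vertices} to $g'$ and $\mP'$: writing $R''$ for the union of the parts of $\mP'$ with fewer than $k(k+1)$ elements, we get a partition $X=X^{(1)}\sqcup\cdots\sqcup X^{(k)}$ with $\E\,g'(x)\prod_i\one_{(R'')^c}(x_i)\ll_k\E\,g'(x)\prod_i\one_{X^{(i)}}(x_i)$. Once $|X|>k(k+1)/\tau$, every part of $\mP'$ with $<k(k+1)$ elements has probability $<\tau$, hence $R''\subseteq R$ and $\one_{R^c}\leq\one_{(R'')^c}$, so the left-hand side is $\ll_k\E\prod_i\hat g_i(x_i,x_{i+1})$ with $\hat g_i:=\tilde f_i\one_{X^{(i)}\times X^{(i+1)}}$. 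Putting $\hat f_i:=f_i\one_{X^{(i)}\times X^{(i+1)}}$, one has $||\hat f_i||_{S^{q_i}},||\hat g_i||_{S^{q_i}}\leq C$ and $||\hat f_i-\hat g_i||_{S^{r_i}}\leq||f_i-\tilde f_i||_{S^{r_i}}\leq\eps_0 C$, so \cref{lem:counting} gives $\E\prod_i\hat g_i(x_i,x_{i+1})\leq\E\prod_i\hat f_i(x_i,x_{i+1})+kC^k\eps_0$; and since the $X^{(i)}$ are pairwise disjoint and the $f_i$ are nonnegative, $\E\prod_i\hat f_i(x_i,x_{i+1})\leq\E\,\one_{x_1,\dots,x_k\textnormal{ distinct}}\prod_i f_i(x_i,x_{i+1})$. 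Choosing $\eps_0$ small enough in terms of $\eps,k,q_1,\dots,q_k,C$ and the implied constant of \cref{lem:distinct-vertices} absorbs the remaining error into $\eps$. The main obstacle is this last part: one must arrange that the exceptional set $R$ from the entropy-pruning step is also negligible in the element-counting sense demanded by \cref{lem:distinct-vertices} (the inclusion $R''\subseteq R$ for large $|X|$), and one must check that $\E(f_i\mid\mP\otimes\mP)$ may be replaced by $\tilde f_i=\E(f_i\mid\mP'\otimes\mP')$ on the support of $\prod_i\one_{R^c}$, so that the regularity estimate survives the restriction to the product sets $X^{(i)}\times X^{(i+1)}$; parts (i) and (ii) are routine once the exponents $r_i$ and the entropy-to-measure bound for $R_i$ are in place.
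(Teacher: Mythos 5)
Your proposal is correct and follows essentially the same route as the paper: regularity lemma with exponents $r_i$ chosen so that $\tfrac1{r_i}+\sum_{j\neq i}\tfrac1{q_j}\ge 1$, the counting lemma for part (i), the entropy-to-measure (Markov-type) bound to prune small parts into $R_i$ for part (ii), and \cref{lem:distinct-vertices} combined with a second application of the counting lemma for part (iii). The only cosmetic differences are your choice of $r_i$ (slightly below the threshold rather than the paper's explicit formula $r_i=q_i+q_i^2(\sum_j \tfrac1{q_j}-1)$) and your explicit verification that the small-cardinality exceptional set $R''$ of \cref{lem:distinct-vertices} is contained in $R$ once $|X|>k(k+1)/\tau$, a point the paper handles more tersely.
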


\begin{remark}
Parts $(ii)$ and $(iii)$ of \cref{prop:reduction-to-structure} yield partitions $\mP_i$ whose parts all have probabilities at least $\tau$ (except possibly for one part $R_i$), which automatically implies the cardinality bound $|\mP_i| \le \frac{1}{\tau} + 1 = O_{\eps,k,q_1, \ldots, q_k,C}(1)$. This is already better than the entropy bound $\H(\mP'_i) = O_{\eps,k,q_1, \ldots, q_k,C}(1)$ from part $(i)$, but the more explicit restriction on the structure of $\mP_i$ will be useful in \cref{sec:transference}.
\end{remark}

\begin{proof}
Assume without loss of generality that $\eps/(kC^k) < 1$ (by making $\eps$ smaller if necessary).
To prove $(i)$, apply \cref{thm:regularity} for the functions $f_{i}$, and with
\begin{equation} \label{eq:r-choice}
\begin{aligned}
    r_i := q_i + q_i^2\left(\sum_j \frac{1}{q_j}-1\right) > q_i
    \qquad 
    &\Rightarrow 
    \qquad 
    \frac{1}{q_i} - \frac{1}{r_i} \le \frac{r_i - q_i}{q_i^2} = \sum_j \frac{1}{q_j}-1
    \\
    &\Rightarrow 
    \qquad 
    \frac{1}{r_i} + \sum_{j \neq i} \frac{1}{q_j} \ge 1.
\end{aligned}
\end{equation}
This produces partitions $\mP_i'$ of $X_i$ such that $\H(\mP_i') = O_{\eps,k,q_1, \ldots, q_k,C}(1)$ and 
\[
    \max_i ||f_{i} - \E(f_{i} \mid \mP_i' \otimes \mP_{i+1}')||_{S^{r_i}} \le \frac{\eps}{kC^k} \max_i ||f_{i}||_{S^{q_i}} \le \frac{\eps}{kC^{k-1}}.
\]
The conclusion now follows from our counting lemma (\cref{lem:counting}).

For $(ii)$, consider the partitions $\mP_i'$ from $(i)$, and let $\mP_i$ be obtained from $\mP'_i$ by combining all parts with probability less than $\tau$ into one part $R_i$ (for some parameter $\tau \in (0, 1)$ to be chosen shortly). Then, if $p_1, p_2, \ldots$ denote the probabilities of all parts of $\mP'_i$, we have 
\begin{equation} \label{eq:entropy-bound}
\begin{aligned}
    \P_i(R_i) = \sum_{p_n < \tau} p_n 
    &=
    \frac{1}{\log (1/\tau)} \sum_{p_n < \tau} p_n \log \frac{1}{\tau}
    \\
    &\le 
    \frac{1}{\log (1/\tau)} \sum_{n} p_n \log \frac{1}{p_n}
    =
    \frac{\H(\mP_i')}{\log(1/\tau)} = O_{\eps,k,q,C}\left(\frac{1}{\log(1/\tau)} \right).
\end{aligned}
\end{equation}
We can thus pick $\tau = \tau(\eps,k,q_1, \ldots, q_k,C)$ small enough so that $\P_i(R_{i}) \le \eps^2$ (uniformly in $i$). Since the functions $\E(f_{i} \mid \mP_i' \otimes \mP_{i+1}')$ and $\E(f_{i} \mid \mP_i \otimes \mP_{i+1})$ are equal inside $R_i^c \times R_{i+1}^c$, we conclude (using $(i)$) that
\[
\begin{aligned}
    \E_{x_1, \ldots, x_k} \prod_{i=1}^k \E(f_{i} \mid \mP_i \otimes \mP_{i+1})\one_{R_{i}^c}(x_i) 
    &\le
    \E_{x_1, \ldots, x_k} \prod_{i=1}^k \E(f_{i} \mid \mP_i' \otimes \mP_{i+1}')(x_i,x_{i+1}) 
    \\
    &\le \eps + \E_{x_1, \ldots, x_k} \prod_{i=1}^k f_{i}(x_i,x_{i+1}).
\end{aligned}
\]

Finally, to prove $(iii)$, we apply \cref{thm:regularity} as before, except that now all vertex sets are equal (so $m = 1$). This produces a partition $\mP'$ of $X$ such that $\H(\mP') = O_{\eps,k,q_1, \ldots, q_k,C}(1)$ and
\[
    \max_i ||f_{i} - \E(f_{i} \mid \mP' \otimes \mP')||_{S^{r_i}} \le \frac{\eps}{kC^k} \max_i ||f_{i}||_{S^{q_i}} \le \frac{\eps}{kC^{k-1}}.
\]
We repeat the construction of $\mP_i$ and $R_i$ from part $(ii)$ (except that now there is only one partition), to obtain a partition $\mP$ with $|\mP| = O_{\eps,k,q_1, \ldots, q_k,C}(1)$, and a $\mP$-measurable set $R$ with $\P(R) \le \eps^2$, which is the union of all parts of $\mP'$ with probability less than $\tau = \tau(\eps,k,q_1,\ldots,q_k,C)$.

Now assume that $|X|$ is large enough so that $k(k+1)/|X| < \tau$, and apply \cref{lem:distinct-vertices} for the partition $\mP'$ and the $(\bigotimes_{i=1}^k \mP')$-measurable function 
\[
    f(x_1,\ldots,x_k) := \prod_{i=1}^k\E(f_{i} \mid \mP' \otimes \mP')(x_i, x_{i+1}),
\]
to obtain a partition of $X$ into $k$ disjoint parts $X^{(1)}, \ldots, X^{(k)}$, such that
\[
    \E_{x_1,\ldots,x_k} \prod_{i=1}^k \E(f_{i} \mid \mP' \otimes \mP')(x_i, x_{i+1}) \one_{R^c}(x_i) \ll_k 
    \E_{x_1,\ldots,x_k} \prod_{i=1}^k
    \E(f_{i} \mid \mP' \otimes \mP')(x_i, x_{i+1}) \one_{X^{(i)}}(x_i).
\]
Denoting
\[
\begin{aligned}
    g_{i}(x_i, x_{i+1}) &:= f_{i}(x_i,x_{i+1}) \one_{X^{(i)}}(x_i), \\
    \tilde{g}_{i}(x_i, x_{i+1}) &:= \E(f_{i} \mid \mP' \otimes \mP')(x_i,x_{i+1}) \one_{X^{(i)}}(x_i),
\end{aligned} 
\]
\cref{lem:Schatten-norms-functions} implies that
\[
    ||g_{i} - \tilde{g}_{i}||_{S^{r_i}} \le ||f_{i} - \E(f_{i} \mid \mP' \otimes \mP')||_{S^{r_i}} \le \frac{\eps}{kC^{k-1}}.
\]
We can thus apply our counting lemma, \cref{lem:counting}, for the spaces $X_i = X$ and the functions $g_{i}, \tilde{g}_{i}$, to obtain
\[
    \left\vert \E_{x_1,\ldots,x_k} \prod_{i=1}^k g_{i}(x_i,x_{i+1}) - \E_{x_1,\ldots,x_k} \prod_{i=1}^k \tilde{g}_{i}(x_i,x_{i+1})\right\vert \le \eps.
\]
The point of splitting $X$ into $k$ parts and inserting the indicator functions $\one_{X^{(i)}}(x_i)$ was to force the product $\prod_{i=1}^k g_{i}(x_i,x_{i+1})$ to vanish when $x_1, \ldots, x_k$ are not pairwise distinct; it follows that 
\[
    \E_{x_1,\ldots,x_k} \prod_{i=1}^k g_{i}(x_i,x_{i+1}) \le 
    \E_{x_1, \ldots, x_k} \one_{x_1,\ldots,x_k \text{ distinct}}\prod_{i=1}^k f_{i}(x_i,x_{i+1}),
\]
and thus putting things together, we have
\[
\begin{aligned}
    \E_{x_1,\ldots,x_k} \prod_{i=1}^k \E(f_{i} \mid \mP' \otimes \mP')(x_i, x_{i+1}) \one_{R^c}(x_i) 
    &\ll_k
    \E_{x_1,\ldots,x_k} \prod_{i=1}^k
    \tilde{g}_{i}(x_i, x_{i+1}) 
    \\
    &\le
    \eps +
    \E_{x_1,\ldots,x_k} \prod_{i=1}^k
    g_{i}(x_i, x_{i+1}) 
    \\
    &\le
    \eps + \E_{x_1, \ldots, x_k} \one_{x_1,\ldots,x_k \textnormal{ distinct}}\prod_{i=1}^k f_{i}(x_i,x_{i+1}).
\end{aligned}
\]
But the functions $\E(f_{i} \mid \mP' \otimes \mP')$ and $\E(f_{i} \mid \mP \otimes \mP)$ are equal inside $R^c \times R^c$, so the same conclusion holds for $\mP$ instead of $\mP'$.
\end{proof}

\section{Transference-free results} \label{sec:transference-free}

When counting paths or even-length cycles, \cref{prop:reduction-to-structure} already has some interesting direct applications, with no transference step (and thus no dense removal lemma or tower-exponential bounds) required. The main relevant result here is the following.

\begin{theorem}[Expected number of paths and even-length cycles] \label{thm:expected-cycles-paths}
Fix $\epsilon, C > 0$ and integers $j \ge 1$, $k > \ell \ge 2$. Then for all $n \ge n_0(\epsilon, C, j, k, \ell)$ and all $p > 0$, any $n$-vertex graph $G$ satisfying
\[
    \Hom(P_1, G) \ge \epsilon p n^2 \qquad \qquad 
    \text{and} \qquad\qquad 
    \Hom(C_{2\ell}, G) \le C p^{2\ell} n^{2\ell}
\]
also satisfies
\[
    \Inj(P_{j}, G) \asymp p^{j} n^{j+1} 
    \qquad\qquad \text{and} 
    \qquad\qquad \Inj(C_{2k},G) \asymp p^{2k} n^{2k}.
\]
%None of the implied bounds in this theorem are tower-exponential.
\end{theorem}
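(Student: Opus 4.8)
Equip $V(G)$ with the uniform probability measure, abbreviate $X:=V(G)$, and set $f:=\one_G/p:X\times X\to[0,\infty)$, so that by \cref{eq:2k-cycles} the hypothesis $\Hom(C_{2\ell},G)\le Cp^{2\ell}n^{2\ell}$ reads exactly $||f||_{S^{2\ell}}\le C':=C^{1/(2\ell)}$, while $\Hom(P_1,G)\ge\epsilon pn^2$ says $\E_{x,y}[f(x,y)]\ge\epsilon$. I would prove the four estimates by separating upper from lower bounds; the upper bounds are soft, and the content is in the lower bounds, which I would extract from part $(iii)$ of \cref{prop:reduction-to-structure}.

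For the \emph{upper bounds} I use $\Inj(H,G)\le\Hom(H,G)$. Since $2k>2\ell$, monotonicity of Schatten norms (\cref{lem:Schatten-norms-functions}) and \cref{eq:2k-cycles} give $\Hom(C_{2k},G)=p^{2k}n^{2k}||f||_{S^{2k}}^{2k}\le (C')^{2k}p^{2k}n^{2k}$. For $P_j$ I would write $\Hom(P_j,G)=p^jn^{j+1}\,\E_{x_0,\ldots,x_j}\prod_{i=1}^{j}f(x_{i-1},x_i)$ as a closed-walk count around a $(j+1)$-cycle whose last edge carries the constant weight $\one_{X\times X}$, and apply the cycle counting inequality of \cref{lem:Schatten-norms-functions} with exponent $2\ell$ on each copy of $f$ and a suitable finite exponent on $\one_{X\times X}$ making the reciprocals sum to $\ge1$ (possible for every $j\ge1$); since $||\one_{X\times X}||_{S^q}=1$, this gives $\Hom(P_j,G)\le (C')^j p^j n^{j+1}$.

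For the \emph{lower bounds}, take $n$ large enough for \cref{prop:reduction-to-structure} to apply, with all probability spaces equal to $X$. For $C_{2k}$, run the proposition with all weight functions equal to $f$ and all exponents equal to $2\ell$; the hypothesis $k>\ell$ is exactly what makes $\sum_i 1/q_i=2k/(2\ell)>1$, so it produces a partition $\mP$ of $X$ and a set $R$ with $\P(R)\le\eps_1^2$ (for any $\eps_1>0$ fixed in advance) such that
\[
  \E_{x_1,\ldots,x_{2k}}\prod_{i=1}^{2k}\tilde f(x_i,x_{i+1})\one_{R^c}(x_i)\ \ll_k\ \eps_1+\frac{\Inj(C_{2k},G)}{p^{2k}n^{2k}},\qquad \tilde f:=\E(f\mid\mP\otimes\mP),
\]
and it remains to bound the left side below by a constant. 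With $g(x,y):=\tilde f(x,y)\one_{R^c}(x)\one_{R^c}(y)$, the left side is $\E_{x_1,\ldots,x_{2k}}\prod_i g(x_i,x_{i+1})$; $g\ge0$ is symmetric (as $\tilde f$ inherits the symmetry of $f$), so $\Mat(g)$ is real symmetric and by \cref{eq:k-cycle-count} this equals $||g||_{S^{2k}}^{2k}$. Now $\E[\tilde f]=\E[f]\ge\epsilon$, while $\E_{x,y}[\tilde f(x,y)\one_R(x)]$ and $\E_{x,y}[\tilde f(x,y)\one_R(y)]$ are each $\le||\tilde f||_{S^\infty}\sqrt{\P(R)}\le C'\eps_1$ by \cref{eq:Schatten-infty} (with a constant test function) together with $||\tilde f||_{S^\infty}\le||\tilde f||_{S^{2\ell}}\le||f||_{S^{2\ell}}$ from \cref{lem:Schatten-norms-functions}; hence $\E[g]\ge\epsilon-2C'\eps_1\ge\epsilon/2$ once $\eps_1\le\epsilon/(4C')$, and then $||g||_{S^{2k}}\ge||g||_{S^\infty}\ge\E[g]\ge\epsilon/2$, i.e. $||g||_{S^{2k}}^{2k}\ge(\epsilon/2)^{2k}$. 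Taking $\eps_1$ also below a constant multiple of the implied constant from the proposition yields $\Inj(C_{2k},G)\gg p^{2k}n^{2k}$. The bound for $P_j$ is the same argument on the $(j+1)$-cycle with weights $f,\ldots,f,\one_{X\times X}$ and exponents as before (the last one now chosen so the reciprocals sum past $1$, as the proposition requires): the ``distinct'' term becomes $\Inj(P_j,G)/(p^jn^{j+1})$ since the constant closing weight imposes no constraint, and the structured term becomes the $P_j$-homomorphism density $\E_{x_0,\ldots,x_j}\prod_{i=1}^j g(x_{i-1},x_i)$ of the same $g$, which is $\ge\E[g]^{\,j}\ge(\epsilon/2)^j$ by Sidorenko's inequality for trees.

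The only genuinely nontrivial points are: (a) the bookkeeping that lets \cref{prop:reduction-to-structure} apply — encoding a path as a cycle with a constant closing weight while keeping every Schatten exponent admissible; (b) the lower bound on the leftover ``structured'' count, which for cycles is the one-line chain $||g||_{S^{2k}}\ge||g||_{S^\infty}\ge\E[g]$ and for paths is the single external input, the (elementary, for trees) Sidorenko estimate $\E_{x_0,\ldots,x_j}\prod_{i=1}^j W(x_{i-1},x_i)\ge\bigl(\E_{x,y}W(x,y)\bigr)^{\,j}$ for $W\ge0$; and (c) controlling $\E[\tilde f]$ against the mass $\tilde f$ puts on $R$, which is where $||\cdot||_{S^\infty}\le||\cdot||_{S^{2\ell}}$ enters. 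I expect (a) and (b) to be the bulk of the write-up and essentially routine once the encoding is fixed.
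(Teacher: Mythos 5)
Your proposal is correct and follows essentially the same route as the paper: the paper packages the lower bounds as Propositions \ref{prop:few-paths} and \ref{prop:few-even-cycles} (apply \cref{prop:reduction-to-structure}.$(iii)$ with a constant closing weight for paths, identify the structured cycle count with $||g||_{S^{2k}}^{2k}$, invoke Blakley--Roy/Sidorenko, and control the mass on $R$ via $||\tilde f||_{S^\infty}\le||\tilde f||_{S^{2\ell}}$), exactly as you do, and then takes the contrapositive. The only cosmetic difference is your upper bound for $\Hom(P_j,G)$ via the cycle counting inequality with a constant edge, where the paper instead uses $\langle u,\Mat(\one_G)^ju\rangle\le||\one_G||_{S^\infty}^j\le||\one_G||_{S^{2\ell}}^j$; both are valid.
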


\begin{remark}
To clarify our notation, the implied constants may depend on the fixed parameters, but not on $n$ and $p$. It is worth noting that a random (Erd\"os--R\'enyi) graph with $\asymp pn^2$ edges has an expected number of $\asymp p^j n^{j+1}$ $P_j$'s and $\asymp p^{2k} n^{2k}$ $C_{2k}$'s, and the conclusion of \cref{thm:expected-cycles-paths} confirms these predictions under relatively mild assumptions. What makes this result nontrivial is the fact that we are counting \emph{injective} copies of $P_j$ and $C_{2k}$; if we counted homomorphic copies instead, the result would be a relatively straight-forward corollary of the fact that paths and even-length cycles are bipartite graphs with Sidorenko's property \cite{sidorenko1991inequalities}.
\end{remark}

We work our way to proving \cref{thm:expected-cycles-paths}, starting with a couple of propositions for weighted graphs.

\begin{proposition}[Weighted graphs with few paths] \label{prop:few-paths}
Let $k \in \Z_{\ge 1}$, $q \in [1, \infty)$, $\eps, C > 0$, and $X$ be a large enough finite set (in terms of $\eps, k, q, C$), equipped with uniform probability. If $f : X \times X \to [0, \infty)$ is symmetric (i.e., $f(x, y) = f(y, x), \forall x, y \in X$), $||f||_{S^q} \le C$, and
\[
    \E_{x_1,\ldots,x_{k+1}} \one_{x_1,\ldots,x_{k+1} \textnormal{ distinct}} \prod_{i=1}^k f(x_i, x_{i+1}) \le \eps,
\]
then $||f||_{L^1} \ll_k \eps^{1/k} + C\eps$.
\end{proposition}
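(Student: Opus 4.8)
The plan is to combine the Blakley--Roy (Sidorenko-for-paths) lower bound on the homomorphic $P_k$-count of $f$ with a dichotomy on the size of its \emph{non-injective} part, and to control that non-injective part via homomorphism counts of the quotient multigraphs of $P_k$, estimated with Schatten norms. So, write $d := ||f||_{L^1} = \E_{x,y}\,f(x,y)$; we may assume $d > \eps^{1/k}$, since otherwise there is nothing to prove. As $f \ge 0$ is symmetric and a path is a bipartite graph with Sidorenko's property \cite{sidorenko1991inequalities} (equivalently, by the Blakley--Roy inequality for the symmetric nonnegative matrix $\Mat(f)$), the homomorphic $P_k$-density of $f$ satisfies $P := \E_{x_1,\ldots,x_{k+1}}\prod_{i=1}^k f(x_i,x_{i+1}) \ge d^k$. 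Setting $I := \E_{x_1,\ldots,x_{k+1}}\,\one_{x_1,\ldots,x_{k+1}\text{ distinct}}\prod_{i=1}^k f(x_i,x_{i+1}) \le \eps$ and $E := P - I = \E_{x_1,\ldots,x_{k+1}}\,(1-\one_{x_1,\ldots,x_{k+1}\text{ distinct}})\prod_{i=1}^k f(x_i,x_{i+1}) \ge 0$, we get $d^k \le \eps + E$. If $E \le \tfrac12 d^k$, then $\tfrac12 d^k \le \eps$, i.e.\ $d \le (2\eps)^{1/k} \ll_k \eps^{1/k}$, and we are done; so it remains to prove that $E$ is small. In fact I will aim to show $E \le \eps$ once $|X|$ is large enough in terms of $\eps,k,q,C$, which gives $d^k \le 2\eps$ and the claim (the $C\eps$ term in the statement being slack).

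To bound $E$, I would group the non-injective tuples by the partition $\pi$ of $\{1,\ldots,k+1\}$ recording which of $x_1,\ldots,x_{k+1}$ coincide, and recognise the resulting sums as homomorphism counts of the quotient multigraphs $P_k/\pi$ (connected, with $|\pi|\le k$ vertices and $k$ edges counted with multiplicity — loops and multi-edges allowed — hence containing at least one cycle). Each $\pi$ contributes $|X|^{|\pi|-(k+1)}\,\Hom(P_k/\pi,f)$, so, using $|\pi|\le k$,
\[
  E\ \le\ \sum_{\pi\neq\text{discrete}} |X|^{\,|\pi|-(k+1)}\,\Hom\big(P_k/\pi,\,f\big)\ \le\ \frac1{|X|}\sum_{\pi\neq\text{discrete}}\Hom\big(P_k/\pi,\,f\big).
\]
It therefore suffices to prove $\Hom(P_k/\pi,f)\le C^{O_k(1)}\,|X|^{\,1-c}$ for some fixed $c=c(k,q)>0$, as this forces $E\ll_{k,q}C^{O_k(1)}|X|^{-c}$.

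For these counts I would proceed in two steps. \emph{Peeling the pendant trees:} deleting a degree-one vertex of a graph amounts, in homomorphism-density language, to replacing the edge-function at its neighbour by pointwise multiplication by $T_f(\cdot)$, where $T_f g(x):=\E_y[f(x,y)g(y)]$ is the self-adjoint averaging operator of $f$; since $T_f$ is unitarily conjugate to multiplication by $\Mat(f)$ (the computation behind the operator-norm identity in \cref{lem:Schatten-norms-functions}), one has $\|T_f\|_{L^2\to L^2}=||\Mat(f)||_{S^\infty}=||f||_{S^\infty}\le||f||_{S^q}\le C$, so peeling one vertex costs at most a factor $C$, and the at most two pendant paths meeting a common vertex of the $2$-core are recombined by Cauchy--Schwarz. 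This reduces $\Hom(P_k/\pi,f)$, up to a factor $C^{O_k(1)}$, to the homomorphism count of a single cycle $C_m$ ($1\le m\le k$), possibly decorated by a nonnegative weight of bounded mass at one vertex. \emph{Bounding the cyclic part:} for $m\ge 2$ the cycle counting inequality from \cref{lem:Schatten-norms-functions} gives $\Hom(C_m,f)\le\sum_i\sigma_i^{m}$ in terms of the singular values $(\sigma_i)$ of $\Mat(f)$ (for $m=1$ use the trace-norm inequality from \cref{lem:Schatten-norms-matrices}), and since $\sum_i\sigma_i^q=||f||_{S^q}^q\le C^q$ — so each $\sigma_i\le C$ — Hölder's inequality on the sequence $(\sigma_i)$ yields $\sum_i\sigma_i^m\le C^{m}\,|X|^{\,1-1/q}$. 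It is exactly the hypothesis $||f||_{S^q}\le C$, rather than merely a spectral-norm bound, that produces here a power of $|X|$ \emph{strictly smaller than} $1$; the weight contributions are then absorbed through the same kind of singular-value Hölder estimate.

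The step I expect to be the main obstacle is making this last bound \emph{uniform} over all quotients $\pi$ — specifically, keeping the power saving in $|X|$ when $P_k/\pi$ is a short cycle carrying two long pendant paths attached at a single vertex. There the closed-walk weight functions produced by peeling are only controlled in $L^1$, so a crude $L^\infty$ bound on the cyclic factor would give only the trivial exponent $1$, which is not enough. Resolving this will require interleaving the singular-value Hölder estimates with the tree-peeling more carefully, and, if needed, applying the statement being proved recursively to the shorter paths hidden inside the pendants; this is the technical heart of the argument.
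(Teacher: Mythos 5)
Your route is genuinely different from the paper's: you apply Blakley--Roy directly to $f$ and then try to show that the homomorphic path density $P$ and the injective density $I$ differ by $o(1)$, controlling the degenerate part $E:=P-I$ purely through Schatten-norm bounds on homomorphism counts of the quotient multigraphs $P_k/\pi$. (The paper instead regularizes first: it applies \cref{prop:reduction-to-structure}(iii) -- i.e.\ the regularity lemma, the counting lemma and \cref{lem:distinct-vertices} -- to transfer the \emph{injective} count of $f$ into a \emph{homomorphic} count of the structured function $g=\E(f\mid\mP\otimes\mP)\one_{R^c\times R^c}$, and only then invokes Blakley--Roy, for $g$.) Unfortunately your central claim -- that $E\le\eps$ once $|X|$ is large in terms of $\eps,k,q,C$, via $\Hom(P_k/\pi,f)\le C^{O_k(1)}|X|^{1-c}$ -- is false, and the quotient you flagged as the ``main obstacle'' (a short cycle with two pendant paths) is not a technical difficulty but a genuine counterexample. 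Take $k=3$, fix $y_0\in X$ and small constants $c_1,c_2>0$, and set
\[
    f(x,y):=\delta_0+c_1|X|\,\one_{x=y_0}\one_{y=y_0}+c_2\sqrt{|X|}\,\bigl(\one_{x=y_0}+\one_{y=y_0}\bigr).
\]
This $f$ is symmetric and nonnegative, and $||f||_{S^q}\le\delta_0+c_1+4c_2$ for every $q$ (the three pieces have matrices of rank at most $2$ with singular values $\delta_0$, $c_1$, and two values $\le 2c_2$ respectively), so the hypothesis $||f||_{S^q}\le C$ holds with an absolute constant $C$. Yet, restricting the degenerate sum to $x_2=x_3=y_0$,
\[
    E\ \ge\ \E_{x_1,\ldots,x_4}\Bigl[\one_{x_2=x_3=y_0}\prod_{i=1}^{3}f(x_i,x_{i+1})\Bigr]
    \;=\;\frac{f(y_0,y_0)}{|X|^2}\Bigl(\E_x f(x,y_0)\Bigr)^2\ \ge\ \frac{c_1|X|\cdot c_2^2|X|}{|X|^2}\;=\;c_1c_2^2,
\]
a positive constant independent of $|X|$. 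Equivalently, for the quotient of $P_3$ identifying $x_2$ with $x_3$ (a loop with two pendant edges) one has $\Hom(P_3/\pi,f)\gg|X|$, not $O(|X|^{1-c})$. The same hub construction defeats every $k\ge 3$.

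The deeper problem is that no bound on $E$ depending only on $k,q,C,|X|$ can close your dichotomy: letting $\delta_0\to0$ above, one gets $||f||_{L^1}\to0$ and $I\to0$ while $E$ stays bounded below by $c_1c_2^2$, so the homomorphic count $P$ is dominated by degenerate walks and the chain $d^k\le P\le\eps+E$ cannot yield $d\ll\eps^{1/k}+C\eps$. The Schatten hypothesis tolerates a single vertex of degree $\asymp\sqrt{|X|}$ carrying a loop of weight $\asymp|X|$; such a hub contributes $\Theta(1)$ to $P$ but only $O(|X|^{-1/2})$ to $||f||_{L^1}$ and $o(1)$ to $I$. Any repair must therefore either remove a small exceptional set before invoking Blakley--Roy or make the bound on $E$ depend on $d$ itself. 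This is exactly what the paper's proof does: Blakley--Roy is applied to the regularized $g$, which is constant on products of parts of probability $\ge\tau$ (so its degenerate tuples are genuinely negligible), and the heavy hub lands in the exceptional set $R$ with $\P(R)\le\eps^2$, whose contribution $||f\one_{R\times X}||_{L^1}\le||f||_{S^\infty}\sqrt{\P(R)}\le C\eps$ is precisely the $C\eps$ term in the statement -- the term you dismissed as slack.
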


\begin{proof}
Apply part $(iii)$ of \cref{prop:reduction-to-structure} with $k+1$ in place of $k$, $q_1 = \cdots = q_{k} = q$ and $q_{k+1} = 1$, and the functions $f_{1} = \cdots = f_{k} = f$ and $f_{k+1} = \one_{X_k \times X_1}$ (recall that $||f_{k+1}||_{S^1} = 1$ by \cref{lem:Schatten-norms-functions}). This produces a partition $\mP$ of $X$ and a $\mP$-measurable subset $R \subset X$, such that $\P(R) \le \eps^2$ and
\[
    \E_{x_1,\ldots,x_{k+1}} \prod_{i=1}^k \E(f \mid \mP \otimes \mP)(x_i, x_{i+1}) \one_{R^c}(x_i) \ll_k \eps.
\]
Letting $g := \E(f \mid \mP \otimes \mP) \one_{R^c \times R^c}$, we thus have $\E_{x_1,\ldots,x_{k+1}} \prod_{i=1}^k g(x_i, x_{i+1}) \ll_k \eps$. But since $g$ is symmetric and nonnegative, an older result of Blakley and Roy \cite{blakley1965holder} implies that
\[
    \E_{x_1,\ldots,x_{k+1}}
    \prod_{i=1}^k g(x_i, x_{i+1}) 
    \ge 
    \left( \E_{x_1,x_2} g(x_1, x_2) \right)^k.
\]
(Blakley and Roy state their result in the form $\langle u, G^ku \rangle \ge \langle u, Gu \rangle^k$, for any nonnegative symmetric $n \times n$ matrix $G$, and any $u \in \R^n$ with $||u||_{2} = 1$; we can recover the inequality above by letting $n := |X|$, $G := \Mat(g)$, and $u_x := \sqrt{1/|X|}$ for all $x \in X$. This can also be interpreted as the statement that paths satisfy Sidorenko's conjecture \cite{sidorenko1991inequalities}.) 

We thus get $||g||_{L^1} \ll_k \eps^{1/k}$. But $R^c \times R^c$ is $(\mP \otimes \mP)$-measurable, so 
\[
\begin{aligned}
    \eps^{1/k} \gg_k ||g||_{L^1}
    &= ||f \one_{R^c \times R^c}||_{L^1} 
    \\
    &\ge ||f||_{L^1} - 2||f \one_{R \times X}||_{L^1}. 
\end{aligned}
\]
Recalling from \cref{eq:Schatten-infty} that 
\[
    ||f \one_{R \times X}||_{L^1} \le ||f||_{S^\infty} ||\one_R||_{L^2} ||\one_X||_{L^2} 
    \le ||f||_{S^q} \sqrt{\P(R)} \le C\eps,
\]
we reach the desired conclusion, $||f||_{L^1} \ll_k \eps^{1/k} + C\eps$.
\end{proof}

\begin{proposition}[Weighted graphs with few even cycles] \label{prop:few-even-cycles}
Let $k$ be an even positive integer, $q \in [1, k)$, $\eps, C > 0$, and $X$ be a large enough finite set (in terms of $\eps, k, q$), equipped with uniform probability. If $f : X \times X \to [0, \infty)$ is symmetric, $||f||_{S^q} \le C$, and (working with indices modulo $k$)
\[
    \E_{x_1,\ldots,x_k} \one_{x_1,\ldots,x_k \textnormal{ distinct}} \prod_{i \in \Z/k\Z} f(x_i, x_{i+1}) \le \eps,
\]
then $||f||_{L^1} \ll_k \eps^{1/k} + C\eps$.
\end{proposition}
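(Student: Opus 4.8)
The plan is to follow the proof of \cref{prop:few-paths} almost verbatim, with the even cycle $C_k$ in place of the path $P_k$; the one genuinely new ingredient is that even cycles satisfy Sidorenko's property.

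First I would invoke part $(iii)$ of \cref{prop:reduction-to-structure} with all spaces equal to $(X,\P)$, exponents $q_1 = \cdots = q_k = q$, and functions $f_1 = \cdots = f_k = f$. This is legitimate because $\sum_{i=1}^k \frac{1}{q_i} = \frac{k}{q} > 1$ (this is exactly where the hypothesis $q < k$ enters), $f$ is nonnegative, and $||f||_{S^q} \le C$; it also requires $|X|$ to be large in terms of $\eps, k, q, C$, which we are assuming. The conclusion produces a partition $\mP$ of $X$ and a $\mP$-measurable set $R$ with $\P(R) \le \eps^2$ such that
\[
    \E_{x_1, \ldots, x_k} \prod_{i \in \Z/k\Z} \E(f \mid \mP \otimes \mP)(x_i, x_{i+1})\, \one_{R^c}(x_i)
    \ll_k \eps + \E_{x_1, \ldots, x_k} \one_{x_1,\ldots,x_k \textnormal{ distinct}} \prod_{i \in \Z/k\Z} f(x_i,x_{i+1})
    \ll_k \eps,
\]
the last step using the hypothesis. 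Writing $g := \E(f \mid \mP \otimes \mP)\, \one_{R^c \times R^c}$ — which is symmetric (as $f$ is) and nonnegative — and observing that in the cyclic product each vertex carries the cutoff $\one_{R^c}$ exactly twice, the left-hand side equals $\E_{x_1, \ldots, x_k} \prod_{i \in \Z/k\Z} g(x_i, x_{i+1})$, so this cyclic average is $\ll_k \eps$.

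Next I would apply the Sidorenko-type inequality for the even cycle $C_k$: for symmetric nonnegative $g$ one has $\E_{x_1, \ldots, x_k} \prod_{i \in \Z/k\Z} g(x_i, x_{i+1}) \ge \big( \E_{x_1, x_2} g(x_1, x_2) \big)^k = ||g||_{L^1}^k$. I would prove this spectrally, using tools already in the paper: by \cref{eq:k-cycle-count} the left-hand side equals $\Tr(\Mat(g)^k) = \sum_j \mu_j^k$, where the $\mu_j$ are the (real) eigenvalues of the symmetric matrix $\Mat(g)$; since $k$ is even, $\sum_j \mu_j^k \ge (\max_j |\mu_j|)^k = ||g||_{S^\infty}^k$, and $||g||_{S^\infty} \ge \big| \E_{x,y}[g(x,y)] \big| = ||g||_{L^1}$ by \cref{eq:Schatten-infty} applied with the constant test functions $\one_X$ (of $L^2$ norm $1$) together with $g \ge 0$. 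Hence $||g||_{L^1}^k \ll_k \eps$, i.e.\ $||g||_{L^1} \ll_k \eps^{1/k}$.

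Finally I would transfer this back to $f$. Since $\one_{R^c \times R^c}$ is $(\mP \otimes \mP)$-measurable and $f \ge 0$, one has $||g||_{L^1} = ||f\, \one_{R^c \times R^c}||_{L^1}$, and by the symmetry of $f$,
\[
    ||f||_{L^1} \le ||f\, \one_{R^c \times R^c}||_{L^1} + 2\, ||f\, \one_{R \times X}||_{L^1} = ||g||_{L^1} + 2\, ||f\, \one_{R \times X}||_{L^1}.
\]
The error term is controlled via \cref{eq:Schatten-infty} and the monotonicity of Schatten norms: $||f\, \one_{R \times X}||_{L^1} \le ||f||_{S^\infty}\, ||\one_R||_{L^2}\, ||\one_X||_{L^2} \le ||f||_{S^q}\sqrt{\P(R)} \le C\eps$. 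Combining the two bounds gives $||f||_{L^1} \ll_k \eps^{1/k} + C\eps$, as desired. The step I expect to need the most care is the even-cycle Sidorenko inequality and, relatedly, the fact that $\sum_j \mu_j^k \ge (\max_j |\mu_j|)^k$ fails for odd $k$: it is precisely the parity of $k$ that makes this transference-free argument work, whereas odd-length cycles will require the transference machinery of \cref{sec:transference}.
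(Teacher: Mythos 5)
Your proposal is correct and follows essentially the same route as the paper: apply part $(iii)$ of \cref{prop:reduction-to-structure} with all $q_i = q$ (using $k/q > 1$), set $g := \E(f \mid \mP \otimes \mP)\one_{R^c \times R^c}$, bound $||g||_{L^1}^k$ by the cyclic average via the even-cycle Sidorenko property, and control $||f\one_{R \times X}||_{L^1}$ through $||f||_{S^\infty}\sqrt{\P(R)} \le C\eps$. Your spectral derivation $\Tr(\Mat(g)^k) = \sum_j \mu_j^k \ge ||g||_{S^\infty}^k \ge ||g||_{L^1}^k$ is just an unpacked version of the paper's chain $||g||_{L^1} \le ||g||_{S^\infty} \le ||g||_{S^k}$ together with the identification of the cyclic average as $||g||_{S^k}^k$ via \cref{eq:Schatten-2k}.
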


\begin{proof}
Apply part $(iii)$ of \cref{prop:reduction-to-structure} for the functions $f_{i} = f$, to obtain a partition $\mP$ of $X$ and a $\mP$-measurable subset $R \subset X$, such that $\P(R) \le \eps^2$ and
\[
    \E_{x_1,\ldots,x_k} \prod_{i \in \Z/k\Z} \E(f \mid \mP \otimes \mP)(x_i, x_{i+1}) \one_{R^c}(x_i) \ll_k \eps.
\]
Let $g := \E(f \mid \mP \otimes \mP) \one_{R^c \times R^c}$. Since $k$ is an even integer and $g$ is real symmetric, \cref{eq:Schatten-2k} implies that the left-hand side above is precisely $||g||_{S^k}^k$. Since $g$ is nonnegative, we further have
\[
    ||g||_{L^1} = ||g||\sq \le ||g||_{S^\infty} \le ||g||_{S^k} \ll_k \eps^{1/k}.
\]
(The bound $||g||_{L^1} \le ||g||_{S^k}$ is essentially the statement that even-length cycles satisfy Sidorenko's conjecture \cite{sidorenko1991inequalities}.)

But $R^c \times R^c$ is $(\mP \otimes \mP)$-measurable, so $||g||_{L^1} = ||f \one_{R^c \times R^c}||_{L^1}$. As in the previous proof, this further implies that $||f||_{L^1} \ll_k \eps^{1/k} + C\eps$ since $\P(R) \le \eps^2$.
\end{proof}

We take a final intermediate step to proving \cref{thm:expected-cycles-paths} through the following result, which resembles a removal lemma.

\begin{proposition}
\label{prop:graphs-transference-free}
For any integers $j \ge 1$ and $k > \ell \ge 2$, and any $\epsilon, C > 0$, there exist $n_0, \delta > 0$ such that the following holds. For any $n \ge n_0$ and $p > 0$, if an $n$-vertex graph $G$ with $\Hom(C_{2\ell},G) \le C p^{2\ell} n^{2\ell}$ has
\[
    \Inj(P_{j}, G) \le \delta p^{j} n^{j+1}
    \qquad\qquad \text{or} \qquad\qquad 
    \Inj(C_{2k}, G) \le \delta p^{2k} n^{2k},
\]
then $G$ has fewer than $\epsilon pn^2$ edges.
\end{proposition}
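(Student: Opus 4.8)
The plan is to pass to the normalized weight function $f := \one_G/p : V(G) \times V(G) \to [0, \infty)$, with $V(G)$ carrying the uniform probability measure, and to rewrite all three hypotheses and the conclusion in terms of $f$. The function $f$ is symmetric, and by \cref{eq:2k-cycles} together with the homogeneity of the Schatten norm, the hypothesis $\Hom(C_{2\ell}, G) \le C p^{2\ell} n^{2\ell}$ is equivalent to $||f||_{S^{2\ell}} \le C^{1/(2\ell)}$. Unfolding the definition of $\Inj$ and using $\one_G = p f$, one has
\[
    \Inj(P_j, G) = p^j n^{j+1}\, \E_{x_1, \ldots, x_{j+1}} \one_{x_1, \ldots, x_{j+1} \textnormal{ distinct}} \prod_{i=1}^j f(x_i, x_{i+1})
\]
and
\[
    \Inj(C_{2k}, G) = p^{2k} n^{2k}\, \E_{x_1, \ldots, x_{2k}} \one_{x_1, \ldots, x_{2k} \textnormal{ distinct}} \prod_{i \in \Z/2k\Z} f(x_i, x_{i+1}),
\]
so the two alternative hypotheses become, respectively,
\[
    \E_{x_1, \ldots, x_{j+1}} \one_{x_1, \ldots, x_{j+1} \textnormal{ distinct}} \prod_{i=1}^j f(x_i, x_{i+1}) \le \delta
    \qquad \text{or} \qquad
    \E_{x_1, \ldots, x_{2k}} \one_{x_1, \ldots, x_{2k} \textnormal{ distinct}} \prod_{i \in \Z/2k\Z} f(x_i, x_{i+1}) \le \delta.
\]
Finally, since $e(G) = \tfrac12 n^2 \E_{x,y}[\one_G(x,y)] = \tfrac12 p n^2 ||f||_{L^1}$, the conclusion $e(G) < \epsilon p n^2$ is equivalent to $||f||_{L^1} < 2\epsilon$.

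With these translations in place, the proposition follows directly from the two preceding results. In the first case (few paths), apply \cref{prop:few-paths} with its ``$k$'' taken to be $j$, its ``$q$'' taken to be $2\ell$, its ``$C$'' taken to be $C^{1/(2\ell)}$, and its ``$\eps$'' taken to be $\delta$; this requires $n = |V(G)|$ to be large enough in terms of $\delta, j, \ell, C$ — which is what fixes $n_0$ — and yields $||f||_{L^1} \ll_j \delta^{1/j} + C^{1/(2\ell)} \delta$. In the second case (few even cycles), apply \cref{prop:few-even-cycles} with its ``$k$'' taken to be $2k$ (which is even, as required), its ``$q$'' taken to be $2\ell$ (legitimate because $2\ell < 2k$, by the assumption $\ell < k$), its ``$C$'' taken to be $C^{1/(2\ell)}$, and its ``$\eps$'' taken to be $\delta$; this requires $n$ large enough in terms of $\delta, k, \ell$, and yields $||f||_{L^1} \ll_k \delta^{1/(2k)} + C^{1/(2\ell)} \delta$. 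In either case, choosing $\delta = \delta(\epsilon, C, j, k, \ell) > 0$ small enough forces $||f||_{L^1} < 2\epsilon$, as desired.

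The analytic content is entirely absorbed into \cref{prop:few-paths,prop:few-even-cycles} (which in turn rest on the reduction to low complexity of \cref{prop:reduction-to-structure}, the Blakley--Roy inequality, and Sidorenko's property of even cycles), so there is no genuine obstacle here beyond bookkeeping. The only points that need care are the exact normalizations in the translation step — notably the factor $\tfrac12$ relating $||f||_{L^1}$ to the edge count, and the verification that the parameter constraint ``$q < k$'' of \cref{prop:few-even-cycles} holds precisely because $\ell < k$ — and ensuring $n_0$ is chosen large enough to meet the ``$X$ large enough'' hypothesis of whichever proposition is invoked. One minor point worth noting: the argument uses no relationship between $p$ and the true edge density of $G$, since every power of $p$ cancels in the translation.
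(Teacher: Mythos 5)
Your proposal is correct and follows essentially the same route as the paper: pass to $f = \one_G/p$ on $V(G)$ with uniform probability, translate the cycle-homomorphism bound into $\|f\|_{S^{2\ell}} \le C^{1/(2\ell)}$ and the injective counts into the normalized expectations, then invoke \cref{prop:few-paths} (with $q = 2\ell$) or \cref{prop:few-even-cycles} (with $2k$ in place of $k$, legitimate since $2\ell < 2k$) and take $\delta$ small. The only difference is cosmetic: the paper aims for $\|f\|_{L^1} \le \epsilon$ rather than tracking the factor of $2$ from the edge count, which is harmless.
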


\begin{proof}
Let $q := 2\ell$, $X := V(G)$ (equipped with uniform probability) and $f := \one_{G}/p : X \times X \to [0, \infty)$. Then by \cref{eq:2k-cycles},
\[
    ||f||_{S^q}^q =  ||\one_{G} / p||_{S^{2\ell}}^{2\ell} = \frac{\Hom(C_{2\ell}, G)}{p^{2\ell}n^{2\ell}} \le C,
\]
so $||f||_{S^q} \le C^{1/q}$, and we aim to show that $||f||_{L^1} \le \epsilon$. But our second assumption about $G$ states that
\[
    \E_{x_1,\ldots,x_{j+1}} \one_{x_1,\ldots,x_{j+1} \textnormal{ distinct}} \prod_{i=1}^{j} f(x_i, x_{i+1}) \le \delta,
\]
or
\[
    \E_{x_1,\ldots,x_{2k}} \one_{x_1,\ldots,x_{2k} \text{ distinct}} \prod_{i \in \Z/2k\Z} f(x_i, x_{i+1}) \le \delta,
\]
for a value of $\delta$ that we have yet to choose. \cref{prop:few-paths,prop:few-even-cycles} now imply that for all large enough $n$,
\[
    ||f||_{L^1} \ll_j \delta^{1/j} + C^{1/q}\delta
    \qquad\qquad 
    \text{or}
    \qquad\qquad 
    ||f||_{L^1} \ll_k \delta^{1/2k} + C^{1/q}\delta,
\]
and we can choose $\delta$ small enough so that the upper bounds above are less than $\epsilon$.
\end{proof}

\begin{proof}[Proof of \cref{thm:expected-cycles-paths}]
Apply \cref{prop:graphs-transference-free} for the same values of $j, k, \ell, \epsilon, C$, to obtain some $n_0, \delta > 0$ such that the following contrapositive statement holds. For any $n \ge n_0$ and $p > 0$, and any $n$-vertex graph $G$ with $\ge \epsilon p n^2$ edges and $\Hom(C_{2\ell}, G) \le C p^{2\ell} n^{2\ell}$, one has
\[
    \Inj(P_{j}, G) > \delta p^{j} n^{j+1}
    \qquad\qquad 
    \text{and} 
    \qquad\qquad 
    \Inj(C_{2k}, G) > \delta p^{2k} n^{2k}.
\]
We wish to find constants $\alpha, \beta$ (depending on $j, k, \ell, \epsilon, C$) such that
\[
    \alpha p^j n^{j+1} \le \Inj(P_{j}, G) \le \beta p^{j} n^{j+1}
    \qquad\quad \text{and} \qquad\quad 
    \alpha p^{2k} n^{2k} \le \Inj(C_{2k}, G) \le \beta p^{2k} n^{2k},
\]
and we can already take $\alpha := 1/\delta$ for the desired lower bounds. For the upper bounds, first note that by \cref{eq:2k-cycles}, the condition $\Hom(C_{2\ell}, G) \le Cp^{2\ell} n^{2\ell}$ means $||\one_{G}/p||_{S^{2\ell}} \le C^{1/{2\ell}}$. Since $||\one_{G}/p||_{S^{2k}} \le ||\one_{G}/p||_{S^{2\ell}}$, we get
\[
    \Inj(C_{2k}, G) \le \Hom(C_{2k}, G)
    =
    n^{2k} ||\one_{G}||_{S^{2k}}^{2k}
    \le 
    C^{k/\ell} p^{2k} n^{2k}.
\]
Similarly, after fixing an ordering of the vertices of $G$ (and equipping $V(G)$ with the uniform probability measure), one has
\[
    \Inj(P_{j},G) \le \Hom(P_{j}, G) = 
    \sum_{v_1, \ldots, v_{j+1} \in V(G)} \prod_{i=1}^k \one_{G}(v_i, v_{i+1})
    =
    n^{j+1} \langle u, \Mat(\one_{G})^{j} u \rangle,
\]
where $u \in \R^n$ is the constant vector with each entry equal to $1/\sqrt{n}$ (and thus $||u||_{2} = 1$). We now have
\[
    \langle u, \Mat(\one_{G})^{j} u\rangle
    \le 
    \left\vert \left\vert \Mat(\one_{G})^{j} \right\vert \right\vert_{S^\infty}
    =
    \left\vert \left\vert \Mat(\one_{G}) \right\vert \right\vert_{S^\infty}^{j} = 
    ||\one_{G}||_{S^\infty}^{j},
\]
since the singular values of a self-adjoint matrix are just the absolute values of its eigenvalues (thus the singular values of $\Mat(\one_G)^j$ are the $j$th powers of the singular values of $\Mat(\one_G)$). Putting things together, we get
\[
    \Inj(P_j,G) \le n^{j+1} ||\one_{G}||_{S^\infty}^{j}
    \le 
    n^{j+1}
    ||\one_{G}||_{S^{2\ell}}^{j}
    \le 
    C^{j/2\ell} p^{j} n^{j+1}.
\]
So we can take $\beta := \max(C^{k/\ell}, C^{j/2\ell})$, and our proof is complete.
\end{proof}

Next, we prove the first two parts \cref{cor:graphs-few-4-cycles}, and leave the last part (which requires a transference step) to \cref{sec:transference}.

\begin{theorem}[Rephrasing of parts $(i)$, $(ii)$ of \cref{cor:graphs-few-4-cycles}]
For any integers $j \ge 1$ and $k \ge 3$, and any $\epsilon, C > 0$, there exist $n_0, \delta > 0$ such that the following holds. For any $n \ge n_0$ and $p \ge n^{-1/2}$, if an $n$-vertex graph $G$ with $\Inj(C_4,G) \le C p^{4} n^{4}$ has
\[
    \Inj(P_{j}, G) \le \delta p^{j} n^{j+1}
    \qquad\qquad \text{or} \qquad\qquad 
    \Inj(C_{2k}, G) \le \delta p^{2k} n^{2k},
\]
then $G$ has fewer than $\epsilon pn^2$ edges.
\end{theorem}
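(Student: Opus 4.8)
The plan is to reduce the statement to \cref{prop:graphs-transference-free}, which is the same assertion but with the hypothesis $\Hom(C_{2\ell}, G) \le Cp^{2\ell}n^{2\ell}$ in place of $\Inj(C_4, G) \le Cp^4 n^4$. The key point is that the density restriction $p \ge n^{-1/2}$ makes a bound on $\Inj(C_4, G)$ as strong as a bound on $\Hom(C_4, G)$, so that we may take $\ell = 2$.

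First I would invoke the combinatorial inequality \cref{eq:4-cycle-counts}, namely $\Hom(C_4, G) \ll n^2 + \Inj(C_4, G)$ with an absolute implied constant. Combined with the hypothesis $\Inj(C_4, G) \le Cp^4 n^4$, this gives $\Hom(C_4, G) \ll n^2 + Cp^4 n^4$. Since $p \ge n^{-1/2}$, we have $n^2 = (n^{-1/2})^4 n^4 \le p^4 n^4$, so the $n^2$ term is absorbed and $\Hom(C_4, G) \le C' p^4 n^4$ for some constant $C' = C'(C)$ depending only on $C$.

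Next I would apply \cref{prop:graphs-transference-free} with $\ell := 2$, with the same integers $j \ge 1$ and $k \ge 3 > \ell$, with the same $\epsilon$, and with $C'$ in place of $C$. This produces $n_0, \delta > 0$ such that any $n$-vertex graph $G$ with $n \ge n_0$, $\Hom(C_4, G) \le C' p^4 n^4$, and with $\Inj(P_j, G) \le \delta p^j n^{j+1}$ or $\Inj(C_{2k}, G) \le \delta p^{2k} n^{2k}$, has fewer than $\epsilon p n^2$ edges. Since we have already verified $\Hom(C_4, G) \le C' p^4 n^4$ under the assumption $p \ge n^{-1/2}$, the desired conclusion follows for exactly this choice of $n_0$ and $\delta$.

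I do not expect any genuine obstacle here: parts $(i)$ and $(ii)$ of \cref{cor:graphs-few-4-cycles} are, at bottom, \cref{thm:expected-cycles-paths} and \cref{prop:graphs-transference-free}, and the only new ingredient is the passage from a homomorphic $C_4$-count to an injective one, which is precisely what \cref{eq:4-cycle-counts} provides once $p \ge n^{-1/2}$. The one bookkeeping point worth stating explicitly is that the implied constant in the conclusion ``fewer than $\epsilon p n^2$ edges'' does not depend on $n$ or $p$, which is inherited directly from \cref{prop:graphs-transference-free}.
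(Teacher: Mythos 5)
Your proof is correct and is essentially identical to the paper's: the paper likewise applies \cref{prop:graphs-transference-free} with $\ell = 2$ after upgrading $\Inj(C_4,G) \le Cp^4n^4$ to $\Hom(C_4,G) \le C'p^4n^4$ via \cref{eq:4-cycle-counts} and the bound $p \ge n^{-1/2}$. No issues.
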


\begin{proof}
This follows directly from \cref{prop:graphs-transference-free} by taking $\ell = 2$, provided that we can use the additional lower bound $p \ge n^{-1/2}$ to upgrade the upper bound $\Inj(C_4, G) \le Cp^4 n^4$ to 
\[
    \Hom(C_4, G) \le C'p^4 n^4,
\]
for another constant $C'$ (depending on $C$). But we already did this in the proof of \cref{cor:two-sided-counting}, using \cref{eq:4-cycle-counts}.
\end{proof}

\section{The transference step} \label{sec:transference}

In this section, we complete the transference step and prove our main removal lemmas for $k$-cycles (and $k$-paths) in weighted sparse graphs. We will ``transfer'' to the following weighted version of the $k$-cycle removal lemma for dense graphs (i.e., assuming $L^\infty$ bounds); this is a fairly classical result, which can be proven along the same lines as \cite[Theorem 4.1]{conlon2021regularity} (see also \cite{tao2007ergodic}).

\begin{knowntheorem}[Dense cycle removal] \label{thm:dense-k-cycle-removal}
For any $k \in \Z_{\ge 3}$ and $\epsilon > 0$, there exists $\delta > 0$ such that the following holds. Let $(X_i, \Sigma_i, \P_i)$ be any probability spaces and $f_{i} : X_i \times X_{i+1} \to [0, 1]$ be measurable functions, for $i \in \Z/k\Z$. If
\begin{equation} \label{eq:few-k-cycles-0}
    \E_{x_1,\ldots,x_k} \prod_{i=1}^k f_{i}(x_i, x_{i+1}) \le \delta,
\end{equation}
then there exist measurable subsets $E_{i} \subset X_i \times X_{i+1}$ with $||f_{i} \one_{E_{i}^c}||_{L^1} \le \epsilon$ for each $i$, such that
\[
    \E_{x_1,\ldots, x_k} \prod_{i=1}^k \one_{E_{i}}(x_i, x_{i+1}) = 0.
\]
%If $f_k \equiv 1$, then one can take $E_k = X_k \times X_1$ (this case corresponds to $(k-1)$-paths).
\end{knowntheorem}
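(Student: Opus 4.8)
I would prove \cref{thm:dense-k-cycle-removal} by the classical regularity method (regularize, count, remove), the one point of care being that here --- unlike in the sparse setting treated elsewhere in this paper --- one must use the \emph{strong} (Szemer\'edi-type) regularity lemma rather than a weak cut-norm one, which is the source of the tower-type dependence of $\delta^{-1}$ on $\epsilon^{-1}$. \emph{Step 1 (regularization).} Fix a parameter $\eta = \eta(\epsilon,k) > 0$ (it will suffice to take $\eta := \epsilon/16$). Apply Szemer\'edi's regularity lemma, in the form valid for arbitrary probability spaces (one may first reduce to finite spaces by approximating the $\P_i$; see also \cite{lovasz2012large}), simultaneously to the cyclic family $f_1,\dots,f_k$. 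This produces partitions $\mP_i$ of $X_i$, each into $M := M_k(\eta)$ parts, such that, writing $d_i(P,P') := \E[f_i \mid P \times P']$ for $P \in \mP_i$ and $P' \in \mP_{i+1}$, all but an $\eta$-fraction (by measure) of the pairs $(P,P')$ are $\eta$-regular for $f_i$: the restriction of $f_i$ to $P \times P'$ differs from the constant $d_i(P,P')$ by at most $\eta$ in the conditional cut norm on $P \times P'$.

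\emph{Step 2 (removal).} Put $\theta := \epsilon/4$ and $\mu_0 := \epsilon/(100M)$, and let $E_i \subseteq X_i \times X_{i+1}$ be the union of all blocks $P \times P'$ (with $P \in \mP_i$, $P' \in \mP_{i+1}$) that are $\eta$-regular for $f_i$, have $d_i(P,P') \ge \theta$, and have $\min(\P_i(P),\P_{i+1}(P')) \ge \mu_0$. Its complement $E_i^c$ is again a union of blocks, so $\|f_i \one_{E_i^c}\|_{L^1}$ equals the total $f_i$-mass of the discarded blocks; using $f_i \le 1$, this mass is at most $\eta$ from the irregular pairs, at most $\theta$ from the low-density pairs (the block measures summing to $1$), and at most $2M\mu_0$ from the pairs meeting a small part. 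With the above choices this totals at most $\eta + \theta + 2M\mu_0 < \epsilon$.

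\emph{Step 3 (no surviving cycle).} Suppose for contradiction that $\E_{x_1,\dots,x_k}\prod_{i=1}^k \one_{E_i}(x_i,x_{i+1}) > 0$. Since this integrand is constant on products of blocks, there is a tuple $(P_1,\dots,P_k) \in \prod_i \mP_i$ with $\prod_i \P_i(P_i) > 0$ such that each $(P_i,P_{i+1})$ is $\eta$-regular, has $d_i(P_i,P_{i+1}) \ge \theta$, and has $\P_i(P_i) \ge \mu_0$. Restricting the cyclic average to $x_i \in P_i$ for all $i$ and passing to the conditional measures, the standard counting lemma for $k$-cycles over $\eta$-regular pairs (the $k$-cycle analogue of the triangle counting lemma; see the proof of \cite[Theorem 4.1]{conlon2021regularity} and \cite{tao2007ergodic}) applies because $\eta$ is small relative to $\theta$, and shows that
\[
    \E_{x_1,\dots,x_k} \one_{x_i \in P_i\,\forall i}\,\prod_{i=1}^k f_i(x_i,x_{i+1}) \;\ge\; c\,\prod_{i=1}^k \P_i(P_i) \;\ge\; c\,\mu_0^k
\]
for some $c = c(\epsilon,k) > 0$ (one may take $c = (\theta/2)^k$). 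Hence $\E_{x_1,\dots,x_k}\prod_i f_i(x_i,x_{i+1}) \ge c\,(\epsilon/(100M))^k$; choosing $\delta := \tfrac12 c\,(\epsilon/(100M))^k$ --- legitimate, since $M = M_k(\eta)$ depends only on $\epsilon$ and $k$ and $\delta$ is fixed last --- contradicts \cref{eq:few-k-cycles-0}. Therefore $\E_{x_1,\dots,x_k}\prod_i \one_{E_i}(x_i,x_{i+1}) = 0$, as required.

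\emph{Main obstacle.} The crux is that Step 3 needs \emph{within-pair} (conditional) regularity, so that the counting-lemma lower bound only requires $\eta$ small relative to $\theta$ --- a condition not involving $M$. This is what breaks the potential circularity and lets the tower-sized $M$ be absorbed into $\delta$ (which is chosen last). A merely global cut-norm (or $S^\infty$) bound of size $\eta$ would, once localized to blocks of measure $\sim \mu_0 \sim \epsilon/M$, degrade to a local bound of size $\sim \eta/\mu_0^2$, forcing $\eta$ to depend on $M$ and collapsing the argument; this is precisely why the weak regularity lemma used in the sparse part of the paper does not suffice here. Degenerate cases (e.g.\ some $X_i$ a single atom, where the only partition is trivial) should be checked separately but are immediate.
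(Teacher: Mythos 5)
The paper does not prove this statement itself: it is stated as a known result, to be proven ``along the same lines as \cite[Theorem 4.1]{conlon2021regularity}'' (see also \cite{tao2007ergodic}), and your regularize--clean--count--contradict argument is exactly that standard route, so the approach matches. The one slip is quantitative: for the $k$-cycle counting lemma in Step 3 to yield a count $\ge (\theta/2)^k\prod_i\P_i(P_i)$, the telescoping error is of size $\asymp k\eta$ against a main term $\theta^k$, so you need $\eta \lesssim \theta^k/k$ (depending on $k$ and polynomially on $\epsilon$), not the fixed choice $\eta = \epsilon/16$; since $\eta$ only enters $\delta$ through $M = M_k(\eta)$ and is chosen before $\delta$, this is harmless to fix and does not affect the architecture of the proof.
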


\begin{remark}
The conclusion of this theorem (and similar ones) is sometimes \cite{tao2007ergodic} phrased with the vanishing of $\prod_{i=1}^k (f_i\one_{E_i})(x_i, x_{i+1})$ rather than $\prod_{i=1}^k \one_{E_i}(x_i, x_{i+1})$. To recover our statement from this one, we can simply intersect $E_i$ with the support of $f_i$.
\end{remark}

\begin{remark}
Thinking of $f_{i}$ as the weight functions of a $k$-partite graph with few $k$-cycles, $\supp(f_i) \setminus E_{i}$ represent the (small) sets of edges that one can remove in order to obtain a $k$-cycle free graph. If one is interested in general weighted graphs, one may take $X_1 = \cdots = X_k = X$, $f_{1} = \cdots = f_{k,1} = f$,
\[
    E := \bigcap_{i=1}^k E_{i},
\]
and bound $||f \one_{E^c}||_{L^1}$ by the triangle inequality.
If one is further interested in undirected graphs (so $f$ is also symmetric), one can always modify $E$ by considering the minimal symmetric set containing it. Of course, one issue with passing to non-$k$-partite graphs this way is that due to vertex collisions, \cref{eq:few-k-cycles-0} now assumes a bound for homomorphic copies of $k$-cycles rather than proper $k$-cycles, which matters for $k > 3$ (unlike in \cref{thm:dense-triangle-removal}). Luckily, this issue can be ammended through the last part of \cref{prop:reduction-to-structure} (the proof of which used \cref{lem:distinct-vertices}).
\end{remark}

\begin{theorem}[Sparse cycle removal] \label{thm:sparse-k-cycle-removal}
For any $k \in \Z_{\ge 3}$ and $q_1, \ldots, q_k \in [1, \infty)$ with $\sum_i \frac{1}{q_i} > 1$, and any $\epsilon, K, C > 0$, there exist $\delta, \tau > 0$ such that the following holds.
Let $(X_i, \P_i)$ be finite probability spaces and $f_{i} : X_i \times X_{i+1} \to [0, \infty)$, for $i \in \Z/k\Z$. Assume that for each $i$,
\begin{itemize}
    \item[$(i)$] \emph{(Bounded Schatten norm).} One has $||f_{i}||_{S^{q_i}} \le C$, and
    \item[$(ii)$] \emph{(Dense pairs condition with large parts).} For any partitions $\mP_i, \mP_{i+1}$ of $X_i, X_{i+1}$ into parts with probabilities $\ge \tau$, one has
    \begin{equation} \label{eq:dense-pairs}
        ||f_{i}\one_{\E(f_{i} \mid \mP_i \otimes \mP_{i+1}) > K} ||_{L^1} \le \epsilon/4.
    \end{equation}
\end{itemize}
If additionally
\begin{equation} \label{eq:few-k-cycles-1}
    \E_{x_1, \ldots, x_k} \prod_{i=1}^k f_{i}(x_i,x_{i+1}) \le \delta,
\end{equation}
then there exist subsets $E_{i} \subset X_i \times X_{i+1}$ with $||f_{i} \one_{E_{i}^c}||_{L^1} \le \epsilon$ for each $i$, such that
\[
    \E_{x_1, \ldots, x_k} \prod_{i=1}^k \one_{E_{i}}(x_i,x_{i+1}) = 0.
\]
%If $f_k \equiv 1$, then one can take $E_k = X_k \times X_1$. 
Moreover, if all $(X_i, \P_i) = (X, \P)$ are equal and $\P$ is the uniform distribution, and $|X|$ is sufficiently large in terms of $k,q_1,\ldots,q_k,\epsilon,K,C$, then in the dense pairs condition one can assume that $\mP_i = \mP_{i+1}$, and one can replace \cref{eq:few-k-cycles-1} with $\E_{x_1, \ldots, x_k}\one_{x_1,\ldots,x_k\textnormal{ distinct}} \prod_{i=1}^k f_{i}(x_i,x_{i+1})  \le \delta$. If additionally all $f_i = f$ are equal, then one can also replace \cref{eq:dense-pairs} with $||f\one_{\E(f \mid \mP \otimes \mP) > K} \one_{\diag(\mP)^c}||_{L^1} \le \epsilon/8$.
\end{theorem}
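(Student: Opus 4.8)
The plan is to combine the reduction to low complexity (\cref{prop:reduction-to-structure}) with the dense cycle removal lemma (\cref{thm:dense-k-cycle-removal}): one uses \cref{prop:reduction-to-structure} to replace the $f_i$ by structured conditional densities carrying essentially the same $k$-cycle count, truncates these densities at height $K$ to obtain an honest $[0,1]$-valued dense instance, applies \cref{thm:dense-k-cycle-removal}, and pulls the removed edge sets back. Fix a small parameter $\eps_1 > 0$ to be chosen at the end (in terms of $k$, the $q_i$'s, $\epsilon$, $K$, $C$, and the function $\eps \mapsto \delta_{\mathrm{dense}}(\eps)$ from \cref{thm:dense-k-cycle-removal}), let $\tau > 0$ be the parameter that \cref{prop:reduction-to-structure}(ii) outputs for this $\eps_1$, and set $\delta$ small (to be pinned down later). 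Since $\sum_i 1/q_i > 1$, all $f_i \ge 0$, and $||f_i||_{S^{q_i}} \le C$, applying \cref{prop:reduction-to-structure}(ii) yields partitions $\mP_i = \{R_i, A_{i,1}, A_{i,2}, \dots\}$ of $X_i$ with $\P_i(R_i) \le \eps_1^2$, all $\P_i(A_{i,j}) \ge \tau$, and (using \cref{eq:few-k-cycles-1})
\[
    \E_{x_1, \dots, x_k} \prod_{i=1}^k \E(f_i \mid \mP_i \otimes \mP_{i+1})(x_i, x_{i+1})\, \one_{R_i^c}(x_i) \ \le\ \eps_1 + \delta .
\]

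The main obstacle is that the dense pairs condition only applies to partitions all of whose parts have probability $\ge \tau$, whereas $R_i$ may have probability far below $\tau$ (even $0$), and naively absorbing $R_i$ into a large part would corrupt the conditional densities on a possibly large cell. I would resolve this by removing the exceptional edges \emph{first}, and only then merging. Since $||f_i||_{S^\infty} \le ||f_i||_{S^{q_i}} \le C$, \cref{eq:Schatten-infty} gives $||f_i \one_{R_i \times X_{i+1}}||_{L^1} \le C\sqrt{\P_i(R_i)} \le C\eps_1$, and likewise for $X_i \times R_{i+1}$; thus the edges incident to the exceptional sets carry $L^1$-mass $\le 2C\eps_1$ and will be discarded. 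Put $f_i' := f_i \one_{R_i^c \times R_{i+1}^c}$, so that $||f_i'||_{S^{q_i}} \le C$ and $\E\prod_i f_i' \le \delta$ by \cref{lem:Schatten-norms-functions}, and merge $R_i$ into $A_{i,1}$ to obtain a partition $\mP_i^\sharp$ whose parts all have probability $\ge \tau$. Writing $g_i' := \E(f_i' \mid \mP_i^\sharp \otimes \mP_{i+1}^\sharp)$, two pointwise inequalities — both immediate from the definitions — drive everything: (a) on $R_i^c \times R_{i+1}^c$ one has $g_i' \le \E(f_i \mid \mP_i \otimes \mP_{i+1})$ (the merge can only dilute densities on the support of $f_i'$, where $f_i' = f_i$); and (b) everywhere $g_i' \le \E(f_i \mid \mP_i^\sharp \otimes \mP_{i+1}^\sharp)$ (since $f_i' \le f_i$). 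Using the identity $\prod_i \one_{R_i^c}(x_i) = \prod_i \one_{R_i^c \times R_{i+1}^c}(x_i, x_{i+1})$, inequality (a) upgrades the displayed bound to $\E\big[\prod_i g_i'(x_i,x_{i+1})\, \one_{x_j \notin R_j \ \forall j}\big] \le \eps_1 + \delta$; inequality (b), combined with the dense pairs condition applied to the partitions $\mP_i^\sharp, \mP_{i+1}^\sharp$ and with $f_i' \le f_i$, gives $||f_i' \one_{g_i' > K}||_{L^1} \le ||f_i \one_{\E(f_i \mid \mP_i^\sharp \otimes \mP_{i+1}^\sharp) > K}||_{L^1} \le \epsilon/4$.

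Now pass to the dense instance. Let $h_i := \tfrac{1}{K} \min(g_i', K) : X_i \times X_{i+1} \to [0,1]$, which is measurable with respect to $\Sigma(\mP_i^\sharp) \otimes \Sigma(\mP_{i+1}^\sharp)$; since $0 \le h_i \le 1$ and $\prod_i h_i \le K^{-k}\prod_i g_i'$, splitting according to whether all $x_j \notin R_j$ gives $\E\prod_i h_i \le K^{-k}(\eps_1 + \delta) + \sum_j \P_j(R_j) \le K^{-k}(\eps_1 + \delta) + k\eps_1^2$. With $\eps_1$, then $\delta$, chosen small enough this is $\le \delta_{\mathrm{dense}}(\eps')$ where $\eps' := \min(\epsilon/(4K), 1)$, so \cref{thm:dense-k-cycle-removal} applied on the spaces $(X_i, \Sigma(\mP_i^\sharp), \P_i)$ produces cell-measurable sets $E_i^{(1)} \subset X_i \times X_{i+1}$ with $||h_i \one_{(E_i^{(1)})^c}||_{L^1} \le \eps'$ and $\E\prod_i \one_{E_i^{(1)}} = 0$. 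Set $E_i := E_i^{(1)} \cap (R_i^c \times R_{i+1}^c)$; then $\E\prod_i \one_{E_i} \le \E\prod_i \one_{E_i^{(1)}} = 0$, and since $E_i^c \subset (E_i^{(1)})^c \cup (R_i \times X_{i+1}) \cup (X_i \times R_{i+1})$ it remains to bound $||f_i' \one_{(E_i^{(1)})^c}||_{L^1}$. On each cell $C$ of $\mP_i^\sharp \otimes \mP_{i+1}^\sharp$ with $g_i'|_C \le K$ (a ``light'' cell) one has $\E[f_i'\one_C] = g_i'|_C\, \P(C) = K\,\E[h_i\one_C]$, and because $(E_i^{(1)})^c$ is a union of cells, the light cells inside it contribute $\le K\eps' \le \epsilon/4$; the remaining (``heavy'') cells contribute $\le ||f_i'\one_{g_i' > K}||_{L^1} \le \epsilon/4$. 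Hence $||f_i' \one_{(E_i^{(1)})^c}||_{L^1} \le \epsilon/2$, and adding the at most $4C\eps_1$ of mass carried by $f_i - f_i'$ on $(E_i^{(1)})^c$ and by the $R$-incident part of $E_i^c$, we get $||f_i\one_{E_i^c}||_{L^1} \le \epsilon/2 + 4C\eps_1 \le \epsilon$ once $\eps_1 \le \epsilon/(8C)$. (If $K \le \epsilon/4$, testing the dense pairs condition against the trivial partition already forces $||f_i||_{L^1} \le \epsilon/4$, so $E_i = \emptyset$ works and this case is trivial.)

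For the equal-spaces/undirected refinements I would run the same argument with \cref{prop:reduction-to-structure}(ii) replaced by part (iii): when the $(X_i,\P_i) = (X,\P)$ are equal with $\P$ uniform and $|X|$ large, it yields a single partition $\mP$ (and a single exceptional set $R$) with $\E\prod_i f_i$ in the count replaced by $\E\prod_i \one_{x_1,\dots,x_k\text{ distinct}} f_i(x_i,x_{i+1})$, so \cref{eq:few-k-cycles-1} may be replaced by its distinct-tuple analogue and the rest goes through verbatim with $\mP_i = \mP$ and $\mP_i^\sharp = \mP^\sharp$. For the diagonal variant of the dense pairs condition (all $f_i = f$), one splits the dense cells of $\mP^\sharp \otimes \mP^\sharp$ into off-diagonal cells (controlled by $||f\one_{\E(f\mid\mP^\sharp\otimes\mP^\sharp) > K}\,\one_{\diag(\mP^\sharp)^c}||_{L^1} \le \epsilon/8$) and diagonal cells, discards the edges of the dense diagonal cells, and bounds their mass via the distinct-tuple count as in \cref{lem:distinct-vertices}. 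Apart from the obstacle above — which forces the ``remove-then-merge'' ordering and the two pointwise density inequalities (a), (b) — the remaining work is the bookkeeping of the $\epsilon$-budgets, in particular the factor $K$ lost when converting $h_i$-mass to $f_i'$-mass, which is why \cref{thm:dense-k-cycle-removal} is invoked with error parameter $\epsilon/(4K)$ rather than $\epsilon$.
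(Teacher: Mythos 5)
Your proposal is correct, and its overall architecture coincides with the paper's: apply \cref{prop:reduction-to-structure}(ii) (resp.\ (iii) for the equal-spaces refinements), truncate the resulting conditional densities to get a $[0,1]$-valued instance, invoke \cref{thm:dense-k-cycle-removal}, and pull the removed cells back, paying for the exceptional sets $R_i$ via $||f_i||_{S^\infty} \le C$ and $\sqrt{\P(R_i)}$, and for the over-dense cells via the dense pairs hypothesis. Where you genuinely diverge is in how the small part $R_i$ is reconciled with the requirement that the dense pairs condition only sees partitions with parts of probability $\ge \tau$. The paper truncates $\tilde f_i := \E(f_i \mid \mP_i \otimes \mP_{i+1})$ (with respect to the \emph{unmerged} partition) at $4K$, and only afterwards merges $R_j$ into a large part $B_j$ (with a case split on whether $\P_j(R_j) < \tau$), proving the pointwise comparison $\tilde f_i \one_{R_i^c \times R_{i+1}^c} \le 4\,\E(f_i \mid \mP_i' \otimes \mP_{i+1}')$ so that $\tilde f_i > 4K$ forces the merged density above $K$; the factor $4$ lost in the merge is exactly what the $4K$ threshold absorbs. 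You instead restrict $f_i$ to $R_i^c \times R_{i+1}^c$ \emph{before} conditioning, condition directly on the merged partition $\mP_i^\sharp$, and truncate at $K$; your inequalities (a) and (b) (both of which I checked are correct: the merge can only dilute the density of $f_i'$ on the complement of the exceptional sets, and monotonicity of conditional expectation handles the other direction) then let the dense pairs condition be applied verbatim at threshold $K$, with no factor-$4$ bookkeeping and no case split on $\P_j(R_j)$ versus $\tau$. The quantifier order ($\eps_1$ from $\epsilon, K, k, q_i, C$ and the dense removal function; then $\tau$ from $\eps_1$; then $\delta$) is sound, the heavy/light cell accounting for converting $h_i$-mass back to $f_i'$-mass is right, and your sketches of the two ``moreover'' refinements point at the same mechanisms the paper uses (part (iii) of \cref{prop:reduction-to-structure}, and bounding the diagonal mass via the distinct-tuple cycle count restricted to a single part). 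In short: same proof skeleton, with a slightly cleaner resolution of the exceptional-part/dense-pairs interaction.
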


\begin{remark}
We recall that for a partition $\mP$ of a set $X$, we denoted $\diag(\mP) := \bigcup_{A \in \mP} (A \times A) \subset X \times X$ in \cref{subsec:partitions}. The fact that we can ignore the contribution of the set $\diag(\mP)$ to the average in \cref{eq:dense-pairs} when all $f_i$'s are equal will be relevant for the last part of \cref{cor:graphs-few-4-cycles}.
\end{remark}

\begin{remark}
One can obtain a sparse $k$-path removal lemma similar to \cref{thm:sparse-k-cycle-removal}, by using the appropriate dense removal lemma, and applying our counting lemma (\cref{lem:counting}) with $f_1 = \tilde{f}_1 = \one_{X_1 \times X_2}$ and $q_1 = 1$; the condition $\sum_i \frac{1}{q_i} > 1$ can be discarded in that case. The resulting removal lemma is superseded by \cref{prop:few-paths} when applied to a single, undirected graph (which is why we leave it to the reader), but it has new content for $k$-partite and/or directed graphs. 
\end{remark}

%\begin{remark}
%As in the dense removal lemma, the case $f_k \equiv 1$ (and $E_k = X_k \times X_1$) corresponds to removing $(k-1)$-paths rather than $k$-cycles. In this case, one can take $q_k = 1$ (assuming $C \ge 1$, since $||\one||_{S^1} = 1$), which makes the condition $\sum_i \frac{1}{q_i} > 1$ trivial. Thus to remove $(k-1)$-paths with our result, one can use \emph{any} values of $q_i < \infty$ (but the value of $\tau$ will approach $0$ as $\max_i q_i \to \infty$).
%\end{remark}
   
\begin{proof}
Denote $q := (q_1, \ldots, q_k)$; we will choose a parameter $\delta = \delta(\epsilon, K, k, C) > 0$ later in the proof. For now, we fix $\delta$ and apply part $(ii)$ of \cref{prop:reduction-to-structure} with $\delta$ in place of $\eps$, to obtain a parameter $\tau = \tau(\delta, k, q, C) \in (0, 1)$ and partitions $\mP_i$ of $X_i$ whose parts have probabilities $\ge \tau$ (except possibly for one part $R_i \in \mP_i$ with $\P_i(R_i) \le \delta^2$), such that
\[
    \E_{x_1, \ldots, x_k} \prod_{i=1}^k \E(f_{i} \mid \mP_i \otimes \mP_{i+1})(x_i, x_{i+1})\one_{R_i^c}(x_i) 
    \le 
    \delta + \E_{x_1, \ldots, x_k} \prod_{i=1}^k f_{i}(x_i,x_{i+1}).
\]
By \cref{eq:few-k-cycles-1}, the right-hand side is at most $2\delta$.
Now consider, for each $i$, the $(\mP_i \otimes \mP_{i+1})$-measurable functions
\[ 
\begin{aligned}
    \tilde{f}_i := \E(f_{i} \mid \mP_i \otimes \mP_{i+1}) \qquad\quad  \text{and}  \qquad\quad  
    g_i := \frac{\tilde{f}_i \one_{\tilde{f}_i \le 4K}}{4K} \one_{R_i^c \times R_{i+1}^c}.
\end{aligned}
\] 
%(If $f_k \equiv 1$, simply take $g_k \equiv 1$ instead.) 
It follows that for some constant $c_{K,k} > 0$ depending on $K$ and $k$, we have
\begin{equation}
    \E_{x_1, \ldots, x_k} \prod_{i=1}^k g_i(x_i, x_{i+1}) \le c_{K,k} \delta.
\end{equation}
Note that $0 \le g_i \le 1$ everywhere, so we can apply the dense removal lemma (\cref{thm:dense-k-cycle-removal}) for the probability spaces $(X_i, \Sigma(\mP_i), \P_i)$ and the functions $g_i$. We finally pick $\delta = \delta(\epsilon, K, k)$ small enough, as given by \cref{thm:dense-k-cycle-removal}, to find $(\mP_{i} \otimes \mP_{i+1})$-measurable subsets $E_{i} \subset X_i \times X_{i+1}$ such that
\[
    \prod_{i=1}^k \one_{E_{i}}(x_i, x_{i+1}) = 0 \qquad\quad \text{and} \qquad\quad  
     ||g_{i} \one_{E_{i}^c}||_{L^1} \le \frac{\epsilon}{16K},
\]
for all $x_1, \ldots, x_k$, and for all $i$ respectively.
%(if $f_k \equiv 1$, so $g_k \equiv 1$, \cref{thm:dense-k-cycle-removal} also yields $E_k = X_k \times X_1$)
We also make $\delta$ smaller if necessary, in terms of $\epsilon$ and $C$, to guarantee that 
\[
    \delta ||f_i||_{S^\infty} \le \frac{\epsilon}{4}.
\]
(This is possible by the assumed Schatten norm bounds, recalling that $||f_i||_{S^{\infty}} \le ||f_i||_{S^{q_i}} \le C$.)

It remains to show that $||f_{i}\one_{E_{i}^c}||_{L^1} \le \epsilon$. To this end, we split
\begin{equation} \label{eq:edges-decomposition}
\begin{aligned}
    ||f_{i}\one_{E_{i}^c}||_{L^1} 
    &\le
    ||f_i\one_{(R_i^c \times R_{i+1}^c)^c}||_{L^1}
    +
    ||f_i\one_{R_i^c \times R_{i+1}^c}\one_{E_{i}^c}||_{L^1}.
\end{aligned}
\end{equation}
We use \cref{eq:Schatten-infty} to bound the first term:
\begin{equation} \label{eq:edges-first-term}
\begin{aligned}
    ||f_i\one_{(R_i^c \times R_{i+1}^c)^c}||_{L^1}
    &\le 
    ||f_i\one_{R_i \times X_{i+1}}||_{L^1}
    +
    ||f_i\one_{X_i \times R_{i+1}}||_{L^1}
    \\
    &\le ||f_i||_{S^\infty} \left(||\one_{R_i}||_{L^2} + ||\one_{R_{i+1}}||_{L^2}  \right)
    \\
    &\le 
    ||f_i||_{S^\infty} \left(\sqrt{\P_i(R_i)} + \sqrt{\P_{i+1}(R_{i+1})} \right)
    \le 
    2 ||f_i||_{S^{\infty}} \delta
    \le \frac{\epsilon}{2}.
\end{aligned}
\end{equation}
To handle the second term in \cref{eq:edges-decomposition}, note that since $\{\tilde{f}_i \le K\}$, $E_i$ and and $R_i^c \times R_{i+1}^c$ are $(\mP_i \otimes \mP_{i+1})$-measurable sets, we have
\begin{equation} \label{eq:edges-second-term}
\begin{aligned}
    ||f_i\one_{R_i^c \times R_{i+1}^c}\one_{E_{i}^c}||_{L^1} &\le 
    ||f_i\one_{\tilde{f}_i >4K} \one_{R_i^c \times R_{i+1}^c}||_{L^1}
    +
    ||f_i\one_{\tilde{f}_i \le 4K} \one_{R_i^c \times R_{i+1}^c} \one_{E_i^c}||_{L^1}
    \\
    &= 
    ||f_i\one_{\tilde{f}_i >4K} \one_{R_i^c \times R_{i+1}^c}||_{L^1}
    +
    ||\tilde{f}_i\one_{\tilde{f}_i \le 4K} \one_{R_i^c \times R_{i+1}^c} \one_{E_i^c}||_{L^1}
    \\
    &=
    ||f_i \one_{\tilde{f}_i >4K} \one_{R_i^c \times R_{i+1}^c}||_{L^1}
    +
    ||4K g_i \one_{E_i^c}||_{L^1}
    \\
    &\le 
    ||f_i \one_{\tilde{f}_i >4K} \one_{R_i^c \times R_{i+1}^c}||_{L^1} + \frac{\epsilon}{4}.
\end{aligned}
\end{equation}
We are left to handle the term $||\tilde{f}_i\one_{\tilde{f}_i >4K} \one_{R_i^c \times R_{i+1}^c}||_{L^1}$, and we intend to use the dense pairs condition, recalling that $\tilde{f}_i = \E(f_i \mid \mP_i \otimes \mP_{i+1})$. However, the partitions $\mP_j$ (for $1 \le j \le k$) are not directly suitable for this condition, since one part of $\mP_j$ (which is $R_j$) may have probability $< \tau$. Hence if $\P_j(R_j) < \tau$, we define $\mP'_j$ by combining $R_j$ with an arbitrary other part $B_j$ of $\mP_j$ (which has $\P_j(B_j) \ge \tau$); otherwise we let $\mP'_j := \mP_j$. Denoting $f'_i := \E(f_i \mid \mP'_i \otimes \mP_{i+1}')$, we now claim that
\begin{equation} \label{eq:fs-bound}
    \tilde{f}_i \one_{R_i^c \times R_{i+1}^c} \le 4 f'_i.
\end{equation}
Both sides are $(\mP_i \otimes \mP_{i+1})$-measurable, so we must check the inequality in every part $A_i \times A_{i+1}$ of $\mP_i \otimes \mP_{i+1}$. If $A_i = R_i$ or $A_{i+1} = R_{i+1}$, then the left-hand side vanishes. If $\P_i(R_i) < \tau$ and $A_i = B_i$, then we have 
\[
    \E[f_i \mid B_i \times A_{i+1}] = \frac{\E[f_i \one_{B_i \times A_{i+1}}]}{\P_i(B_i)\P_{i+1}(A_{i+1})}\le 
    2\frac{\E[f_i \one_{(B_i \cup R_i) \times A_{i+1}}]}{\P_i(B_i \cup R_i)\P_{i+1}(A_{i+1})}
    =
    2\E[f_i \mid (B_i \cup R_i) \times A_{i+1}],
\]
since $\P_i(B_i) \ge \tau > \P_i(R_i)$. One can repeat this reasoning if $\P_{i+1}(R_{i+1}) < \tau$ and $A_{i+1} = B_{i+1}$, showing that $\tilde{f}_i \one_{R_i^c \times R_{i+1}^c} \le 4f'_i$ whenever $A_i \in \{B_i, R_i\}$ or $A_{i+1} \in \{B_{i+1}, R_{i+1}\}$. In all other cases, the values of $\tilde{f}_i$ and $f'_i$ are equal. Thus \cref{eq:fs-bound} holds, implying that $f'_i > K$ whenever $\tilde{f}_i > 4K$ and $\one_{R_i^c \times R_{i+1}^c} \neq 0$. Hence we have the pointwise bound
\[
    f_i\one_{\tilde{f}_i >4K} \one_{R_i^c \times R_{i+1}^c} 
    \le f_i \one_{f'_i >K}
\]
everywhere, which implies
\begin{equation} \label{eq:edges-third-term}
\begin{aligned}
    ||f_i\one_{\tilde{f}_i >4K} \one_{R_i^c \times R_{i+1}^c}||_{L^1} &\le ||f_i \one_{f'_i >K}||_{L^1} \le \epsilon/4,
\end{aligned}
\end{equation}
by our dense pairs condition (note that each part of $\mP'_j$ has probability $\ge \tau$).

Combining \cref{eq:edges-decomposition,eq:edges-first-term,eq:edges-second-term,eq:edges-third-term} completes our proof in the general case. The statement about equal probability spaces $(X_i, \P_i) = (X, \P)$ (where $\P$ is the uniform probability distribution on $X$) follows analogously, using part $(iii)$ of \cref{prop:reduction-to-structure} instead of part $(ii)$ (in this case we have all $\mP_i = \mP$ and all $R_i = R$). It remains to prove the additional claim that \cref{eq:dense-pairs} can be replaced with
\begin{equation} \label{eq:dense-pairs-1}
    ||f \one_{\E(f \mid \mP' \otimes \mP') > K} \one_{\diag(\mP')^c} ||_{L^1} \le \frac{\epsilon}{8},
\end{equation}
provided that we also have $f_1 = \cdots = f_k = f$; here $\mP'$ is any partition of $X$ into parts with probabilities $\ge \tau$. To this end, we will show that by choosing a smaller value of $\delta$, \cref{eq:dense-pairs-1} actually self-improves to \cref{eq:dense-pairs} (in the form $||f \one_{\E(f \mid \mP' \otimes \mP') > K}||_{L^1} \le \frac{\epsilon}{4}$), for the specific partitions $\mP'$ that were used in the preceding proof (to obtain \cref{eq:edges-third-term}). More precisely, our application of \cref{prop:reduction-to-structure}.$(iii)$ produced a partition $\mP$ of $X$ and a part $R \in \mP$ with $\P(R) \le \delta^2$, such that
\begin{equation} \label{eq:reduced-to-structure}
    \E_{x_1, \ldots, x_k} \prod_{i=1}^k \E(f \mid \mP \otimes \mP)(x_i,x_{i+1}) \one_{R^c}(x_i) 
    \ll_k
    \delta + \E_{x_1, \ldots, x_k} \one_{x_1,\ldots,x_k \textnormal{ distinct}}\prod_{i=1}^k f(x_i,x_{i+1}) 
    \le 2\delta,
\end{equation}
and the dense pairs condition was applied for a partition $\mP'$, obtained by possibly combining $R$ with another part of $\mP$.
Now fixing some part $A \in \mP$ with $A \neq R$ (and thus $A \cap R = \emptyset$), and restricting the implied summation in \cref{eq:reduced-to-structure} to those $x_1, \ldots, x_k \in A$, we get 
\[
\begin{aligned}
    2\delta \gg_k \E_{x_1, \ldots, x_k} \prod_{i=1}^k \E[f \mid A \times A] \one_{A}(x_i) 
    &=
    \E[f \mid A \times A]^k \E_{x_1, \ldots, x_k} \prod_{i=1}^k \one_{A}(x_i)
    \\
    &= 
    \E[f \mid A \times A]^k \P(A)^k.
\end{aligned}
\]
Summing over $A$ yields
\[
    \sum_{\substack{A \in \mP \\ A \neq R}} \E[f \mid A \times A] \P(A)^2
    \ll_k 
    \sum_{A \in \mP} (2\delta)^{1/k} \P(A)
    = (2\delta)^{1/k},
\]
and using the bound $||f||_{S^\infty} \le ||f||_{S^{q_1}} \le C$ (coupled with \cref{eq:Schatten-infty}), we further obtain
\[
\begin{aligned}
    ||f \one_{\diag(\mP')}||_{L^1} &= \sum_{A \in \mP'} \E[f \one_{A \times A}]
    \\
    &\le 
    \E[f \one_{R \times X}] + \E[f \one_{X \times R}]
    +
    \sum_{A \in \mP'} \E[f \one_{(A \setminus R) \times (A \setminus R)}]
    \\
    &\le 
    2||f||_{S^\infty}\sqrt{\P(R)}
    +
    \sum_{\substack{A \in \mP \\ A \neq R}} \E[f \one_{A \times A}]
    \\
    &\le 
    2C \delta +
    \sum_{\substack{A \in \mP \\ A \neq R}} \E[f \mid A \times A] \P(A)^2
    \qquad 
    \ll_k 
    C\delta + (2\delta)^{1/k}.
\end{aligned}
\]
By making $\delta$ smaller (if necessary) in terms of $\epsilon, k, C$, we can thus guarantee that $||f \one_{\diag(\mP')}||_{L^1} \le \epsilon/8$. Combining this with \cref{eq:dense-pairs-1} recovers the original condition in \cref{eq:dense-pairs} (for this specific partition $\mP'$, in the form $||f\one_{\E(f \mid \mP' \otimes \mP') > K}||_{L^1} \le \epsilon/4$), and this completes our proof.
\end{proof}

To make sense of the ``dense pairs condition with large parts'' and to compare our result with prior work, let us also state the immediate corollaries of \cref{thm:sparse-k-cycle-removal} for (unweighted) graphs.

\begin{corollary}[Cycle removal in multiple sparse graphs] \label{cor:sparse-k-cycle-removal-graphs}
For any $k \in \Z_{\ge 3}$, $q_1, \ldots, q_k \in [1, \infty)$ with $\sum_i \frac{1}{q_i} > 1$, and $\epsilon, K, C > 0$, there exist $n_0, \delta, \tau > 0$ such that the following holds. Let $n \ge n_0$, $p_1, \ldots, p_k > 0$, and $G_1, \ldots, G_k$ be $n$-vertex graphs with the same vertex set $V$, such that for each $i$,
\begin{itemize}
    \item[$(i)$] \emph{(Bounded Schatten norm).} Equipping $V$ with uniform probability, one has $||\one_{G_i}||_{S^{q_i}}^{q_i} \le C p_i^{q_i}$;
    \item[$(ii)$] \emph{(Dense pairs condition with large parts).} For any partition of $V$ into parts $V_1, V_2, \ldots$ with $\ge \tau n$ elements each, a total of at most $\epsilon p_i n^2/4$ edges of $G_i$ lie between pairs $(V_j, V_{j'})$ with 
    \[
        e_{G_i}(V_j, V_{j'}) \ge Kp_i|V_j||V_{j'}|.
    \]
\end{itemize}
(Here we allow pairs with $j = j'$.) If additionally
\[
    \sum_{x_1, \ldots, x_k \text{ distinct}} \one_{G_1}(x_1, x_2) \cdots \one_{G_k}(x_k, x_1) \le \delta p_1 \cdots p_k n^k,
\]
then there exist subgraphs $G_i'$ of $G_i$ with $|E(G_i) \setminus E(G_i')| \le \epsilon p_i n^2$ for each $i$, such that
\[
    \sum_{x_1, \ldots, x_k} \one_{G_1'}(x_1, x_2) \cdots \one_{G_k'}(x_k, x_1) = 0.
\]
%Moreover, if all $G_i = G$ are the same graph and all $p_i = p$ are equal, then in the dense pairs condition it suffices to consider the contribution of the pairs $(V_j, V_{j'})$ with $j \neq j'$.
\end{corollary}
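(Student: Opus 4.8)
The plan is to deduce \cref{cor:sparse-k-cycle-removal-graphs} directly from \cref{thm:sparse-k-cycle-removal}, in its ``equal probability spaces'' form, by passing to the normalized weights $f_i := \one_{G_i}/p_i : V \times V \to [0, \infty)$, where $V = V(G_1) = \cdots = V(G_k)$ carries the uniform probability measure. First I would fix $C' := \max\{1, C^{1/q_1}, \ldots, C^{1/q_k}\}$ and let $n_0, \delta, \tau > 0$ be the outputs of \cref{thm:sparse-k-cycle-removal} applied with $q_1, \ldots, q_k$, the same $\epsilon$ and $K$, and $C'$ in place of $C$, choosing $n_0$ to exceed the threshold on $|X|$ required by the last paragraph of that theorem. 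It then remains to check that, for $n \ge n_0$, the hypotheses of \cref{cor:sparse-k-cycle-removal-graphs} translate into the hypotheses of the ``moreover'' clause of \cref{thm:sparse-k-cycle-removal} for the functions $f_i$.

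The three translations are routine. Hypothesis $(i)$ reads $\|f_i\|_{S^{q_i}} = p_i^{-1} \|\one_{G_i}\|_{S^{q_i}} \le C^{1/q_i} \le C'$, which is the bounded-Schatten-norm hypothesis. For hypothesis $(ii)$: given a partition $\mP = \{V_1, V_2, \ldots\}$ of $V$ with every $|V_j| \ge \tau n$ (equivalently, every part of probability $\ge \tau$), the function $\E(f_i \mid \mP \otimes \mP)$ is constant equal to $e_{G_i}(V_j, V_{j'})/(p_i |V_j||V_{j'}|)$ on each block $V_j \times V_{j'}$, so $\{\E(f_i \mid \mP \otimes \mP) > K\}$ is contained in the union of the ``dense'' blocks (those with $e_{G_i}(V_j, V_{j'}) \ge K p_i |V_j||V_{j'}|$), and $\|f_i \one_{\E(f_i \mid \mP \otimes \mP) > K}\|_{L^1}$ is $(p_i n^2)^{-1}$ times the number of edges of $G_i$ lying inside such dense blocks, which by $(ii)$ is at most $\epsilon/4$ (the constant $\epsilon p_i n^2/4$ in $(ii)$ being chosen precisely to make this match, once one accounts for the normalization between directed and undirected edge counts). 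This is the dense pairs condition of \cref{thm:sparse-k-cycle-removal} in its $\mP_i = \mP_{i+1}$ form. Finally, since the $x_i$ are sampled uniformly and independently, $\E_{x_1, \ldots, x_k} \one_{x_1, \ldots, x_k \text{ distinct}} \prod_i f_i(x_i, x_{i+1}) = (p_1 \cdots p_k n^k)^{-1} \sum_{x_1, \ldots, x_k \text{ distinct}} \one_{G_1}(x_1, x_2) \cdots \one_{G_k}(x_k, x_1) \le \delta$ by the last hypothesis.

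With the hypotheses verified, \cref{thm:sparse-k-cycle-removal} supplies sets $E_i \subset V \times V$ with $\|f_i \one_{E_i^c}\|_{L^1} \le \epsilon$ and $\E_{x_1, \ldots, x_k} \prod_i \one_{E_i}(x_i, x_{i+1}) = 0$. I would then turn these into honest subgraphs by setting $E(G_i') := \{\{x,y\} \in E(G_i) : (x,y) \in E_i \text{ and } (y,x) \in E_i\}$, so that $G_i' \subseteq G_i$ and $\one_{G_i'} \le \one_{E_i}$ pointwise. Since $\|f_i \one_{E_i^c}\|_{L^1} = (p_i n^2)^{-1}\, |\{(x,y) : \{x,y\} \in E(G_i),\ (x,y) \notin E_i\}|$, and every edge of $G_i$ removed in passing to $G_i'$ contributes at least one such ordered pair while distinct edges contribute distinct pairs, we get $|E(G_i) \setminus E(G_i')| \le p_i n^2 \|f_i \one_{E_i^c}\|_{L^1} \le \epsilon p_i n^2$; moreover $\sum_{x_1, \ldots, x_k} \one_{G_1'}(x_1,x_2) \cdots \one_{G_k'}(x_k,x_1) = n^k\, \E_{x_1, \ldots, x_k} \prod_i \one_{G_i'}(x_i, x_{i+1}) \le n^k\, \E_{x_1, \ldots, x_k} \prod_i \one_{E_i}(x_i, x_{i+1}) = 0$, as required.

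There is no genuine obstacle: \cref{thm:sparse-k-cycle-removal} does all the heavy lifting, and in particular its use of \cref{prop:reduction-to-structure}.$(iii)$ — itself built on \cref{lem:distinct-vertices} — is precisely what lets us count only $k$-cycles with \emph{distinct} vertices even though $G_1, \ldots, G_k$ share the vertex set $V$. The only care needed is the bookkeeping in the second paragraph (matching the numerical constants, and keeping straight the $\ge K$ versus $> K$ threshold, which is harmless by the containment noted there) together with the analogous directed/undirected conversion in the third.
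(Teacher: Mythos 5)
Your proposal is correct and is essentially the paper's own (one-line) proof: the paper likewise deduces the corollary by applying \cref{thm:sparse-k-cycle-removal} to $X = V$ with $f_i = \one_{G_i}/p_i$ and $C^{1/\max_i q_i}$ in place of $C$, the only cosmetic difference being that it takes $2\epsilon$ in place of $\epsilon$ to absorb the factor of $2$ between ordered pairs and undirected edges (if hypothesis $(ii)$ counts unordered edges, your dense-pairs computation actually yields $||f_i\one_{\E(f_i\mid\mP\otimes\mP)>K}||_{L^1}\le\epsilon/2$ rather than $\epsilon/4$, since $e_{G_i}(V_j,V_{j'})$ counts ordered pairs). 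This constant slippage is immaterial, and your verification of the Schatten-norm hypothesis, the distinct-vertices cycle count, and the conversion of the sets $E_i$ back to subgraphs $G_i'$ all match the intended argument.
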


\begin{remark}
Recall that if $q_i$ is an even integer, then $||\one_{G_i}||_{S^{q_i}}^{q_i} = n^{-q_i} \Hom(C_{q_i}, G_i)$, so condition $(i)$ becomes a bound on $G_i$'s homomorphic even cycle counts.
\end{remark}

\begin{proof}
This is just \cref{thm:sparse-k-cycle-removal} applied to $X = V$, $f_i = \one_{G_i}/p_i$, $q_i = 2\ell_i$, $2\epsilon$ instead of $\epsilon$, and $C^{1/\max q_i}$ instead of $C$.
\end{proof}

We can similarly deduce our main removal lemma from \cref{sec:introduction}, concerning a single graph:

\begin{proof}[Proof of \cref{thm:sparse-removal-odd-cycles}]
This is just (the last paragraph of) \cref{thm:sparse-k-cycle-removal} applied to $X = V(G)$, $f_i = \one_{G}/p$ and $q_i = 2k$ for $i \in \{1, \ldots, 2k+1\}$, $2\epsilon/(2k+1)$ instead of $\epsilon$, and $C^{1/(2k+1)}$ instead of $C$. Unlike in \cref{cor:sparse-k-cycle-removal-graphs}, it now suffices to consider the `off-diagonal' dense pairs $(V_i, V_j)$ with $i \neq j$. Indeed, the set $\diag(\mP)$ from \cref{thm:sparse-k-cycle-removal} corresponds to the `diagonal' pairs $(V_i, V_i)$, which can be ignored in the dense pairs condition since $f_1 = \cdots = f_{2k+1}$. 

Also, note that \cref{thm:sparse-k-cycle-removal} produces a graph which contains no homomorphic copies of $C_{2k+1}$, and such a graph cannot contain any cycle $C_{2\ell+1}$ with $1 \le \ell \le k$.
\end{proof}

\section{Applications} \label{sec:applications}

\subsection{Graphs with mildly-pseudorandom majorants}

\cref{thm:sparse-k-cycle-removal} also implies a \emph{relative} removal result, assuming the existence of a majorant $\nu \ge f$ satisfying the mild pseudorandomness assumption $||\nu - 1||\sq = o(1)$. The fact that the partitions from the dense pairs condition in \cref{thm:sparse-k-cycle-removal} consist of parts of lower-bounded probability will be critical here.

\begin{corollary}[Relative cycle removal] \label{cor:relative-k-cycle-removal}
For any $k \in \Z_{\ge 3}$ and $q_1, \ldots, q_k \in [1, \infty)$ with $\sum_i \frac{1}{q_i} > 1$, and any $\epsilon, K, C > 0$, there exist $\delta, \tau > 0$ such that the following holds.
Let $(X_i, \P_i)$ be finite probability spaces and $f_{i}, \nu_{i} : X_i \times X_{i+1} \to [0, \infty)$, for $i \in \Z/k\Z$. Assume that for each $i$, one has
\begin{itemize}
    \item[$(i)$] \emph{(Bounded Schatten norm).} $||f_{i}||_{S^{q_i}} \le C$,
    \item[$(ii)$] \emph{(Mildly-pseudorandom majorant).} $f_i \le K\nu_i$ and $||\nu_i - 1||\sq \le \tau$.
\end{itemize}
If additionally
\begin{equation} \label{eq:few-k-cycles-2}
    \E_{x_1, \ldots, x_k} \prod_{i=1}^k f_{i}(x_i,x_{i+1}) \le \delta,
\end{equation}
then there exist subsets $E_{i} \subset X_i \times X_{i+1}$ with $||f_{i} \one_{E_{i}^c}||_{L^1} \le \epsilon$ for each $i$, such that
\[
    \E_{x_1, \ldots, x_k} \prod_{i=1}^k \one_{E_{i}}(x_i,x_{i+1}) = 0.
\]
Moreover, if all $(X_i, \P_i) = (X, \P)$ are equal and $\P$ is the uniform probability distribution, and $|X|$ is sufficiently large in terms of $k,q_1,\ldots,q_k,\epsilon,K,C$, then instead of \cref{eq:few-k-cycles-2} one can require that $\E_{x_1, \ldots, x_k} \one_{x_1,\ldots,x_k\textnormal{ distinct}} \prod_{i=1}^k f_{i}(x_i,x_{i+1}) \le \delta$.
\end{corollary}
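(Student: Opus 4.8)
The plan is to derive this corollary directly from \cref{thm:sparse-k-cycle-removal} by showing that a mildly-pseudorandom majorant automatically certifies the ``dense pairs condition with large parts''. First I would invoke \cref{thm:sparse-k-cycle-removal} with the same $k$, $q_1, \ldots, q_k$, $\epsilon$ and $C$, but with $2K$ in place of its dense-pairs threshold; this produces constants $\delta_0, \tau_0 > 0$, and we may shrink $\tau_0$ so that $\tau_0 \le 1$. I would then set $\delta := \delta_0$ and $\tau := \tau_0^2$ in the statement of the corollary.

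It remains to verify hypothesis $(ii)$ of \cref{thm:sparse-k-cycle-removal} (with threshold $2K$ and parameter $\tau_0$). Fix $i \in \Z/k\Z$ and partitions $\mP_i$ of $X_i$, $\mP_{i+1}$ of $X_{i+1}$ all of whose parts have probability $\ge \tau_0$, and fix a part $A_i \times A_{i+1}$ of $\mP_i \otimes \mP_{i+1}$. Using the majorant bound $f_i \le K\nu_i$, the identity $\E[\one_{A_i \times A_{i+1}}] = \P_i(A_i)\P_{i+1}(A_{i+1})$, and the definition of the cut norm (together with $f_i, \nu_i \ge 0$), I would estimate
\[
    \E[f_i \mid A_i \times A_{i+1}] \le K\,\frac{\E[\nu_i\one_{A_i \times A_{i+1}}]}{\P_i(A_i)\P_{i+1}(A_{i+1})} \le K\,\frac{\P_i(A_i)\P_{i+1}(A_{i+1}) + ||\nu_i - 1||\sq}{\P_i(A_i)\P_{i+1}(A_{i+1})} \le K\left(1 + \frac{\tau}{\tau_0^2}\right) = 2K,
\]
where the final step uses $\P_i(A_i), \P_{i+1}(A_{i+1}) \ge \tau_0$ and $\tau = \tau_0^2$. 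Hence $\E(f_i \mid \mP_i \otimes \mP_{i+1}) \le 2K$ pointwise, so the set $\{\E(f_i \mid \mP_i \otimes \mP_{i+1}) > 2K\}$ is empty and $||f_i\one_{\E(f_i \mid \mP_i \otimes \mP_{i+1}) > 2K}||_{L^1} = 0 \le \epsilon/4$. Thus hypothesis $(ii)$ holds; combined with hypothesis $(i)$ and the few-cycles bound \cref{eq:few-k-cycles-2}, \cref{thm:sparse-k-cycle-removal} furnishes sets $E_i$ with the required properties.

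For the ``moreover'' statement, I would run the same argument through the corresponding part of \cref{thm:sparse-k-cycle-removal}: when all $(X_i, \P_i) = (X, \P)$ with $\P$ uniform and $|X|$ large, that result only demands the dense pairs condition for partitions with $\mP_i = \mP_{i+1}$ and replaces the $k$-cycle count by the distinct-vertices count, which is exactly what is now assumed. The same computation applied to diagonal blocks $A \times A$ with $\P(A) \ge \tau_0$ again gives $\E(f_i \mid \mP \otimes \mP) \le 2K$ pointwise, so the dense pairs condition is satisfied vacuously. I expect no genuine analytic obstacle here; the only delicate point is the bookkeeping of constants — feeding $2K$ (rather than $K$) into \cref{thm:sparse-k-cycle-removal} and then choosing $\tau = \tau_0^2$ — so that the mild pseudorandomness $||\nu_i - 1||\sq \le \tau$ is enough to push all conditional densities on large parts below the inflated threshold.
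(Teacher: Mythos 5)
Your proposal is correct and follows essentially the same route as the paper: invoke \cref{thm:sparse-k-cycle-removal} with threshold $2K$, take the corollary's $\tau$ to be the square of the theorem's part-size parameter, and use $\|\nu_i-1\|\sq\le\tau$ together with $\P_i(A_i)\P_{i+1}(A_{i+1})\ge\tau$ to force $\E(f_i\mid\mP_i\otimes\mP_{i+1})\le 2K$ pointwise, so the dense pairs condition holds vacuously. The treatment of the ``moreover'' clause also matches the paper's.
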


\begin{remark}
The pseudorandomness assumption in \cref{cor:relative-k-cycle-removal} is fairly general for the following reasons:
\begin{itemize}
    \item[(1).] Firstly, for any function $\nu : X \times Y \to [0, \infty)$, \cref{eq:cut} and \cref{eq:Schatten-infty} imply that
    \[
        ||\nu - 1||\sq 
        =
        \sup_{\substack{A \subset X \\ B \subset Y}}
        |\E[(\nu - 1) \one_{A \times B}]|
        \le 
        \sup_{\substack{A \subset X \\ B \subset Y}}
        \frac{|\E[(\nu - 1) \one_{A \times B}]|}{\sqrt{\P_X(A) \P_Y(B)}}
        \le 
        ||\nu - 1||_{S^\infty}.
    \]
    When $|X| = |Y| = n$ and $X, Y$ are equipped with the uniform probability distributions, the second-to-last quantity above is $o(1)$ if and only if $\nu$ is $(1, o(n))$-jumbled (or equivalently, if $p \nu$ is $(p, o(pn))$-jumbled for any $p > 0$), using the terminology from \cite{conlon2014extremal}.
    So our assumption that $||\nu - 1||\sq = o(1)$ is strictly weaker than the $(p, o(pn))$-jumbledness of $p\nu$, which is weaker than $||\nu - 1||_{S^\infty} = o(1)$, and much weaker than $||\nu - 1||_{S^4} = o(1)$.
    \item[(2).] Secondly, the conditions $||\nu_i - 1||_{S^4} = o(1)$ are themselves weaker than the linear forms conditions of \cite{conlon2014green,tao2006gaussian}, due to \cref{eq:Schatten-2k}. Importantly, our assumptions about $\nu_{i}$ are \emph{separated}, in the sense that each condition concerns only one of the majorants $\nu_{i}$; by contrast, a linear forms-type condition would also include (among many others) an estimate of the form
    \[
        \E_{x_1,\ldots,x_k} \prod_{i=1}^k \nu_{i}(x_i, x_{i+1}) = 1 + o(1).
    \]
    Of course, the linear forms conditions do not include a bounded-Schatten-norm condition for $f_i$, which is an advantage if $q_i < 4$, and particularly when $k = 3$; see point (4) below. But when $k = 3$, our result can still recover Roth's theorem in the primes, by choosing $q_i \in (2, 3)$ and using the \emph{discrete majorant property} from Green's original proof \cite[Lemma 6.6]{green2005roth}. %When $k = 3$ and $(a_1, a_2, a_3) = (-1, 2, -1)$, the solutions to $a_1 x_1 + a_2 x_2 + a_3 x_3 = 0$ are precisely the 3-term arithmetic progressions in $S$. Taking $q \in (2, 3)$, one can combine \cref{cor:linear-patterns-pseudorandom} (or its weighted version, \cref{cor:linear-patterns-pseudorandom-weighted}) with the \emph{discrete majorant property} from Green's original proof of Roth's theorem in the primes \cite[Lemma 6.6]{green2005roth}, and with the existence of a mildly-pseudorandom majorant $\nu$ for a (weighted) subset of the primes (e.g., as in \cite{conlon2014green}), to show that the primes contain infinitely many (nontrivial) 3-term arithmetic progressions.
    \item[(3).] Thirdly, even the \emph{dense model} theorem in \cite{conlon2014green} (which is the more flexible part of the transference process therein) assumes that $||\nu - 1||\sq = o(1)$, so this may be the weakest meaningful pseudorandomness condition that one can assume in practice.
    \item[(4).] Finally, provided that $q_i >= 4$, one can let $r_i := \lfloor q_i/2 \rfloor$ and note that
    \[
      ||f_i||_{S^{q_i}} \le ||f_i||_{S^{2r_i}} \le ||\nu_i||_{S^{2r_i}},
    \]
    by \cref{eq:Schatten-2k}. Hence the bounded-Schatten-norm condition can be replaced by a pseudorandomness condition $||\nu_i||_{S^{2r_i}} \ll 1$. In particular, it suffices to assume that $||\nu_i - 1||_{S^{2r_i}} = o(1)$, which also supersedes the condition $||\nu_i - 1||\sq = o(1)$.
\end{itemize}
\end{remark}

\begin{proof}[Proof of \cref{cor:relative-k-cycle-removal}]
Apply \cref{thm:sparse-k-cycle-removal} with $2K$ in place of $K$ and $\sqrt{\tau}$ in place of $\tau$. This produces some $\eps, \tau > 0$ such that our desired conclusion holds, assuming the following condition:
\begin{equation} \label{eq:bounded-averages-pseudorandom}
    \text{For any $A_j \subset X_j$ with $\P_j(A_j) \ge \sqrt{\tau}$, one has $\E[f_i \mid A_i \times A_{i+1}] \le 2K$.}
\end{equation}
Indeed, if \cref{eq:bounded-averages-pseudorandom} holds, then for any partitions $\mP_i, \mP_{i+1}$ of $X_i, X_{i+1}$ into parts of probabilities $\ge \sqrt{\tau}$, we will have $\E(f_i \mid \mP_i \otimes \mP_{i+1}) \le 2K$ everywhere, making the dense pairs condition in \cref{thm:sparse-k-cycle-removal} trivial. It thus remains to verify \cref{eq:bounded-averages-pseudorandom}; given such $A_j$, we can write
\[
\begin{aligned}
    \E[f_i \mid A_i \times A_{i+1}] = 
    &\le 
    K\E[\nu_i \mid A_i \times A_{i+1}]
    \\
    &= 
    K +
    K\E[\nu_i - 1 \mid A_i \times A_{i+1}]
    \\
    &\le
    K +
    K\frac{\E[(\nu_i - 1) \one_{A_i \times A_{i+1}}]}{\P(A_i)\ \P(A_{i+1})}
    \le 
    K + K \frac{||\nu_i - 1||\sq}{\tau} \le 2K
\end{aligned}    
\]
for each $i$, which completes our proof.
\end{proof}

For the remainder of this subsection, we deduce \cref{cor:subgraphs-pseudorandom,cor:linear-patterns-pseudorandom} from \cref{cor:relative-k-cycle-removal}. 

\begin{corollary}[Rephrasing of \cref{cor:subgraphs-pseudorandom}]
For any $k \in \Z_{\ge 2}$ and $\epsilon, K, C > 0$, there exist $n_0, \delta, \tau > 0$ such that the following holds. Let $n \ge n_0$, and $G$ be an $n$-vertex graph with $\Inj(C_{2k+1}, G) \le \delta p^{2k+1} n^{2k+1}$ and $\one_G/p \le K\nu$ for some $\nu : V(G) \times V(G) \to [0, \infty)$ with $||\nu||_{S^{2k}} \le C$ and $||\nu - 1||\sq \le \tau$. Then one can remove at most $\epsilon p n^2$ edges from $G$ to obtain a graph $G'$ with $\Hom(C_{2k+1}, G') = 0$.
\end{corollary}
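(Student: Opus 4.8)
The plan is to deduce this from \cref{cor:relative-k-cycle-removal}, specialized to the case where all the probability spaces, weight functions, and majorants coincide. Equip $V(G)$ with the uniform probability measure, set $X := V(G)$, and for $i \in \Z/(2k+1)\Z$ put $f_i := \one_G/p$, $\nu_i := \nu$, and $q_i := 2k$; note that $\sum_i \tfrac1{q_i} = \tfrac{2k+1}{2k} > 1$. The hypothesis $\one_G/p \le K\nu$ is exactly the majorization $f_i \le K\nu_i$, and \cref{eq:Schatten-2k} expresses $||g||_{S^{2k}}^{2k}$, for nonnegative $g$, as an average of nonnegative products of the values of $g$; hence $||\cdot||_{S^{2k}}$ is monotone under pointwise domination of nonnegative functions, so $||f_i||_{S^{2k}} = ||\one_G/p||_{S^{2k}} \le K||\nu||_{S^{2k}} \le KC$. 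Fixing $\epsilon' := \epsilon/(2k+1)$, I would then invoke \cref{cor:relative-k-cycle-removal} with $2k+1$ in place of $k$, each $q_i = 2k$, $KC$ in place of $C$, $K$ unchanged, and $\epsilon'$ in place of $\epsilon$; this furnishes $n_0, \delta, \tau > 0$ (the largeness requirement on $|X| = n$ yielding $n_0$), and conditions $(i)$, $(ii)$ there are met since $||f_i||_{S^{2k}} \le KC$, $f_i \le K\nu_i$, and $||\nu_i - 1||\sq = ||\nu - 1||\sq \le \tau$.

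Next I would note that $\Inj(C_{2k+1}, G) \le \delta p^{2k+1}n^{2k+1}$, divided through by $p^{2k+1}n^{2k+1}$, is precisely
\[
    \E_{x_1,\ldots,x_{2k+1}} \one_{x_1,\ldots,x_{2k+1}\textnormal{ distinct}} \prod_{i \in \Z/(2k+1)\Z} f_i(x_i, x_{i+1}) \le \delta ,
\]
which is the hypothesis required by the ``moreover'' clause of \cref{cor:relative-k-cycle-removal} for equal uniform spaces. That clause then produces subsets $E_i \subset V(G)\times V(G)$ with $||f_i\one_{E_i^c}||_{L^1} \le \epsilon'$ for each $i$ and with $\E_{x_1,\ldots,x_{2k+1}} \prod_i \one_{E_i}(x_i, x_{i+1}) = 0$.

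It remains to repackage this as a single undirected subgraph. Writing $f := \one_G/p$ and $E := \bigcap_i E_i$, the triangle inequality gives $||f\one_{E^c}||_{L^1} \le \sum_i ||f_i\one_{E_i^c}||_{L^1} \le (2k+1)\epsilon' = \epsilon$. Since $f$ is symmetric and the measure uniform, replacing $E$ by its symmetrization $E_{\mathrm{sym}} := \{(x,y) : (x,y)\in E \textnormal{ and } (y,x)\in E\}$ at most doubles the complementary mass: $||f\one_{E_{\mathrm{sym}}^c}||_{L^1} \le 2||f\one_{E^c}||_{L^1} \le 2\epsilon$. Let $G'$ be the subgraph of $G$ with edge set $\{\{x,y\}\in E(G) : (x,y)\in E_{\mathrm{sym}}\}$; then the number of deleted edges equals $\tfrac12 n^2\,\E[\one_G\one_{E_{\mathrm{sym}}^c}] = \tfrac12 p n^2\,||f\one_{E_{\mathrm{sym}}^c}||_{L^1} \le \epsilon p n^2$, and since $\one_{G'} \le \one_{E_{\mathrm{sym}}} \le \one_{E_i}$ pointwise for every $i$,
\[
    \Hom(C_{2k+1}, G') = n^{2k+1}\,\E_{x_1,\ldots,x_{2k+1}} \prod_i \one_{G'}(x_i, x_{i+1}) \le n^{2k+1}\,\E_{x_1,\ldots,x_{2k+1}} \prod_i \one_{E_i}(x_i, x_{i+1}) = 0 .
\]

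This is a routine corollary; the only points I would handle carefully are the monotonicity of $||\cdot||_{S^{2k}}$ under pointwise domination of nonnegative functions (for which one must use the combinatorial identity \cref{eq:Schatten-2k}, not any general comparison of Schatten norms) and the symmetrization step, which costs only the harmless factor of $2$ absorbed above. I do not anticipate any substantive obstacle beyond correctly matching the parameters when invoking \cref{cor:relative-k-cycle-removal}.
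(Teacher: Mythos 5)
Your proposal is correct and follows essentially the same route as the paper: apply \cref{cor:relative-k-cycle-removal} (its ``moreover'' clause for equal uniform spaces) with $2k+1$ in place of $k$, $f_i = \one_G/p$, $q_i = 2k$, and $KC$ in place of $C$, using \cref{eq:Schatten-2k} to get $||\one_G/p||_{S^{2k}} \le K||\nu||_{S^{2k}}$. The paper leaves the final intersection/symmetrization bookkeeping implicit (it takes $2\epsilon/(2k+1)$ in place of $\epsilon$ and appeals to the remark after \cref{thm:dense-k-cycle-removal}), whereas you carry it out explicitly and correctly, with the factor of $2$ from symmetrizing cancelling against the factor of $\tfrac12$ from converting ordered pairs to undirected edges.
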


\begin{proof}
This follows from (the last paragraph of) \cref{cor:relative-k-cycle-removal} applied to $X = V(G)$, $2k+1$ in place of $k$, $2\epsilon/(2k+1)$ in place of $\epsilon$, $f_i = \one_G/p$, $q_i = 2k$, and $KC$ in place of $C$. Note that the condition $||\nu ||_{S^{2k}} \le C$ implies $||f_i||_{S^{2k}} \le K ||\nu_i||_{S^{2k}} \le KC$, due to \cref{eq:Schatten-2k}.
\end{proof}

\iffalse
\begin{remark}
To deduce \cref{cor:subgraphs-pseudorandom}, note that the bound $||\nu - 1||_{S^{2k}} \le \delta$ implies both $||\nu - 1||\sq \le \delta$ and $||\one_G/p||_{S^{2k}} \le ||\nu||_{S^{2k}} \le 1 + \delta$. Taking $\delta \le 1$ without loss of generality, we may choose $C = 2$ and apply the result above.
\end{remark}
\fi

\iffalse
\begin{remark}
One may also apply \cref{cor:relative-k-cycle-removal} to obtain removal lemmas for oriented $k$-cycles (and oriented $(k-1)$-paths) from \emph{directed} graphs with mildly-pseudorandom majorants. Of course, the Schatten norm bounds do not translate as nicely to homomorphic counts of $2\ell$-cycles, and assuming a small $k$-cycle count, one will only be able to remove oriented $d$-cycles where $d \mid k$ (as well as some compositions of other cycles); we leave the details to the interested reader.
\end{remark}
\fi

\begin{corollary}[Extension of \cref{cor:linear-patterns-pseudorandom}] \label{cor:linear-patterns-pseudorandom-weighted}
For any $k \in \Z_{\ge 3}$, $q \in [1, k)$, and $K, C > 0$, there exist $\delta, \tau > 0$ such that the following holds. Let $a_1, \ldots, a_k$ be nonzero integers with $a_1 + \cdots + a_k = 0$, $n \in \Z_{\ge 1}$ with $\gcd(n, a_1 \cdots a_k) = 1$, $G$ be an abelian group of order $n$, and $f, \nu : G \to [0, \infty)$ satisfy
\begin{itemize}
    \item[$(i)$] \emph{(Not-too-large Fourier coefficients).}  $||\hat{f}||_{q} \le C\hat{f}(0)$, and
    \item[$(ii)$] \emph{(Mildly-pseudorandom majorant).} $f \le K \frac{\hat{f}(0)}{n} \nu$ and $||\nu - 1||\sq \le \tau$. 
\end{itemize}
Then one has
\[
    \sum_{\substack{g_1, \ldots, g_k \in G \\ a_1 g_1 + \cdots + a_k g_k = 0}} 
    f(g_1) \cdots f(g_k) 
    \ge 
    \delta \frac{\hat{f}(0)^k}{n}.
\]
\end{corollary}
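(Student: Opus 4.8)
The plan is to encode the weighted solution count as a homomorphic $k$-cycle count in a weighted $k$-partite graph, feed this to \cref{cor:relative-k-cycle-removal}, and then exclude the ``few solutions'' case via an edge-disjointness argument on the diagonal solutions $g_1=\cdots=g_k$. We may assume $f\not\equiv 0$ and write $p:=\hat f(0)/n=\E[f]>0$, with $G$ equipped with the uniform probability measure.

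First, build the graph. Since $\gcd(n,a_i)=1$, pick integers $b_i\equiv a_i^{-1}\pmod n$ for $i\in\Z/k\Z$, set all vertex sets $X_i:=G$, and define $f_i,\nu_i:X_i\times X_{i+1}\to[0,\infty)$ by $f_i(x,y):=\frac1p f(b_i(y-x))$ and $\nu_i(x,y):=\nu(b_i(y-x))$. The change of variables $g_i:=b_i(z_{i+1}-z_i)$ — which, for fixed $z_1$, is a bijection of $(z_2,\dots,z_k)$ onto $(g_1,\dots,g_{k-1})$, with the cyclic identity $\sum_i a_i g_i=\sum_i(z_{i+1}-z_i)=0$ automatic — gives
\[
    \E_{z_1,\dots,z_k}\prod_{i=1}^k f_i(z_i,z_{i+1})=\frac{1}{p^k n^{k-1}}\sum_{\substack{g_1,\dots,g_k\in G\\ a_1 g_1+\cdots+a_k g_k=0}}f(g_1)\cdots f(g_k),
\]
so the desired inequality is exactly $\E_{z_1,\dots,z_k}\prod_i f_i(z_i,z_{i+1})\ge\delta$. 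The hypotheses transfer cleanly: \cref{lem:Fourier-singular} (applied with $(a,b)=(-b_i,b_i)$, noting $\gcd(n,b_i^2)=1$) yields $||f_i||_{S^q}=\frac{1}{np}||\hat f||_q\le C$; from $f\le Kp\nu$ we get $f_i\le K\nu_i$; and since $x\mapsto -b_i x$ and $y\mapsto b_i y$ are automorphisms of $G$ and the cut norm is invariant under relabelling rows and columns, $||\nu_i-1||\sq=||\nu-1||\sq\le\tau$. Finally $\sum_i\frac1q=\frac kq>1$ because $q<k$, so $\{f_i\}$, $\{\nu_i\}$ and $q_1=\cdots=q_k=q$ meet the hypotheses of \cref{cor:relative-k-cycle-removal} (with $k\ge 3$).

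It remains to run the removal argument. Fix $\epsilon:=\frac{1}{2k}$, take $\delta,\tau>0$ as produced by \cref{cor:relative-k-cycle-removal} for this $\epsilon$ and the above data, and suppose for contradiction that $\E_{z_1,\dots,z_k}\prod_i f_i(z_i,z_{i+1})<\delta$. Then \cref{cor:relative-k-cycle-removal} supplies $E_i\subset X_i\times X_{i+1}$ with $||f_i\one_{E_i^c}||_{L^1}\le\epsilon$ and $\E_{z_1,\dots,z_k}\prod_i\one_{E_i}(z_i,z_{i+1})=0$. For $g\in\supp(f)$ and $z\in G$, the ``diagonal cycle'' with vertices $v_i:=z+(a_1+\cdots+a_{i-1})g$ has $v_{i+1}-v_i=a_i g$ and $v_{k+1}=v_1$ (using $\sum_i a_i=0$), hence $f_i(v_i,v_{i+1})=f(b_i a_i g)/p=f(g)/p>0$; since this tuple cannot lie entirely inside the $E_i$'s, one of its edges, say at position $i(g,z)$ (breaking ties by taking the least such index), lies outside $E_{i(g,z)}$. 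For each fixed $i$, the map $(g,z)\mapsto(v_i,v_{i+1})$ is injective — the edge determines $g$ via $v_{i+1}-v_i=a_i g$ and then $z$ — with image in $E_i^c\cap\supp(f_i)$, so collecting the contributions of all diagonal cycles over the at most $k$ positions gives
\[
    \sum_{i=1}^k||f_i\one_{E_i^c}||_{L^1}\ \ge\ \frac{1}{pn^2}\sum_{z\in G}\sum_{g\in\supp(f)}f(g)\ =\ \frac{1}{pn^2}\cdot n\hat f(0)\ =\ 1,
\]
contradicting $\sum_i||f_i\one_{E_i^c}||_{L^1}\le k\epsilon=\frac12$. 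Hence $\E_{z_1,\dots,z_k}\prod_i f_i(z_i,z_{i+1})\ge\delta$, which is the claim.

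I expect the main obstacle to be the careful bookkeeping in this last step: making the diagonal cycles genuinely ``edge-robust'' in the quantitative form above (the injectivity of the edge labelling and the correct $L^1$-normalization in terms of $p$ and $n$). By contrast, the transfer of the Schatten-norm and cut-norm hypotheses through the group automorphisms is routine given \cref{lem:Fourier-singular} and the translation invariance built into the definitions of $||\cdot||\sq$ and $||\cdot||_{S^q}$ for functions on $G$.
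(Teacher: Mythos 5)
Your proposal is correct and follows essentially the same route as the paper: encode the solution count as a homomorphic $k$-cycle average for $f_i(x,y)=f(b_i(y-x))/p$, transfer the hypotheses via \cref{lem:Fourier-singular} and automorphism-invariance of the cut norm, apply \cref{cor:relative-k-cycle-removal}, and derive a contradiction by double-counting the ``diagonal'' cycles. Your parametrization of these cycles by $(g,z)$ with a witness index $i(g,z)$ is just an explicit version of the paper's set $T=\{(x_1,\dots,x_k): b_1(x_1-x_2)=\cdots=b_k(x_k-x_1)\}$ and the edge-to-tuple bijection, so the arguments coincide.
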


\begin{proof}
Apply \cref{cor:relative-k-cycle-removal} for $q_i = k$ and $1/(2k)$ in place of $\epsilon$ to obtain values of $\delta, \tau > 0$, and let $f$ be as in the corollary's statement. The conclusion is trivial if $f$ is the identically zero function, so we may assume without loss of generality that $\hat{f}(0) \neq 0$. Moreover, we may scale $f$ by a constant to further assume that $\hat{f}(0) = n$ (in other words, $f$ has average $1$). 

Let $b_1, \ldots, b_k \in \Z \setminus \{0\}$ be inverses modulo $n$ for $a_1, \ldots, a_k$, $X_1 = \cdots = X_k := G$ (equipped with the uniform probability measure), and $f_i, \nu_i : X_i \times X_{i+1} \to [0, \infty)$ be given by
\[
    f_i(x_i, x_{i+1}) := f(b_i(x_i - x_{i+1})), 
    \qquad\qquad 
    \nu_i(x_i, x_{i+1}) := \nu(b_i(x_i - x_{i+1})),
\]
for $i \in \Z/k\Z$. By \cref{lem:Fourier-singular} and assumption $(i)$, we have $||f_i||_{S^q} = \frac{1}{n} ||\hat{f}||_{q} \le C$. Moreover, $||\nu_i - 1||\sq = ||\nu - 1||\sq$ since the latter was defined as the cut norm of the function $(x, y) \mapsto (\nu-1)(x+y)$, and the maps $x \mapsto \pm b_i x$ are automorphisms of $G$. Thus by assumption $(ii)$, we have $f_i\le K \nu_i$ and $||\nu_i - 1||\sq \le \tau$.

Now assume towards a contradiction that $\sum_{a_1 g_1 + \cdots + a_k g_k = 0} f(g_1) \cdots f(g_k) < \delta n^{k-1}$, which translates to $\E_{x_1, \ldots, x_k} \prod_{i=1}^k f_i(x_i, x_{i+1}) < \delta$. Then we may apply the conclusion of \cref{cor:relative-k-cycle-removal} to get subsets $E_i \subset X_i \times X_{i+1}$ with $||f_i \one_{E_i^c}||_{L^1} \le 1/(2k)$ for each $i$, such that $\prod_{i=1}^k \one_{E_i}(x_i, x_{i+1}) = 0$ for any $x_1, \ldots, x_k$. In particular, this implies that $\sum_{i=1}^k \one_{E_i^c} (x_i, x_{i+1}) \ge 1$ always.

To obtain a contradiction, we define $T \subset G^k$ by 
\[
    T := \{(x_1, \ldots, x_k) \in X_1 \times \cdots \times X_k : b_1(x_1 - x_2) = b_2(x_2 - x_3) = \cdots = b_k(x_k - x_1)\}.
\]
Note that any pair $(x_i, x_{i+1}) \in X_i \times X_{i+1}$ uniquely determines a $k$-tuple $(x_1, \ldots, x_k) \in T$ containing it; the uniqueness follows easily from the definition of $T$, while the existence follows from the fact that $a_1 + \cdots + a_k = 0$. Indeed, denoting $g_i = b_i(x_i - x_{i+1})$, we have $\sum_{i=1}^k a_i g_i = 0$; thus if we assume that $k-1$ of the $g_i$'s are equal, then all of them must be equal. Moreover, for $(x_1, \ldots, x_k) \in T$, all of the values $f_{i}(x_i, x_{i+1}) = f(b_i(x_i - x_{i+1}))$ are equal. Hence we may write
\[
\begin{aligned}
    ||f_1||_{L^1} 
    =
    \frac{1}{n^2} \sum_{(x_1, \ldots, x_k) \in T} f_{1}(x_1, x_2)
    &\le
    \frac{1}{n^2} 
    \sum_{(x_1, \ldots, x_k) \in T} f_{1}(x_1, x_2) \sum_{i=1}^k \one_{E_{i}^c}(x_i, x_{i+1})
    \\
    &=
    \frac{1}{n^2} 
    \sum_{i=1}^k \sum_{(x_1, \ldots, x_k) \in T} f_{i}(x_i, x_{i+1})  \one_{E_{i}^c}(x_i, x_{i+1})
    \\
    &=
    \sum_{i=1}^k\frac{1}{n^2} 
    \sum_{(x_i, x_{i+1}) \in X_i \times X_{i+1}} f_{i}\one_{E_{i}^c}  (x_i, x_{i+1})
    =
    \sum_{i=1}^k ||f_i \one_{E_i^c}||_{L^1},
\end{aligned}
\]
which implies that $1 \le k (1/2k) = 1/2$. Thus we must have $\sum_{a_1g_1 + \cdots + a_k g_k = 0} f(g_1) \cdots f(g_k) \ge \delta n^{k-1}$, as we wanted.
\end{proof}

\begin{proof}[Proof of \cref{cor:linear-patterns-pseudorandom}]
Take $q = 2\ell$ and $f = \one_S$ in \cref{cor:linear-patterns-pseudorandom-weighted}, and note that $||\hat{\one_S}||_{2\ell} = n||\one_S||_{S^{2\ell}} \le \epsilon^{-1}|S| \cdot ||\nu||_{S^{2\ell}} \le \epsilon^{-1} C |S|$ by $(i)$ and $(ii)$.
\end{proof}

\subsection{Graphs with few 4-cycles} 

Following \cite{conlon2021regularity}, we first state a lemma relating the dense pairs condition to counts of $4$-cycles.

\begin{lemma}[Dense pairs with large parts I] \label{lem:dense-pairs-1}
Let $\tau, q > 0$, and $G$ be an $n$-vertex graph with $T := \Inj(C_4, G)$. Consider a partition of $V(G)$ into parts $V_1, V_2, \ldots$ with at least $\tau n$ elements each. Then $G$ has at most
\begin{equation} \label{eq:dense-pairs-bound}
    O\left(\frac{n}{q} + \frac{1}{q\tau^2} + \frac{T^{1/4}n}{\tau}\right)
\end{equation}
edges lying between pairs $(V_j, V_{j'})$ with $e_G(V_j, V_{j'}) \ge q|V_j||V_{j'}|$. Moreover, if one only counts the `off-diagonal' pairs with $j \neq j'$, then one can replace $T$ with $\sup_{G \supset H \textnormal{ bipartite}} \Inj(C_4, H)$.
\end{lemma}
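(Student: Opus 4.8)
\section*{Proof proposal for Lemma 6.10 (``Dense pairs with large parts I'')}

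The plan is to count homomorphic copies of $C_4$. Write $V_1,V_2,\dots$ for the parts of the given partition, so that there are $m\le 1/\tau$ of them, and let $D$ be the set of dense pairs, i.e. the pairs $(V_j,V_{j'})$ --- with $j=j'$ allowed --- such that $e_G(V_j,V_{j'})\ge q|V_j||V_{j'}|$; the number of edges of $G$ between dense pairs is $\ll \sum_{(V_j,V_{j'})\in D}e_G(V_j,V_{j'})=:E$. One may assume $q\le 1$, since every pair has density at most $1$ (and indeed $e_G(V_j,V_j)<|V_j|^2$). For each $(V_j,V_{j'})\in D$, let $H_{j,j'}$ be the bipartite subgraph of $G$ consisting of all $G$-edges between $V_j$ and $V_{j'}$ (or the graph $G[V_j]$ when $j=j'$). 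Convexity --- Cauchy--Schwarz applied to the codegree sequence --- gives $\Hom(C_4,H_{j,j'})\ge e_G(V_j,V_{j'})^4/(|V_j|^2|V_{j'}|^2)$; and since a homomorphic $C_4$ lying in $H_{j,j'}$ determines the pair $\{j,j'\}$, these counts are disjoint and sum to at most $\Hom(C_4,G)\ll n^2+\Inj(C_4,G)=n^2+T$ by \cref{eq:4-cycle-counts}. Using the density bound $e_G(V_j,V_{j'})\ge q|V_j||V_{j'}|$ to write $e_G(V_j,V_{j'})^4/(|V_j|^2|V_{j'}|^2)\ge q^2e_G(V_j,V_{j'})^2$, this yields the core estimate
\[
    \sum_{(V_j,V_{j'})\in D} e_G(V_j,V_{j'})^2 \ \ll\ \frac{n^2+T}{q^2}.
\]

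The second ingredient is a complementary bound: applying the same convexity to $G$ itself gives $\Hom(C_4,G)\ge 16|E(G)|^4/n^4$, hence $E\le |E(G)|\ll n^{3/2}+T^{1/4}n$, again by \cref{eq:4-cycle-counts}. I would then extract the claimed bound by a short case analysis. If $q\le n^{-1/2}$ then $n/q\ge n^{3/2}$, so $E\le|E(G)|\ll n^{3/2}+T^{1/4}n\ll n/q+T^{1/4}n/\tau$ (using $\tau\le 1$). If $\tau\le 1/n$ then $1/(q\tau^2)\ge 1/\tau^2\ge n^2\ge |E(G)|\ge E$. Otherwise $q> n^{-1/2}$ and $\tau>1/n$; combining the core estimate with the lower bound $e_G(V_j,V_{j'})\ge q\tau^2 n^2$ valid for every dense pair gives $E\ll(n^2+T)/(q^3\tau^2n^2)$, and since $q^3\tau^2\ge q\cdot(q\tau)^2\ge q/n$ when $q\tau\ge n^{-1/2}$, in that range $E\ll n/q+T/(qn)$, which is $\ll n/q+T^{1/4}n/\tau$ after splitting on whether $T\le q^{4/3}n^{8/3}$ (so that $T/(qn)\ll T^{1/4}n$) or $T>q^{4/3}n^{8/3}$ (so that $T^{1/4}n>n^{3/2}$ as $q\ge n^{-1/2}$, and $E\le|E(G)|\ll T^{1/4}n$); while if $q\tau<n^{-1/2}$ one falls back on Cauchy--Schwarz with $|D|\le\binom{m+1}{2}\ll 1/\tau^2$, giving $E\ll\sqrt{n^2+T}/(q\tau)$, which in that range is dominated by $n/q+1/(q\tau^2)+T^{1/4}n/\tau$ (here one also uses that the existence of a dense pair with parts of size $\ge\tau n$ forces $T\gg (q\tau n)^4$, up to a lower-order error, to control the $\sqrt T/(q\tau)$ term). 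Collecting the cases yields $E\ll n/q+1/(q\tau^2)+T^{1/4}n/\tau$.

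For the off-diagonal refinement, I would colour each part $V_1,\dots,V_m$ with one of two colours, independently and uniformly at random, and set $W_0,W_1\subseteq V(G)$ to be the unions of the parts of each colour. Any off-diagonal dense pair is bichromatic with probability $1/2$, so one may fix a colouring for which at least half of the edges of $G$ between off-diagonal dense pairs run between $W_0$ and $W_1$. The graph $G[W_0,W_1]$ is a bipartite subgraph of $G$, so $\Hom(C_4,G[W_0,W_1])\ll n^2+\Inj(C_4,G[W_0,W_1])\le n^2+\sup_{H\subseteq G\text{ bipartite}}\Inj(C_4,H)$ by \cref{eq:4-cycle-counts}; moreover every bichromatic dense pair $(V_j\subseteq W_0,\,V_{j'}\subseteq W_1)$ still has density $\ge q$ in $G[W_0,W_1]$, since no $G$-edge between $V_j$ and $V_{j'}$ was discarded. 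Re-running the argument above inside $G[W_0,W_1]$ then gives the same bound with $T$ replaced by $\sup_{H\subseteq G\text{ bipartite}}\Inj(C_4,H)$.

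The main obstacle is the case analysis in the second step: distilling exactly the three-term bound from the $\ell^2$-estimate $\sum_{(V_j,V_{j'})\in D}e_G(V_j,V_{j'})^2\ll(n^2+T)/q^2$ requires juggling that estimate, the ``each dense pair is large'' lower bound, and the trivial bound $E\le|E(G)|\ll n^{3/2}+T^{1/4}n$, together with the observation that a single dense pair with large parts already forces many $C_4$'s. The passages between homomorphic and injective $C_4$-counts and the handling of the diagonal pairs are routine once \cref{eq:4-cycle-counts} is in hand.
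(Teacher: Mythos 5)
Your core $\ell^2$ estimate $\sum_{(V_j,V_{j'})\in D}e_G(V_j,V_{j'})^2\ll (n^2+T)/q^2$ is correct, but the extraction step does not close, and the gap is not the one you flag. In the sub-case $q\tau<n^{-1/2}$ (with $q>n^{-1/2}$, $\tau>1/n$), Cauchy--Schwarz gives $E\ll n/(q\tau)+\sqrt{T}/(q\tau)$, and already the \emph{first} term is not $O(n/q+1/(q\tau^2))$: that would require $n\tau\ll n\tau^2+1$, which fails badly for, say, $\tau=n^{-1/4}$. Concretely, take $T=0$, $\tau=n^{-1/4}$, $q=n^{-3/10}$: your Cauchy--Schwarz bound is $n^{1.55}$, the target is $n/q=n^{1.3}$, and none of your fallbacks help ($|E(G)|\ll n^{3/2}$ gives $n^{1.5}$; the ``each dense pair is large'' bound $1/(q^3\tau^2)$ gives $n^{1.4}$). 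The remark that a dense pair ``forces $T\gg(q\tau n)^4$ up to a lower-order error'' is vacuous precisely here, since $(q\tau n)^4<n^2$ in this regime and the degenerate-cycle error $O(n^2)$ swallows the main term. The root cause is structural: by working only with \emph{homomorphic} $4$-cycle counts and converting via $\Hom(C_4,G)\ll n^2+\Inj(C_4,G)$, you carry an irreducible additive $n^2$ that cannot be beaten by pure counting. (In the example above the truth is that KST forbids any dense pair at all, which your inequalities cannot see.)

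The paper's proof avoids this with two devices you are missing. First, for each fixed $j$ it aggregates all dense partners into a single set $W_j=\bigcup\{V_{j'}:e_G(V_j,V_{j'})\ge q|V_j||V_{j'}|\}$, so the final summation runs over only $1/\tau$ indices $j$ rather than $1/\tau^2$ pairs and no Cauchy--Schwarz over pairs is needed. Second, it uses the K\H{o}v\'ari--S\'os--Tur\'an-type dichotomy of \cite[Lemma 2.3]{conlon2021regularity}: if $e(V_j,W_j)<4|V_j||W_j|^{1/2}+4|W_j|$ one bounds $e(V_j,W_j)\le e(V_j,W_j)^2/(q|V_j||W_j|)\ll |V_j|/q+1/(q\tau)$ directly from the density hypothesis, while above that threshold the \emph{injective} count of the auxiliary bipartite graph already satisfies $\Inj(C_4,H_j)\gg e(V_j,W_j)^4/(|V_j|^2|W_j|^2)$ with no $n^2$ correction, giving $e(V_j,W_j)\ll T^{1/4}n$. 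Summing the three contributions over $j$ yields exactly \cref{eq:dense-pairs-bound}. Your random two-colouring for the off-diagonal refinement is a workable substitute for the paper's cleaner replacement of $W_j$ by $W_j\setminus V_j$, but it sits on top of the flawed main argument. To repair your proof you would need to import the injective KST lemma and the per-$j$ aggregation; at that point you have essentially reconstructed the paper's argument.
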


\begin{proof}
This is a slight simplification of the proof of \cite[Lemma 2.4]{conlon2021regularity}, since in our case we know that $|V_j| \ge \tau n$ for each $j$. 

Let $W_j$ be the union of all $V_{j'}$'s with $e_G(V_j, V_{j'}) \ge q|V_j||V_{j'}|$; in particular, $e_G(V_j, W_j) \ge q|V_j||W_j|$. Let us consider the bipartite graph $H_j$ with vertex sets $V_j$ and $W_j^0$, where $W_j^0$ is a copy of $W_j$ (so that $W_j^0 \cap V_j = \emptyset$), and edges induced by the edges of $G$. While $H_j$ is not necessarily isomorphic to a subgraph of $G$ (since we may have $V_j \subset W_j$, in which case the vertices in $V_j$ have copies in $W_j^0$), it is true that every $4$-cycle in $H_j$ induces a $4$-cycle in $G$ (the fact that $G$ is a simple graph causes the right pairs of vertices to be distinct in $V(G)$). Moreover, each $4$-cycle in $G$ corresponds to at most two $4$-cycles in $H_j$, and thus $\Inj(C_4, H_j) \ll \Inj(C_4, G)$. This line of reasoning is implicit in \cite[Lemma 2.4]{conlon2021regularity}.

Now if $e(V_j, W_j) < 4|V_j||W_j|^{1/2} + 4|W_j|$, then we have
\[
\begin{aligned}
    e(V_j, W_j) \le \frac{e(V_j, W_j)^2}{q|V_j||W_j|}
    &\ll 
    \frac{|V_j|^2|W_j| + |W_j|^2}{q|V_j||W_j|}
    =
    \frac{|V_j|}{q} + \frac{|W_j|}{q|V_j|}
    \le 
    \frac{|V_j|}{q} + \frac{1}{q\tau}.
\end{aligned}
\]
Otherwise, \cite[Lemma 2.3]{conlon2021regularity} implies that $e(V_j, W_j)^4 |V_j|^{-2} |W_j|^{-2} \ll \Inj(C_4, H_j) \ll \Inj(C_4, G) = T$ (provided that $W_j$ is nonempty). Thus
\[
    e(V_j, W_j) \ll T^{1/4} |V_j|^{1/2} |W_j|^{1/2} \le T^{1/4} n.
\]
Thus in both cases, we have
\[
    e(V_j, W_j) \ll \frac{|V_j|}{q} + \frac{1}{q\tau} + T^{1/4} n.
\]
Summing over all $j$ (which gives a total of at most $n/(\tau n) = 1/\tau$ terms), we recover the desired bound in \cref{eq:dense-pairs-bound}.

If we only count the `off-diagonal' pairs $(V_j, V_{j'})$ with $j \neq j'$, then we may repeat the same reasoning by replacing $W_j$ with $W_j \setminus V_j$. The difference is that $H_j$ is now isomorphic to a subgraph of $G$, since there are no repeated vertices in $V_j$ and $W_j$; thus we have
\[
    \Inj(C_4, H_j) \le \sup_{G \supset H \textnormal{ bipartite}} \Inj(C_4, H),
\]
and we may use the right-hand side as the parameter $T$.
\end{proof}

We now state a further corollary of \cref{cor:sparse-k-cycle-removal-graphs} for graphs with few $4$-cycles, which should be compared with the result for $k$-partite graphs in \cite[Theorem 1.6]{conlon2021regularity} (with $k = 5$).

\begin{corollary}[Cycle removal in multiple graphs with few $4$-cycles] \label{cor:k-cycle-removal-few-4-cycles}
For any $k \in \Z_{\ge 5}$ and any $\epsilon, C > 0$, there exist $n_0, \delta > 0$ such that the following holds. Let $n \ge n_0$, $p_1, \ldots, p_k \ge n^{-1/2}$, and $G_1, \ldots, G_k$ be $n$-vertex graphs with the same vertex set $V$, such that for each $i$,
\begin{equation} \label{eq:few-4-cycles}
    \Inj(C_4, G_i) \le \delta p_i^4 n^4.
\end{equation}
If additionally
\[
    \sum_{x_1, \ldots, x_k \text{ distinct}} \one_{G_1}(x_1, x_2) \cdots \one_{G_k}(x_k, x_1) \le \delta p_1 \cdots p_k n^k,
\]
then there exist subgraphs $G_i'$ of $G_i$ with $|E(G_i) \setminus E(G_i')| \le \epsilon p_i n^2$ for each $i$, such that
\[
    \sum_{x_1, \ldots, x_k} \one_{G_1'}(x_1, x_2) \cdots \one_{G_k'}(x_k, x_1) = 0.
\]
%Moreover, if all $G_i = G$ are the same graph and all $p_i = p$ are equal, then instead of \cref{eq:few-4-cycles} one can require that $\sup_{G \supset H \textnormal{ bipartite}} \Inj(C_4, H) \le \delta p^4 n^4$ and $\Hom(C_{2\lfloor (k-1)/2 \rfloor}, G) \le C(pn)^{2\lfloor (k-1)/2 \rfloor}$.
\end{corollary}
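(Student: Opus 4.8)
The plan is to deduce the statement from \cref{cor:sparse-k-cycle-removal-graphs}, applied with $q_1 = \cdots = q_k = 4$; this is legitimate since $k \ge 5$ forces $\sum_i \tfrac{1}{q_i} = \tfrac{k}{4} > 1$, and for even $q_i$ the Schatten norm $\|\one_{G_i}\|_{S^4}$ is governed by a normalized $C_4$-count, which is exactly the quantity our hypothesis bounds. Concretely, I would first record the absolute constant $C_0$ implicit in \cref{eq:4-cycle-counts} (so that $\Hom(C_4, G) \le C_0(n^2 + \Inj(C_4, G))$ for every $n$-vertex graph $G$) and the absolute constant $C_1$ implicit in \cref{lem:dense-pairs-1}. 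Set $C' := 2C_0$ and $K := 12C_1/\epsilon$, and apply \cref{cor:sparse-k-cycle-removal-graphs} with the parameters $k$, $q_i = 4$, $\epsilon$, $K$ and $C'$; this yields constants $n_0', \delta_0, \tau > 0$. Finally choose $\delta := \min\bigl(\delta_0,\, 1,\, (\epsilon\tau/(12C_1))^4\bigr)$ and take $n_0 \ge \max\bigl(n_0',\, 12C_1/(K\tau^2\epsilon)\bigr)$.

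It then remains to verify that any graphs $G_1, \ldots, G_k$ as in the statement satisfy the hypotheses of \cref{cor:sparse-k-cycle-removal-graphs}. For the bounded-Schatten-norm condition: by \cref{lem:Schatten-norms-matrices}.(iii) (equivalently, by \cref{eq:2k-cycles}) one has $\|\one_{G_i}\|_{S^4}^4 = \Hom(C_4, G_i)/n^4$, and since $p_i \ge n^{-1/2}$ gives $n^2 \le p_i^4 n^4$, \cref{eq:4-cycle-counts} together with $\Inj(C_4, G_i) \le \delta p_i^4 n^4 \le p_i^4 n^4$ yields $\Hom(C_4, G_i) \le C_0(n^2 + p_i^4 n^4) \le 2C_0 p_i^4 n^4$, hence $\|\one_{G_i}\|_{S^4}^4 \le C' p_i^4$. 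For the dense-pairs condition with large parts: fix any partition of $V$ into parts of size $\ge \tau n$, fix $i$, and apply \cref{lem:dense-pairs-1} to $G_i$ with density threshold $q := Kp_i$ and $T := \Inj(C_4, G_i) \le \delta p_i^4 n^4$; since $T^{1/4} \le \delta^{1/4} p_i n$, the bound \cref{eq:dense-pairs-bound} becomes $C_1\bigl(\tfrac{n}{Kp_i} + \tfrac{1}{Kp_i\tau^2} + \tfrac{\delta^{1/4}p_i n^2}{\tau}\bigr)$. Using $p_i^2 n \ge 1$ one has $\tfrac{n}{Kp_i} \le \tfrac{p_i n^2}{K}$ and $\tfrac{1}{Kp_i\tau^2} \le \tfrac{p_i n}{K\tau^2}$, so the first term is $\le \tfrac{\epsilon p_i n^2}{12}$ by the choice of $K$, the second is $\le \tfrac{\epsilon p_i n^2}{12}$ by the choice of $n_0$, and the third is $\le \tfrac{\epsilon p_i n^2}{12}$ by the choice of $\delta$; summing, at most $\epsilon p_i n^2/4$ edges of $G_i$ lie between pairs $(V_j,V_{j'})$ with $e_{G_i}(V_j,V_{j'}) \ge Kp_i|V_j||V_{j'}|$, which is precisely the required bound (and since \cref{cor:sparse-k-cycle-removal-graphs} permits $j = j'$, the off-diagonal refinement in \cref{lem:dense-pairs-1} is not needed). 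The few-cycles hypothesis is given directly, as $\delta \le \delta_0$.

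With all hypotheses in place, \cref{cor:sparse-k-cycle-removal-graphs} provides subgraphs $G_i' \subset G_i$ with $|E(G_i)\setminus E(G_i')| \le \epsilon p_i n^2$ and $\sum_{x_1,\ldots,x_k}\one_{G_1'}(x_1,x_2)\cdots\one_{G_k'}(x_k,x_1) = 0$, which is exactly the desired conclusion. There is no essential difficulty beyond bookkeeping: the one thing to get right is the order of choices — $K$ large in terms of $\epsilon$, then $\tau$ (and $\delta_0$) from \cref{cor:sparse-k-cycle-removal-graphs}, then $\delta$ small in terms of $\epsilon$ and $\tau$, then $n_0$ large in terms of everything — together with the observation that the lower-order terms $\tfrac{n}{Kp_i}$ and $\tfrac{1}{Kp_i\tau^2}$ in \cref{eq:dense-pairs-bound} are only absorbed into $\epsilon p_i n^2$ because of the assumption $p_i \ge n^{-1/2}$, which is precisely why this corollary, unlike \cref{cor:relative-k-cycle-removal}, requires such a lower bound on the $p_i$.
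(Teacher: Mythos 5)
Your proposal is correct and follows essentially the same route as the paper: apply \cref{cor:sparse-k-cycle-removal-graphs} with $q_1=\cdots=q_k=4$, upgrade the injective $C_4$-count to a homomorphic one via \cref{eq:4-cycle-counts} and $p_i\ge n^{-1/2}$ to get the Schatten bound, and verify the dense pairs condition via \cref{lem:dense-pairs-1} with $q=Kp_i$, absorbing the three error terms by choosing $K$, $n_0$ and $\delta$ in that order. Your version merely makes the implied constants and the parameter dependencies explicit, which the paper leaves as ``large enough''/``small enough''.
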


\begin{proof}
Fix $k \in \Z_{\ge 5}$ and $\epsilon > 0$, and let $n_0, \delta, \tau > 0$ be obtained from \cref{cor:sparse-k-cycle-removal-graphs} with $q_1 = \cdots = q_k := 4$, $K$ a large enough multiple of $1/\epsilon$, and $C$ a large enough absolute constant. Note that we may always decrease our choice of $\delta$ and increase our choice of $n_0$ (in terms of $\epsilon$ and $k$), without breaking the conclusion of \cref{cor:sparse-k-cycle-removal-graphs}; so for a start, let us take $\delta \le 1$ without loss of generality.

Now let $n \ge n_0$, $p_1, \ldots, p_k \ge n^{-1/2}$, and $G_1, \ldots, G_k$ be graphs with the same vertex set $V$ (of size $n$), such that $\Inj(C_4, G_i) \le \delta p_i^4 n^4$. To complete our proof, it suffices to verify that each $G_i$ satisfies the conditions in \cref{cor:sparse-k-cycle-removal-graphs}:
\begin{itemize}
    \item[$(i)$.] One has $\Hom(C_4, G_i) \le C(p_i n)^{4}$;
    \item[$(ii)$.] $G_i$ obeys the dense pairs condition with parts of sizes $\ge \tau n$.
\end{itemize}
To verify condition $(i)$, note that \cref{eq:4-cycle-counts} implies 
\[
    \Hom(C_4, G) \ll n^2 + \delta p_i^4 n^4 \ll p_i^4 n^4,
\]
which is enough since we chose $C$ to be a large enough absolute constant.

Concerning condition $(ii)$, let $V = V_1 \cup V_2 \cup \ldots$ be a partition into parts of sizes $\ge \tau n$. Applying \cref{lem:dense-pairs-1} with $q := Kp_i$ and $T = \Inj(C_4, G_i) \le \delta p_i^4 n^4$, we find that $G_i$ has at most
\[
    O\left(\frac{n}{Kp_i} + \frac{1}{Kp_i \tau^2} + \frac{\delta^{1/4}p_i n^2}{\tau} \right)
\]
edges between pairs $(V_j, V_{j'})$ with $e_G(V_j, V_{j'}) \ge Kp_i |V_j| |V_{j'}|$. We would like this to be at most equal to $\epsilon p_i n^2/4$, which can be achieved if:
\begin{itemize}
    \item[(1).] $K$ is a large enough multiple of $1/\epsilon$ and $p_i^2 n \ge 1$ (both of which are true);
    \item[(2).] $1/\tau^2 \le n$, which is true provided that we increase the value of $n_0$ accordingly;
    \item[(3).] $\delta^{1/4}/\tau$ is smaller than an absolute multiple of $\epsilon$, which is true provided that we decrease the value of $\delta$ accordingly.
\end{itemize}
This completes our proof with (new) values of $n_0$ and $\delta$ depending only on $\epsilon$ and $k$ (since $K$ and $\tau$ also only depend on $\epsilon$ and $k$).
\end{proof}

We can similarly deduce the last part of \cref{cor:graphs-few-4-cycles}, which concerns a single graph $G$. In fact, we prove a slightly more general statement, where we assume that $\Hom(C_{2k}, G) \ll p^{2k} n^{2k}$ instead of $\Inj(C_4, G) \ll p^4 n^4$; the former follows from the latter due to the inequalities \cref{eq:4-cycle-counts}, $p \ge n^{-1/2}$, and $||\one_G/p||_{S^{2k}} \le ||\one_G/p||_{S^4}$.

\begin{corollary}[Generalizing part $(iii)$ of \cref{cor:graphs-few-4-cycles}]
For any $k \in \Z_{\ge 2}$ and $\epsilon, C > 0$, there exist $n_0, \delta > 0$ such that the following holds. For $n \ge n_0$ and $p \ge n^{-1/2}$, any $n$-vertex graph $G$ with $\Hom(C_{2k}, G) \le Cp^{2k} n^{2k}$, $\sup_{G \supset H \textnormal{ bipartite}} \Inj(C_4, H) \le \delta p^4 n^4$, and $\Inj(C_{2k+1}, G) \le \delta p^{2k+1} n^{2k+1}$ can be made $\{C_3, C_5, \ldots, C_{2k+1}\}$-free by removing at most $\epsilon pn^2$ edges. 
\end{corollary}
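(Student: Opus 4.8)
The plan is to derive this directly from \cref{thm:sparse-removal-odd-cycles}, using \cref{lem:dense-pairs-1} to verify its dense pairs hypothesis. Given $k \in \Z_{\ge 2}$ and $\epsilon, C > 0$, I would first fix $K$ to be a sufficiently large multiple of $1/\epsilon$ — the required multiple depending only on the absolute implied constant in \cref{lem:dense-pairs-1} — and then apply \cref{thm:sparse-removal-odd-cycles} with these values of $\epsilon, K, C$ and the integer $k$, obtaining parameters $n_0^{(0)}, \delta^{(0)}, \tau > 0$. Finally I would set $\delta := \min\{\delta^{(0)}, (c\epsilon\tau)^4\}$ and $n_0 := \max\{n_0^{(0)}, c'/(K\tau^2\epsilon)\}$ for suitable absolute constants $c, c' > 0$; since $K, \tau, \delta^{(0)}, n_0^{(0)}$ depend only on $k, \epsilon, C$, so do $\delta$ and $n_0$.

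For $n \ge n_0$, $p \ge n^{-1/2}$, and $G$ satisfying the three hypotheses, two of the conditions required by \cref{thm:sparse-removal-odd-cycles} hold immediately: $\Hom(C_{2k}, G) \le Cp^{2k}n^{2k}$ is assumed outright, and $\Inj(C_{2k+1}, G) \le \delta p^{2k+1}n^{2k+1} \le \delta^{(0)} p^{2k+1}n^{2k+1}$. The one real step is the dense pairs condition: given any partition of $V(G)$ into parts $V_1, V_2, \ldots$ each of size at least $\tau n$, I would invoke \cref{lem:dense-pairs-1} with $q := Kp$, taking the parameter $T$ there to be the off-diagonal quantity $\sup_{G \supset H \textnormal{ bipartite}} \Inj(C_4, H) \le \delta p^4 n^4$. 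This bounds the number of edges of $G$ between off-diagonal pairs $(V_i, V_j)$ with $e_G(V_i, V_j) \ge Kp|V_i||V_j|$ by $O\bigl(\tfrac{n}{Kp} + \tfrac{1}{Kp\tau^2} + \tfrac{\delta^{1/4} p n^2}{\tau}\bigr)$.

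It then remains only to check that this is below $\epsilon p n^2 / 8$. Using $p \ge n^{-1/2}$ one has $p^2 n \ge 1$, so $\tfrac{n}{Kp} \le \tfrac{p n^2}{K}$, which is at most $\tfrac{\epsilon}{24} p n^2$ for $K$ a large enough multiple of $1/\epsilon$; likewise $p^2 n^2 \ge n \ge n_0$, so $\tfrac{1}{Kp\tau^2} \le \tfrac{p n^2}{K\tau^2 n_0} \le \tfrac{\epsilon}{24} p n^2$ by the choice of $n_0$; and $\tfrac{\delta^{1/4} p n^2}{\tau} \le \tfrac{\epsilon}{24} p n^2$ by the choice of $\delta$. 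Absorbing the absolute implied constant into the $24$'s, the three terms sum to less than $\epsilon p n^2 / 8$, so \cref{thm:sparse-removal-odd-cycles} applies and gives the claim. I do not expect a genuine obstacle; the argument mirrors the proof of \cref{cor:k-cycle-removal-few-4-cycles}, and the only thing that needs care is the quantifier order — choosing $K$ (hence obtaining $\tau$) before shrinking $\delta$ and enlarging $n_0$ — together with the use of the off-diagonal refinement in \cref{lem:dense-pairs-1} that lets us replace $\Inj(C_4, G)$ by the supremum over bipartite subgraphs.
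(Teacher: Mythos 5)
Your proposal is correct and follows essentially the same route as the paper: apply \cref{thm:sparse-removal-odd-cycles} directly (the Schatten-norm hypothesis being given outright), and verify its off-diagonal dense pairs condition via \cref{lem:dense-pairs-1} with $T = \sup_{G \supset H \textnormal{ bipartite}} \Inj(C_4, H)$, fixing $K \asymp 1/\epsilon$ first and then shrinking $\delta$ and enlarging $n_0$ exactly as in the proof of \cref{cor:k-cycle-removal-few-4-cycles}. The quantifier bookkeeping and the three-term estimate are handled correctly.
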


\begin{proof}
We apply \cref{thm:sparse-removal-odd-cycles} with the given value of $C$, and repeat the same reasoning as in the previous proof to verify the dense pairs condition. The difference is that it now suffices to count the `off-diagonal' pairs $(V_j, V_{j'})$ with $j \neq j'$, which allows us to use $T = \sup_{G \supset H \textnormal{ bipartite}} \Inj(C_4, H)$ in \cref{lem:dense-pairs-1}.
\end{proof}

\iffalse
\begin{remark}
Part $(iii)$ of \cref{cor:graphs-few-4-cycles} now follows immediately from \cref{eq:4-cycle-counts} and the inequality $||\one_G||_{S^4} \ge ||\one_G||_{S^{2k}}$.
\end{remark}
\fi

Finally, we prove \cref{cor:polygonal-patterns}, rewriting the rational slopes as fractions $s_i = a_i/b_i$ (where $(a_i, b_i) = (1, 0)$ if $s_i = \infty$).

\begin{corollary}[Rephrasing of \cref{cor:polygonal-patterns}] \label{cor:polygonal-patterns-rephrased}
For any $\epsilon > 0$, $k \in \Z_{\ge 5}$, and $(a_i, b_i) \in \Z^2$ such that $\gcd(a_i, b_i) = 1$ and $a_i b_{i+1} \neq a_{i+1} b_i$ for $i \in \Z/k\Z$, there exists $\delta > 0$ such that the following holds. Let $n \in \Z_{\ge 1}$ and $S \subset [n]^2$ satisfy
\begin{itemize}
    \item[$(i)$.] $|S| \ge \epsilon n^{3/2}$, and
    \item[$(ii)$.] For each $i$, $S$ contains at most $\delta |S|^4/n^4$ solutions in distinct points $(x_1, y_1), \ldots, (x_4, y_4)$ to
    \[
    \begin{aligned}
        &a_i(x_2 - x_1) = b_i(y_2 - y_1), \qquad\qquad 
        a_{i+1}(x_3 - x_2) = b_{i+1}(y_3 - y_2), \\
        &a_i(x_4 - x_3) = b_i(y_4 - y_3), \qquad\qquad 
        a_{i+1}(x_1 - x_4) = b_{i+1}(y_1 - y_4).
    \end{aligned}
    \]
\end{itemize}
Then $S$ contains at least $\delta |S|^k/n^k$ solutions in $(x_1, y_1), \ldots, (x_k, y_k)$ to the equations
\begin{equation} \label{eq:polygonal-solutions}
    a_i(x_{i+1} - x_i) = b_i(y_{i+1} - y_i), \qquad i \in \Z/k\Z.
\end{equation}
\end{corollary}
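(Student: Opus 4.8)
The plan is to realise the $(s_1,\dots,s_k)$-polygons in $S$ as closed $k$-walks in a $k$-partite $\{0,1\}$-valued graph, and then invoke \cref{cor:k-cycle-removal-few-4-cycles} (the $k$-partite form of \cref{cor:graphs-few-4-cycles}), closing the argument with the ``diagonal parametrisation'' trick already used in the proof of \cref{cor:linear-patterns-pseudorandom-weighted}. First I would set up the geometry: writing $s_i=a_i/b_i$ and $\ell_i(x,y):=a_ix-b_iy$, two lattice points $p,q$ lie on a common line of slope $s_i$ precisely when $\ell_i(p)=\ell_i(q)$, and the equations in \cref{eq:polygonal-solutions} say exactly that $\ell_i(p_i)=\ell_i(p_{i+1})$ for all $i\in\Z/k\Z$. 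The hypothesis $a_ib_{i+1}\neq a_{i+1}b_i$ means, after shifting the index, that $\ell_{i-1}$ and $\ell_i$ are linearly independent, so $\Psi_i\colon p\mapsto(\ell_{i-1}(p),\ell_i(p))$ is an injective $\Z$-linear map $\Z^2\to\Z^2$. I would then let $V_i:=\{\ell_i(p):p\in[n]^2\}$, a set of integers of size $\asymp_{a_i,b_i}n$, place these in $k$ disjoint blocks of a common ground set $V$ with $|V|\asymp_{a,b}n$, and define the $i$-th graph $G_i$ (the edges between the block holding $V_{i-1}$ and the block holding $V_i$) to have $\{u,w\}$ an edge whenever $(u,w)=\Psi_i(z)$ for some $z\in S$ (such $z$, if it exists, is unique since $\Psi_i$ is injective); then each point of $S$ contributes exactly one edge, so $e(G_i)=|S|$, and the block structure forces the closed $k$-walks $\sum_{x_1,\dots,x_k}\one_{G_1}(x_1,x_2)\cdots\one_{G_k}(x_k,x_1)$ to run once around all the blocks.

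The heart of the proof is the two resulting dictionaries. The map $(p_1,\dots,p_k)\mapsto(\ell_1(p_1),\dots,\ell_k(p_k))$ would be shown to be a bijection between the $(s_1,\dots,s_k)$-polygons contained in $S$ (degenerate ones included, which is all we need) and the closed $k$-walks above: injectivity and well-definedness come from the injectivity of the $\Psi_i$, and surjectivity from the definition of $G_i$ together with $\ell_i(p_{i+1})=\ell_i(p_i)$. Thus the polygon count equals $\sum_{x_1,\dots,x_k}\prod_i\one_{G_i}(x_i,x_{i+1})$. Separately, an injective copy of $C_4$ in $G_i$ is a choice of distinct $u_1,u_2$ in block $i-1$ and distinct $w_1,w_2$ in block $i$ with all four points $\Psi_i^{-1}(u_a,w_b)$ in $S$; the four pairs $(u_a,w_b)$ being pairwise distinct and $\Psi_i$ injective, these four points are automatically distinct and form a non-degenerate $(s_{i-1},s_i,s_{i-1},s_i)$-parallelogram, and conversely. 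Hence $\Inj(C_4,G_i)$ equals, up to the bounded factor from the automorphisms of $C_4$, the number of ordered non-degenerate $(s_{i-1},s_i,s_{i-1},s_i)$-polygons in $S$, which hypothesis $(ii)$ applied with index $i-1$ bounds by $O(\delta|S|^4/n^4)$ --- and as $i$ ranges over $\Z/k\Z$ this covers every $G_i$.

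To finish I would take the density parameter $p:=|V|^{-1/2}\asymp n^{-1/2}$ and first establish the a priori bound $|S|\le C_0n^{3/2}$ for some $C_0=C_0(a,b)$: combining $\Hom(C_4,G_i)\ge e(G_i)^4/(|V_{i-1}|\,|V_i|)^2$ (the $C_4$ case of Sidorenko's inequality, as used in \cref{prop:few-even-cycles}) with \cref{eq:4-cycle-counts} gives $\Inj(C_4,G_i)\gg|S|^4/n^4-O(n^2)$, and comparing with the upper bound from hypothesis $(ii)$ forces $|S|\ll n^{3/2}$ once $\delta$ is below an absolute threshold. With $|S|\asymp_{a,b}n^{3/2}$ secured, both hypotheses of \cref{cor:k-cycle-removal-few-4-cycles} for $G_1,\dots,G_k$ with this $p$ --- namely $\Inj(C_4,G_i)$ at most a small multiple of $p^4|V|^4$, and (assuming towards a contradiction that $S$ has fewer than $\delta|S|^k/n^k$ polygons) at most a small multiple of $p^k|V|^k$ closed $k$-walks --- follow by taking the corollary's $\delta$ small in terms of $\epsilon$, $k$, the slopes and $C_0$. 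Applying \cref{cor:k-cycle-removal-few-4-cycles} with $\epsilon/(2k)$ in place of its $\epsilon$ yields $G_i'\subset G_i$ with $|E(G_i)\setminus E(G_i')|\le\frac{\epsilon}{2k}p|V|^2$ and no closed $k$-walk. The diagonal step then closes the argument: for each point $z\in S$ the constant polygon $(z,\dots,z)$ corresponds to the tuple $(\ell_1(z),\dots,\ell_k(z))$, all of whose $k$ edges lie in $G_1,\dots,G_k$, and distinct $z$ give distinct edges of each $G_i$; since the product over the $G_i'$ vanishes on this tuple, each $z\in S$ must supply a deleted edge of some $G_i$, so $|S|\le\sum_i|E(G_i)\setminus E(G_i')|\le\frac{\epsilon}{2}p|V|^2\ll_{a,b}\frac{\epsilon}{2}n^{3/2}$, contradicting $|S|\ge\epsilon n^{3/2}$ for $n$ large. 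I expect the main obstacle to be making the two dictionaries airtight --- matching hypothesis $(ii)$ exactly to $\Inj(C_4,G_i)$ while controlling degeneracy and the grid boundary --- together with the a priori bound $|S|\ll n^{3/2}$, which is precisely what allows the passage from the scale $|S|^4/n^4$ appearing in the hypothesis to the $p^4|V|^4$ normalisation (with $p\ge|V|^{-1/2}$) demanded by \cref{cor:k-cycle-removal-few-4-cycles}.
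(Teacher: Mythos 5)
Your proposal is correct and follows essentially the same route as the paper: the same linear change of variables packaging $S$ into a $k$-partite graph on $O_{a,b}(n)$ vertices, the same identification of non-degenerate parallelograms with injective $4$-cycles and of (possibly degenerate) polygons with closed $k$-walks, an application of \cref{cor:k-cycle-removal-few-4-cycles}, and the same diagonal-tuple argument for the final contradiction. The only cosmetic differences are that the paper takes $p_i := |S|/(\epsilon n^2)$ rather than $p := |V|^{-1/2}$, which lets it skip your a priori bound $|S| \ll n^{3/2}$ (stated there only as a remark), and that your closing inequality $|S| \le \tfrac{\epsilon}{2}p|V|^2 \ll_{a,b} \tfrac{\epsilon}{2}n^{3/2}$ only yields a contradiction once the substituted $\epsilon$ also absorbs the constant $(|V|/n)^{3/2}$ — which is precisely why the paper applies the removal corollary with $\epsilon/(4kA^2)$.
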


\begin{proof}
Let $A := k (2\max_i (|a_i| + |b_i|) + 1)$.
We choose $n_0, \delta_0 > 0$ as in \cref{cor:k-cycle-removal-few-4-cycles}, for the same value of $k$ and with $\epsilon/(4kA^2)$ in place of $\epsilon$; we set $\delta := \delta_0 / \max(\epsilon^4, \epsilon^k)$ here.

Let $n, S$ be as in the corollary's statement; we may assume without loss of generality that $n \ge n_0$, since the conclusion is trivial for $n \le n_0$ provided that $\delta < n_0^{2-k}$ (due to the $|S|$ diagonal solutions $(x_1, y_1) = \cdots = (x_k, y_k)$).

For $i \in \Z/k\Z$, we define the invertible $2 \times 2$ matrix %do you even need invertible? Yes! Otherwise you're not counting all the solutions
\[
    M_i := 
    \begin{pmatrix}
        a_i & b_i \\
        a_{i+1} & b_{i+1}
    \end{pmatrix}
\]
and the set
\[
    V_i := [-(|a_i|+|b_i|)n, (|a_i|+|b_i|)n] \cap \Z.
\]
Finally, we let $V$ be a disjoint union of all the $V_i$'s (so in particular $n_0 \le n \le |V| \le An$), and define the (bipartite) graph $G_i$ with vertex set $V$ by
\[
    \one_{G_i}(u, v) := \one_{V_{i} \times V_{i+1}}(u, v)\ \one_S\left( M_i^{-1}\begin{psmall} u \\ v \end{psmall} \right),
\]
where we identify pairs $(x, y) \in S$ with vectors $\begin{psmall} x \\ y \end{psmall}$. Note that for $(x_i, y_i), (x_{i+1}, y_{i+1}) \in S$, we have $a_i(x_{i+1} - x_i) = b_i(y_{i+1} - y_i)$ iff $a_i x_i + b_i y_i = a_i x_{i+1} + b_i y_{i+1} =: u_i$ for some $u_i \in V_i$. With this change of variables, the system of equations in \cref{eq:polygonal-solutions} can be rewritten as
\[
    M_i \begin{pmatrix} x_{i+1} \\ y_{i+1} \end{pmatrix} = 
    \begin{pmatrix} u_i \\ u_{i+1} \end{pmatrix},
    \qquad i \in \Z/k\Z,
\]
so that the number of solutions to \cref{eq:polygonal-solutions} becomes $\sum_{u_1, \ldots, u_k \in V} \prod_{i=1}^k \one_{G_i}(u_i, u_{i+1})$;
we want to show that this is at least equal to $\delta |S|^k/n^k$. Suppose for the sake of contradiction that this is false, so
\[
    \sum_{u_1, \ldots, u_k \in V} 
    \prod_{i=1}^k \one_{G_i}(u_i, u_{i+1}) \le 
    \frac{\delta_0}{\epsilon^k} \frac{|S|^k}{n^k}
    =
    \delta_0 p_1 \cdots p_k n^k
    \le \delta_0 p_1 \cdots p_k |V|^k,
\]
where $p_1 = \cdots = p_k := |S|/(\epsilon n^2) \ge n^{-1/2}$ (by assumption $(i)$). Similarly, assumption $(ii)$ implies that for each $i$,
\[
    \sum_{\substack{u_1, u_2, u_3, u_4 \in V \\ u_1 \neq u_3,\ u_2 \neq u_4}} \one_{G_i}(u_1, u_2) \one_{G_i}(u_3, u_2) \one_{G_i}(u_1, u_4) \one_{G_i}(u_2, u_4) 
    \le \frac{\delta|S|^4}{n^4} \le \frac{\delta_0}{\epsilon^4} \frac{|S|^4}{n^4}
    \le \delta_0 p_i^4 |V|^4,
\]
which is to say, $\Inj(C_4, G) \le \delta_0 p_i^4 |V|^4$. By the conclusion of our initial application of \cref{cor:k-cycle-removal-few-4-cycles}, we find that there exist subsets $E_i \subset V_i \times V_{i+1}$ such that
\[
    \sum_{u_i, u_{i+1}} \one_{G_i} \one_{E_i^c} (u_i, u_{i+1})
    \le 
    \frac{\epsilon}{2kA^2} p_i |V|^2
    \le 
    \frac{\epsilon n^2}{2k} p_i
    =
    \frac{|S|}{2k}
\]
for each $i$, and also $\sum_{u_1, \ldots, u_k \in V} \prod_{i=1}^k \one_{E_i}(u_i, u_{i+1}) = 0$; in other words, $\sum_{i=1}^k \one_{E_i^c}(u_i, u_{i+1}) \ge 1$ everywhere. To obtain a contradiction, we consider the set
\[
    T := \{(u_1, \ldots, u_k) \in V_1 \times \cdots \times V_k : M_1^{-1}\begin{psmall} u_1 \\ u_2 \end{psmall} = \cdots = M_k^{-1}\begin{psmall} u_k \\ u_1 \end{psmall} \in S \},
\]
which has natural bijections to $S$ and to the edges of each $G_i$ (through $S$). We conclude that
%We claim that each edge $(u_i, u_{i+1})$ of $G_i$ determines exactly one sequence $(u_1, \ldots, u_k) \in T$ containing it. Indeed, uniqueness follows easily from the definition of $T$. For existence, since $\one_{G_i}(u_i,u_{i+1}) = 1$, we must have $(u_i, u_{i+1}) \in V_i \times V_{i+1}$ and $\begin{psmall} x \\ y \end{psmall} := M_i^{-1}\begin{psmall} u_i \\ u_{i+1} \end{psmall} \in S$; we can now choose
%\[
%    u_j := a_j x + b_j y \in V_j,
%\] 
%for $j \in \Z/k\Z$, which gives $M_j \begin{psmall} x \\ y \end{psmall} = \begin{psmall} u_j \\ u_{j+1} \end{psmall}$ for each $j$. We conclude that
\[
\begin{aligned}
    |S| = |T|
    &\le 
    \sum_{(u_1, \ldots, u_k) \in T} \sum_{i=1}^k \one_{E_i^c}(u_i, u_{i+1})
    \\
    &=
    \sum_{i=1}^k \sum_{(u_1, \ldots, u_k) \in T}  \one_{E_i^c}(u_i, u_{i+1})
    \\
    &= 
    \sum_{i=1}^k \sum_{(u_i, u_{i+1}) \in V_i \times V_{i+1}} \one_{G_i}\one_{E_i^c}(u_i, u_{i+1})
    \le 
    k \frac{|S|}{2k} = \frac{|S|}{2},
\end{aligned}
\]
a contradiction. Thus it must be the case that there are at least $\delta |S|^k/n^k$ solutions to \cref{eq:polygonal-solutions}, as we wanted.
\end{proof}

\subsection{Graphs free of quasi-smooth families}

It remains to prove \cref{cor:smooth-free-graphs,cor:linear-patterns-generalized-Sidon}; we start with a lemma similar to \cref{lem:dense-pairs-1}, but in the context of quasi-smooth families (recall \cref{def:quasi-smooth}).

\begin{lemma}[Dense pairs with large parts II] \label{lem:dense-pairs-2}
Let $2 > \alpha > \beta \ge 1$ and $\mF$ be an $(\alpha, \beta)$-quasi-smooth family of bipartite graphs. Also, let $\tau, q > 0$, $G$ be an $\mF$-free $n$-vertex graph, and $V_1 \cup V_2 \cup \ldots$ be a partition $V(G)$ into parts with at least $\tau n$ elements each. Then $G$ has at most
\[
    O_\mF\left(
    \frac{n}{q^{(\alpha-1)/(2-\alpha)}}
    \left(1 + \frac{1}{n^{(\alpha - \beta)/(2 - \alpha)} \tau^{\alpha/(2-\alpha)}}\right)\right)
\]
edges lying between pairs $(V_j, V_{j'})$ with $e(V_j, V_{j'}) \ge q |V_j| |V_j'|$ and $j \neq j'$. If $\mF$ only consists of complete bipartite graphs $K_{s,t}$, then one can also include the diagonal pairs $(V_j, V_j)$.
\end{lemma}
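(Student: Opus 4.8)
The plan is to follow the same strategy as the proof of \cref{lem:dense-pairs-1}, replacing the supersaturation bound for $C_4$ by the hypothesis that $G$ is $\mF$-free. For each index $j$, I would let $W_j$ be the union of all parts $V_{j'}$ with $j' \neq j$ and $e_G(V_j, V_{j'}) \ge q|V_j||V_{j'}|$, so that $e_G(V_j, W_j) \ge q|V_j||W_j|$, and let $H_j$ be the bipartite graph with parts $V_j$ and $W_j$ and edges induced by $G$. Since $j' \neq j$ there are no repeated vertices, so $H_j$ is (isomorphic to) a subgraph of $G$ and is therefore $\mF$-free. Every edge of $G$ between a dense off-diagonal pair $(V_j, V_{j'})$ is counted by $e_G(V_j, W_j)$, so it suffices to bound $\sum_j e_G(V_j, W_j)$. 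Writing $a_j := |V_j|$ and $b_j := |W_j|$, we have $a_j \ge \tau n$; moreover $b_j \ge \tau n$ whenever $W_j \neq \emptyset$ (it is then a union of at least one part), both are at most $n$, and there are at most $1/\tau$ indices $j$ with $W_j \neq \emptyset$.

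Next I would apply quasi-smoothness to $H_j$: since it is $\mF$-free and bipartite with parts of sizes $a_j, b_j$, \cref{def:quasi-smooth} gives
\[
    q a_j b_j \le e_G(V_j, W_j) = e(H_j) \ll_\mF \min(a_j, b_j)\max(a_j, b_j)^{\alpha - 1} + \max(a_j, b_j)^\beta ,
\]
and the argument then splits according to which term on the right dominates. If $\min(a_j,b_j)\max(a_j,b_j)^{\alpha-1}$ dominates, then $q a_j b_j \ll_\mF \min(a_j,b_j)\max(a_j,b_j)^{\alpha-1}$ forces $q\max(a_j,b_j)^{2-\alpha} \ll_\mF 1$, i.e.\ $\max(a_j,b_j) \ll_\mF q^{-1/(2-\alpha)}$, and hence $e_G(V_j, W_j) \ll_\mF \min(a_j,b_j)\, q^{-(\alpha-1)/(2-\alpha)} \le a_j\, q^{-(\alpha-1)/(2-\alpha)}$; summing over the relevant parts and using $\sum_j a_j \le n$, this contributes $\ll_\mF n\, q^{-(\alpha-1)/(2-\alpha)}$, the main term of the asserted bound. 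If instead $\max(a_j,b_j)^\beta$ dominates, then $\min(a_j,b_j) \ll_\mF \max(a_j,b_j)^{\beta-\alpha+1} \le n^{1-(\alpha-\beta)}$, so (as $\min(a_j,b_j) \ge \tau n$) this case is vacuous unless $\tau \ll_\mF n^{-(\alpha-\beta)}$; in that regime I would use the trick $e_G(V_j,W_j) \le e_G(V_j,W_j)^2/(q a_j b_j) \ll_\mF \max(a_j,b_j)^{2\beta}/(q a_j b_j) = \max(a_j,b_j)^{2\beta-1}/(q\min(a_j,b_j)) \le n^{2\beta-2}/(q\tau)$, and sum over the $\le 1/\tau$ parts to get a contribution $\ll_\mF n^{2\beta-2}/(q\tau^2)$. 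A direct computation of exponents, using $\tau \ll_\mF n^{-(\alpha-\beta)}$ together with $q \ll_\mF n^{\beta-2}/\tau$ (which also holds in this case, from $q a_j b_j \ll_\mF \max(a_j,b_j)^\beta$ and the size bounds), should show this is dominated by $\frac{n}{q^{(\alpha-1)/(2-\alpha)}}\cdot\frac{1}{n^{(\alpha-\beta)/(2-\alpha)}\tau^{\alpha/(2-\alpha)}}$, i.e.\ the correction term; combining the two cases gives the estimate.

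For the final assertion (when $\mF$ consists only of complete bipartite graphs $K_{s,t}$), I would instead take $W_j$ to be the union of \emph{all} parts $V_{j'}$, now also allowing $j' = j$, with $e_G(V_j, V_{j'}) \ge q|V_j||V_{j'}|$, and let $H_j$ be the bipartite graph on $V_j \sqcup W_j^0$ where $W_j^0$ is a disjoint copy of $W_j$ and $\{u, v^0\} \in E(H_j)$ iff $\{u,v\} \in E(G)$. The key point is that any injective copy of $K_{s,t}$ in $H_j$ projects to a genuine copy of $K_{s,t}$ in $G$: the projection is injective on each side, and no vertex can be the image of vertices from both sides, since its two occurrences would have to be adjacent in $H_j$ and hence $G$ would have a loop — here one uses crucially that in $K_{s,t}$ every vertex of one side is adjacent to every vertex of the other. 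Hence $H_j$ is again $\mF$-free, $e(H_j) = e_G(V_j, W_j) \ge q a_j b_j$, and the rest of the argument goes through verbatim, now covering the diagonal pairs.

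The hard part will be the exponent bookkeeping in the second case: verifying that the crude bound $n^{2\beta-2}/(q\tau^2)$ is absorbed by $n^{1-(\alpha-\beta)/(2-\alpha)}/(q^{(\alpha-1)/(2-\alpha)}\tau^{\alpha/(2-\alpha)})$ over the whole admissible range of $q$ (using only $\tau \ll_\mF n^{-(\alpha-\beta)}$ and $q \ll_\mF n^{\beta-2}/\tau$), and handling the borderline possibilities cleanly — namely when neither term in the quasi-smooth bound strictly dominates and when $\alpha - \beta \ge 1$ (in which the second case essentially cannot occur). These are routine but fiddly; one may need a slightly sharper estimate than $\max(a_j,b_j) \le n$ in the second case, e.g.\ an interpolation between the trivial bound $e_G(V_j,W_j) \le a_j b_j$ and the quasi-smooth bound.
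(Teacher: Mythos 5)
Your setup (the sets $W_j$, the bipartite graphs $H_j$, quasi-smoothness of $H_j$, summation over the at most $1/\tau$ indices) is the same as the paper's, your derivation of the main term $n\,q^{-(\alpha-1)/(2-\alpha)}$ is correct, and your disjoint-copy argument for the diagonal pairs in the $K_{s,t}$ case is exactly right. The genuine gap is in your second case. The estimate you propose there, namely $e_G(V_j,W_j) \le e_G(V_j,W_j)^2/(q a_j b_j) \ll_\mF \max(a_j,b_j)^{2\beta-1}/(q\min(a_j,b_j))$, summed to $n^{2\beta-2}/(q\tau^2)$, is \emph{not} dominated by the correction term over the admissible range, even granting your auxiliary constraints $\tau \ll n^{-(\alpha-\beta)}$ and $q \ll n^{\beta-2}/\tau$. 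Concretely, take $\alpha = 1.1$, $\beta = 1$, $\tau = n^{-1/2}$, $q = n^{-9/10}$: your Case-2 total is $1/(q\tau^2) = n^{1.9}$ (and even the sharper per-part version with $\max(a_j,b_j)=n^{0.9}$ gives $n^{1.8}$), whereas the claimed bound is
\[
    \frac{n}{q^{1/9}} + \frac{n^{8/9}}{q^{1/9}\,\tau^{11/9}} \asymp n^{1.1} + n^{1.6},
\]
and both constraints $\tau \ll n^{-0.1}$, $q \ll n^{-1}/\tau = n^{-1/2}$ are satisfied. So the "direct computation of exponents" you defer to cannot succeed as stated; squaring is the wrong power to raise $e_G(V_j,W_j)$ to. (Your fallback worry about $\alpha-\beta \ge 1$ is a non-issue since $\alpha < 2$ and $\beta \ge 1$, but the exponent-$2$ loss is real.)

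The paper avoids the case split entirely by choosing the exponent to match $\alpha$: with $\gamma := 1/(2-\alpha) \ge 1$, one writes $e_G(V_j,W_j) \le e_G(V_j,W_j)^{\gamma}/(q a_j b_j)^{\gamma-1}$ and substitutes the full quasi-smooth bound $e_G(V_j,W_j) \ll_\mF a_j b_j^{\alpha-1} + n^{\beta}$ into the numerator. The exponent $\gamma$ is exactly the one for which the first term collapses with no leftover power of $b_j$, giving $a_j/q^{\gamma-1}$ (which sums to the main term via $\sum_j a_j \le n$), while the second term gives $n^{\gamma\beta}/(q^{\gamma-1}(\tau n)^{2(\gamma-1)})$ per index, which after summing over $\le 1/\tau$ indices is precisely the stated correction term. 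Replacing your Case-2 computation by this uniform argument (your Case 1 is then subsumed as well) repairs the proof.
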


\begin{proof}
For each $j$, denote by $W_j$ the union of all parts $V_{j'}$ with $e(V_j, V_{j'}) \ge q|V_j||V_{j'}|$ and $j \neq j'$; thus in particular $e(V_j, W_j) \ge q|V_j||W_j|$ and $V_j \cap W_j = \emptyset$. Let $H_j$ be the bipartite subgraph of $G$ with vertex sets $V_j$ and $W_j$, and $e(V_j, W_j)$ edges. Then $H_j$ is also $\mF$-free, so by the the quasi-smoothness of $\mF$ we have
\[
\begin{aligned}
    e(V_j, W_j) &\ll_\mF \min(|V_j|, |W_j|) \max(|V_j|, |W_j|)^{\alpha - 1} + \max(|V_j|, |W_j|)^\beta
    \\
    &\le |V_j| |W_j|^{\alpha - 1} + n^\beta,
\end{aligned}
\]
where we used that $\alpha - 1 \in (0, 1)$.
If $W_j$ is nonempty, this further implies that upon setting $\gamma := \frac{1}{2-\alpha}$,
\[
\begin{aligned}
    e(V_j, W_j) &\ll_\mF \frac{e(V_j, W_j)^\gamma}{q^{\gamma - 1} |V_j|^{\gamma - 1} |W_j|^{\gamma - 1}}
    \\
    &\ll 
    \frac{|V_j|^{\gamma} |W_j|^{\gamma(\alpha-1)} + n^{\gamma \beta}}{q^{\gamma - 1} |V_j|^{\gamma - 1} |W_j|^{\gamma - 1}}
    \le
    \frac{|V_j|}{q^{\gamma - 1}}
    + 
    \frac{n^{\gamma \beta - 2(\gamma - 1)}}{(q\tau^2)^{\gamma - 1}}.
\end{aligned}
\]
Summing over all (at most $1/\tau$) values of $j$ and noting that $\gamma \beta - 2(\gamma - 1) = 1 - (\alpha - \beta)/(2 - \alpha)$, we recover the desired upper bound.

Finally, if $\mF$ only consists of complete bipartite graphs, then it is okay to include $V_j$ into $W_j$ (provided that $e(V_j, V_j) \ge q|V_j|^2$). Indeed, as in the proof of \cref{lem:dense-pairs-1}, we can let $H_j$ have vertex sets $V_j$ and $W_j^0$ instead, where $W_j^0$ is a copy of $W_j$ that is disjoint with $V_j$. While $H_j$ may not be isomorphic to a subgraph of $G$, it must remain $K_{s,t}$-free for each $K_{s,t} \in \mF$ (since the existence of such a $K_{s,t}$ would force all the involved vertices in $V_j$ and $W_j^0$ to originate from distinct vertices in $G$, thus inducing a copy of $K_{s,t}$ in $G$). 
\end{proof}

\begin{corollary}[Rephrasing of \cref{cor:smooth-free-graphs}]
Fix $k \in \Z_{\ge 2}$, $2 > \alpha > \beta \ge 1$, and an $(\alpha, \beta)$-quasi-smooth family $\mF$. Then for any $\epsilon, C > 0$, there exist $n_0, \delta > 0$ such that the following holds. For $n \ge n_0$ and $p \ge n^{\alpha - 2}$, any $\mF$-free $n$-vertex graph $G$ with $\Hom(C_{2k}, G) \le Cp^{2k} n^{2k}$ and $\Inj(C_{2k+1}, G) \le \delta p^{2k+1} n^{2k+1}$ can be made $\{C_3, C_5, \ldots, C_{2k+1}\}$-free by removing at most $\epsilon pn^2$ edges. 
\end{corollary}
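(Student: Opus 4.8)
The plan is to deduce this from \cref{thm:sparse-removal-odd-cycles}, following the same template as the proof of \cref{cor:k-cycle-removal-few-4-cycles} and of the corollary generalizing part $(iii)$ of \cref{cor:graphs-few-4-cycles}, but with \cref{lem:dense-pairs-2} in place of \cref{lem:dense-pairs-1}. Given $\epsilon, C > 0$, I would first fix the parameter $K = K(\epsilon, \alpha, \beta, \mF)$ to be a sufficiently large power of $1/\epsilon$ (with exponent $(2-\alpha)/(\alpha-1)$ and an $\mF$-dependent implied constant), and then invoke \cref{thm:sparse-removal-odd-cycles} with this $K$, the given $C$, and the given $k$ to obtain parameters $n_0', \delta, \tau > 0$; we take our $\delta$ equal to this one and require our final $n_0$ to be at least $n_0'$. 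For an $\mF$-free graph $G$ on $n \ge n_0$ vertices with $\Hom(C_{2k},G) \le Cp^{2k}n^{2k}$, $\Inj(C_{2k+1},G) \le \delta p^{2k+1}n^{2k+1}$, and $p \ge n^{\alpha-2}$, the only remaining task is to verify the off-diagonal dense pairs condition of \cref{thm:sparse-removal-odd-cycles}: for every partition $V(G) = V_1 \cup V_2 \cup \cdots$ into parts of size $\ge \tau n$, at most $\epsilon p n^2/8$ edges of $G$ lie between pairs $(V_j, V_{j'})$ with $j \neq j'$ and $e_G(V_j, V_{j'}) \ge Kp\,|V_j||V_{j'}|$.

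To check this I would apply \cref{lem:dense-pairs-2} with $q := Kp$ (this is the one place the $\mF$-freeness of $G$ is used). Writing $\gamma := 1/(2-\alpha)$, so that $(\alpha-1)/(2-\alpha) = \gamma-1 > 0$, the lemma bounds the relevant edge count by some constant $C_\mF$ (depending only on $\mF$) times
\[
    \frac{n}{(Kp)^{\gamma-1}}\left(1 + \frac{1}{n^{(\alpha-\beta)/(2-\alpha)}\,\tau^{\alpha/(2-\alpha)}}\right).
\]
The essential input is the hypothesis $p \ge n^{\alpha-2} = n^{-1/\gamma}$, which gives $p^\gamma n \ge 1$ and hence $n(Kp)^{-(\gamma-1)} = \epsilon p n^2 / (\epsilon K^{\gamma-1} p^\gamma n) \le \epsilon p n^2 / (\epsilon K^{\gamma-1})$. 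Thus the ``$1$''-part of the bound is $\le \epsilon p n^2/16$ once $K^{\gamma-1}$ exceeds a suitable $\mF$-dependent multiple of $1/\epsilon$, which holds by our choice of $K$; and the second part is at most $C_\mF\, \epsilon p n^2 / (\epsilon K^{\gamma-1}\, n^{(\alpha-\beta)/(2-\alpha)}\,\tau^{\alpha/(2-\alpha)})$, which is $\le \epsilon p n^2/16$ as soon as $n_0$ is large enough in terms of $\epsilon, C, k, \mF$ — here one uses that $\alpha > \beta$ makes $(\alpha-\beta)/(2-\alpha)$ strictly positive and that $\tau$ depends only on $\epsilon, C, k, \mF$. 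Summing, at most $\epsilon p n^2/8$ edges lie between such pairs, so \cref{thm:sparse-removal-odd-cycles} applies and produces a set of at most $\epsilon p n^2$ edges of $G$ whose removal eliminates every $C_3, C_5, \ldots, C_{2k+1}$.

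I do not anticipate a genuine obstacle: the argument is essentially that of \cref{cor:k-cycle-removal-few-4-cycles} with \cref{lem:dense-pairs-2} substituted for \cref{lem:dense-pairs-1} and the density threshold $p \ge n^{\alpha-2}$ substituted for $p \ge n^{-1/2}$. The step requiring the most care is the order of quantifiers: $K$ must be chosen (in terms of $\epsilon, \alpha, \beta, \mF$) before calling \cref{thm:sparse-removal-odd-cycles}, since $\tau$ is returned by that theorem and depends on $K$; only afterwards do we enlarge $n_0$ to absorb the factor $\tau^{-\alpha/(2-\alpha)}$ in the second term and to ensure $n_0 \ge n_0'$. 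Since the standing assumptions $2 > \alpha > \beta \ge 1$ force $\gamma-1$, $(\alpha-\beta)/(2-\alpha)$ and $\alpha/(2-\alpha)$ to all be strictly positive, these choices are always possible, and what is left is routine bookkeeping with the exponents of $p$, $n$, $\tau$ and $K$.
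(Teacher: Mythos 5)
Your proposal is correct and follows essentially the same route as the paper: invoke \cref{thm:sparse-removal-odd-cycles} (with $K$ fixed in advance as a suitable power of $1/\epsilon$ depending on $\alpha$), and verify the off-diagonal dense pairs condition via \cref{lem:dense-pairs-2} with $q = Kp$, using that $p \ge n^{\alpha-2}$ rearranges to $n/p^{(\alpha-1)/(2-\alpha)} \le pn^2$ and enlarging $n_0$ to absorb the $\tau^{-\alpha/(2-\alpha)} n^{-(\alpha-\beta)/(2-\alpha)}$ factor. Your remark on the order of quantifiers (choosing $K$ before $\tau$ is returned) is exactly the point the paper handles by noting $K$ depends only on $\epsilon$ and $\alpha$.
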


\begin{proof}
Fix $\epsilon, C$ and let $n_0, \delta, \tau > 0$ be obtained by applying \cref{thm:sparse-removal-odd-cycles} with the given value of $C$, and $K$ a large enough constant (to be chosen later in terms of $\epsilon$ and $\alpha$). Let $n \ge n_0$, $p \ge n^{\alpha - 2}$, and $G$ be an $\mF$-free $n$-vertex graph with $\Hom(C_{2k}, G) \le Cp^{2k}n^{2k}$ and $\Inj(C_{2k+1}, G) \le \delta p^{2k+1} n^{2k+1}$.

Arguing as in the proof of \cref{cor:k-cycle-removal-few-4-cycles}, it suffices to verify the dense pairs condition in \cref{thm:sparse-removal-odd-cycles} (which only concerns `off-diagonal' dense pairs $(V_j, V_{j'})$ with $j \neq j'$ and $e_G(V_j, V_{j'}) \ge K p |V_j| |V_{j'}|$). By \cref{lem:dense-pairs-2} for $q = Kp$, $G$ contains a total of at most
\[
    O\left(\frac{n}{(Kp)^{(\alpha-1)/(2-\alpha)}}
    \left(1 + \frac{1}{n^{(\alpha - \beta)/(2 - \alpha)} \tau^{\alpha/(2-\alpha)}}\right) \right)
\]
edges lying between such pairs, and we want this to be less than $\epsilon p n^2/8$. This is true provided that $n$ is large enough in terms of $\tau, \alpha, \beta$ (which holds after increasing the value of $n_0$ accordingly), and that
\begin{equation} \label{eq:dense-pairs-necessary}
    \frac{n}{p^{(\alpha-1)/(2-\alpha)}}
    \le 
    p n^2,
\end{equation}
since we chose $K$ to be large enough in terms of $\epsilon$ and $\alpha$. But \cref{eq:dense-pairs-necessary} rearranges precisely to our assumption that $p \ge n^{\alpha - 2}$, which completes our proof.
\end{proof}

One can similarly deduce a corollary for multiple graphs, using \cref{cor:sparse-k-cycle-removal-graphs} instead of \cref{thm:sparse-removal-odd-cycles}; however, the necessity to consider diagonal dense pairs $(V_j, V_j)$ restricts our result to families that consist only of complete bipartite graphs.

\begin{corollary}[Cycle removal in multiple $K_{s,t}$-free graphs] \label{cor:k-cycle-removal-kst-free}
For any $k \in \Z_{\ge 3}$, any $\epsilon, C > 0$, and any $t_i \ge s_i \ge 1$ for $1 \le i \le k$, there exist $n_0, \delta > 0$ such that the following holds. Let $n \ge n_0$, $p_i \ge n^{-1/s_i}$, and $G_1, \ldots, G_k$ be $n$-vertex graphs with the same vertex set $V$, such that for each $i$,
\begin{itemize}
    \item[$(i)$.] Equipping $V$ with uniform probability, one has $||\one_{G_i}||_{S^{q_i}}^{q_i} \le C p_i^{q_i}$, and
    \item[$(ii)$.] $G_i$ is $K_{s_i, t_i}$-free.
\end{itemize}
If additionally
\[
    \sum_{x_1, \ldots, x_k \text{ distinct}} \one_{G_1}(x_1, x_2) \cdots \one_{G_k}(x_k, x_1) \le \delta p_1 \cdots p_k n^k,
\]
then there exist subgraphs $G_i'$ of $G_i$ with $|E(G_i) \setminus E(G_i')| \le \epsilon p_i n^2$ for each $i$, such that
\[
    \sum_{x_1, \ldots, x_k} \one_{G_1'}(x_1, x_2) \cdots \one_{G_k'}(x_k, x_1) = 0.
\]
\end{corollary}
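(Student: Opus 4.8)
The plan is to deduce this from \cref{cor:sparse-k-cycle-removal-graphs} in exactly the way \cref{cor:k-cycle-removal-few-4-cycles} was deduced, with \cref{lem:dense-pairs-2} playing the role of \cref{lem:dense-pairs-1}. The only structural difference is that \cref{lem:dense-pairs-2} must be invoked in its ``$K_{s,t}$-only'' form --- the one that also controls the diagonal dense pairs $(V_j, V_j)$ --- which is precisely why the hypothesis restricts each $G_i$ to being $K_{s_i,t_i}$-free rather than $\mathcal{F}_i$-free for a general quasi-smooth family: since \cref{cor:sparse-k-cycle-removal-graphs} concerns several distinct graphs $G_1,\dots,G_k$, the functions $f_i = \one_{G_i}/p_i$ are not all equal and one cannot discard $\diag(\mathcal{P})$ from the dense pairs condition. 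We may assume $s_i \ge 2$ for every $i$ (the case $s_i = 1$, where $G_i$ has bounded degree and which falls outside the quasi-smooth framework, is degenerate and handled by an elementary argument); then, by F\"uredi's theorem recorded after \cref{def:quasi-smooth}, each family $\{K_{s_i,t_i}\}$ is $(\alpha_i,\beta_i)$-quasi-smooth with $\alpha_i = 2 - 1/s_i$ and $\beta_i = 2 - 2/s_i$, and indeed $2 > \alpha_i > \beta_i \ge 1$.

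Concretely, I would first fix a constant $K = K(\epsilon, s_1,t_1,\dots,s_k,t_k)$, to be pinned down at the end, and apply \cref{cor:sparse-k-cycle-removal-graphs} with the given exponents $q_1,\dots,q_k$ (which satisfy $\sum_i 1/q_i > 1$), the given $C$, and this $K$; this produces $n_0', \delta, \tau > 0$. It then remains to verify, for each $i$, the dense pairs condition with large parts from \cref{cor:sparse-k-cycle-removal-graphs}: for any partition of $V$ into parts of size $\ge \tau n$, at most $\epsilon p_i n^2/4$ edges of $G_i$ lie between (possibly diagonal) pairs $(V_j, V_{j'})$ with $e_{G_i}(V_j, V_{j'}) \ge K p_i |V_j||V_{j'}|$. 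Applying \cref{lem:dense-pairs-2} with $\mathcal{F} = \{K_{s_i,t_i}\}$ and $q = K p_i$, and substituting $\alpha_i = 2 - 1/s_i$, $\beta_i = 2 - 2/s_i$, the exponents collapse: $(\alpha_i-1)/(2-\alpha_i) = s_i - 1$, $(\alpha_i-\beta_i)/(2-\alpha_i) = 1$, and $\alpha_i/(2-\alpha_i) = 2s_i - 1$, so the number of such edges is
\[
    O_{s_i,t_i}\!\left( \frac{n}{(Kp_i)^{s_i-1}}\Bigl( 1 + \frac{1}{n\, \tau^{2s_i-1}} \Bigr) \right).
\]
Using the hypothesis $p_i \ge n^{-1/s_i}$, equivalently $p_i^{s_i} n \ge 1$, the first term is $\le \epsilon p_i n^2/8$ as soon as $K^{s_i-1}$ exceeds a suitable multiple of $1/\epsilon$ (here $s_i \ge 2$ ensures $s_i - 1 \ge 1$, so such a $K$ exists), while the second term is $\le \epsilon p_i n^2/8$ once $n$ is large in terms of $K$, $\tau$, $\epsilon$, $s_i$ and $t_i$. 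So I would choose $K$ to handle the first term, then enlarge $n_0$ past $n_0'$ to handle the second; the final $n_0$ and $\delta$ depend only on $k, \epsilon, C$ and the $s_i, t_i$ (and $q_i$), since $K$ and $\tau$ do. With the dense pairs condition checked for every $G_i$, \cref{cor:sparse-k-cycle-removal-graphs} delivers the subgraphs $G_i'$.

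There is no genuine obstacle here: all the content sits in \cref{cor:sparse-k-cycle-removal-graphs} and \cref{lem:dense-pairs-2}. The points needing care are (a) the order of parameter selection --- $K$ must be fixed, depending only on $\epsilon$ and the $s_i, t_i$, \emph{before} \cref{cor:sparse-k-cycle-removal-graphs} is invoked (which then returns $\tau$), and only afterwards may $n_0$ be enlarged, so that no circularity arises --- and (b) the elementary treatment of the degenerate case $s_i = 1$. The arithmetic simplification of the quasi-smoothness exponents, and the observation that $p_i \ge n^{-1/s_i}$ is exactly the inequality needed to absorb both error terms, are the remaining routine steps.
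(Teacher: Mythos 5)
Your proposal is correct and follows essentially the same route as the paper, whose proof is exactly ``follow the proof of \cref{cor:k-cycle-removal-few-4-cycles} with \cref{lem:dense-pairs-2} in place of \cref{lem:dense-pairs-1}, noting that $K_{s_i,t_i}$ is $(2-1/s_i,\,2-2/s_i)$-quasi-smooth so that $p_i \ge n^{-1/s_i} = n^{\alpha_i-2}$ makes the computation of \cref{eq:dense-pairs-necessary} go through''; your exponent arithmetic and parameter ordering match this. You are in fact slightly more careful than the paper in flagging the degenerate case $s_i=1$ (which falls outside the quasi-smoothness framework since it forces $\alpha_i=1$), a case the paper's proof silently ignores.
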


\begin{proof}
We follow the proof of \cref{cor:k-cycle-removal-few-4-cycles}, using \cref{lem:dense-pairs-2} instead of \cref{lem:dense-pairs-1} (and the given values of $q_i, p_i \ge n^{-1/s_i}$). The Schatten norm bounds are now given explicitly in our hypothesis, while the dense pairs condition follows from the same computation as in \cref{eq:dense-pairs-necessary} for each $i$ (recall that $K_{s_i,t_i}$ is $(\alpha_i, \beta_i)$-quasi-smooth \cite{furedi1996upper} with $\alpha_i = 2 - 1/s_i$ and $\beta_i = 2 - 2/s_i$, so we have $p_i \ge n^{-1/s_i} = n^{\alpha_i - 2}$).
\end{proof}

Finally, we prove (a generalization of) \cref{cor:linear-patterns-generalized-Sidon}, using a Sidon-type condition corresponding to $K_{s,t}$-free graphs. We say that $S$ is $(s, t)$-Sidon if and only if $S^{s \times t}$ contains no $(s, t)$-matrix $M$ with distinct rows and distinct columns, such that 
\[
    M_{i,i} + M_{j,j} = M_{i,j} + M_{j,i}
\] 
for all $1 \le i \le s$ and $1 \le j \le t$ (i.e., the rows of $M$ are translations of each other by elements in $S$). This recovers classical Sidon sets when $s = t = 2$; note that $(s, t)$-Sidon sets are also $(t, s)$-Sidon and $(s', t')$-Sidon for $s' \ge s$ and $t' \ge t$. We expect that $(s, t)$-Sidon subsets of an abelian group of order $n$ have size $\ll n^{1 - 1/\min(s, t)}$, so that the following corollary concerns the largest of these sets.

\begin{corollary}[Extension of \cref{cor:linear-patterns-generalized-Sidon}] \label{cor:linear-patterns-st-Sidon}
For any $\epsilon, C > 0$, $s, t \in \Z_{\ge 2}$, $k \in \Z_{\ge 3}$ and $q \in [1, k)$, there exists $\delta > 0$ such that the following holds. Let $a_1, \ldots, a_k$ be nonzero integers with $a_1 + \cdots + a_k = 0$, $G$ be an abelian group of order $n$ with $\gcd(n, a_1 \cdots a_k) = 1$, and $S \subset G$ satisfy
\begin{itemize}
    \item[$(i).$] $||\hat{\one_S}||_q \le C|S|$,
    \item[$(ii).$] $|S| \ge \epsilon n^{1-1/\min(s,t)}$,
    \item[$(iii).$] $S$ is $(s, t)$-Sidon.
\end{itemize}
Then $S$ contains $\ge \delta |S|^k/n$ solutions in $x_1, \ldots, x_k$ to the linear equation $a_1 x_1 + \cdots + a_k x_k = 0$.
\end{corollary}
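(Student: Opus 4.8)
The plan is to deduce \cref{cor:linear-patterns-st-Sidon} from the $k$-partite $K_{s,t}$-free cycle removal lemma (\cref{cor:k-cycle-removal-kst-free}), using the additive-to-graphs dictionary of \cref{cor:linear-patterns-pseudorandom-weighted}. I would first assume $s \le t$ without loss of generality (the $(s,t)$-Sidon property is symmetric in $s, t$, and $\min(s,t) = s$), and apply \cref{cor:k-cycle-removal-kst-free} with $s_i := s$ and $t_i := t$ for all $i$, with exponents $q_i := q$ for all $i$ (so that $\sum_i 1/q_i = k/q > 1$, which is where $q < k$ enters), with $\epsilon/(4k)$ in place of its parameter $\epsilon$, and with $(C\epsilon)^q$ in place of its parameter $C$; this produces $n_0, \delta_0 > 0$. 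Setting $\delta := \min(\delta_0/\epsilon^k,\, n_0^{2-k})$, the cases $S = \emptyset$ and $n < n_0$ become trivial, since the $|S|$ diagonal solutions $x_1 = \cdots = x_k \in S$ already number at least $\delta |S|^k/n$; so from now on I assume $S \neq \emptyset$ and $n \ge n_0$.

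Next I would set up the graph model. For each $i$ let $b_i \in \Z$ be an inverse of $a_i$ modulo $n$ (these exist since $\gcd(n, a_1\cdots a_k) = 1$), put $p := |S|/(\epsilon n)$ (so $p \ge n^{-1/s}$ by condition $(ii)$), equip $V := G$ with the uniform probability measure, and define graphs $G_1, \dots, G_k$ on $V$ by $\one_{G_i}(u,v) := \one_S(b_i(u-v))$. These are (directed) Cayley-type graphs, and two facts need checking. First, since $\one_{G_i}(u,v) = \one_S(b_i u + (-b_i) v)$ with $\gcd(n, b_i^2) = 1$, \cref{lem:Fourier-singular} gives $||\one_{G_i}||_{S^q} = n^{-1}||\hat{\one_S}||_q \le C|S|/n = C\epsilon p$ by condition $(i)$, so $||\one_{G_i}||_{S^q}^q \le (C\epsilon p)^q$. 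Second, $G_i$ contains no copy of $K_{s,t}$: such a copy would give distinct vertices $u_1,\dots,u_s$ and distinct vertices $w_1,\dots,w_t$ with $m_{a,b} := b_i(u_a - w_b) \in S$ for all $a, b$, and then $m_{a,b} - m_{a',b} = b_i(u_a - u_{a'})$ and $m_{a,b} - m_{a,b'} = b_i(w_{b'} - w_b)$ depend only on $a, a'$ resp.\ $b, b'$, so the matrix $(m_{a,b}) \in S^{s\times t}$ has pairwise-distinct rows and pairwise-distinct columns, each row a translate of the others --- precisely the configuration forbidden by the $(s,t)$-Sidon hypothesis $(iii)$. (Since $G_i$ need not be symmetric, ``$K_{s,t}$-free'' is used here in the bipartite sense, with colour classes allowed to overlap; this is the form in which \cref{lem:dense-pairs-2}, hence \cref{cor:k-cycle-removal-kst-free}, handles families of complete bipartite graphs.) Finally, the change of variables $g_i := b_i(u_i - u_{i+1})$ --- invertible because $\sum_i a_i = 0$ --- shows that the number of solutions of $a_1 x_1 + \cdots + a_k x_k = 0$ with $x_1,\dots,x_k \in S$ equals $n^{-1}\sum_{u_1,\dots,u_k \in V}\prod_{i=1}^k \one_{G_i}(u_i, u_{i+1})$.

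Then I would argue by contradiction. If $S$ had fewer than $\delta |S|^k/n$ such solutions, then $\sum_{u_1,\dots,u_k}\prod_i \one_{G_i}(u_i,u_{i+1}) < \delta |S|^k = \delta \epsilon^k p^k n^k \le \delta_0 p^k n^k$, so a fortiori the sum over distinct tuples $u_1,\dots,u_k$ is $< \delta_0 p^k n^k = \delta_0 p_1\cdots p_k n^k$. Since the $G_i$ satisfy the hypotheses of \cref{cor:k-cycle-removal-kst-free} verified above, its conclusion yields subgraphs $G_i' \subseteq G_i$ with $|E(G_i)\setminus E(G_i')| \le \tfrac{\epsilon}{4k} p n^2$ and $\sum_{u_1,\dots,u_k}\prod_i \one_{G_i'}(u_i,u_{i+1}) = 0$; writing $E_i$ for the edge set of $G_i'$, the latter means $\sum_{i=1}^k \one_{E_i^c}(u_i, u_{i+1}) \ge 1$ for every tuple. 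Setting $T := \{(u_1,\dots,u_k) \in V^k : b_1(u_1 - u_2) = \cdots = b_k(u_k - u_1)\}$, the map $(u_1,\dots,u_k) \mapsto (u_i, u_{i+1})$ is a bijection $T \to V\times V$ for each $i$ (again using $\sum a_i = 0$), and on $T$ all the values $\one_{G_i}(u_i,u_{i+1})$ agree; hence, exactly as in the proof of \cref{cor:linear-patterns-pseudorandom-weighted}, one gets $||\one_{G_1}||_{L^1} \le \sum_{i=1}^k ||\one_{G_i}\one_{E_i^c}||_{L^1}$. The left side equals $|S|/n = \epsilon p$, while each term on the right is at most $\tfrac{\epsilon}{2k} p$ by the edge bound, forcing $\epsilon p \le \tfrac{\epsilon}{2} p$ --- impossible since $p > 0$. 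Therefore $S$ contains at least $\delta |S|^k/n$ solutions, as desired.

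The step I expect to be the main obstacle is the translation in the second paragraph: matching the combinatorial definition of an $(s,t)$-Sidon set against $K_{s,t}$-freeness of the Cayley-type graph $G_i$, making sure that the ``distinct rows and columns'' clause corresponds exactly to the $u_a$'s (resp.\ $w_b$'s) being distinct, and checking that the bipartite notion of $K_{s,t}$-freeness invoked is precisely the one used by \cref{lem:dense-pairs-2} for families of complete bipartite graphs. The remaining ingredients --- the change of variables, absorbing the density factor $\epsilon$ into $p$, the edge-budget bookkeeping, and the $S = \emptyset$ / small-$n$ degenerate cases --- are routine.
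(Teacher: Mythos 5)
Your proposal follows essentially the same route as the paper: reduce to \cref{cor:k-cycle-removal-kst-free} via the Cayley-type dictionary with $b_i = a_i^{-1} \pmod n$, verify $K_{s,t}$-freeness from the $(s,t)$-Sidon hypothesis via the translate-matrix $(b_i(x_u - y_v))_{u,v}$, get the Schatten bound from \cref{lem:Fourier-singular} and condition $(i)$, and close with the bijection between the cycle set $T$ and $V\times V$ exactly as in \cref{cor:linear-patterns-pseudorandom-weighted}. All of those steps match the paper's argument, and your parameter bookkeeping (including the small-$n$ degenerate case) is fine.

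The one genuine deviation is also the one gap. You place all $k$ graphs on the single vertex set $V = G$ with the asymmetric adjacency $\one_{G_i}(u,v) = \one_S(b_i(u-v))$, and then invoke \cref{cor:k-cycle-removal-kst-free} for these ``directed'' objects. But that corollary, and in particular the dense-pairs verification it rests on (\cref{lem:dense-pairs-2}), is stated for graphs in the sense of \cref{not:graphs} --- undirected and simple --- and its proof builds bipartite subgraphs $H_j$ of an $\mF$-free \emph{graph} and applies the quasi-smoothness bound to them. Your parenthetical that ``$K_{s,t}$-free in the bipartite sense'' is the form the lemma handles is the right intuition, but it is not licensed by the statement as written: for an asymmetric $0$--$1$ function one would have to recheck that the directed edge counts $e_{G_i}(V_j, V_{j'})$ and the induced bipartite graphs still satisfy the Kővári--Sós--Turán-type bound, and also deal with possible loops when $0 \in S$. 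The paper sidesteps all of this by taking $V$ to be the disjoint union of $k$ copies $V_1, \ldots, V_k$ of $G$ and letting $G_i$ be the honest undirected bipartite graph between $V_i$ and $V_{i+1}$ with $\one_{G_i}(x_i,x_{i+1}) = \one_{V_i\times V_{i+1}}(x_i,x_{i+1})\one_S(b_i(x_i-x_{i+1}))$, adjusting $p$ to $k^{-1/s}|S|/(\epsilon n)$ so that $p \ge |V|^{-1/s}$ still holds. Your argument is repaired by either adopting that $k$-partite device or by explicitly extending \cref{lem:dense-pairs-2} to asymmetric adjacency matrices; as written, the application of \cref{cor:k-cycle-removal-kst-free} is not justified.
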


\begin{proof}
We can assume without loss of generality that $t \ge s$ and that $n$ is large enough in terms of $\epsilon, C, s, t, k, q$. Indeed, for small values $n \le n_0(\epsilon, C, s, t, k, q)$, the conclusion is trivial once we take $\delta < n_0^{2-k}$, due to the $|S|$ diagonal solutions $x_1 = \cdots = x_k$.

We follow the proof of \cref{cor:linear-patterns-pseudorandom-weighted} (with $f = n\one_S/|S|$ and a smaller choice of $\epsilon$), using \cref{cor:k-cycle-removal-kst-free} instead of \cref{cor:relative-k-cycle-removal}. We let $V$ consist of $k$ distinct copies $V_1, \ldots, V_k$ of the abelian group $G$ of order $n$, and set $\one_{G_i}(x_i,x_{i+1}) := \one_{V_i \times V_{i+1}}(x_i, x_{i+1}) \one_S(b_i(x_i - x_{i+1}))$, and $p := k^{-1/s}|S|/(\epsilon n)$; note that we have $p \ge |V|^{-1/s}$ by the third assumption in our hypothesis. The conclusion that $S$ contains at least $\delta |S|^k/n$ solutions to $a_1 x_1 + \cdots + a_k x_k$ is naturally the same as that of \cref{cor:linear-patterns-pseudorandom}.

The only significant difference is that instead of a pseudorandom majorant condition, we now need to verify that each $G_i$ is $K_{s,t}$-free. Suppose for the sake of contradiction that this is false, so there exist distinct $x_1, \ldots, x_s \in V_i = G$ and distinct $y_1, \ldots, y_t \in V_{i+1} = G$ such that
\[
    \forall u \le s,\ \forall v \le t: \qquad \qquad 
    b_i(x_u - y_v) \in S.
\]
But then, the matrix $M \in S^{s \times t}$ with entries given by $M_{u,v} := b_i(x_u - y_v)$ has distinct rows and columns (since $a_i b_i \equiv 1 \pmod{n}$, the $x_u$'s are distinct, and the $y_v$'s are distinct), and satisfies $M_{u,u} + M_{v,v} = M_{u,v} + M_{v,u}$ for each $u$ and $v$. This contradicts the fact that $S$ is $(s, t)$-Sidon, and completes our proof.
\end{proof}

\begin{remark}
If $S$ is an $(s,t)$-Sidon subset of $[n] := \{1, \ldots, n\}$ instead of a finite abelian group, one can identify $S$ with a subset of $\Z/N\Z$ with the same property, where $N \ll_{s,t,a_1,\ldots,a_k} n$ can be chosen to be relatively prime with $a_1, \ldots, a_k$. One can then combine \cref{cor:linear-patterns-generalized-Sidon} with an argument of Marcinkiewicz–-Zygmund \cite{zygmund2002trigonometric} (specifically, see \cite[Lemma 6.5]{green2005roth}) to obtain an analogous statement for $(s, t)$-Sidon subsets of $[n]$, where the condition on Fourier coefficients becomes continuous:
\[
    \int_{\R/\Z} \left\vert \frac{1}{|S|} \sum_{m \in S} e^{2\pi i \alpha m} \right\vert^q d\alpha  \le \frac{C}{n}.
\]
%(When $q/2 \in \Z$, this is a bound on the additive energy $\#\{(a_i), (b_i) \in S^{q/2} : \sum_i a_i = \sum_i b_i\}$.)
\end{remark}

\begin{proof}[Proof of \cref{cor:linear-patterns-generalized-Sidon}]
As noted in the remark above, it suffices to prove an analogous statement for subsets of abelian groups $G$ of order $n$, where $\gcd(n, a_1\cdots a_k) = 1$. Suppose that $S \subset G$ has $|S| \ge \epsilon n^{1/2}$, and that for each $h \neq 0$, $S$ contains at most $C$ pairs of the form $(x, x+h)$. If $t$ denotes any integer strictly larger than $\max(C, 1)$, this implies that $S^{2 \times t}$ contains no matrix of the form
\[
    \begin{pmatrix}
        x_1 & x_2 & \cdots & x_t \\
        x_1 + h & x_2 + h & \cdots & x_t + h
    \end{pmatrix},
\]
where $x_1, \ldots, x_t$ are distinct. In other words, $S$ is $(2, t)$-Sidon.

The conclusion would now follow from \cref{cor:linear-patterns-st-Sidon}, using $s = 2$ and $q = 4$, provided that we can show that
\[
    ||\hat{\one_S}||_4 \ll_{\epsilon,C} |S|.
\]
This is equivalent to a bound for the \emph{additive energy} of $S$:
\begin{equation} \label{eq:additive-energy}
    \#\left\{(a, b, c, d) \in S^4 : a - b = c - d \right\} \ll_{\epsilon,C} \frac{|S|^4}{n}.
\end{equation}
But we have 
\[
\begin{aligned}
    \#\left\{(a, b, c, d) \in S^4 : a - b = c - d \right\}
    &=
    \sum_{h \in G} \#\left\{(a, b, c, d) \in S^4 : a - b = c - d = h\right\}
    \\
    &=
    \sum_{h \in G} \left(\#\left\{(a, b) \in S^2 : a - b = h \right\}\right)^2.
\end{aligned}
\]
The contribution of $h = 0$ is exactly $|S|^2$. Moreover, by our second assumption, the contribution of each $h \neq 0$ is at most $C^2$. Overall, we obtain
\[
    \#\left\{(a, b, c, d) \in S^4 : a - b = c - d \right\} \le |S|^2 + C^2 n,
\]
which is acceptable in \cref{eq:additive-energy} since $|S| \ge \epsilon n^{1/2}$. This completes our proof.
\end{proof}

\section{Acknowledgements}

The author wishes to thank his advisor, Professor James Maynard, for his guidance and support, and Professors Ben Green, Terence Tao and Yufei Zhao for helpful comments and suggestions. This work was completed while the author was sponsored by the EPSRC Scholarship at the University of Oxford.

%%%%%%%%%% %%%%%%%%%% END OF SECTIONS %%%%%%%%%% %%%%%%%%%%

\appendix
%\begin{subappendices}

\section{Spectral norms and entropy} \label{sec:spectral-norms-entropy}

In this section we prove \cref{lem:modify-spectral}.

\begin{proof}
We start by proving a couple of claims about any linear function $\lambda(z) = az + b$, with $a, b \in \C$:
\begin{itemize}
    \item[(a).] Given any (bounded) line segment $S \subset \C$, the maximum $\max_{z \in S} |\lambda(z)|$ is attained at one of the ends of $S$.
    \item[(b).] Given any $z_0 \in \C$, there exists
    $z_1 \in \{0\} \cup \{2^k, i2^k, -2^k, -i2^k : k \in \Z_{\ge 0}\}$ such that $|z_1| \le \max(1, 2\sqrt{2}|z_0|)$, and $|\lambda(z_1)| \ge |\lambda(z_0)|$.
\end{itemize}

Claim $(a)$ follows by writing $S = \{ct + d : t \in [0, 1]\}$ for some $c, d \in \C$, and expressing $|\lambda(ct+d)|^2$ as a convex function of $t$. 

For claim $(b)$, draw a line segment through $z_0$ with slope in $\{\pm 1\}$, intersecting both semi-axes of $z_0$'s quadrant at $w_1$ and $w_2$ (if $z_0$ is purely real or imaginary, we may ignore this step). By claim (a), we must have $|\lambda(z_0)| \le |\lambda(w_i)|$ for some $i \in \{1, 2\}$. Then, draw a line segment between $0$ and $2^k w_i/|w_i|$, where $k \ge 0$ is minimal such that $|w_i| \le 2^k$, and apply claim (a) again.
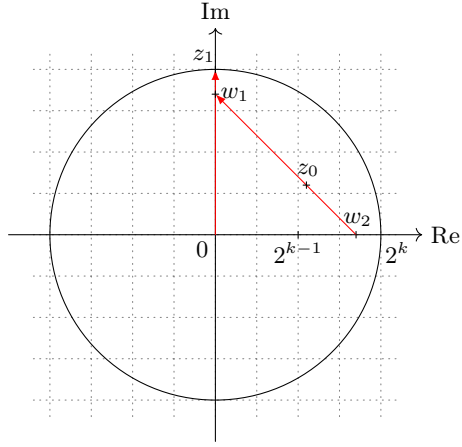
\begin{figure}[ht]
\begin{center}
\begin{tikzpicture}[scale=2.2,cap=round]
  \footnotesize
  \tikzstyle{axes}=[]
  \tikzstyle{important line}=[very thick]
  \draw[style=dotted,opacity=0.5,step=0.25cm] (-1.1,-1.1) grid (1.1,1.1);

  \draw (0,0) circle (1cm);
  \begin{scope}[style=axes]
    \draw[->] (-1.25,0) -- (1.25,0) node[right] {$\Re$};
    \draw[->] (0,-1.25) -- (0,1.25) node[above] {$\Im$};
    
    \draw [xshift=1.1 cm,yshift=0.01cm]
    node[below]
        {$2^k$};
    \draw (0.5,-0.02) -- (0.5,0.02);
    \draw [xshift=0.5 cm,yshift=0.01cm]
    node[below]
        {$2^{k-1}$};
    \draw [xshift=-0.08 cm,yshift=0.01cm]
    node[below]
        {$0$};
    \draw [xshift=0.56 cm,yshift=0.3cm]
    node[above]
        {$z_0$};
    \draw[latex-] [color = red] (0, 0.85) -- (0.85, 0);
    \draw [color = black] (0.53, 0.3) -- (0.57, 0.3);
    \draw [color = black] (0.55, 0.28) -- (0.55, 0.32);
    \draw[-latex] [color = red] (0, 00) -- (0, 1);
    \draw [color = black] (-0.02, 0.85) -- (0.02, 0.85);
    \draw [color = black] (0, 0.83) -- (0, 0.87);
    \draw [color = black] (0.85, -0.02) -- (0.85, 0.02);
    \draw [color = black] (0, 0.98) -- (0, 1.02);
    \draw [xshift=0.86 cm,yshift=0cm]
    node[above]
        {$w_2$};
    \draw [xshift=-0.02 cm,yshift=0.85cm]
    node[right]
        {$w_1$};
    \draw [xshift=0.05cm,yshift=1.08cm]
    node[left]
        {$z_1$};
  \end{scope}
\end{tikzpicture}
\end{center}
\label{fig:modify-linear-function}
\caption{An example of finding $z_1$ starting from $z_0$.}
\end{figure}
%\FloatBarrier

Next, to prove the lemma, recall that
\[
    ||f||_{S^\infty} = \sup_{||g||_{L^2}=||h||_{L^2}=1} \left\vert \E_{x,y}[f(x,y)g(x)h(y)] \right\vert.
\]
For fixed functions $g, h$ with $||g||_{L^2} = ||h||_{L^2} = 1$, note that the expectation $\E_{x,y}[f(x,y)g(x)h(y)]$ is linear in each value of $g$ and $h$. Hence we can apply claim $(b)$ to modify each of these values ($g(x)$ for $x \in X$ and $h(y)$ for $y \in Y$, one at a time), while not decreasing the value of $|\E_{x,y}[f(x,y)g(x)h(y)]|$, until the resulting functions $g'$ and $h'$ only have values in $\{0\} \cup \{2^k, i2^k, -2^k, -i2^k : k \in \Z_{\ge 0}\}$. Then we have
\[
    \left\vert \E_{x,y}[f(x,y)g(x)h(y)] \right\vert 
    \le 
    \left\vert \E_{x,y}[f(x,y)g'(x)h'(y)] \right\vert,
\]
\[
\begin{aligned}
    ||g'||_{L^2}^2 = \E_x\left[ |g'(x)|^2 \right] 
    &\le 
    \E_x \left[1 + \left(2\sqrt{2}|g(x)|\right)^2\right]
    =
    1 + 8||g||_{L^2}^2 = 9,
\end{aligned}
\]
and similarly $||h'||_{L^2} \le 3$. The desired bound for $||f||_{S^\infty}$ follows after taking a supremum over $g, h, g', h'$.

Finally, to establish the entropy bounds for $g$ and $h$, let $g : X \to \{0\} \cup \{2^k, i2^k, -2^k, -i2^k : k \in \Z_{\ge 0}\}$ have $||g||_{L^2} \le 3$. Denoting by $\P_X(g = 0)$ the probability of the set $\{g = 0\} = g^{-1}(0)$, we have
\[
    \H(g) = \P_X(g = 0) \log \frac{1}{\P_X(g = 0)} + \sum_{k=0}^\infty \sum_{\omega^4 = 1} \P_X(g = \omega 2^k) \log \frac{1}{\P_X(g = \omega 2^k)},
\]
and 
\[
    9 \ge ||g||_{L^2}^2 = \sum_{k=0}^\infty \sum_{\omega^4 = 1} 4^k \P_X(g = \omega 2^k).
\]
In particular, it follows that $\P_X(g = \omega 2^k) \le 9/4^k$. Now consider the function $u(p) := p \log \frac{1}{p}$ on $[0, 1]$ (with $u(0) := 0$); this function is bounded, nonnegative, and strictly increasing near $0$. Hence for sufficiently large $k$, we must have
\[
    u(\P_X(g = \omega 2^k)) \le u\left(\frac{9}{4^k}\right),
\]
and thus
\[
\begin{aligned}
    \H(g) &\ll 1 + 4\sum_{k = 2}^\infty u\left(\frac{9}{4^k}\right)
    =
    1 + 4\sum_{k = 2}^\infty \frac{9}{4^k} (k \log 4 - \log 9) \ll 1.
\end{aligned}
\]
The same argument shows that $\H(h) \ll 1$, which completes our proof.
\end{proof}

\begin{remark}
If in \cref{lem:modify-spectral} we assume that $f$ takes real values (which is enough for all combinatorial applications), then in \cref{eq:Schatten-infty} it suffices to consider real-valued functions $g, h$. Consequently, one can ignore the possible values $\pm i 2^k$ from \cref{lem:modify-spectral} in this case.
\end{remark}

\section{The ``super-dense'' removal lemma} \label{sec:super-dense-removal}

In this section, we prove (a slight generalization of) \cref{thm:super-dense-removal}. We will work with a finite probability space $(X, \P)$ with full $\sigma$-algebra; all expectations involving variables $(x_u)_{u \in V(H)}$ are understood to sample each $x_u$ from $(X, \P)$, independently of the other variables.

\begin{theorem}[Weighted version of \cref{thm:super-dense-removal}]
For any graph $H$, $q \in [1, 2)$, $C > 0$ and $\epsilon \in (0, 1)$, there exists
\[
    \delta \ge \exp \left( - O_{H,q,C}\left( \epsilon^{-O_{H,q}(1)}\right) \right)
\] 
such that the following holds. Let $f : X \times X \to [0, \infty)$ be a symmetric function (i.e., $f(x, y) = f(y, x)$) satisfying $||f||_{L^\infty} \le 1$, $||f||_{S^q} \le C$, and
\begin{equation} \label{eq:weighted-H-count}
    \E \prod_{\{u, v\} \in E(H)} f(x_u, x_v) \le \delta.
\end{equation}
Then there is a symmetric set $E \subset X \times X$ such that $||f \one_{E^c}||_{L^1} \le \epsilon$ and $\prod_{\{u, v\} \in E(H)} \one_E(x_u, x_v) = 0$ for any choice of the variables $(x_u)_{u \in V(H)}$.
\end{theorem}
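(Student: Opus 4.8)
The plan is to run the standard regularity method, but with the Schatten regularity lemma \cref{thm:regularity} used in the regime $q < r = 2$: there the relevant exponent is $\alpha(q,2) = 4/(2-q)$, so the partition it produces has entropy only \emph{polynomial} in the inverse regularity parameter, and this polynomial dependence (finite precisely because $q<2$) is what will eventually yield a single-exponential lower bound for $\delta$. First I would apply \cref{thm:regularity} with $m = k = 1$, $X_1 = X$, the single function $f$, $q_1 = q$, $r_1 = 2$, and a small parameter $\epsilon_1$ to be pinned down later; the bound \cref{eq:spec-norm-bound} then reads $||f - \tilde f||_{L^2} \le \epsilon_1 ||f||_{S^q} \le C\epsilon_1$ with $\H(\mP') \ll_q \epsilon_1^{-4/(2-q)}$, where $\tilde f := \E(f \mid \mP' \otimes \mP')$. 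Note $\tilde f$ is again symmetric (the same partition is used in both coordinates), takes values in $[0,1]$, and satisfies $||\tilde f||_{S^q} \le ||f||_{S^q} \le C$ by \cref{lem:Schatten-norms-functions}. Since $f, \tilde f \in [0,1]$ and $||f-\tilde f||\sq \le ||f-\tilde f||_{S^\infty} \le ||f-\tilde f||_{S^2} \le C\epsilon_1$, the classical dense $H$-counting lemma gives $|\Hom(H,f) - \Hom(H,\tilde f)| \le |E(H)|\,C\epsilon_1$, hence $\Hom(H,\tilde f) \le \delta + |E(H)|\,C\epsilon_1$.

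Next I would pass to a bounded-complexity model. Following the entropy computation behind \cref{eq:entropy-bound} (as in \cref{prop:reduction-to-structure}), merge every part of $\mP'$ of probability below a threshold $\tau$ into one exceptional part $R$; this gives a partition $\mP$ with $|\mP| \le 1/\tau + 1$ and $\P(R) \le \H(\mP')/\log(1/\tau)$, so that $\tau = \exp(-\H(\mP')/\epsilon_2)$ forces $\P(R) \le \epsilon_2$. Writing $\tilde f' := \E(f \mid \mP \otimes \mP)$ we still have $0 \le \tilde f' \le 1$, $\tilde f'$ symmetric, $||\tilde f'||_{S^q} \le C$, and $||\tilde f - \tilde f'||_{L^1} \le 2\P(R) \le 2\epsilon_2$ (the two functions agree off $(R\times X)\cup(X\times R)$ and are bounded by $1$), whence $\Hom(H,\tilde f') \le \delta + |E(H)|(C\epsilon_1 + 2\epsilon_2)$. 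Now $\tilde f'$ is a weighted graph supported on a \emph{bounded} number $\le 1/\tau+1$ of clusters, all but one of mass $\ge \tau$, and of bounded Schatten $q$-norm; to it I would apply the dense $H$-removal lemma for weighted graphs --- the general-$H$ analogue of \cref{thm:dense-k-cycle-removal}, with the edge set symmetrized as in the remark following it --- in its elementary cluster-graph form: deleting every cluster-pair of density $< \epsilon/4$ and every pair meeting $R$ costs at most $\epsilon/4 + 2C\sqrt{\epsilon_2}$ in $L^1$ (using \cref{eq:Schatten-infty} for the pairs meeting $R$), and a homomorphic copy of $H$ surviving among the remaining clusters would force $\Hom(H,\tilde f') \ge \tau^{|V(H)|}(\epsilon/4)^{|E(H)|} =: \delta_0$. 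So provided $\delta + |E(H)|(C\epsilon_1 + 2\epsilon_2) < \delta_0$, one gets a symmetric $E \subset X \times X$ with $||\tilde f'\one_{E^c}||_{L^1} \le \epsilon/4 + 2C\sqrt{\epsilon_2}$ and $\prod_{\{u,v\}\in E(H)} \one_E(x_u,x_v) = 0$ for all $(x_u)_{u\in V(H)}$; transferring back, $||f\one_{E^c}||_{L^1} \le ||\tilde f'\one_{E^c}||_{L^1} + ||f-\tilde f'||_{L^1} \le \epsilon/4 + 2C\sqrt{\epsilon_2} + C\epsilon_1 + \sqrt{2\epsilon_2}$, using $||f-\tilde f'||_{L^1} \le ||f-\tilde f'||_{L^2} \le C\epsilon_1 + \sqrt{2\epsilon_2}$.

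The remaining --- and main --- work is the quantitative bookkeeping: one must choose $\epsilon_1,\epsilon_2$ (polynomially small in $\epsilon$, with exponents depending on $H$ and $q$) so that the $L^1$ bound above is $\le \epsilon$ and simultaneously $\delta + |E(H)|(C\epsilon_1 + 2\epsilon_2) < \delta_0 = \tau^{|V(H)|}(\epsilon/4)^{|E(H)|}$ with $\tau = \exp(-\H(\mP')/\epsilon_2)$ and $\H(\mP') \ll_q \epsilon_1^{-4/(2-q)}$, and then read off the largest admissible $\delta$. The hard part is that this last constraint is self-referential: the counting error $C\epsilon_1$ from the regularity step has to be beaten by the removal threshold $\delta_0$, which shrinks as the cluster count $1/\tau$ grows, which in turn blows up as $\epsilon_1 \to 0$. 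Reconciling these two demands while keeping every parameter polynomially related to $\epsilon$ --- so that $\log(1/\tau)$, being only polynomial in $1/\epsilon$, produces the claimed $\delta \ge \exp(-O_{H,q,C}(\epsilon^{-O_{H,q}(1)}))$ instead of a tower --- is exactly where the polynomial (rather than tower-type) entropy bound of \cref{thm:regularity} for $q<2$ is indispensable; if the naive cluster-graph removal of the second paragraph does not quite close the numerology, the fallback is to carry out that step directly via the spectral decomposition of $\tilde f'$, using that $q<2$ makes its singular values decay like $\sigma_j \ll_q C j^{-1/q}$ and hence $\tilde f'$ approximable in $L^2$ by a function of rank $O_q((C/\mu)^{2q/(2-q)})$, which is again only polynomial in the error $1/\mu$.
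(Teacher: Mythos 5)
Your first step (applying \cref{thm:regularity} with $r=2$ to exploit the polynomial entropy bound $\H(\mP')\ll_q \epsilon_1^{-4/(2-q)}$) is exactly right and matches the paper. But the transference step has a genuine gap, and it is the one you yourself flag at the end: the constraint $|E(H)|\,C\epsilon_1 < \delta_0 = \tau^{|V(H)|}(\epsilon/4)^{|E(H)|}$ cannot be satisfied by any choice of parameters. To keep $\P(R)\le\epsilon_2$ you are forced (as in \cref{eq:entropy-bound}) to take the merging threshold $\tau=\exp(-\H(\mP')/\epsilon_2)$, so $\delta_0$ is exponentially small in $\epsilon_1^{-4/(2-q)}$, while the global counting error is only polynomially small in $\epsilon_1$; the inequality $\epsilon_1 \lesssim \exp(-c\,\epsilon_1^{-4/(2-q)})$ fails for every small $\epsilon_1$. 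This is not a bookkeeping issue to be "read off" at the end --- the route through a \emph{global} counting lemma plus a cluster-graph removal threshold is structurally blocked, because the additive transfer error is compared against a per-copy threshold carrying the factor $\tau^{|V(H)|}$. Your spectral-decomposition fallback does not address this mismatch either.

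The paper's fix is to make the comparison \emph{local to a single product of blocks}, so that the factor $\prod_w \P(A_w)$ appears on both sides and cancels. Concretely: define $E^c$ as the union of blocks $A\times B$ of $\mP'\otimes\mP'$ on which (i) $f_1:=\E(f\mid\mP'\otimes\mP')<\alpha$, (ii) $\E[|f_2|\mid A\times B]\ge\beta$ where $f_2:=f-f_1$, or (iii) one of $\P(A),\P(B)$ is below $\gamma$. If a homomorphic copy of $H$ survived on blocks $(A_w)_{w\in V(H)}$, expand $f=f_1+f_2$ inside $\E[\prod_w\one_{A_w}(x_w)\prod_{\{u,v\}}f(x_u,x_v)]$: every term containing an $f_2$ is at most $\beta\prod_w\P(A_w)$ (because $\|f\|_{L^\infty}\le 1$ and the retained blocks have $\E[|f_2|\mid A_u\times A_v]<\beta$), while the pure-$f_1$ term is at least $\alpha^{|E(H)|}\prod_w\P(A_w)$. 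Taking $\beta=\alpha^{|E(H)|}/2$, the comparison is now between two quantities polynomial in $\alpha$, and one gets the contradiction once $\delta<\alpha^{|E(H)|}\gamma^{|V(H)|}/4$ --- the exponentially small factor $\gamma^{|V(H)|}$ lands harmlessly in $\delta$ (which is allowed to be $\exp(-\epsilon^{-O(1)})$) rather than in a constraint against $\epsilon_1$. The $L^1$ cost of the type-(ii) blocks is then controlled by Markov's inequality, $\P(\E(|f_2|\mid\mP'\otimes\mP')\ge\beta)\le \|f_2\|_{L^1}/\beta\le 2\tau C/\alpha^{|E(H)|}$, the type-(iii) blocks by the entropy argument, and the type-(i) blocks trivially by $\alpha$. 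If you replace your global counting lemma and cluster-graph removal with this localized expansion, the rest of your parameter choices go through and yield the claimed $\delta$.
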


\begin{remark}
One can remove the assumption $||f||_{L^\infty} \le 1$ at the expense of a slightly worse bound. To recover \cref{thm:super-dense-removal}, one can take $X = V(G)$ (equipped with uniform probability) and $f = \one_G$.
\end{remark}

\begin{proof}
Note that $E(H)$ cannot be empty, since $\E[1] = 1 > \epsilon$. %Also, any isolated vertices in $H$ do not contribute to the products over $\{u, v\} \in E(H)$, so we may remove those vertices from $H$ without loss of generality (i.e., by replacing $H$ with another graph depending only on $H$). The resulting graph has $|V(H)| \le 2|E(H)|$, since each vertex is contained in at least one edge.

Let us apply our regularity lemma, \cref{thm:regularity}, for some parameter $\tau \in (0, 1)$ to be chosen later in the proof (in place of $\eps$), the function $f$, and $r = 2$. This yields a partition $\mP$ of $X$ with 
\[
    \H(\mP) \le \frac{c(q)}{\tau^{4/(2-q)}},
\]
for some constant $c(q) > 0$, such that
\[
    ||f - \E(f \mid \mP \otimes \mP)||_{L^2} \le \tau C.
\]
For brevity, we write $f_1 := \E(f \mid \mP \otimes \mP)$ and $f_2 := f - f_1$, so that $f_1$ is $(\mP \otimes \mP)$-measurable and $||f_2||_{L^2} \le \tau C$. Note that this has the effect of a \emph{strong} regularity lemma, but with no uniform component $f_3$ (and no tower-exponential dependencies).

We define $E^c \subset X \times X$ as the union of the following subsets $A \times B \in \mP \otimes \mP$, using three parameters $\alpha, \beta, \gamma$ to be chosen later:
\begin{itemize}
    \item[$(i)$.] Those where $f_1$ is small (more precisely, smaller than $\alpha$);
    \item[$(ii)$.] Those where $|f_2|$ is large on average (more precisely, where $\E[|f_2| \mid A \times B] \ge \beta$);
    \item[$(iii)$.] Those with small probability (more precisely, with $\P(A) \le \gamma$ or $\P(B) \le \gamma$).
\end{itemize}
In other words, we defined $E$ by removing the blocks of edges above from $X \times X$. Now suppose for the sake of contradiction that $\prod_{\{u, v\} \in E(H)} \one_E(x_u, x_v)$ is nonzero for some choice of $(x_u)_{u \in V(H)}$. Letting $A_u$ be the part of $\mP$ containing $x_u$, we must have
\[
    (A_u, A_v) \in E,
    \qquad
    \forall \{u, v\} \in E(H).
\]
Assuming that \cref{eq:weighted-H-count} holds for some $\delta \in (0, 1)$ to be chosen later, we have
\[
    \E \left[ \prod_{w \in V(H)} \one_{A_w}(x_w) \prod_{\{u, v\} \in E(H)} f(x_u, x_v) \right] \le \delta,
\]
and we may split $f = f_1 + f_2$ in each factor from the product above. In the resulting expansion, each term containing some instance of $f_2(x_u, x_v)$ is bounded, due to the $L^\infty$ bound on $f$, by
\[
\begin{aligned}
     \E \left[ |f_2|(x_u, x_v) \prod_{w \in V(H)} \one_{A_w}(x_w) \right]
     &=
     \E \left[ \E(|f_2| \mid \mP \otimes \mP)(x_u, x_v) \prod_{w \in V(H)} \one_{A_w}(x_w) \right]
     \\
     &=
     \E \left[ \E[|f_2| \mid A_u \times A_v] \prod_{w \in V(H)} \one_{A_w}(x_w) \right]
     \\
     &\le 
     \beta \prod_{w \in V(H)} \P(A_w).
\end{aligned}
\]
On the other hand, the term consisting only of $f_1$'s is lower-bounded, due to the removal of the type-$(i)$ blocks, by
\[
    \alpha^{|E(H)|} \prod_{w \in V(H)} \P(A_w).
\]
Putting the last three bounds together, we obtain $\left(\alpha^{|E(H)|} - \beta \right) \prod_{w \in V(H)} \P(A_w) \le \delta$, 
and we now choose $\beta := \alpha^{|E(H)|}/2$ to conclude that
\[
    \delta \ge \frac{\alpha^{|E(H)|}}{2} \prod_{w \in V(H)} \P(A_w)
    \ge 
    \frac{\alpha^{|E(H)|} \gamma^{|V(H)|}}{2}.
\]
Finally, we set 
\begin{equation} \label{eq:delta-choice}
    \delta := \frac{\alpha^{|E(H)|} \gamma^{|V(H)|}}{4}
\end{equation}
to obtain a contradiction (proving that $\prod_{\{u, v\} \in E(H)} \one_E(x_u, x_v)$ is zero everywhere). It remains to choose suitable values for $\tau, \alpha, \gamma$ such that $||f \one_{E^c}||_{L^1} \le \epsilon$. By the definition of $E^c$,
\[
    ||f \one_{E^c}||_{L^1}
    \le
    \E[f \one_{f_1 \le \alpha}]
    +
    \P(\E(|f_2| \mid \mP \otimes \mP) \ge \beta)
    +
    \P(R_\gamma)^2,
\]
where $R_\gamma$ denotes the union of all sets $A \in \mP$ with $\P(A) \le \gamma$. But by the same reasoning as in \cref{eq:entropy-bound}, we have
\[
    \P(R_\gamma) \le \frac{\H(\mP)}{\log(1/\gamma)}
    \le 
    \frac{c(q)}{\tau^{4/(2-q)} \log(1/\gamma)}.
\]
At the same time, we have $\E[f \one_{f_1 \le \alpha}] = \E[f_1 \one_{f_1 \le \alpha}] \le \alpha$ and
\[
    \P(\E(|f_2| \mid \mP \otimes \mP) \ge \beta)
    \le 
    \frac{||f_2||_{L^1}}{\beta} 
    \le 
    \frac{2||f_2||_{L^2}}{\alpha^{|E(H)|}}
    \le 
    \frac{2\tau C}{\alpha^{|E(H)|}},
\]
so it makes sense to pick 
\[
    \alpha(\tau) := \tau^{1/2|E(H)|}
    \qquad\quad 
    \text{and} 
    \qquad\quad 
    \gamma(\tau) := \exp \left( - \frac{1}{\tau^{1+4/(2-q)}} \right),
\]
to conclude that 
\[
    ||f\one_{E^c}||_{L^1} 
    \le 
    \tau^{1/2|E(H)|} + 2C \sqrt{\tau} + 
    c(q)^2 \tau^2.
\]
To ensure that the right-hand side is at most equal to $\epsilon$ (and that $\tau \in (0, 1)$), it suffices to pick some
\[
    \tau = \left(\frac{\epsilon}{O_{C,q}(1)}\right)^{2 E(H)}.
\]
Noting that both $\alpha(\tau)$ and $\gamma(\tau)$ are increasing functions of $\tau$, this produces (due to \cref{eq:delta-choice}) a value of
\[
    \delta 
    \gg 
    \exp \left(- \left(\frac{\epsilon}{O_{C,q}(1)}\right)^{-O_{H,q}(1)} \right)^{O_H(1)},
\]
as we wanted.
\end{proof}

\section{A sharp regularity lemma depending only on edge density}
\label{sec:sharp-regularity}
%Mention various energy functions, including the cubic one on $\mathbb{F}_3^n$. This one basically uses a logarithmic-type one, based on inequalities from information theory

%Used Gowers, Schatten, etc. about the graph; but what if all you know is $p = \E[\one_G]$?

%Prove this using your energy increment set-up, then remark that a similar argument leads to the strong removal lemma! Have a question on whether that's sharp too (or any useful)

%This seems related to mean entropy density and to the work of Fox \cite{fox2011new}; see especially \cite{conlon2013graph}, is that work relative?

Recall that using Schatten norms in the upper bounds of regularity lemmas allowed us to work with sparse graphs $G$, with edge density $p = \E[\one_G] \searrow 0$, since the Schatten $q$-norm $||\one_G/p||_{S^q}$ is not a function of $p$ (and may remain bounded as $p \searrow 0$ for certain families of graphs). We now ask for the best upper bound that one can use in a (weak) regularity lemma, which \emph{only} depends on the edge density $p$. In fact, we will prove a more general \emph{relative} regularity lemma, leading to a dense model theorem. We will apply our abstract energy optimization argument (\cref{lem:energy-optimization}) with a logarithmic version of energy, which may be of independent interest. 

\begin{definition}[Relative logarithmic energy]
Let $(X, \P)$ be a finite probability space (with full $\sigma$-algebra), $\mP$ be a partition of $X$, and $f, \nu : X \to [0, \infty)$ satisfy $f = 0$ whenever $\nu = 0$. Let us normalize $f, \nu$ such that $\E[f] = \E[\nu] = 1$. We define the \emph{relative logarithmic energy} of $\mP$ with respect to $f$ and $\nu$ by
\[
    \mE_f^\nu(\mP) := 
    \E \left[f \log \frac{\E(f \mid \mP)}{\E(\nu \mid \mP)} \right],
\]
where we interpret ratios of $0/0$ as $0$. Note that the trivial partition has energy $0$.
\end{definition}

\begin{lemma}[Relative logarithmic energy increment] \label{lem:logarithmic-energy-increment}
Let $\mP, \mP'$ be partitions of a finite probability space $(X, \P)$ such that $\mP'$ is finer than $\mP$. Then one has
\[
    \E \left\vert \E(f \mid \mP') - \frac{\E(f \mid \mP)}{\E(\nu \mid \mP)} \E(\nu \mid \mP') \right\vert 
    \le \sqrt{2\left(\mE_f^\nu(\mP') - \mE_f^\nu(\mP)\right)}.
\]
In particular, this shows that relative logarithmic energy is nondecreasing and nonnegative.
\end{lemma}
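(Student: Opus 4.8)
The plan is to recognize the left-hand side as (twice) a conditional KL-divergence between two probability measures on the finer partition $\mP'$, and then to invoke Pinsker's inequality. First I would normalize as in the definition, so $\E[f]=\E[\nu]=1$, and observe that $f\,d\P$ and $\nu\,d\P$ are probability measures on $X$; write $\mu_f,\mu_\nu$ for them. The energy $\mE_f^\nu(\mP)$ is exactly the KL-divergence $\DKL(\mu_f\Vert\mu_\nu)$ computed on the $\sigma$-algebra $\Sigma(\mP)$, i.e.\ of the pushforwards of $\mu_f,\mu_\nu$ under the map $X\to\mP$ sending $x$ to its part. Indeed $\mu_f(P)=\E[f\one_P]=\P(P)\E(f\mid P)$ and similarly for $\nu$, so $\sum_{P\in\mP}\mu_f(P)\log\frac{\mu_f(P)}{\mu_\nu(P)}=\sum_P \P(P)\E(f\mid P)\log\frac{\E(f\mid P)}{\E(\nu\mid P)}=\E[f\log\frac{\E(f\mid\mP)}{\E(\nu\mid\mP)}]$.

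The key step is the chain rule for KL-divergence: since $\mP'$ refines $\mP$, restricting to $\Sigma(\mP')$ can only increase the divergence, and more precisely
\[
    \mE_f^\nu(\mP') - \mE_f^\nu(\mP)
    = \sum_{P\in\mP}\mu_\nu(P)\,\DKL\!\big(\mu_f(\cdot\mid P)\,\big\Vert\,\mu_\nu(\cdot\mid P)\big)\cdot\frac{\mu_f(P)}{\mu_\nu(P)},
\]
or more cleanly, writing $\mP'|_P$ for the parts of $\mP'$ inside $P$, the increment equals $\sum_{P}\sum_{Q\subset P}\mu_f(Q)\log\frac{\mu_f(Q)/\mu_f(P)}{\mu_\nu(Q)/\mu_\nu(P)}$, a sum of relative entropies of conditional distributions weighted by $\mu_f(P)$. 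This immediately gives monotonicity and nonnegativity (Gibbs' inequality termwise), settling the ``in particular'' clause. For the quantitative bound, I would apply Pinsker's inequality $\DKL(\mu\Vert\nu)\ge \tfrac12\Vert\mu-\nu\Vert\TV^2 = \tfrac{1}{2}\big(\sum_i|\mu_i-\nu_i|\big)^2$ to each conditional pair, then combine: by Cauchy--Schwarz (or concavity of $\sqrt{\cdot}$ and the weights $\mu_f(P)$ summing to $1$),
\[
    \sum_{P}\mu_f(P)\cdot\frac{1}{\mu_f(P)}\sum_{Q\subset P}|\mu_f(Q)-\tfrac{\mu_f(P)}{\mu_\nu(P)}\mu_\nu(Q)|
    \le\sqrt{2\sum_P\mu_f(P)\cdot\tfrac{1}{\mu_f(P)^2}\Big(\cdots\Big)}\,,
\]
which one arranges to be $\le\sqrt{2(\mE_f^\nu(\mP')-\mE_f^\nu(\mP))}$. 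The quantity being bounded, $\E|\E(f\mid\mP')-\frac{\E(f\mid\mP)}{\E(\nu\mid\mP)}\E(\nu\mid\mP')|$, unfolds to $\sum_{P}\sum_{Q\subset P}\P(Q)\,|\E(f\mid Q)-\frac{\E(f\mid P)}{\E(\nu\mid P)}\E(\nu\mid Q)| = \sum_P\sum_{Q\subset P}|\mu_f(Q)-\frac{\mu_f(P)}{\mu_\nu(P)}\mu_\nu(Q)|$, matching exactly the total-variation sum above (the renormalization factor $\mu_f(P)/\mu_\nu(P)$ is precisely what makes $\{\frac{\mu_f(P)}{\mu_\nu(P)}\mu_\nu(Q)/\mu_f(P)\}_{Q\subset P}$ a probability vector, so Pinsker applies).

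The main obstacle is bookkeeping the weights correctly so that the Cauchy--Schwarz step produces the clean bound $\sqrt{2\,\Delta\mE}$ rather than a weighted variant: one has to check that the per-part factor $\mu_f(P)$ in the energy increment cancels against the $1/\mu_f(P)$ needed to turn the inner sum into a genuine TV-distance, and that applying Cauchy--Schwarz with the probability weights $\mu_f(P)$ is legitimate. I should also handle the degenerate cases where $\mu_\nu(P)=0$ (then $\mu_f(P)=0$ by the hypothesis $f=0$ where $\nu=0$, so the part contributes nothing and the $0/0$ convention applies) and confirm that the logarithms are all well-defined with the stated conventions. Once these are in place, the estimate follows from Pinsker plus Jensen/Cauchy--Schwarz with no further computation.
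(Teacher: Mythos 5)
Your proposal is correct, and it rests on the same inequality the paper uses, namely Pinsker; the difference is purely in how it is deployed. The paper applies Pinsker \emph{once, globally}, to the pair $g := \E(f \mid \mP')$ and $h := \frac{\E(f \mid \mP)}{\E(\nu \mid \mP)}\E(\nu \mid \mP')$: both have expectation $1$ (for $h$, tower the conditional expectations), and since $\log\frac{\E(f\mid\mP)}{\E(\nu\mid\mP)}$ is $\mP'$-measurable, the quantity $\E[g\log(g/h)]$ telescopes exactly to $\mE_f^\nu(\mP') - \mE_f^\nu(\mP)$, so the bound is a one-line consequence of $\E|g-h| \le \sqrt{2\E[g\log(g/h)]}$. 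You instead pass through the chain rule for KL divergence, apply Pinsker to each conditional distribution on $\mP'|_P$, and reassemble with Cauchy--Schwarz against the weights $\mu_f(P)$. Your bookkeeping is right: the left-hand side does unfold to $\sum_P\sum_{Q\subset P}\bigl|\mu_f(Q)-\tfrac{\mu_f(P)}{\mu_\nu(P)}\mu_\nu(Q)\bigr|$, the increment is $\sum_P \mu_f(P)\,\DKL(\mu_f(\cdot\mid P)\Vert\mu_\nu(\cdot\mid P))$ (your first displayed formula for the increment has a spurious factor $\mu_f(P)/\mu_\nu(P)$, but the ``cleaner'' rewriting you give right after is the correct one), and the $\mu_f(P)$ weights cancel as you anticipate. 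What your route buys is the termwise nonnegativity via Gibbs, which makes the ``in particular'' clause transparent; what the global application buys is the elimination of the chain-rule and Cauchy--Schwarz steps entirely. Either way the proof is complete.
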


\begin{proof}
This is a consequence of \emph{Pinsker's inequality} in the form
\[
    \E |g - h| \le \sqrt{2 \E \left[ g \log \frac{g}{h} \right]},
\]
when $\E[g] = \E[h] = 1$ (and $g, h \ge 0$, with $g = 0$ whenever $h = 0$).
\end{proof}

\begin{theorem}[Weak relative regularity lemma] \label{thm:weak-relative-regularity}
Let $(X_1 \times X_2, \P)$ be a finite probability space, $\eps \in (0, 1)$, and $f, \nu : X_1 \times X_2 \to [0, \infty)$ be such that $f = 0$ whenever $\nu = 0$, and $\E[f] = \E[\nu] = 1$. Then there is a partition $\mP = \mP_1 \otimes \mP_2$ of $X_1 \times X_2$ with $2^{O(1/\eps^2)}$ parts, such that
\[
    \left\vert \left\vert f - \frac{\E(f \mid \mP)}{\E(\nu \mid \mP)}\nu\right\vert\right\vert\sq \le \eps \sqrt{\E\left[f \log \frac{f}{\nu}\right]}.
\]
If $X_1 = X_2$, one can also take $\mP_1 = \mP_2$. In particular, if $G$ is a finite graph with edge density $p = \E[\one_G]$ (allowing self-edges), one can find a partition $\mP_0$ of $V(G)$ with $2^{O(1/\eps^2)}$ parts such that
\[
    \left\vert\left\vert \one_{G} - \E(\one_{G} \mid \mP_0 \otimes \mP_0)\right\vert\right\vert\sq \le \eps p \sqrt{\log(1/p)},
\]
where $\mP = \mP_0 \otimes \mP_0$. This last bound is sharp, in the sense that one cannot replace $p \sqrt{\log(1/p)}$ with a function of the form $o_{p \to 0}(p \sqrt{\log(1/p)})$ in the right-hand side, uniformly in $G$ and $\eps$.
\end{theorem}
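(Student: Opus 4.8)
# Proof Proposal for Theorem B.5 (Weak relative regularity lemma)

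\textbf{Overall strategy.} The plan is to apply the abstract energy optimization argument (\cref{lem:energy-optimization}) with the complexity function $\Complexity(\mP) = \log|\mP|$ and the energy function $\Energy(\mP) = \mE_f^\nu(\mP_1 \otimes \mP_2)$, where $\mathscr{P}$ is the family of all partitions of $X_1 \times X_2$ of the form $\mP_1 \otimes \mP_2$. The hypotheses of \cref{lem:energy-optimization} are checked as follows: the relative logarithmic energy is nondecreasing (\cref{lem:logarithmic-energy-increment}), it is nonnegative, and $\log|\mP|$ is subadditive with $\log|\mT| = 0$. We also need the upper bound $\Energy(\mD) - \Energy(\mT) = \mE_f^\nu(\mD) \le \E[f\log(f/\nu)]$; since $\mD$ is the discrete partition, $\E(f\mid\mD) = f$ and $\E(\nu\mid\mD) = \nu$, so this is an equality. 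Applying \cref{lem:energy-optimization} with $\eps$ replaced by a suitable constant multiple of $\eps^2$ (to be tracked below) yields a partition $\mP = \mP_1\otimes\mP_2$ with $\log|\mP| \ll 1/\eps^2$, i.e.\ $|\mP| = 2^{O(1/\eps^2)}$, such that for every $\mQ = \mQ_1 \otimes \mQ_2$,
\[
    \mE_f^\nu(\mP \vee \mQ) - \mE_f^\nu(\mP) \ll \eps^2 \log|\mQ| \cdot \E\!\left[f\log\tfrac{f}{\nu}\right].
\]

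\textbf{From the energy defect to the cut norm bound.} The cut norm $\|f - \tfrac{\E(f\mid\mP)}{\E(\nu\mid\mP)}\nu\|\sq$ is a supremum over sets $A\subset X_1$, $B\subset X_2$ of $|\E[(f - \tfrac{\E(f\mid\mP)}{\E(\nu\mid\mP)}\nu)\one_A\one_B]|$. Fix such $A, B$ and set $\mQ := \Part(\one_A) \otimes \Part(\one_B)$, which has $|\mQ| \le 4$, so $\log|\mQ| \ll 1$. Writing $\tilde f := \tfrac{\E(f\mid\mP)}{\E(\nu\mid\mP)}\nu$, the function $\tilde f$ is such that $\E(\tilde f \mid \mP) = \E(f\mid\mP)$, and since $\one_A\one_B$ is $(\mP\vee\mQ)$-measurable, we may replace $f$ and $\tilde f$ by their conditional expectations onto $\mP\vee\mQ$ inside the expectation; thus the quantity to bound equals $|\E[(\E(f\mid\mP\vee\mQ) - \tfrac{\E(f\mid\mP)}{\E(\nu\mid\mP)}\E(\nu\mid\mP\vee\mQ))\one_A\one_B]|$, which by the triangle inequality is at most $\E|\E(f\mid\mP\vee\mQ) - \tfrac{\E(f\mid\mP)}{\E(\nu\mid\mP)}\E(\nu\mid\mP\vee\mQ)|$ (note $\tfrac{\E(f\mid\mP)}{\E(\nu\mid\mP)}$ is $\mP$-measurable, hence constant on parts of $\mP\vee\mQ$, so this rewriting is legitimate provided one is careful that the correct conditional-expectation object appears --- here one uses that $\tilde f$ restricted to a part $P$ of $\mP$ equals $\tfrac{\E(f\mid P)}{\E(\nu\mid P)}\nu$ on $P$). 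Applying \cref{lem:logarithmic-energy-increment} with $\mP$ and $\mP' = \mP\vee\mQ$, this is at most $\sqrt{2(\mE_f^\nu(\mP\vee\mQ) - \mE_f^\nu(\mP))} \ll \eps\sqrt{\E[f\log(f/\nu)]}$ by the energy defect bound. Taking the supremum over $A, B$ gives the claimed inequality (after absorbing the absolute constant into the choice of the parameter fed to \cref{lem:energy-optimization}). For the case $X_1 = X_2$, one replaces $\mathscr{P}$ by the family of partitions $\mP_0 \otimes \mP_0$ (still closed under common refinements since $(\mP_0\otimes\mP_0)\vee(\mQ_0\otimes\mQ_0) = (\mP_0\vee\mQ_0)\otimes(\mP_0\vee\mQ_0)$) and repeats the argument verbatim.

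\textbf{The graph corollary and sharpness.} For a graph $G$ with edge density $p$, take $f := \one_G/p$ and $\nu := \one_{X\times X}$ (the constant function $1$), both with mean $1$, and note $f = 0$ where $\nu = 0$ holds vacuously; then $\E(\nu\mid\mP) = 1$, and $\E[f\log(f/\nu)] = \E[(\one_G/p)\log(\one_G/p)] = p\cdot\tfrac1p\log\tfrac1p = \log(1/p)$, so the general statement gives $\|\one_G/p - \E(\one_G/p\mid\mP_0\otimes\mP_0)\|\sq \le \eps\sqrt{\log(1/p)}$, i.e.\ $\|\one_G - \E(\one_G\mid\mP_0\otimes\mP_0)\|\sq \le \eps p\sqrt{\log(1/p)}$, as desired. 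For sharpness, I would exhibit, for each small $p$, a graph $G$ on $n$ vertices (with $n$ large, and $p$ of the form so that $pn \in \Z$) for which every partition $\mP_0$ with a bounded (or even slowly growing) number of parts has $\|\one_G - \E(\one_G\mid\mP_0\otimes\mP_0)\|\sq \gg p\sqrt{\log(1/p)}$. The natural candidate is the Erd\H{o}s--R\'enyi random graph $G(n,p)$: one expects that for such $G$, for any partition $\mP_0$ of $V(G)$ into $2^{O(1/\eps^2)}$ parts, the conditional expectation $\E(\one_G\mid\mP_0\otimes\mP_0)$ is close to the constant $p$ on most pairs of parts, yet the cut norm $\|\one_G - p\|\sq$ is of order $p\sqrt{\log(1/p)}$ when $p$ is not too small --- this can be seen by choosing $A = B$ to be a random set of density chosen to maximize the (signed) edge discrepancy, where a second-moment / large-deviation computation shows the discrepancy of $G(n,p)$ over such a pair reaches $\asymp p\sqrt{\log(1/p)}\cdot |A||B|/n^2$. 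The main obstacle is making this lower bound fully rigorous: one must show the discrepancy cannot be absorbed by \emph{any} bounded-complexity partition, which requires a union bound over the ($n^{O(1/\eps^2)}$-many) choices of $\mP_0$ together with the ($2^{O(n)}$-many) choices of $A, B$ refining it, balanced against the exponential concentration of the edge count --- so one should take $p$ bounded below by an appropriate negative power of $\log n$, or equivalently state the sharpness with $\eps$ and $G$ ranging over an implicit family. I would present this sharpness direction as the technical heart of the proof and organize it as: (1) fix the target discrepancy set construction; (2) compute its expected discrepancy in $G(n,p)$; (3) concentration plus union bound over low-complexity partitions; (4) conclude no $o(p\sqrt{\log(1/p)})$ bound can hold uniformly.
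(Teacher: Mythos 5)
Your energy-optimization argument, the choice $\mQ = \{A_1,A_1^c\}\otimes\{A_2,A_2^c\}$, the reduction of the cut-norm integrand to conditional expectations onto $\mP\vee\mQ$, the application of \cref{lem:logarithmic-energy-increment} (Pinsker), the $X_1=X_2$ modification, and the specialization $f=\one_G/p$, $\nu\equiv 1$ all coincide with the paper's proof; that portion is correct and complete.

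The sharpness claim is where your proposal falls short. You propose taking $G=G(n,p)$ and running a discrepancy computation, but you yourself flag the "main obstacle": to prove sharpness one must show that \emph{every} partition $\mP_0$ with $2^{O(1/\eps^2)}$ parts leaves cut norm $\gg \eps\, p\sqrt{\log(1/p)}$, and a union bound over the $\approx 2^{Cn/\eps^2}$ partitions requires, for each fixed $\mP_0$, that the event "\emph{no} cut has large discrepancy relative to $\E(\one_G\mid\mP_0\otimes\mP_0)$" has probability $2^{-\omega(n/\eps^2)}$. Lower-bounding a supremum with that failure probability is substantially harder than the second-moment heuristic you describe (one would need, e.g., many nearly independent candidate cuts), and you leave this step as a plan rather than a proof. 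The paper avoids all of this with a deterministic example: the statement explicitly allows self-edges precisely so that one can take $G$ to be the graph of $n$ loops, $\one_G(a,b)=\one_{a=b}$, with $p=1/n$ and $\eps\asymp 1/\sqrt{\log n}$. Any $\mP_0$ with at most $n/2$ parts must contain non-singleton parts covering $\ge n/2$ vertices; choosing $A_1=A_2=C$ to split each such part roughly in half gives $\|\one_G-\E(\one_G\mid\mP_0\otimes\mP_0)\|\sq\gg 1/n$, while $\eps\, p\sqrt{\log(1/p)}\asymp 1/n$, so the ratio does not vanish as $p\to 0$. You should replace your probabilistic sketch with this (or another fully worked) construction; as written, the sharpness direction — which you correctly identify as the technical heart — is not established.
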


\begin{remark}
The cut norm of a function on $X_1 \times X_2$ is defined as in \cref{eq:cut}, even if there is now a joint probability distribution on $X_1 \times X_2$ (which may not split as a product of distributions on $X_1$ and $X_2$); more precisely,
\[
    ||f||\sq := \sup_{A_i \subset X_i}
    |\E[f \one_{A_1 \times A_2}]|.
\]
\end{remark}

\iffalse
\begin{remark}
Suppose that $G$ has $n$ vertices and edge density $p \asymp 1/(\log n)$. Then with the change of variables $\eps \mapsto \eps/\sqrt{\log(1/p)}$, \cref{thm:weak-sparse-copy} gives a cut-norm bound of
\[
    \left\vert\left\vert \one_{G} - \E(\one_{G} \mid \mP)\right\vert\right\vert\sq \le \eps p,
\] 
where $\mP = \mP_0 \otimes \mP_0$ and $\mP_0$ contains at most $(\log n)^{O(1/\eps^2)}$ sets.
\end{remark}
\fi 

\begin{proof}[Proof of \cref{thm:weak-relative-regularity}]
Let $\mathscr{P}$ be the family of partitions of the form $\mP_1 \otimes \mP_2$, where $\mP_i$ is a partition of $X_i$. We apply \cref{lem:energy-optimization} for the family $\mathscr{P}$, $\eps^2/4$ in place of $\eps$, the complexity function $\C(\mP) = \log_2 |\mP|$, and the energy function $\E_f^\nu(\mP)$; this produces a partition $\mP = \mP_1 \otimes \mP_2$ with at most $2^{4/\eps^2}$ parts, such that
\[
    \mE_f^\nu (\mP \vee \mQ) - \mE_f^\nu(\mP) \le 
    \frac{\eps^2}{2} \E\left[f \log \frac{f}{\nu}\right] \log_2 |\mQ|,
\]
for any partition $\mQ = \mQ_1 \otimes \mQ_2$ of $X_1 \times X_2$. Now consider the cut norm
\[
\begin{aligned}
    \left\vert \left\vert f - \frac{\E(f \mid \mP)}{\E(\nu \mid \mP)}\nu\right\vert\right\vert\sq
    &=
    \sup_{A_i \subset X_i} \left\vert \E \left[ f \one_{A_1 \times A_2} - \frac{\E(f \mid \mP)}{\E(\nu \mid \mP)}\nu \one_{A_1 \times A_2} \right] \right\vert.
\end{aligned}
\]
For fixed $A_1$ and $A_2$, let $\mQ_1 := \{A_1, A_1^c\}$, $\mQ_2 := \{A_2, A_2^c\}$ and $\mQ := \mQ_1 \otimes \mQ_2$. Then by \cref{lem:logarithmic-energy-increment}, we have
\[
\begin{aligned}
    \left\vert \E \left[ f \one_{A_1 \times A_2} - \frac{\E(f \mid \mP)}{\E(\nu \mid \mP)}\nu \one_{A_1 \times A_2} \right] \right\vert
    &=
    \left\vert \E \left[ \E(f \mid \mP \vee \mQ) \one_{A_1 \times A_2} - \frac{\E(f \mid \mP)}{\E(\nu \mid \mP)}\E(\nu \mid \mP \vee \mQ) \one_{A_1 \times A_2} \right] \right\vert
    \\
    &\le 
    \E \left\vert 
    \E(f \mid \mP \vee \mQ) - \frac{\E(f \mid \mP)}{\E(\nu \mid \mP)}\E(\nu \mid \mP \vee \mQ) \right\vert
    \\
    &\le 
    \sqrt{2\left( \mE_f^\nu (\mP \vee \mQ) - \mE_f^\nu(\mP) \right)}
    \\
    &\le
    \sqrt{2\frac{\eps^2}{4}\E \left[f \log \frac{f}{\nu} \right] \log_2 4},
\end{aligned}
\]
which is what we wanted. When $X_1 = X_2$, one can instead use the family $\mathscr{P}$ of partitions of the form $\mP_0 \otimes \mP_0$, and make the necessary modifications to $\mQ$ (i.e., take $\mQ_0 := \mQ_1 \vee \mQ_2$ and $\mQ := \mQ_0 \otimes \mQ_0$).

The statement about graphs follows by letting $X_1 = X_2 := V(G)$, equipping $V(G) \times V(G)$ with the uniform probability measure, and letting $f := \one_G/p$ and $\nu \equiv 1$.

Finally, let us show that for any constant $C > 0$, one cannot in general choose a partition $\mP = \mP_0 \otimes \mP_0$, where $\mP_0$ contains at most $2^{C/\eps^2}$ parts, such that 
\[
    \left\vert\left\vert \one_G - \E(\one_G \mid \mP)\right\vert\right\vert\sq \le \eps o_{p \to 0}\left(p \sqrt{\log(1/p)}\right)
\]
holds simultaneously for all $\eps \in (0, 1)$ and all finite graphs $G$ (where $p := \E[\one_G]$), allowing self-edges. Let $n \ge 1$ be large, $p := 1/n$, and $\eps := \sqrt{C/\log_2 (n/2)}$; we will construct a graph $G = G_n$ on $n$ vertices, with edge density $p$, such that
\[
    \left\vert\left\vert \one_{G_n} - \E(\one_{G_n} \mid \mP) \right\vert\right\vert\sq \gg 
    \frac{1}{n},
\]
for all $\mP = \mP_0 \otimes \mP_0$ such that $\mP_0$ contains at most $2^{C/\eps^2} = n/2$ sets. This will complete our proof since 
\[
    \eps p \sqrt{\log(1/p)} = \frac{1}{n} \sqrt{\frac{C \log n}{\log_2(n/2)}} 
    \ll_C 
    \frac{1}{n},
\]
and thus the ratio $||\one_G - \E(\one_G \mid \mP)||\sq / \left(\eps p \sqrt{\log(1/p)}\right)$ does not vanish as $n \to \infty$ (i.e., $p \searrow 0$).

The construction is simple, using the same example as in \cite[below Theorem 2.2]{conlon2021regularity}: take $V(G) = \{1, \ldots, n\}$ and $\one_G(a, b) = \one_{a = b}$ for $a, b \in V(G)$, so that $G$ consists of $n$ loops. Then any partition $\mP_0$ of $V(G)$ containing at most $n/2$ sets can use at most $n/2$ singletons (trivially); if $k_1, \ldots, k_m \ge 2$ denote the sizes of the remaining sets $A_1, \ldots, A_m$ in $\mP_0$, we thus have
\[
    k_1 + \ldots + k_m \ge n - \frac{n}{2} = \frac{n}{2}.
\]
Now consider a cut $C \subset V(G)$ which splits each $A_i$ roughly in half; more precisely, choose $C \subset A_1 \cup \cdots \cup A_m$ such that $|C \cap A_i| = \lceil k_i/2 \rceil$ for $i \in \{1, \ldots, m\}$. Then we have 
\[
\begin{aligned}
    \left\vert\left\vert \one_{G_n} - \E(\one_{G_n} \mid \mP) \right\vert\right\vert\sq
    &\ge 
    \E \left[ \left( \one_{G_n} - \E(\one_{G_n} \mid \mP)\right)\one_{C \times C} \right]
    \\
    &=
    \frac{1}{n^2}\sum_{i=1}^m \sum_{a, b \in C \cap A_i} \left(\one_{a=b} - \frac{k_i}{k_i^2}\right)
    \\
    &=
    \frac{1}{n^2}\sum_{i=1}^m \left(|C \cap A_i| - \frac{|C \cap A_i|^2}{k_i} \right)
    =
    \frac{1}{n^2}\sum_{i=1}^m \left\lceil \frac{k_i}{2} \right\rceil \left(1 - \frac{\lceil k_i/2 \rceil}{k_i} \right).
\end{aligned}
\]
Since each $k_i \ge 2$, it is easy to check that $\lceil k_i/2 \rceil/k_i \le 2/3$, and thus
\[
    \left\vert\left\vert \one_{G_n} - \E(\one_{G_n} \mid \mP) \right\vert\right\vert\sq
    \gg \frac{1}{n^2} \sum_{i=1}^m \frac{k_i}{2} \gg \frac{1}{n},
\]
which completes our proof.
%You can in fact get better by a factor of 2 using the original Pinsker, but this is unimportant
\end{proof}

\begin{remark}
One can also prove a \emph{strong} relative regularity lemma similar to \cref{thm:weak-relative-regularity} using an additional level of iteration; we leave the details to the interested reader.
\end{remark}

The corollary below should be compared to the dense model theorem from \cite{conlon2014green}, noting the dependency of $\eps'$ on $\eps$ and $\delta$.

\begin{corollary}[Dense model theorem]
For any $\eps \in (0, 1)$, there exists $\eps' > 0$ such that the following holds. Let $(X_1 \times X_2, \P)$ be a finite probability space and $f, \nu : X_1 \times X_2 \to [0, \infty)$ satisfy
\[
    f \le \nu \qquad \text{and} \qquad 
    ||\nu - 1||\sq \le \eps'.
\]
Then there exists a function $\tilde{f} : X_1 \times X_2 \to [0, \infty)$ such that 
\[
    \tilde{f} \le 1 \qquad \text{and} \qquad ||f - \tilde{f}||\sq \le \eps.
\]
In fact, if $\delta := \E[f] > 0$ and $\E[\nu] = 1$, then one can take
$\eps' \gg \eps \exp\left(-O\left( \delta^2 \log \frac{1}{\delta}\right) \eps^{-2} \right)$.
\end{corollary}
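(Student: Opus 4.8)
\emph{Proof proposal.}
The plan is to apply the weak relative regularity lemma (\cref{thm:weak-relative-regularity}) to suitable normalizations of $f$ and $\nu$, and to take $\tilde f$ to be the ``relative step function'' it produces. First I would dispose of degenerate cases: if $\E[\nu] = 0$ then $\nu \equiv 0$ and (since $0 \le f \le \nu$) $f \equiv 0$, so $\tilde f := 0$ works; and likewise if $\E[f] = 0$. Otherwise, testing the bound $||\nu-1||\sq \le \eps'$ with $A_1 = X_1$, $A_2 = X_2$ gives $\E[\nu] = 1 + O(\eps')$, whence $\delta := \E[f] \in (0, 2]$ once $\eps'$ is small. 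Set $\bar f := f/\delta$, $\nu' := \nu/\E[\nu]$ (so $\E[\bar f] = \E[\nu'] = 1$) and $M := \E[\nu]/\delta$, so that $f \le \nu$ becomes the pointwise inequality $\bar f \le M\nu'$.

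Applying \cref{thm:weak-relative-regularity} to $\bar f, \nu'$ with a parameter $\eps_0 \in (0,1)$ to be chosen yields a partition $\mP = \mP_1 \otimes \mP_2$ of $X_1 \times X_2$ with $2^{O(1/\eps_0^2)}$ parts and
\[
    \left\vert\left\vert \bar f - \tfrac{\E(\bar f \mid \mP)}{\E(\nu' \mid \mP)}\nu' \right\vert\right\vert\sq \le \eps_0 \sqrt{\E\left[\bar f \log \tfrac{\bar f}{\nu'}\right]}.
\]
The key estimate is that the relative entropy appearing here is nonnegative (by Jensen's inequality, since $\bar f, \nu' \ge 0$ both have mean $1$) and bounded by $\log M$: from $\bar f \le M\nu'$ we get $\log \bar f \le \log M + \log \nu'$ wherever $\bar f > 0$, so $\E[\bar f \log(\bar f/\nu')] \le (\log M)\,\E[\bar f] = \log M = \log(\E[\nu]/\delta) = O(\log(1/\delta))$. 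Multiplying the displayed bound by $\delta$ and using $\delta\,\tfrac{\E(\bar f\mid\mP)}{\E(\nu'\mid\mP)}\nu' = \tfrac{\E(f\mid\mP)}{\E(\nu\mid\mP)}\nu =: \tilde f_0 \nu$, where $\tilde f_0 := \tfrac{\E(f\mid\mP)}{\E(\nu\mid\mP)}$ satisfies $0 \le \tilde f_0 \le 1$ precisely because conditional expectation is monotone ($\E(f\mid\mP) \le \E(\nu\mid\mP)$, with $0/0$ read as $0$), I obtain $||f - \tilde f_0\nu||\sq \ll \delta\eps_0\sqrt{\log(2/\delta)}$.

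Next I would pass from $\tilde f_0\nu$ to $\tilde f_0$ via $||f - \tilde f_0||\sq \le ||f - \tilde f_0\nu||\sq + ||\tilde f_0(\nu - 1)||\sq$, together with the elementary fact that for any $(\mP_1 \otimes \mP_2)$-measurable $g$ with $|g| \le 1$ one has $||g\,h||\sq \le |\mP_1|\,|\mP_2|\,||h||\sq$ (expand $g$ into its at most $|\mP_1||\mP_2|$ constant pieces and apply the definition of $||\cdot||\sq$ on each rectangle). This gives $||\tilde f_0(\nu-1)||\sq \le 2^{O(1/\eps_0^2)}\eps'$, so that $||f - \tilde f_0||\sq \ll \delta\eps_0\sqrt{\log(2/\delta)} + 2^{O(1/\eps_0^2)}\eps'$. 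Finally I would choose parameters: since $\delta\sqrt{\log(2/\delta)}$ is bounded by an absolute constant for $\delta \in (0,2]$, taking $\eps_0$ an appropriate absolute multiple of $\eps$ makes the first term $\le \eps/2$ while keeping $|\mP_1||\mP_2| = 2^{O(1/\eps^2)}$, and then $\eps' := \eps\,2^{-C/\eps^2}$ with $C$ large enough makes the second term $\le \eps/2$; this proves the general statement with $\tilde f := \tilde f_0$. For the quantitative refinement, instead take $\eps_0 := \min(\tfrac12,\ c\,\eps/(\delta\sqrt{\log(1/\delta)}))$, so that $1/\eps_0^2 \ll \delta^2\log(1/\delta)\,\eps^{-2} + 1$ and hence $\eps' := \eps\,2^{-O(1/\eps_0^2)} \gg \eps\exp\!\big(-O(\delta^2\log(1/\delta))\,\eps^{-2}\big)$.

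The main obstacle is the circular tension between the two error terms: shrinking $\eps_0$ improves the regularity error $\delta\eps_0\sqrt{\log(1/\delta)}$ but inflates the partition size $2^{O(1/\eps_0^2)}$ multiplying $\eps'$ in $||\tilde f_0(\nu-1)||\sq$. This is resolved only because the hypothesis $f \le \nu$ (with $\E[\nu] \approx 1$) caps the relative entropy by $\log(1/\delta)$ and, crucially, caps $\delta\sqrt{\log(1/\delta)}$ by an absolute constant, so that $\eps_0$ can be fixed as a function of $\eps$ alone (optimally, of $\eps$ and $\delta$) \emph{before} choosing $\eps'$. A secondary point to state carefully is that $f \le \nu$ is used in exactly two places: to bound the relative entropy, and to guarantee $\tilde f_0 \le 1$ via $\E(f\mid\mP) \le \E(\nu\mid\mP)$ (the weaker hypothesis $f \le C\nu$ would cost a factor $C$ in the final $\tilde f \le C$).
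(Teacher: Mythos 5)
Your proposal is correct and follows essentially the same route as the paper: apply \cref{thm:weak-relative-regularity} to the normalized $f/\delta$ and $\nu/\E[\nu]$, take $\tilde f = \E(f\mid\mP)/\E(\nu\mid\mP)$, bound the relative entropy by $\log(\E[\nu]/\delta)$ via $f\le\nu$, control $\|\tilde f(\nu-1)\|\sq$ by the $2^{O(1/\eps_0^2)}$ partition-size factor, and choose $\eps_0$ before $\eps'$. Your parameter choice $\eps_0\asymp\eps/(\delta\sqrt{\log(1/\delta)})$ also correctly recovers the stated quantitative dependence of $\eps'$, which the paper leaves implicit.
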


\begin{proof}
If $f \equiv 0$, there is nothing to prove; taking $\eps' < 1/2$, we can also guarantee that $\E[\nu] \asymp 1$. So let $\delta := \E[f] > 0$, $g := f/\delta$, and $\mu := \nu/\E[\nu]$. Then by \cref{thm:weak-relative-regularity}, for any $\tau \in (0, 1)$, there exists a partition $\mP = \mP_1 \otimes \mP_2$ of $X_1 \times X_2$ with $2^{O(1/\tau^2)}$ parts such that 
\[
    \left\vert \left\vert 
    g - \frac{\E(g \mid \mP)}{\E(\mu \mid \mP)} \mu 
    \right\vert \right\vert\sq 
    \le 
    \tau \sqrt{\E\left[g \log \frac{g}{\mu} \right]}.
\]
Renormalizing and using the inequality $f \le \mu \E[\nu]$, this translates to
\[
    \left\vert \left\vert 
    f - \frac{\E(f \mid \mP)}{\E(\nu \mid \mP)} \nu 
    \right\vert \right\vert\sq 
    \le 
    \delta \tau \sqrt{\E\left[\frac{f}{\delta} \log \frac{f}{\delta \mu} \right]}
    \le 
    \delta \tau \sqrt{\log \frac{\E[\nu]}{\delta}}.
\]
On the other hand, since $\E(f \mid \mP) \le \E(\nu \mid \mP)$, one can obtain the inequality
\[
    \left\vert \left\vert 
    \frac{\E(f \mid \mP)}{\E(\nu \mid \mP)} (\nu - 1) 
    \right\vert \right\vert\sq 
    \le 
    2^{O(1/\tau^2)}
    ||\nu - 1||\sq,
\]
by expanding the right-hand side as a sum of $2^{O(1/\tau^2)}$ terms (corresponding to the parts of $\mP_1 \otimes \mP_2$).

Putting the last two inequalities together and assuming that $||\nu - 1||\sq \le \eps'$ (for some $\eps' > 0$ to be chosen), we get that
\[
    ||f - \tilde{f}||\sq 
    \le 
    2^{O(1/\tau^2)} \eps' + \delta \tau \sqrt{\log \frac{\E[\nu]}{\delta}},
\]
where $\tilde{f} := \E(f \mid \mP)/\E(\nu \mid \mP) \le 1$. The conclusion follows by choosing $\tau$ and then $\eps'$ small enough such that each of the above terms is at most $\eps/2$.
\end{proof}

%\end{subappendices}

%\bibliographystyle{plain}
%\bibliography{sparse-graphs}

\bibliographystyle{plain}
\bibliography{main}

\end{document}